\providecommand{\U}[1]{\protect\rule{.1in}{.1in}}
\newtheorem{theorem}{Theorem} [section]
\newtheorem{corollary}{Corollary}[section]
\newtheorem{definition}{Definition} [section]
\newtheorem{lemma}{Lemma}[section]
\newtheorem{proposition}{Proposition} [section]
\newtheorem{remark}{Remark} [section]
\newenvironment{proof}[1][Proof]{\textbf{#1.} }{\ \rule{1em}{1em}}
\numberwithin{equation}{section}
\newcommand{\p}{\partial}
\newcommand{\ep}{\epsilon}
\newcommand{\BFR}{\mathbf{R}}
\newcommand{\CJ}{\mathcal{J}}
\newcommand{\be}{\begin{equation}}
\newcommand{\ee}{\end{equation}}
\newcommand{\CX}{\mathcal{X}}
\newcommand{\CT}{\mathcal{T}}
\newcommand{\CG}{\mathcal{G}}
\newcommand{\CA}{\mathcal{A}}
\newcommand{\CH}{\mathcal{H}}
\begin{document}

\title{Separable Hamiltonian PDEs and Turning point principle for stability of
gaseous stars}
\author{Zhiwu Lin and Chongchun Zeng\\School of Mathematics\\Georgia Institute of Technology\\Atlanta, GA 30332, USA}
\date{}
\maketitle

\begin{abstract}
We consider stability of non-rotating gaseous stars modeled by the
Euler-Poisson system. Under general assumptions on the equation of states, we
proved a turning point principle (TPP) that the stability of the stars is
entirely determined by the mass-radius curve parameterized by the center
density. In particular, the stability can only change at extrema (i.e. local
maximum or minimum points) of the total mass. For very general equation of
states, TPP implies that for increasing center density the stars are stable up
to the first mass maximum and unstable beyond this point until next mass
extremum (a minimum). Moreover, we get a precise counting of unstable modes
and exponential trichotomy estimates for the linearized Euler-Poisson system.
To prove these results, we develop a general framework of separable
Hamiltonian PDEs. The general approach is flexible and can be used for many
other problems including stability of rotating and magnetic stars,
relativistic stars and galaxies.

\end{abstract}

\section{Introduction}

Consider a self-gravitating gaseous star satisfying the 3D Euler-Poisson
system%
\begin{equation}
\rho_{t}+\nabla\cdot\left(  \rho u\right)  =0,\label{continuity-EP}%
\end{equation}%
\begin{equation}
\rho\left(  v_{t}+u\cdot\nabla u\right)  =-\nabla p-\rho\nabla
V,\label{momentum-EP}%
\end{equation}%
\begin{equation}
\Delta V=4\pi\rho,\ \lim_{\left\vert x\right\vert \rightarrow\infty}V\left(
t,x\right)  =0,\label{Poisson-EP}%
\end{equation}
where $\rho\geq0$ is the density, $u\left(  t,x\right)  \in\mathbf{R}^{3}$ is
the velocity, $p=P\left(  \rho\right)  $ is the pressure and $V$ is the
self-consistent gravitational potential. Assume $P\left(  \rho\right)
\ $satisfies:
\begin{equation}
P\left(  s\right)  \in C^{1}\left(  0,\infty\right)  ,\ P^{\prime
}>0\label{assumption-P1}%
\end{equation}
and there exists $\gamma_{0}\in\left(  \frac{6}{5},2\right)  $ such that
\begin{equation}
\lim_{s\rightarrow0+}s^{1-\gamma_{0}}P^{\prime}\left(  s\right)
=K>0.\ \ \ \label{assumption-P2}%
\end{equation}
The assumptions (\ref{assumption-P2}) implies that the pressure $P\left(
\rho\right)  \thickapprox K\rho^{\gamma_{0}}$ for $\rho$ near $0$.

We consider the stability of non-rotating stars. Throughout the paper, {\it non-rotating stars} referred to  static equilibria of (\ref{momentum-EP}%
)-(\ref{Poisson-EP}) with $u=\vec{0}$. Note that any  traveling
solution of (\ref{momentum-EP})-(\ref{Poisson-EP}) with $u$ to be a
constant vector $\vec{c}\ $becomes static under the Galilean
transformation
\[
\left(  \rho\left(  x,t\right)  ,u\left(  x,t\right)  \right)  \rightarrow
\left(  \left(  \rho\left(  x+\mathbf{\ }\vec{c}t,t\right)  ,u\left(
x+\vec{c}t,t\right)  -\vec{c}\right)  \right)  .
\]
The density function of a compactly supported non-rotating star can be
shown to be radially symmetric (\cite{Gidas-Ni-Nirenberg}).

By Lemma
\ref{lemma-steady-small-density}, there exists $\mu_{\max}\in(0,+\infty]$ such
that for any center density $\rho_{\mu}\left(  0\right)  =\mu\in\left(
0,\mu_{\max}\right)  $, there exists a unique non-rotating star with the
density $\rho_{\mu}\left(  \left\vert x\right\vert \right)  $ supported inside
a ball with radius $R_{\mu}=R\left(  \mu\right)  <\infty$. In particular,
$\mu_{\max}=\infty$ when $\gamma_{0}\geq\frac{4}{3}\ $%
(\cite{uggala-Newtonian-stars}). (See also \cite{Rein-existence}
\cite{rein-rendall-compact} \cite{makino-84} for the proof when $\gamma
_{0}>\frac{4}{3}$). Denote
\[
M\left(  \mu\right)  =\int_{R^{3}}\rho_{\mu}\ dx=\int_{S_{\mu}}\rho_{\mu}\ dx
\]
to be the total mass of the star, where $S_{\mu}=\left\{  \left\vert
x\right\vert <R_{\mu}\right\}  $ is the support of $\rho_{\mu}$. We consider
the linear stability of this family of non-rotating gaseous stars $\rho_{\mu
}\left(  \left\vert x\right\vert \right)  $ for $\mu\in\left(  0,\mu_{\max
}\right)  $. Our main result is the following turning point principle.

\begin{theorem}
\label{Thm2-TPP}\textbf{\ }The linear stability of $\rho_{\mu}$ is fully
determined by the mass-radius curve parameterized by $\mu$. Let $n^{u}\left(
\mu\right)  $ be the number of unstable modes, namely the total algebraic
multiplicities of unstable eigenvalues. For small $\mu$, we have
\begin{equation}
n^{u}\left(  \mu\right)  =%
\begin{cases}
1\text{ (linear instability)} & \text{when }\gamma_{0}\in\left(  \frac{6}%
{5},\frac{4}{3}\right)  \\
0\text{ (linear stability)} & \text{ when }\gamma_{0}\in\left(  \frac{4}%
{3},2\right)
\end{cases}
.\label{formula-unstable-mode-small-mu}%
\end{equation}
The number $n^{u}\left(  \mu\right)  \ $can only change at mass extrema. For
increasing $\mu$, at a mass extrema point where $M^{\prime}(\mu)$ changes
sign, $n^{u}\left(  \mu\right)  $ increases by $1$ if $M^{\prime}%
(\mu)R^{\prime}\left(  \mu\right)  $ changes from $-$ to $+$ (i.e. the
mass-radius curve bends counterclockwise) and $n^{u}\left(  \mu\right)  $
decreases by $1$ if $M^{\prime}(\mu)R^{\prime}\left(  \mu\right)  $ changes
from $+$ to $-\ $(i.e. the mass-radius curve bends clockwise).
\end{theorem}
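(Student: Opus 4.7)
The plan is to apply the general framework of separable Hamiltonian PDEs developed in the paper to the linearization of Euler-Poisson at $\rho_\mu$, reducing the unstable-mode count to the Morse index of a single self-adjoint operator on the density component, and then extract the turning point principle from a one-parameter perturbation analysis of that operator.

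First I would recast the linearized system in the Hamiltonian form $\p_t X = \BFJ \BFL X$ with $X=(\rho',\rho_\mu u)$, where $\BFJ$ is the block anti-self-adjoint symplectic operator and $\BFL$ splits as a positive kinetic block $1/\rho_\mu$ acting on the momentum together with a density block $\CA_\mu$ encoding the pressure Hessian $P'(\rho_\mu)$ and the gravitational self-interaction $-\Delta^{-1}$. Mass conservation constrains the physically relevant density perturbations to $\CH_\mu=\{\rho'\colon\int_{S_\mu}\rho'\,dx=0\}$, and the paper's index theorem for separable Hamiltonian PDEs then yields $n^u(\mu)=n^-(\CA_\mu|_{\CH_\mu})$, the negative Morse index of the reduced operator, together with the promised exponential trichotomy estimates.

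The second step is to identify $\ker(\CA_\mu|_{\CH_\mu})$ with the mass-radius geometry. Differentiating the hydrostatic equation for $\rho_\mu$ in $\mu$ produces a function $\p_\mu\rho_\mu$ satisfying $\CA_\mu(\p_\mu\rho_\mu)=0$, and since $\int_{S_\mu}\p_\mu\rho_\mu\,dx=M'(\mu)$, this zero mode is admissible precisely when $M'(\mu)=0$. This already shows that $n^u(\mu)$ can change only at mass extrema. For the signed count at such an extremum $\mu_*$, I would carry out a Lyapunov--Schmidt reduction along $\p_\mu\rho_\mu$ to produce a smooth simple eigenvalue branch $\lambda(\mu)$ with $\lambda(\mu_*)=0$, and differentiate the eigenvalue equation together with the mass constraint to obtain a formula for $\lambda'(\mu_*)$ in terms of $M''(\mu_*)$, $R'(\mu_*)$ and boundary data. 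The geometric content is that $\lambda$ crosses zero from positive to negative (so $n^u$ jumps up by one) exactly when the mass-radius curve bends counterclockwise, i.e.\ when $M'R'$ switches from $-$ to $+$, matching the statement.

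The main obstacle is the rigorous execution of this Lyapunov--Schmidt step at the vacuum free boundary $\p S_\mu$, where $\CA_\mu$ degenerates like $\rho_\mu\sim \mathrm{dist}(\cdot,\p S_\mu)^{1/(\gamma_0-1)}$ and the support $S_\mu$ itself moves with $\mu$. I expect one must work in weighted Sobolev spaces adapted to this degeneracy, or equivalently pass to Lagrangian displacement variables, in order to place $\p_\mu\rho_\mu$ genuinely in the domain and to track the boundary contributions that encode $R'(\mu_*)$. The initial values of $n^u$ for small $\mu$ then follow from a comparison with the Lane--Emden polytrope of index $\gamma_0$: a standard rescaling shows that $\CA_\mu|_{\CH_\mu}$ is positive when $\gamma_0>\frac{4}{3}$ and has exactly one negative direction when $\gamma_0\in(\frac{6}{5},\frac{4}{3})$, produced by the scaling test function. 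Combined with the jump formula at each mass extremum, this base case yields the full theorem.
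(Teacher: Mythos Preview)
Your overall architecture---reduce $n^u(\mu)$ to the negative index of the density operator $L_\mu$ on the mass-constrained subspace via the separable Hamiltonian framework, then track zero crossings in $\mu$---is the paper's route. But the identity you invoke is wrong, and a central simplifying device is missing.

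Differentiating the hydrostatic equation $\Phi'(\rho_\mu)+V_\mu=V_\mu(R_\mu)$ in $\mu$ does \emph{not} give $L_\mu\partial_\mu\rho_\mu=0$. The right-hand side picks up the $\mu$-derivative of the surface potential $V_\mu(R_\mu)=-M(\mu)/R_\mu$, so in fact
\[
L_\mu\,\partial_\mu\rho_\mu=-\frac{d}{d\mu}\Big(\frac{M(\mu)}{R_\mu}\Big),
\]
a nonzero constant in general. This constant is exactly what couples the operator to the mass--radius curve: pairing against $\partial_\mu\rho_\mu$ gives
\[
\big\langle L_\mu\partial_\mu\rho_\mu,\partial_\mu\rho_\mu\big\rangle=-M'(\mu)\,\frac{d}{d\mu}\Big(\frac{M(\mu)}{R_\mu}\Big),
\]
and near a mass extremum $\mu_*$ (where $\frac{d}{d\mu}(M/R_\mu)=-M(\mu_*)R'(\mu_*)/R_{\mu_*}^2\ne0$) the sign of this quantity equals $\mathrm{sign}\,M'(\mu)R'(\mu)$. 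That already delivers the jump rule; your proposed Lyapunov--Schmidt computation of $\lambda'(\mu_*)$ in terms of $M''(\mu_*)$ is not needed and would be considerably more delicate.

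You also have not argued that the constrained operator has \emph{no other} kernel away from $M'(\mu)=0$; this requires controlling $\ker L_\mu$ and solutions of $L_\mu\sigma=\mathrm{const}$. The paper handles this---and simultaneously dissolves the free-boundary degeneracy you flag as the main obstacle---by passing from $L_\mu$ on weighted $L^2(S_\mu)$ to the equivalent second-order operator $D_\mu^0=-\Delta_r-4\pi F_+'(y_\mu)$ acting on the potential in $\dot H^1_r(\mathbf{R}^3)$, with $n^-(L_\mu)=n^-(D_\mu^0)$ and $\dim\ker L_\mu=\dim\ker D_\mu^0$. On $D_\mu^0$ there is no moving boundary, no degenerate weight, and the ODE structure makes the kernel and Morse index computable; in particular $\ker D_\mu^0$ is nontrivial exactly when $\frac{d}{d\mu}(M/R_\mu)=0$, and a separate lemma shows $M'(\mu)$ and $\frac{d}{d\mu}(M/R_\mu)$ never vanish together. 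Without this transfer to $D_\mu^0$, your program in the density variable would indeed face the weighted-space difficulties you anticipate.
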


Here, the mass-radius curve is oriented in a coordinate plane where the
horizontal and vertical axes correspond to the support radius and mass of the
star respectively. Theorem \ref{Thm2-TPP} shows that the stability of
non-rotating stars and the number of unstable modes are entirely determined by
the mass-radius curve parameterized by the center density $\mu$. In
particular, the stability can only change at a center density with extremal
mass (i.e. maxima or minima of $M\left(  \mu\right)  $). The change of
stability at mass extrema is called turning point principle (TPP) in the
astrophysical literature for both Newtonian and relativistic stars. It was
usually based on heuristic arguments. As an example, we quote the following
arguments in \cite{straumann book} for relativistic stars:\textquotedblleft
Suppose that for a given equilibrium configuration a radial mode changes its
stability property, i.e., the frequency $\omega$ of this mode passes through
zero. This implies that there exist infinitesimally nearby equilibrium
configurations into which the given one can be transformed, without changing
the total mass. Hence if $\omega$ passes trough zero we have $M^{\prime
}\left(  \mu\right)  =0$.\textquotedblright\ Same arguments can also be found
in other astrophysical textbooks such as \cite{weinberg book}
\cite{shapiro-book-85} \cite{glendening-compact-star}. In Theorem
\ref{Thm2-TPP}, we give a rigorous justification of TPP for Newtonian stars.
Moreover, we obtain the precise counting of unstable modes from the
mass-radius curve. For relativistic stars, similar results can also be
obtained (\cite{lin-hadzic-tpp-euler-einstein}).

Besides above stability criteria, we obtain more detailed information about
the spectra of the linearized Euler-Poisson operator and exponential
trichotomy estimates for the linearized Euler-Poisson system, which will be
useful for the future study of nonlinear dynamics near the non-rotating stars.
To state these results, first we introduce some notations. Let $X_{\mu}%
,Y_{\mu}$ be the
weighted spaces $L_{\Phi^{\prime\prime}\left(  \rho_{\mu}\right)  }^{2}\left(
S_{\mu}\right)  $ and $\left(  L_{\rho_{\mu}}^{2}\left(  S_{\mu}\right)
\right)  ^{3}$, where the enthalpy $\Phi\left(  \rho\right)  >0$ is defined
by
\begin{equation}
\Phi\left(  0\right)  =\Phi^{\prime}\left(  0\right)  =0,\ \ \Phi
^{\prime\prime}\left(  \rho\right)  =\frac{P^{\prime}\left(  \rho\right)
}{\rho}.\label{defn-enthalpy}%
\end{equation}
Denote $\mathbf{X}=X_{\mu}\times Y_{\mu}$.
The linearized Euler-Poisson system at $\left(  \rho_{\mu},\vec{0}\right)  $
is
\begin{align}
&\sigma_{t}=-\nabla\cdot\left(  \rho_{\mu}v\right)  ,\label{LEP-1}\\
&v_{t}=-\nabla\left(  \Phi^{\prime\prime}\left(  \rho_{\mu}\right)
\sigma+V\right)  ,\ \ \ \label{LEP2}%
\end{align}
with $\Delta V=4\pi\rho$. Here, $\left(  \sigma,v\right)  \in\mathbf{X}\ $are
the density and velocity perturbations.

Define the operators
\begin{equation}
L_{\mu}=\Phi^{\prime\prime}\left(  \rho_{\mu}\right)  -4\pi\left(
-\Delta\right)  ^{-1}:X_{\mu}\rightarrow X_{\mu}^{\ast},\ \ A_{\mu}=\rho_{\mu
}:Y_{\mu}\rightarrow Y_{\mu}^{\ast}\label{defn-L-A}%
\end{equation}
and
\begin{equation}
B_{\mu}=-\nabla\cdot=-\operatorname{div}:Y_{\mu}^{\ast}\rightarrow X_{\mu
},\ \ \ B_{\mu}^{\prime}=\nabla:X_{\mu}^{\ast}\rightarrow Y_{\mu
}.\label{defn-B}%
\end{equation}
Here, for $\sigma\in X_{\mu}$, we denote
\[
\left(  -\Delta\right)  ^{-1}\sigma=\int_{S_{\mu}}\frac{1}{4\pi\left\vert
x-y\right\vert }\sigma\left(  y\right)  dy\ |_{S_{\mu}}\text{. }%
\]
Then (\ref{LEP-1})-(\ref{LEP2}) can be written in the Hamiltonian form%
\begin{equation}
\partial_{t}\left(
\begin{array}
[c]{c}%
\sigma\\
v
\end{array}
\right)  =\left(
\begin{array}
[c]{cc}%
0 & B_{\mu}\\
-B_{\mu}^{\prime} & 0
\end{array}
\right)  \left(
\begin{array}
[c]{cc}%
L_{\mu} & 0\\
0 & A_{\mu}%
\end{array}
\right)  \left(
\begin{array}
[c]{c}%
\sigma\\
v
\end{array}
\right)  =\mathcal{J}_{\mu}\mathcal{L}_{\mu}\left(
\begin{array}
[c]{c}%
\sigma\\
v
\end{array}
\right)  ,\label{hamiltonian-EP}%
\end{equation}
where the operators
\begin{equation}
\mathcal{J}_{\mu}=\left(
\begin{array}
[c]{cc}%
0 & B_{\mu}\\
-B_{\mu}^{\prime} & 0
\end{array}
\right)  :\mathbf{X}^{\ast}\mathbf{\rightarrow X},\ \mathcal{L}_{\mu}=\left(
\begin{array}
[c]{cc}%
L_{\mu} & 0\\
0 & A_{\mu}%
\end{array}
\right)  :\mathbf{X\rightarrow X}^{\ast},\label{defn-JL-EP}%
\end{equation}
are off-diagonal anti-selfdual and diagonal self-dual respectively. We call
systems like (\ref{hamiltonian-EP}) to be separable Hamiltonian systems.

In the following theorems and throughout this paper, we follow the tradition
in the astrophysics literature that \textquotedblleft non-radial"
perturbations refer to those modes corresponding to non-constant spherical
harmonics. See the more precise Definition \ref{D:non-radial} of the subspaces
$\mathbf{X}_{r}$ and $\mathbf{X}_{nr}$ of radial and non-radial perturbations
in Subsection \ref{SS:non-radial}.

\begin{theorem}
\label{Thm1-EP}(i) The steady state $\rho_{\mu}$, which is parameterized by
the $C^{1}$ parameter $\mu$, is spectrally stable to non-radial perturbations
in $\mathbf{X}_{nr}$ with isolated purely imaginary eigenvalues. The zero
eigenvalue is isolated with an infinite dimensional kernel space
\begin{align*}
\ker(\mathcal{J}_{\mu}\mathcal{L}_{\mu})=  &  \left\{  \left(
\begin{array}
[c]{c}%
0\\
u
\end{array}
\right)  \ |\ \int\rho_{\mu}\left\vert u\right\vert ^{2}dx<\infty
,\ \nabla\cdot\left(  \rho_{\mu}u\right)  =0\right\} \\
&  \oplus span\left\{  \left(
\begin{array}
[c]{c}%
\partial_{x_{i}}\rho_{\mu}\\
0
\end{array}
\right)  ,\ i=1,2,3\right\}  ,
\end{align*}
and the only generalized eigenvectors of $0$ are given by $(0,\partial_{x_{i}%
}\nabla\tilde{\zeta})^{T}$ with
\[
\mathcal{J}_{\mu}\mathcal{L}_{\mu}\left(
\begin{array}
[c]{c}%
0\\
\partial_{x_{i}}\nabla\tilde{\zeta}%
\end{array}
\right)  =\left(
\begin{array}
[c]{c}%
\partial_{x_{i}}\rho_{\mu}\\
0
\end{array}
\right)  ,\quad i=1,2,3,
\]
where $\tilde{\zeta}$ is defined in \eqref{E:G-kernel} and
\eqref{E:G-kernel-1}.\newline(ii) Under radial perturbations in $\mathbf{X}%
_{r}$, the spectra of the linearized system \eqref{LEP-1}-\eqref{LEP2} are isolated
eigenvalues with finite multiplicity,
\[
\ker(\mathcal{J}_{\mu}\mathcal{L}_{\mu})\cap\mathbf{X}_{r}=span\{(\partial
_{\mu}\rho_{\mu},0)^{T}\}
\]
and the steady state $\rho_{\mu}$ is spectrally stable to radial perturbations
if and only if $n^{-}\left(  D_{\mu}^{0}\right)  =1$ and $i_{\mu}=1$. Here,
the self-adjoint operator $D_{\mu}^{0}$ is defined in
(\ref{defn-D-mu-0-general}) and
\begin{equation}
i_{\mu}=%
\begin{cases}
1 & \text{if }M^{\prime}(\mu)\frac{d}{d\mu}\left(  \frac{M\left(  \mu\right)
}{R_{\mu}}\right)  >0\text{ or \ }M^{\prime}(\mu)=0\\
0 & \text{if }M^{\prime}(\mu)\frac{d}{d\mu}\left(  \frac{M\left(  \mu\right)
}{R_{\mu}}\right)  <0\text{ or }\frac{d}{d\mu}\left(  \frac{M\left(
\mu\right)  }{R_{\mu}}\right)  =0.
\end{cases}
. \label{defn-imu}%
\end{equation}
Moreover, the number of growing modes is
\begin{equation}
n^{u}\left(  \mu\right)  =n^{-}\left(  D_{\mu}^{0}\right)  -i_{\mu}.
\label{formula-unstable-mode}%
\end{equation}

\end{theorem}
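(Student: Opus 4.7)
The plan is to apply the general spectral decomposition theorem for separable Hamiltonian PDEs developed earlier in the paper to the pair $(\mathcal{J}_\mu,\mathcal{L}_\mu)$ from \eqref{defn-JL-EP}. Since $A_\mu=\rho_\mu$ is positive definite on $Y_\mu$, that framework expresses $n^u$ via the negative Morse index of the quadratic form $\langle L_\mu\cdot,\cdot\rangle$ on a suitable invariant subspace, plus a correction that records which kernel directions of the reduced form are genuinely degenerate. Because the Euler--Poisson system is $SO(3)$--invariant, the splitting $\mathbf{X}=\mathbf{X}_r\oplus\mathbf{X}_{nr}$ is invariant under $\mathcal{J}_\mu\mathcal{L}_\mu$, and parts (i) and (ii) can be handled on $\mathbf{X}_{nr}$ and $\mathbf{X}_r$ independently.

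For part (i), the first crucial step is an Antonov-type coercivity: $\langle L_\mu\sigma,\sigma\rangle\geq0$ for every non-radial $\sigma\in X_\mu$, with kernel on the $\ell=1$ spherical-harmonic mode equal to $\mathrm{span}\{\partial_{x_i}\rho_\mu\}$ and trivial kernel for $\ell\geq 2$. I would prove this by expanding in spherical harmonics, using the explicit Green's function of $(-\Delta)^{-1}$ on each angular mode, and comparing pointwise against $\Phi''(\rho_\mu)$; the identity $L_\mu\partial_{x_i}\rho_\mu=0$, obtained by differentiating the static equation $\Phi'(\rho_\mu)+V_\mu=\mathrm{const}$ in $x_i$, provides the extremizer and pins down the $\ell=1$ kernel. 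Combined with $A_\mu>0$ and the separable Hamiltonian framework, this yields spectral stability and isolation of the spectrum on $\mathbf{X}_{nr}$. The kernel description of $\mathcal{J}_\mu\mathcal{L}_\mu$ follows from the direct computation: $(\sigma,v)^T\in\ker$ iff $\nabla\cdot(\rho_\mu v)=0$ and $L_\mu\sigma$ is a constant; mode-by-mode the coercivity forces $\sigma\in\mathrm{span}\{\partial_{x_i}\rho_\mu\}$ on the non-radial part. The generalized eigenvectors $(0,\partial_{x_i}\nabla\tilde\zeta)^T$ are then produced by solving the elliptic equation $-\nabla\cdot(\rho_\mu\partial_{x_i}\nabla\tilde\zeta)=\partial_{x_i}\rho_\mu$ indicated in \eqref{E:G-kernel}.

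For part (ii), restriction to $\mathbf{X}_r$ makes the reduced form of finite deficiency thanks to compactness of $-4\pi(-\Delta)^{-1}$ against $\Phi''(\rho_\mu)$ on radial scalar functions, yielding a discrete spectrum of finite multiplicity. Differentiating the static profile equation in $\mu$ gives $L_\mu\partial_\mu\rho_\mu=c'(\mu)$, a constant, whence $B_\mu^\prime L_\mu\partial_\mu\rho_\mu=0$ and $(\partial_\mu\rho_\mu,0)^T\in\ker(\mathcal{J}_\mu\mathcal{L}_\mu)\cap\mathbf{X}_r$; uniqueness of this kernel direction follows because any other radial kernel candidate would require $\nabla\cdot(\rho_\mu v)=0$ for radial $v=v(r)\hat r$, which forces $v=0$. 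The operator $D_\mu^0$ of \eqref{defn-D-mu-0-general} is then the reduction of $L_\mu$ to the mass-preserving radial subspace $\{\sigma:\int\sigma\,dx=0\}$ (the zero-mean constraint coming from the range of $B_\mu$ on radial vector fields). Pairing $L_\mu\partial_\mu\rho_\mu=c'(\mu)$ against $\partial_\mu\rho_\mu$ yields $\langle L_\mu\partial_\mu\rho_\mu,\partial_\mu\rho_\mu\rangle=c'(\mu)M'(\mu)$, and tracking the boundary condition at $r=R_\mu$ along the family identifies $c'(\mu)$ with a positive multiple of $\frac{d}{d\mu}(M(\mu)/R_\mu)$. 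The sign of this product dictates whether the kernel vector $\partial_\mu\rho_\mu$ sits in the negative cone of $D_\mu^0$, giving the case split in \eqref{defn-imu}; the abstract mode-counting formula of the separable Hamiltonian framework then delivers \eqref{formula-unstable-mode}.

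The main obstacle is the non-radial Antonov-type coercivity under the minimal assumptions \eqref{assumption-P1}--\eqref{assumption-P2}, because $\Phi''(\rho_\mu)\sim\rho_\mu^{\gamma_0-2}$ blows up at the vacuum boundary while the non-local term remains bounded; the mode-by-mode comparison must be tight enough to survive this degeneracy, and the extremizer identity $L_\mu\partial_{x_i}\rho_\mu=0$ is what keeps the inequality sharp at $\ell=1$. A secondary technical point is the identification $c'(\mu)\propto\frac{d}{d\mu}(M(\mu)/R_\mu)$, which requires careful differentiation of both the enthalpy equation and the moving support boundary $R_\mu$ along the one-parameter family and ultimately supplies the geometric link between $i_\mu$ and the mass-radius curve of Theorem \ref{Thm2-TPP}.
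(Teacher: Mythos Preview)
Your overall architecture (separable Hamiltonian framework, $SO(3)$-invariant splitting, differentiate the equilibrium family in $\mu$) is the same as the paper's, but there is a genuine gap in how you understand the operator $D_\mu^0$, and it propagates through both parts.

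You write that $D_\mu^0$ ``is the reduction of $L_\mu$ to the mass-preserving radial subspace $\{\sigma:\int\sigma\,dx=0\}$.'' It is not. The operator $D_\mu^0=-\Delta_r-4\pi F_+'(V_\mu(R_\mu)-V_\mu)$ acts on \emph{potentials} in $\dot H^1_r(\mathbf{R}^3)$, not on densities, and it captures $n^-(L_{\mu,r})$ on the \emph{full} radial space $X_{\mu,r}$, not on the mass-constrained subspace. The paper establishes $n^-(L_{\mu,r})=n^-(D_\mu^0)$ by a duality argument (Lemma~\ref{lemma-equivalent-operators}): for any $\rho\in X_\mu$ one has $\langle L_\mu\rho,\rho\rangle\ge\tfrac{1}{4\pi}\langle D_\mu V,V\rangle$ with $\Delta V=4\pi\rho$, and conversely $\langle D_\mu\phi,\phi\rangle\ge 4\pi\langle L_\mu(F_+'\phi),F_+'\phi\rangle$. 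This is the missing bridge. It is what makes Antonov--Lebovitz clean: instead of your proposed ``pointwise comparison of the Green's function against $\Phi''(\rho_\mu)$'' (which you correctly flag as delicate near vacuum), one works with the local Schr\"odinger operators $D_\mu^l$ and gets $D_\mu^l\ge0$ for $l\ge1$ immediately from $D_\mu^1 V_\mu'=0$, $V_\mu'>0$, and Sturm--Liouville comparison (Lemma~\ref{lemma-property-A-EP}). In the radial case, the formula then reads
\[
n^u(\mu)=n^-\bigl(L_{\mu,r}\big|_{\overline{R(B_{\mu,r})}}\bigr)=n^-(L_{\mu,r})-i_\mu=n^-(D_\mu^0)-i_\mu,
\]
where $i_\mu$ measures whether passing from $X_{\mu,r}$ to the codimension-one mass-constrained subspace $\overline{R(B_{\mu,r})}=\{\int\sigma=0\}$ kills a negative direction. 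Your description has $D_\mu^0$ already carrying the mass constraint, which leaves no room for the subtraction of $i_\mu$ and makes the formula~\eqref{formula-unstable-mode} unmotivated.

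Two smaller points. First, the constant is $c'(\mu)=-\frac{d}{d\mu}\bigl(M(\mu)/R_\mu\bigr)$ (from $V_\mu(R_\mu)=-M(\mu)/R_\mu$, Lemma~\ref{lemma-formula-V-R}), not a positive multiple; this sign is what makes $\langle L_{\mu,r}\partial_\mu\rho_\mu,\partial_\mu\rho_\mu\rangle=-M'(\mu)\frac{d}{d\mu}(M/R_\mu)$ negative exactly when $i_\mu=1$. Second, your argument that $\ker(\mathcal{J}_\mu\mathcal{L}_\mu)\cap\mathbf{X}_r$ is one-dimensional only disposes of the $v$-component; you still need to show that $L_{\mu,r}\sigma=\text{const}$ forces $\sigma\in\mathrm{span}\{\partial_\mu\rho_\mu\}$, which the paper does by a short case split using $\ker L_{\mu,r}=\{0\}$ when $\frac{d}{d\mu}(M/R_\mu)\neq0$ and $\ker L_{\mu,r}=\mathrm{span}\{\partial_\mu\rho_\mu\}$ otherwise (Lemma~\ref{lemma-kernel-D-mu}).
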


The index $i_{\mu}$ in (\ref{defn-imu}) is well-defined, since $M^{\prime
}\left(  \mu\right)  $ and $\frac{d}{d\mu}\left(  \frac{M\left(  \mu\right)
}{R_{\mu}}\right)  $ can not be zero at the same point (Lemma
\ref{lemma-M-R-no critical}). The stability of non-rotating stars under
nonradial perturbations was known in the astrophysics literature as
Antonov-Lebowitz Theorem (\cite{antonov} \cite{lebowitz65}). Theorem
\ref{Thm1-EP} implies that the spectra of the linearized Euler-Poisson
equation at $\rho_{\mu}$ are contained in the imaginary axis except finitely
many unstable (stable) eigenvalues with finite algebraic multiplicity.

\begin{theorem}
\label{Thm3-trichotomy}The operator $\mathcal{J}_{\mu}\mathcal{L}_{\mu}$
generates a $C^{0}$ group $e^{t\mathcal{J}_{\mu}\mathcal{L}_{\mu}}$ of bounded
linear operators on $\mathbf{X}$ and there exists a decomposition%
\[
\mathbf{X}=E^{u}\oplus E^{c}\oplus E^{s},\quad
\]
with the following properties: (i) $E^{u}\left(  E^{s}\right)  $ consist only
of eigenvectors corresponding to negative (positive) eigenvalues of
$\mathcal{J}_{\mu}\mathcal{L}_{\mu}$ and
\begin{equation}
\dim E^{u}=\dim E^{s}=n^{-}\left(  D_{\mu}^{0}\right)  -i_{\mu}%
.\label{unstable-dimension-formula-EP}%
\end{equation}
(ii) The quadratic form $\left(  \mathcal{L}_{\mu}\cdot,\cdot\right)
_{\mathbf{X}}$ vanishes on $E^{u,s}$, but is non-degenerate on $E^{u}\oplus
E^{s}$, and
\[
E^{c}=\left\{
\begin{pmatrix}
\sigma\\
v
\end{pmatrix}
\in\mathbf{X}\mid\left\langle \mathcal{L}_{\mu}%
\begin{pmatrix}
\sigma\\
v
\end{pmatrix}
,%
\begin{pmatrix}
\sigma_{1}\\
v_{1}%
\end{pmatrix}
\right\rangle =0,\ \forall%
\begin{pmatrix}
\sigma_{1}\\
v_{1}%
\end{pmatrix}
\in E^{s}\oplus E^{u}\right\}  .
\]
(iii) $E^{c},E^{u},E^{s}$ are invariant under $e^{t\mathcal{J}_{\mu
}\mathcal{L}_{\mu}}$. Let $\lambda_{u}=\min\{\lambda\mid\lambda\in
\sigma(\mathcal{J}_{\mu}\mathcal{L}_{\mu}|_{E^{u}})\}>0$. Then there exist
$C_{0}>0$ such that
\begin{equation}%
\begin{split}
&  \left\vert e^{t\mathcal{J}_{\mu}\mathcal{L}_{\mu}}|_{E^{s}}\right\vert \leq
C_{0}e^{-\lambda_{u}t},\ t\geq0,\\
&  \left\vert e^{t\mathcal{J}_{\mu}\mathcal{L}_{\mu}}|_{E^{u}}\right\vert \leq
C_{0}e^{\lambda_{u}t},\ t\leq0,
\end{split}
\label{estimate-stable-unstable-EP}%
\end{equation}%
\begin{equation}
\left\vert e^{t\mathcal{J}_{\mu}\mathcal{L}_{\mu}}|_{E^{c}}\right\vert \leq
C_{0}(1+|t|),\ t\in\mathbb{R},\ \ \ \text{if \ }M^{\prime}(\mu)\neq
0,\label{estimate-center-EP-1}%
\end{equation}
and
\begin{equation}
\left\vert e^{t\mathcal{J}_{\mu}\mathcal{L}_{\mu}}|_{E^{c}}\right\vert \leq
C_{0}(1+|t|)^{2},\ t\in\mathbb{R},\ \ \ \text{if \ }M^{\prime}(\mu
)=0.\label{estimate-center-EP-2}%
\end{equation}

(iv) Suppose that $M^{\prime}(\mu)\neq0$. Then
\begin{equation}
|e^{t\mathcal{J}_{\mu}\mathcal{L}_{\mu}}|_{E^{c}\cap\mathbf{X}_{r}}|\leq
C,\ \label{estimate-center-radial-EP}%
\end{equation}
for some constant $C$. In particular, when $n^{-}\left(  D_{\mu}^{0}\right)
=1$ and $M^{\prime}(\mu)\frac{d}{d\mu}\left(  \frac{M\left(  \mu\right)
}{R_{\mu}}\right)  >0$, Lyapunov stability is true for radial perturbations in
the sense that
\begin{equation}
|e^{t\mathcal{J}_{\mu}\mathcal{L}_{\mu}}|_{\mathbf{X}_{r}}|\leq C.
\label{stability-general data-EP}%
\end{equation}

\end{theorem}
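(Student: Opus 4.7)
The plan is to apply the general abstract theorem on exponential trichotomy for separable Hamiltonian PDEs (developed in the body of the paper) to the decomposition \eqref{hamiltonian-EP}, and then extract the sharp polynomial bounds on $E^c$ from the Jordan structure of $\mathcal{J}_\mu\mathcal{L}_\mu$ at $0$ that was already computed in Theorem \ref{Thm1-EP}. First I would verify the hypotheses of the abstract framework: $\mathcal{J}_\mu$ is off-diagonal anti-selfdual and $\mathcal{L}_\mu$ is diagonal self-dual; $A_\mu$ is positive and bounded away from zero on each compact subset of $S_\mu$; $L_\mu$ is self-dual with a finite-dimensional negative cone (its negative index on the radial subspace is $n^-(D_\mu^0)$ and it is non-negative on the non-radial subspace by the Antonov--Lebowitz argument referenced after Theorem \ref{Thm1-EP}); the kernel and generalized kernel of $\mathcal{J}_\mu\mathcal{L}_\mu$ have been identified in Theorem \ref{Thm1-EP}. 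This gives the $C^0$-group generation and yields an invariant splitting $\mathbf{X}=E^u\oplus E^c\oplus E^s$ with $E^{u,s}$ finite-dimensional, together with the characterization of $E^c$ in (ii) as the $\mathcal{L}_\mu$-orthogonal complement of $E^u\oplus E^s$.

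Next I would derive the dimension formula \eqref{unstable-dimension-formula-EP}. The abstract framework gives $\dim E^u = \dim E^s = n^-(\mathcal{L}_\mu) - \dim\mathcal{N}$, where $\mathcal{N}$ is a maximal non-positive subspace of the generalized kernel of $\mathcal{J}_\mu\mathcal{L}_\mu$ modulo $\ker\mathcal{J}_\mu\mathcal{L}_\mu$. On the non-radial part $\mathbf{X}_{nr}$, $\mathcal{L}_\mu\ge 0$ so no unstable modes occur; on the radial part the kernel is one-dimensional, generated by $(\partial_\mu\rho_\mu,0)^T$, and the existence of a $\mathcal{L}_\mu$-non-positive generalized eigenvector depends precisely on the sign of $M'(\mu)\frac{d}{d\mu}(M(\mu)/R_\mu)$, which is exactly the index $i_\mu$ appearing in \eqref{defn-imu}. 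This reproduces \eqref{formula-unstable-mode} and hence \eqref{unstable-dimension-formula-EP}.

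The exponential estimates on $E^{u,s}$ are standard since those are finite-dimensional invariant subspaces with spectrum strictly off the imaginary axis, so $\lambda_u$ is well-defined and \eqref{estimate-stable-unstable-EP} follows. The polynomial bounds on $E^c$ in \eqref{estimate-center-EP-1}--\eqref{estimate-center-EP-2} are the delicate part: they reduce to bounding the length of the longest Jordan chain of $\mathcal{J}_\mu\mathcal{L}_\mu$ at the isolated eigenvalue $0$. From Theorem \ref{Thm1-EP}, the non-radial translation directions $(\partial_{x_i}\rho_\mu,0)^T$ carry Jordan chains of length exactly $2$ (producing the $(1+|t|)$ factor), while the incompressible velocity part $\{(0,u):\nabla\cdot(\rho_\mu u)=0\}$ carries no chain. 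On the radial part, solvability of $\mathcal{J}_\mu\mathcal{L}_\mu(\sigma,v)^T=(\partial_\mu\rho_\mu,0)^T$ reduces via the divergence equation $-\nabla\cdot(\rho_\mu v)=\partial_\mu\rho_\mu$ to a Fredholm condition whose obstruction is proportional to $M'(\mu)$; thus when $M'(\mu)\ne 0$ the radial chain has length $1$, and when $M'(\mu)=0$ it extends to length up to $3$, yielding the $(1+|t|)^2$ bound. Together with the continuous spectrum being contained in the imaginary axis and the resolvent estimate supplied by the abstract framework, this gives \eqref{estimate-center-EP-1}--\eqref{estimate-center-EP-2}.

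Finally, part (iv) follows by restricting to $\mathbf{X}_r$, which is invariant under $e^{t\mathcal{J}_\mu\mathcal{L}_\mu}$: the non-radial Jordan blocks of length $2$ are excised, and when $M'(\mu)\ne 0$ the radial center space has trivial generalized kernel (chain length $1$), so no polynomial growth remains and \eqref{estimate-center-radial-EP} holds. In the case $n^-(D_\mu^0)=1$ and $M'(\mu)\frac{d}{d\mu}(M(\mu)/R_\mu)>0$, the index $i_\mu=1$ forces $\dim E^{u,s}\cap\mathbf{X}_r=0$, so $\mathbf{X}_r=E^c\cap\mathbf{X}_r$ and \eqref{stability-general data-EP} follows from \eqref{estimate-center-radial-EP}. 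The main obstacle I anticipate is the careful Fredholm bookkeeping that tracks how $M'(\mu)$ and $\frac{d}{d\mu}(M/R_\mu)$ govern the length of the radial Jordan chain and its $\mathcal{L}_\mu$-signature; this is the bridge between the abstract trichotomy and the mass--radius geometry, and it is what makes the dichotomy between \eqref{estimate-center-EP-1} and \eqref{estimate-center-EP-2} sharp.
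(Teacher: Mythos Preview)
Your overall strategy is correct and close to the paper's, but the route you take for the center-space polynomial bounds and for part (iv) differs from what the paper actually does, and your version has a soft spot worth flagging.

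\textbf{Where you agree with the paper.} Parts (i), (ii) and the hyperbolic estimates \eqref{estimate-stable-unstable-EP} really are read off directly from the abstract separable-Hamiltonian Theorem~\ref{T:main}, and the dimension formula \eqref{unstable-dimension-formula-EP} is just a restatement of \eqref{formula-unstable-mode} from Theorem~\ref{Thm1-EP}. The paper does not redo the index computation here; it simply cites Theorem~\ref{Thm1-EP}, which in turn uses the separable formula $\dim E^u=n^{-}(L|_{\overline{R(BA)}})$ from Theorem~\ref{T:main} rather than the general Lin--Zeng formula $n^{-}(\mathcal{L})-\dim\mathcal{N}$ that you invoke. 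Your route via the general index formula and the $\mathcal{L}_\mu$-signature of generalized eigenvectors would also work, but is longer than necessary here.

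\textbf{Where the paper argues differently.} For \eqref{estimate-center-EP-1}--\eqref{estimate-center-EP-2} and (iv), the paper does \emph{not} bound Jordan chain lengths directly. Instead it splits $E^c=(E^c\cap\mathbf{X}_{nr})\oplus(E^c\cap\mathbf{X}_r)$ and applies Theorem~\ref{T:main2}. On $\mathbf{X}_{nr}$ there are no unstable modes, all nonzero eigenvalues are semi-simple by the anti-self-adjointness of the block $T_3$ in Theorem~\ref{T:main}, and the only nontrivial Jordan blocks at $0$ are the three length-$2$ translation chains from Theorem~\ref{Thm1-EP}(i), giving the $(1+|t|)$ bound. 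On $\mathbf{X}_r$ with $M'(\mu)=0$ the paper applies Theorem~\ref{T:main2}(i) (injectivity of $A_{\mu,r}=\rho_\mu$, which is automatic) to get $(1+|t|)^2$. On $\mathbf{X}_r$ with $M'(\mu)\ne 0$ the paper verifies that $L_{\mu,r}|_{\overline{R(B_{\mu,r})}}$ is \emph{non-degenerate} (using \eqref{quadratic-form-d-rho-d-mu} and the decomposition $X_{\mu,r}=\overline{R(B_{\mu,r})}\oplus\mathbf{R}\{\partial_\mu\rho_\mu\}$) and then invokes Theorem~\ref{T:main2}(iii), which yields outright Lyapunov stability on $E^c\cap\mathbf{X}_r$. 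This simultaneously gives \eqref{estimate-center-radial-EP} and, combined with the nonradial $(1+|t|)$, gives \eqref{estimate-center-EP-1}.

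\textbf{The soft spot in your argument.} Your reduction ``polynomial bound $=$ longest Jordan chain at $0$'' is not self-evident in infinite dimensions: one must also know that away from the generalized kernel the flow is uniformly bounded. In the paper this is guaranteed by the block form in Theorem~\ref{T:main}(iv), where the only unbounded block $T_3$ is anti-self-adjoint with respect to an equivalent inner product and hence generates a unitary group. You gesture at ``the resolvent estimate supplied by the abstract framework,'' but what is actually needed is precisely this structural fact (or, equivalently, the hypotheses of Theorem~\ref{T:main2}). Also, there is no continuous spectrum here---the paper proves discreteness via the compact embedding $Z_{\mu,r}\hookrightarrow Y_{\mu,r}$ and Theorem~\ref{T:main}(v). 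If you make explicit that the uniform bound off the generalized kernel comes from the $T_3$-block, your Jordan-chain route becomes a legitimate alternative to the paper's application of Theorem~\ref{T:main2}; the paper's route is shorter because the non-degeneracy check of $L_{\mu,r}|_{\overline{R(B_{\mu,r})}}$ is a one-line consequence of \eqref{quadratic-form-d-rho-d-mu}.
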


Above linear estimates will be useful for the future study of nonlinear
dynamics, particularly, the construction of invariant (stable, unstable and
center) manifolds for the nonlinear Euler-Poisson system. The $O(|t|)$ growth
in (\ref{estimate-center-EP-1}) is due to the nonradial generalized kernel
associated to the translation modes given in Theorem \ref{Thm1-EP} i). At the
mass extrema points, the $O(|t|^{2})$ growth in (\ref{estimate-center-EP-2})
is due to the radial generalized kernel associated to the mode of varying
center density given in Theorem \ref{Thm1-EP} ii). Lyapunov stability on the
radial center space $E^{c}\cap\mathbf{X}_{r}$ (under the non-degeneracy
condition $M^{\prime}(\mu)\neq0$) hints that the steady state might be
nonlinearly stable on the center manifold once constructed.

Theorems \ref{Thm1-EP}-\ref{Thm3-trichotomy} are applied to various examples
of equation of states. For Polytropic stars with $P\left(  \rho\right)
=K\rho^{\gamma}$ $\left(  \gamma\in\left(  \frac{6}{5},2\right)  \right)  $,
we recover the classical sharp instability criterion (\cite{ledoux58}
\cite{lin-ss-1997}) that $\gamma\in\left(  \frac{6}{5},\frac{4}{3}\right)  $.
Even for this case, our results give some new information not found in the
literature that there is only one unstable mode and Lyapunov stability is true
on the center space. Next, we consider more practical white dwarf stars with
$P\left(  \rho\right)  =Af\left(  B^{\frac{1}{3}}\rho^{\frac{1}{3}}\right)  $,
where $A,B$ are two constants and $f\left(  x\right)  $ is defined in
(\ref{defn-f-white dwarf}). It is proved in Corollary
\ref{cor-stability-white dwarf} that white dwarf stars $\rho_{\mu}\left(
\left\vert x\right\vert \right)  \ $are linearly Lyapunov stable for any
center density $\mu>0$. For stars with general equation of states, we prove in
Corollary \ref{cor-general-stars} that they are stable up to the first mass
maximum and unstable beyond this point until the next mass extrema (a
minimum). Examples for which the first mass maximum is obtained at a finite
center density including the asymptotically polytropic equation of states
satisfying that $P\left(  \rho\right)  \thickapprox\rho^{\gamma_{1}}$ (for
$\rho$ large) with $\gamma_{1}\in\left(  0,\frac{6}{5}\right)  $ or $\left(
\frac{6}{5},\frac{4}{3}\right)  $. We refer to Corollary \ref{cor-asymp-poly}
for more details.

There exist huge astrophysical literature on the stability of gaseous stars
(e.g. \cite{chandra-book-stellar} \cite{ledoux58} \cite{tassoul-1978}
\cite{shapiro-book-85} \cite{cox-book} \cite{kippenhahn 2012} and references
therein). We briefly mention some more recent mathematical works. Linear
instability of polytropic stars was studied in \cite{lin-ss-1997}. Nonlinear
instability for polytropic stars was proved in \cite{jang-instability} for
$\gamma\in\left(  \frac{6}{5},\frac{4}{3}\right)  $ and in \cite{deng-et-2002}
for $\gamma=\frac{4}{3}$. Nonlinear conditional stability was shown in
\cite{rein-stability-EP} for polytropic stars with $\gamma>\frac{4}{3}$, and
for white dwarf stars in \cite{luo-smoller-1}. In these works, stable stars
were constructed by solving variational problems, for example, by minimizing
the energy functional subject to the mass constraint. In a work under
preparation (\cite{lin-nonlinear-EP}), we will show that the linear
stability/instability criteria in Theorems \ref{Thm1-EP} and \ref{Thm2-TPP}
are also true on the nonlinear level.

In the rest of this introduction, we discuss the methods in our proof of
Theorems \ref{Thm1-EP}-\ref{Thm3-trichotomy}.\textit{ }Since the non-rotating
stars are spherically symmetric, radial and non-radial perturbations are
decoupled for the linearized Euler-Poisson equation. The stability for
nonradial perturbations was obtained in the astrophysical literature in 1960s
(\cite{antonov} \cite{lebowitz65}). The radial perturbations were usually
studied by the Eddington equation (\ref{ODE-SL-radial-oscilation}%
)-(\ref{SL-eqn-BC}), which is a singular Sturm-Liouville problem.

In this paper, we study stability of non-rotating stars in a Hamiltonian
framework. The linearized Euler-Poisson system can be written as a separable
Hamiltonian form (\ref{hamiltonian-EP}). In Section \ref{section-abstract}, we
first study general linear Hamiltonian PDEs of the separable form%
\begin{equation}
\partial_{t}\left(
\begin{array}
[c]{c}%
u\\
v
\end{array}
\right)  =\left(
\begin{array}
[c]{cc}%
0 & B\\
-B^{\prime} & 0
\end{array}
\right)  \left(
\begin{array}
[c]{cc}%
L & 0\\
0 & A
\end{array}
\right)  \left(
\begin{array}
[c]{c}%
u\\
v
\end{array}
\right)  =\mathbf{JL}\left(
\begin{array}
[c]{c}%
u\\
v
\end{array}
\right)  ,\label{hamiltonian-separated}%
\end{equation}
where $u\in X,\ v\in Y$ and $X,Y$ are real Hilbert spaces. The triple $\left(
L,A,B\right)  $ is assumed to satisfy assumptions (G1)-(G4) in Section
\ref{section-abstract}, which roughly speaking require that $B:Y^{\ast}\supset
D(B)\rightarrow X$ \ is a densely defined closed operator, $L:X\rightarrow
X^{\ast}$ is bounded and self-dual with finitely many negative modes, and
$A:Y\rightarrow Y^{\ast}$ is bounded, self-dual and nonnegative. Those
assumptions qualify (\ref{hamiltonian-separated}) as a special case of the
general linear Hamiltonian systems studied in \cite{lin-zeng-hamiltonian}.
However, the special form of such systems ensures certain more specific
structure in the linear dynamics, in particular a more explicit formula for
unstable dimensions, all non-zero eigenvalues being semi-simple, a more
detailed block decomposition, an at most cubic bound of the degree of the
algebraic growth in $E^{c}$, \textit{etc.}

Adapting above framework to the linearized Euler-Poisson system
(\ref{hamiltonian-radial}) for radial perturbations, we obtain that the number
of unstable modes equals $n^{-}\left(  L_{\mu,r}|_{\overline{R\left(
B_{\mu,r}\right)  }}\right)  $, where $L_{\mu,r}$ and $B_{\mu,r}$ are the
restriction of operators $L_{\mu}$ and $B_{\mu}$ to radial functions. The
quadratic form $\left\langle L_{\mu,r}\cdot,\cdot\right\rangle $ is exactly
the second variation of the energy functional $E_{\mu}\left(  \rho\right)  $
defined in (\ref{energy functional -rho}) and $\overline{R\left(  B_{\mu
,r}\right)  }\ $is the space of radial perturbations preserving the total
mass. The unstable index formula (\ref{formula-unstable-mode}) follows from
these structures. In particular, the index $i_{\mu}$ (defined in
(\ref{defn-imu})) measures if the mass constraint can reduce the negative
modes of $L_{\mu,r}$ by one or not. The stability condition $L_{\mu
,r}|_{\overline{R\left(  B_{\mu,r}\right)  }}\geq0$ amounts to Chandrasekhar's
variational principle (\cite{chandra-variational-64}
\cite{binney-tremaine2008}) that the stable states should be energy minimizers
under the constraint of constant mass. Moreover, the separable Hamiltonian
formulation yields that the Sturm-Liouville operator in
(\ref{ODE-SL-radial-oscilation}) can be written in a factorized form
$B_{\mu,r}^{\prime}L_{\mu,r}B_{\mu,r}A_{\mu,r}$, where $A_{\mu,r}=\rho_{\mu}$
is a positive operator on $Y_{\mu,r}$. Compared with the traditional way of
treating the singular Sturm-Liouville operator (\ref{ODE-SL-radial-oscilation}%
), this factorized form is more convenient to prove self-adjointness and
discreteness of eigenvalues (Lemma \ref{lemma-self-adjoint-2}) without relying
on ODE techniques. We refer to Remark \ref{rm-Eddington-eqn} for more details.

To get TPP from Theorem \ref{Thm1-EP}, it is reduced to find $n^{-}\left(
L_{\mu,r}\right)  =n^{-}\left(  D_{\mu}^{0}\right)  $, where $D_{\mu}^{0}$ is
a second order ODE operator from the linearization of the steady state
equation. We use a continuity argument to find $n^{-}\left(  D_{\mu}%
^{0}\right)  $. First, for small $\mu$, $n^{-}\left(  D_{\mu}^{0}\right)  $ is
shown to be equal\ to the corresponding negative index for the Lane-Emden
stars with polytropic index $\gamma_{0}$ (defined in (\ref{assumption-P2})).
For Lane-Emden stars with $\gamma\in\left(  \frac{6}{5},2\right)  $, we show
that the negative index is always $1$. For general equation of states, it can
be shown that $n^{-}\left(  D_{\mu}^{0}\right)  =1\,$for small $\mu$. For
increasing $\mu\,$, we determine $n^{-}\left(  D_{\mu}^{0}\right)  $ by
keeping track of its changes. A key observation is that $D_{\mu}^{0}$ has
one-dimensional kernel only at critical points of the mass-radius ratio
$\frac{M\left(  \mu\right)  }{R_{\mu}}$. Therefore, $n^{-}\left(  D_{\mu}%
^{0}\right)  $ can only change at critical points of $\frac{M\left(
\mu\right)  }{R\left(  \mu\right)  }$. The jump of $n^{-}\left(  D_{\mu}%
^{0}\right)  $ at such critical points is shown to be exactly the jump of
$i_{\mu}$. This not only gives us a way to find $n^{-}\left(  D_{\mu}%
^{0}\right)  $ for any $\mu>0$, but also implies that the number of unstable
modes $n^{u}\left(  \mu\right)  \ $does not change when crossing a critical
point of $\frac{M\left(  \mu\right)  }{R_{\mu}}$. At extrema points of total
mass $M\left(  \mu\right)  $, $n^{-}\left(  D_{\mu}^{0}\right)  \,$\ remains
unchanged but $i_{\mu}$ must change from $0$ to $1$ (or from $1$ to $0$) if
the bending of the mass-radius curve is counterclockwise (or clockwise). This
proves TPP\ that the number of unstable modes can only change at extrema mass
and also give an explicit way to determine $n^{u}\left(  \mu\right)  $ from
the mass-radius curve. The exponential trichotomy estimates in Theorem
\ref{Thm3-trichotomy} follow form the general Theorems \ref{T:main} and
\ref{T:main2}.

The general framework of separable Hamiltonian PDEs in Section
\ref{section-abstract} is flexible and can be used for many other problems.
Hamiltonian systems in the separable form of (\ref{hamiltonian-separated})
appear in many other problems, which include nonlinear Klein-Gordon equations,
nonlinear Schr\"{o}dinger equations and 3D Vlasov-Maxwell systems for
collisionless plasmas etc. This framework was also used in the recent study of
stability of neutron stars modeled by Euler-Einstein equation
(\cite{lin-hadzic-tpp-euler-einstein}) and relativistic globular clusters
modeled by Vlasov-Einstein equation (\cite{mahir-lin-rein}). In particular,
for Euler-Einstein equation, a similar TPP can be proved
(\cite{lin-hadzic-tpp-euler-einstein}) for relativistic stars as in Theorem
\ref{T:main2}. More recently, the stability of rotating stars of Euler-Poisson
system was studied (\cite{lin-wang-rotating}) by the separable Hamiltonian approach.

This paper is organized as follows. Section 2 is about the abstract theory for
the separable linear Hamiltonian PDEs. Section 3 is about the stability of
non-rotating stars and is divided into several subsections. Section 3.1 is for
the existence of non-rotating stars. In section 3.2, the Hamiltonian
structures of linearized Euler-Poisson is studied. Section 3.3 is to find the
negative index $n^{-}\left(  D_{\mu}^{0}\right)  $ for all $\mu>0$. In Section
3.4, we derive the equations for non-radial perturbations and prove the
Antonov-Lebowitz theorem. In Section 3.5, TPP is proved for radial
perturbations. In Section 3.6, more explicit stability criteria are given for
several classes of equation of states. In the appendix, we outline the Lagrangian formulation of the Euler-Poisson system \eqref{continuity-EP}-\eqref{Poisson-EP} and its linearization.

\section{\label{section-abstract}Separable linear Hamiltonian PDE}

Let $X$ and $Y$ be real Hilbert spaces. We make the following assumptions on
$\left(  L,A,B\right)  $ in the Hamiltonian PDE (\ref{hamiltonian-separated}):

\begin{enumerate}
\item[(\textbf{G1})] The operator $B:Y^{\ast}\supset D(B)\rightarrow X$ and
its dual operator $B^{\prime}:X^{\ast}\supset D(B^{\prime})\rightarrow Y\ $are
densely defined and closed (and thus $B^{\prime\prime}=B$).

\item[(\textbf{G2})] The operator $A:Y\rightarrow Y^{\ast}$ is bounded and
self-dual (i.e. $A^{\prime}=A$ and thus $\left\langle Au,v\right\rangle $ is a
bounded symmetric bilinear form on $Y$). Moreover, there exist $\delta>0$ and
a closed subspace $Y_{+}\subset Y$ such that
\[
Y=\ker A\oplus Y_{+},\quad\langle Au,u\rangle\geq\delta\left\Vert u\right\Vert
_{Y}^{2},\;\forall u\in Y_{+}.
\]

\item[(\textbf{G3})] The operator $L:X\rightarrow X^{\ast}$ is bounded and
self-dual (i.e. $L^{\prime}=L$ \textit{etc.}) and there exists a decomposition
of $X$ into the direct sum of three closed subspaces
\begin{equation}
X=X_{-}\oplus\ker L\oplus X_{+},\quad n^{-}(L)\triangleq\dim X_{-}%
<\infty\label{decom-X}%
\end{equation}
satisfying

\begin{enumerate}
\item[(\textbf{G3.a})] $\left\langle Lu,u\right\rangle <0$ for all $u\in
X_{-}\backslash\{0\}$;

\item[(\textbf{G3.b})] there exists $\delta>0$ such that
\[
\left\langle Lu,u\right\rangle \geq\delta\left\Vert u\right\Vert ^{2}\ ,\text{
for any }u\in X_{+}.
\]

\end{enumerate}

\item[(\textbf{G4})] The above $X_{\pm}$ and $Y_{+}$ satisfy
\[
\ker(i_{X_{+}\oplus X_{-}})^{\prime}
\subset D(B^{\prime}), \quad\ker(i_{Y_{+}})^{\prime}\subset D(B).
\]

\end{enumerate}

\begin{remark}
We adopt the notations as in \cite{yosida-book}. For a densely defined linear
operator $A:X\rightarrow Y$ between Hilbert spaces $X,Y$, we use $A^{\prime
}:Y^{\ast}\rightarrow X^{\ast}$ and $A^{\ast}:$ $Y\rightarrow X$ for the dual
and adjoint operators of $A$ respectively. The operators $A^{\prime}$ and
$A^{\ast}$ are related by
\[
A^{\ast}=I_{X}A^{\prime}I_{Y}^{-1},
\]
where $I_{X}:X^{\ast}\rightarrow X$ and $I_{Y}:Y^{\ast}\rightarrow Y$ are the
isomorphisms defined by the Riesz representation theorem. Given a closed
subspace $X_{1}$ of a Hilbert space $X$, $i_{X_{1}}:X_{1}\rightarrow X$
denotes the embedding and ($i_{X_{1}})':X^{\ast}\rightarrow X_{1}^{\ast}$ the
dual operator with
\[
\ker(i_{X_{1}})^{\prime}=\left\{  f\in X^{\ast}\mid\langle f,x\rangle
=0,\,\forall x\in X_{1}\right\}  .
\]

\end{remark}

\begin{remark}
\label{remark-G4}The assumption (\textbf{G4}) for $L$ (or for $A$) is
satisfied automatically if $\dim\ker L<\infty$ (or $\dim\ker A<\infty$). See
Remark 2.3 in \cite{lin-zeng-hamiltonian} for details.
\end{remark}

In this paper, the above abstract framework will be applied the linearized
Euler-Poisson system to be studied in details, where $A$ is actually positive
definite. The more general semi-positive definiteness assumption on $A$ is
partially motivated by the focusing nonlinear Schr\"{o}dinger equation (NLS)
with energy subcritical or critical power nonlinearity,
\begin{equation}
iu_{t}=\Delta u+|u|^{p}u,\quad u:\mathbf{R}^{1+d}\rightarrow\mathbb{C}%
=\mathbf{R}^{2},\;p\in(1,\frac{4}{d-2}] \tag{NLS}%
\end{equation}
with the Hamiltonian
\[
H(u)=\int_{\mathbf{R}^{d}}\frac{1}{2}|\nabla u|^{2}-\frac{1}{p+2}|u|^{p}dx.
\]
There exist standing waves and steady waves in the subcritical and critical
cases, respectively,
\[
U_{\omega}(t,x)=e^{-i\omega t}\phi_{\omega}(x),\quad-\Delta\phi_{\omega
}+\omega\phi_{\omega}-\phi_{\omega}^{p+1}=0.
\]
For ground states, $\phi_{\omega}(x)$ is always radially symmetric and
positive, where $\omega>0$ if $p<\frac{4}{d-2}$ and $\omega=0$ if $p=\frac
{4}{d-2}$.
The linearization of (NLS) in the rotation frame $u(t,x)=e^{-i\omega t}v(t,x)$
at $v_{\omega}=\phi_{\omega}$ with $v$ viewed as a vector in $\mathbf{R}^{2}$
takes the form of \eqref{hamiltonian-separated} where
\[
B=I,\quad L=-\Delta+\omega-(p+1)\phi_{\omega}^{p},\quad A=-\Delta+\omega
-\phi_{\omega}^{p},
\]
on the energy space $H^{1}$ in the subcritical case and $\dot{H}^{1}$ in the
critical case. Clearly $\phi_{\omega}>0$ spans $\ker A$ and thus $A\geq0$.
Viewing $L$ and $A$ as perturbations to $-\Delta+\omega$, a simple argument
based on the compactness shows (\textbf{G1-4}) are satisfied.

Equation (\ref{hamiltonian-separated}) is of the Hamiltonian form
\begin{equation}
\partial_{t}w=\mathbf{JL}w, \label{hamiltonian-general}%
\end{equation}
where $\mathbf{u}=\left(  u,v\right)  \in\mathbf{X}=X\times Y$. Here, the
operators
\[
\mathbf{J}=\left(
\begin{array}
[c]{cc}%
0 & B\\
-B^{\prime} & 0
\end{array}
\right)  :\mathbf{X}^{\ast}\supset D(\mathbf{J})\rightarrow\mathbf{X},\ \ \
\]
and
\[
\mathbf{L}=\left(
\begin{array}
[c]{cc}%
L & 0\\
0 & A
\end{array}
\right)  :\mathbf{X}\rightarrow\mathbf{X}^{\ast}\text{.}%
\]
Under assumptions (\textbf{G1-4}), we can check that:

i) The operator $\mathbf{J\ }$is anti-self-dual, in the sense that
\[
D\left(  \mathbf{J}\right)  =D\left(  B^{\prime}\right)  \times D\left(
B\right)
\]
is dense in $\mathbf{X}^{\ast}$ and $\mathbf{J}^{\prime}=-\mathbf{J}$.

ii) The operator $\mathbf{L\ }$is bounded and self-dual (i.e. $\mathbf{L}%
^{\prime}=\mathbf{L}$) such that $\left\langle \mathbf{Lu,v}\right\rangle $ is
a bounded symmetric bilinear form on $\mathbf{X}$. For any $\mathbf{u}=\left(
u,v\right)  \in\mathbf{X}$, note that
\[
\left\langle \mathbf{\mathbf{L}u,u}\right\rangle =\left\langle
Lu,u\right\rangle +\left\langle Av,v\right\rangle , \quad\ker\mathbf{L=}\ker
L\times\ker A.
\]
Let
\begin{equation}
\mathbf{X}_{-}=X_{-}\times\left\{  0\right\}  ,\quad\mathbf{X}_{+}=X_{+}\times
Y_{+}, \label{defn-X-+-}%
\end{equation}
where $X_{\pm}$ and $Y_{+}$ are as in (\textbf{G2}) and (\textbf{G3}). Then we
have the decomposition
\[
\mathbf{X=X}_{-}\oplus\ker\mathbf{L}\oplus\mathbf{X}_{+},\quad\dim
\mathbf{X}_{-}=n^{-}(\mathbf{L})=n^{-}(L),
\]
satisfying: $\left\langle \mathbf{Lu,u}\right\rangle <0$ for all
$\mathbf{u}\in\mathbf{X}_{-}\backslash\{0\}$ and there exists $\delta_{0}>0$
such that
\[
\left\langle \mathbf{\mathbf{L}u,u}\right\rangle \geq\delta_{0}\left\Vert
\mathbf{u}\right\Vert ^{2}\ =\delta_{0}\left(  \left\Vert u\right\Vert
_{X}^{2}+\left\Vert v\right\Vert _{Y}^{2}\right)  ,\text{ for any
}\mathbf{u\in X}_{+}.
\]

iii) Assumption (\textbf{G4}) implies
\begin{align*}
&  \ker(i_{\mathbf{X}_{+}\mathbf{\oplus X}_{-}})^{\prime}=\{\mathbf{f}%
\in\mathbf{X}^{\ast}\mid\langle\mathbf{f},\mathbf{u}\rangle=0,\,\forall
\mathbf{u\in X}_{-}\mathbf{\oplus X}_{+}\}\\
=  &  \ker(i_{X_{+}\oplus X_{-}})^{\prime}\times\ker(i_{Y_{+}})^{\prime
}\subset D(\mathbf{J}).
\end{align*}
Therefore, $\left(  \mathbf{X},\mathbf{J},\mathbf{L}\right)  $ satisfies the
assumptions (\textbf{H1-3}) in \cite{lin-zeng-hamiltonian} and we can apply
the general theory for linear Hamiltonian PDE \cite{lin-zeng-hamiltonian} to
study the solutions of (\ref{hamiltonian-separated}). In particular, the
semigroup $e^{t\mathbf{J}\mathbf{L}}$ is well-defined. Corollary 12.1 in
\cite{lin-zeng-hamiltonian} also implies
\begin{equation}
\mathbf{L}\mathbf{J}=(\mathbf{J}\mathbf{L})^{\prime},\ BA,\ (BA)^{\prime
}=AB^{\prime},\ B^{\prime}L,\ (B^{\prime}L)^{\prime}=LB\text{ densely defined,
closed}. \label{E:closed-1}%
\end{equation}
Moreover, by using the separable nature of (\ref{hamiltonian-separated}), we
obtain more precise estimates on the instability index and the growth in the
center space. Our main Theorem for (\ref{hamiltonian-separated}) is the
following, whose proof would be self-contained except a few technical lemmas
in \cite{lin-zeng-hamiltonian} are cited. We adopt the same notations as in
\cite{lin-zeng-hamiltonian}. In particular, for a closed subspace
$X_{1}\subset X$,
we denote
\begin{equation}
L_{X_{1}}=i_{X_{1}}^{\prime}Li_{X_{1}}:X_{1}\rightarrow X_{1}^{\ast
}\Longrightarrow\langle L_{X_{1}}u_{1},u_{2}\rangle=\langle Lu_{1}%
,u_{2}\rangle,\;\forall u_{1},u_{2}\in X_{1}. \label{E:L_X_1}%
\end{equation}

\begin{theorem}
\label{T:main} Assume (\textbf{G1-4}) for (\ref{hamiltonian-separated}). The
operator $\mathbf{JL}$ generates a $C^{0}$ group $e^{t\mathbf{JL}}$ of bounded
linear operators on $\mathbf{X}$ and there exists a decomposition%
\[
\mathbf{X}=E^{u}\oplus E^{c}\oplus E^{s},\quad
\]
of closed subspaces $E^{u, s, c}$ with the following properties:

i) $E^{c},E^{u},E^{s}$ are invariant under $e^{t\mathbf{JL}}$.

ii) $E^{u}\left(  E^{s}\right)  $ only consists of eigenvectors corresponding
to negative (positive) eigenvalues of $\mathbf{JL}$ and
\begin{equation}
\dim E^{u}=\dim E^{s}=n^{-}\left(  L|_{\overline{R\left(  BA\right)  }%
}\right)  , \label{unstable-dimension-formula}%
\end{equation}
where $n^{-}\left(  L|_{\overline{R\left(  BA\right)  }}\right)  $ denotes the
number of negative modes of
$L|_{\overline{R\left(  BA\right)  }}$ as defined in \eqref{decom-X}. If
$n^{-}\left(  L|_{\overline{R\left(  BA\right)  }}\right)  >0$,
then there exists $M>0$ such that
\begin{equation}%
\begin{split}
&  \left\vert e^{t\mathbf{JL}}|_{E^{s}}\right\vert \leq Me^{-\lambda_{u}t},\;
t\geq0; \quad\left\vert e^{t\mathbf{JL}}|_{E^{u}}\right\vert \leq
Me^{\lambda_{u}t},\; t\leq0,
\end{split}
\label{estimate-stable-unstable}%
\end{equation}
where $\lambda_{u}=\min\{\lambda\mid\lambda\in\sigma(\mathbf{JL}|_{E^{u}%
})\}>0$.

iii) The quadratic form $\left\langle \mathbf{L}\cdot,\cdot\right\rangle
$ vanishes on $E^{u,s}$, i.e. $\langle\mathbf{L}\mathbf{u}, \mathbf{u}%
\rangle=0$ for all $\mathbf{u} \in E^{u, s}$, but is non-degenerate on
$E^{u}\oplus E^{s}$, and
\begin{equation}
E^{c}=\left\{  \mathbf{u}\in\mathbf{X}\mid\left\langle \mathbf{\mathbf{L}%
u,v}\right\rangle =0,\ \forall\ \mathbf{v}\in E^{s}\oplus E^{u}\right\}  .
\label{defn-center space}%
\end{equation}

iv)
There exist closed subspaces $\mathbf{X}_{j}$, $j=0, \ldots, 5$ such that
\[
E^{c} =\ker L \oplus\ker A \oplus(\oplus_{j=1}^{5} \mathbf{X}_{j}),
\quad\dim\mathbf{X}_{1} = \dim\mathbf{X}_{5} \le n^{-} (L) - \dim E^{u},
\]
\[
\mathbf{X}_{1}, \mathbf{X}_{4}, \mathbf{X}_{5} \subset X \times\{0\},
\quad\mathbf{X}_{2} \subset\{0\} \times Y.
\]
In this decomposition, $\mathbf{J}\mathbf{L}|_{E^{c}}$ and the quadratic form
$\mathbf{L}_{E^{c}}$ take the block form
\[
\mathbf{L}_{E^{c}} \longleftrightarrow%
\begin{pmatrix}
0 & 0 & 0 & 0 & 0 & 0 & 0\\
0 & 0 & 0 & 0 & 0 & 0 & 0\\
0 & 0 & 0 & 0 & 0 & 0 & \mathbf{L}_{15}\\
0 & 0 & 0 & \mathbf{L}_{2} & 0 & 0 & 0\\
0 & 0 & 0 & 0 & \mathbf{L}_{3} & 0 & 0\\
0 & 0 & 0 & 0 & 0 & \mathbf{L}_{4} & 0\\
0 & 0 & \mathbf{L}_{51} & 0 & 0 & 0 & 0\\
&  &  &  &  &  &
\end{pmatrix}
,
\]
\[
\mathbf{J}\mathbf{L}|_{E^{c}} \longleftrightarrow%
\begin{pmatrix}
0 & 0 & 0 & T_{X2} & T_{X3} & 0 & 0\\
0 & 0 & T_{Y1} & 0 & T_{Y3} & T_{Y4} & T_{Y5}\\
0 & 0 & 0 & T_{12} & T_{13} & 0 & 0\\
0 & 0 & 0 & 0 & 0 & 0 & T_{25}\\
0 & 0 & 0 & 0 & T_{3} & 0 & T_{35}\\
0 & 0 & 0 & 0 & 0 & 0 & 0\\
0 & 0 & 0 & 0 & 0 & 0 & 0
\end{pmatrix}
.
\]
All the non-trivial blocks of $\mathbf{L}_{E^{c}}$ are non-degenerate and
\[
\mathbf{L}_{2} \ge\epsilon, \quad\mathbf{L}_{3} \ge\epsilon,
\]
for some $\epsilon>0$. All the blocks of $\mathbf{J} \mathbf{L}$ are bounded
except $T_{3}$ is anti-self-adjoint with respect to the equivalent inner
product $\langle\mathbf{L}_{3} \cdot, \cdot\rangle$ satisfying $\ker T_{3} =
\{0\}$. Consequently, there exists $M>0$ such that
\begin{equation}
\left\vert e^{t\mathbf{JL}}|_{E^{c}}\right\vert \leq M(1+|t|)^{3}%
,\ t\in\mathbb{R}. \label{estimate-center}%
\end{equation}

v) Denote $Z$ to be the space $D(BA)$ with the graph norm
\[
\left\Vert y\right\Vert _{Z}=\left\Vert y\right\Vert _{Y}+\left\Vert
BAy\right\Vert _{X}.
\]
If the embedding $Z\hookrightarrow Y$ is compact, then the spectra of $T_{3}$
are nonzero, isolated with finite multiplicity, and have no accumulating point
except for $+\infty$. Moreover, the eigenfunctions of $T_{3}$ form an
orthonormal basis of $\mathbf{X}_{3}$ with respect to $\langle\mathbf{L}_{3}
\cdot, \cdot\rangle$. Consequently the spectra $\sigma(\mathbf{J}%
\mathbf{L})\backslash\{0\}$ are isolated with finite multiplicity, and have no
accumulating point except for $+\infty$.
\end{theorem}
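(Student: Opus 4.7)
The plan is to reduce to the general linear Hamiltonian PDE theory developed in \cite{lin-zeng-hamiltonian} and then exploit the separable structure to sharpen the conclusions. As the excerpt already verifies, assumptions (\textbf{G1-4}) imply hypotheses (\textbf{H1-3}) of \cite{lin-zeng-hamiltonian} for the triple $(\mathbf{X},\mathbf{J},\mathbf{L})$; invoking that theory I obtain a $C^0$ group $e^{t\mathbf{JL}}$ together with an invariant decomposition $\mathbf{X}=E^u\oplus E^c\oplus E^s$ such that $E^{u,s}$ consist of eigenvectors for real hyperbolic eigenvalues with the exponential estimates \eqref{estimate-stable-unstable}, $\langle\mathbf{L}\cdot,\cdot\rangle$ vanishes on each of $E^{u,s}$ but is non-degenerate on their sum, and $E^c$ is the $\mathbf{L}$-orthogonal complement. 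This delivers items (i), the exponential part of (ii), and (iii) essentially for free.

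The main separable refinement is the identification $\dim E^u=\dim E^s=n^-(L|_{\overline{R(BA)}})$. The approach is to read off the hyperbolic eigenvalue equation $\mathbf{JL}(u,v)^T=\lambda(u,v)^T$ as $BAv=\lambda u$ and $-B'Lu=\lambda v$, so that for nonzero $\lambda\in\mathbb{R}$ one has $u\in R(BA)$ and $(-BAB'L)u=\lambda^2 u$. Combined with the closedness statements in \eqref{E:closed-1}, this confines the projection of $E^u\oplus E^s$ onto the $X$ factor into $\overline{R(BA)}$. Because $A\geq0$ with $\langle Av,v\rangle\geq\delta\|v\|_Y^2$ on $Y_+$ by (\textbf{G2}), the standard Hamiltonian pairing argument (vanishing of $\langle\mathbf{L}\cdot,\cdot\rangle$ on $E^{u,s}$ plus non-degeneracy on $E^u\oplus E^s$) shows that each negative direction of $L|_{\overline{R(BA)}}$ produces exactly one unstable/stable pair, yielding \eqref{unstable-dimension-formula}.

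For the block decomposition in (iv), I would analyze the generalized kernel of $\mathbf{JL}$ at $0$ using the fact that $\ker\mathbf{L}=\ker L\times\ker A$ and that Jordan chains over $0$ are produced by the interplay of $B,B'$ with these kernels; this explains why $\mathbf{X}_1,\mathbf{X}_4,\mathbf{X}_5\subset X\times\{0\}$ (chains rooted over $\ker L$) and $\mathbf{X}_2\subset\{0\}\times Y$ (chains rooted over $\ker A$). The remaining piece $\mathbf{X}_3$, where $\langle\mathbf{L}\cdot,\cdot\rangle$ is strictly positive, carries the bounded purely-imaginary dynamics governed by $T_3$, which one shows is anti-self-adjoint with respect to the equivalent inner product $\langle\mathbf{L}_3\cdot,\cdot\rangle$. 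A key computational step is to prove that Jordan chains over $0$ have length at most four, by using the vanishing of certain symmetric bilinear pairings of $\mathbf{L}$ with successive $(\mathbf{JL})^k$ applied to a chain to force termination; this gives the polynomial bound $(1+|t|)^3$ in \eqref{estimate-center}. The bound $\dim\mathbf{X}_1=\dim\mathbf{X}_5\leq n^-(L)-\dim E^u$ then follows by counting the negative directions of $L$ absorbed respectively by $E^u$ and by the nontrivial Jordan chains.

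The hardest part I expect is pinning down the precise zero pattern asserted for $\mathbf{L}_{E^c}$ and $\mathbf{JL}|_{E^c}$, in particular arranging the decomposition so that only the indicated entries $T_{ij},T_3,\mathbf{L}_{15},\mathbf{L}_{2,3,4}$ survive and so that all the non-trivial $\mathbf{L}$-blocks are non-degenerate; this requires choosing complements carefully within each generalized eigenspace and is the most delicate bookkeeping step. Item (v) is then a functional-analytic consequence: once $T_3$ is recognized as anti-self-adjoint with respect to the equivalent inner product $\langle\mathbf{L}_3\cdot,\cdot\rangle$, the hypothesis that $Z=D(BA)\hookrightarrow Y$ is compact implies $T_3$ has compact resolvent, so its spectrum is discrete with finite multiplicity and accumulates only at $+\infty$, and its eigenfunctions form an $\langle\mathbf{L}_3\cdot,\cdot\rangle$-orthonormal basis of $\mathbf{X}_3$. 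Since the complementary finite-dimensional blocks contribute only finitely many eigenvalues, the same conclusion transfers to $\sigma(\mathbf{JL})\setminus\{0\}$.
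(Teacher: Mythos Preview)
Your high-level outline is in the right spirit, but it skips the one technical ingredient that actually does the work in the paper, and the substitute you offer for it is not sufficient.

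The paper does \emph{not} obtain the hyperbolic subspaces and the formula $\dim E^u=n^-(L|_{\overline{R(BA)}})$ by quoting the trichotomy of \cite{lin-zeng-hamiltonian} and then identifying the dimension a posteriori. Instead, the proof is built around Lemma~\ref{lemma-self-adjoint-2} (which in turn rests on Proposition~\ref{prop-self-adjoint -1}): after a preliminary reduction removing $\ker L$ and $\ker A$, the operator $\tilde{\mathbb L}=B'LBA$ is shown to be \emph{self-adjoint} on $Y$ with respect to $\langle A\cdot,\cdot\rangle$. Its negative spectral subspace then has dimension exactly $n^-(L|_{\overline{R(BA)}})$, and each negative eigenvalue $-\lambda_j^2$ of $\tilde{\mathbb L}$ yields an explicit pair of eigenvectors $\mathbf{u}_j^\pm$ of $\mathbf{JL}$ with eigenvalues $\pm\lambda_j$. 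This is how \eqref{unstable-dimension-formula} is obtained. Your ``standard Hamiltonian pairing argument'' only shows that hyperbolic eigenvectors have $X$-component in $\overline{R(BA)}$ and that $\langle\mathbf L\cdot,\cdot\rangle$ is indefinite there; it does \emph{not} show that every negative direction of $L|_{\overline{R(BA)}}$ actually generates an unstable mode. Without compactness (cf.\ the remark following Theorem~\ref{T:main2}), the variational problem you would need may have no minimizer, so existence of the unstable eigenvalues genuinely requires the self-adjointness of $B'LBA$.

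The block decomposition in (iv) is likewise not obtained by a Jordan-chain termination argument at $0$. The paper proceeds constructively: after splitting off $E^{u,s}$ (Step~3) and then $\ker L$, $\ker A$ (Step~4), it applies Lemma~\ref{L:decom-X} with $\tilde X=\overline{R(\tilde B)}$ and Lemma~\ref{L:decom-Y} with $Y_1=\ker\tilde{\mathbb L}$ to produce Proposition~\ref{P:block-1}. In that proposition $\mathbf X_1$ is $\ker L_{\overline{R(\tilde B)}}\times\{0\}$ and $\mathbf X_2=\{0\}\times\ker\tilde{\mathbb L}$; these are not ``Jordan chains rooted in $\ker L$ and $\ker A$'' but rather come from the interaction of $L$ with $\overline{R(B)}$ and of $\tilde{\mathbb L}$ with $Y$. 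The anti-self-adjointness and injectivity of $T_3$ (hence semi-simplicity of all nonzero imaginary eigenvalues) are proved directly from this construction, and the $(1+|t|)^3$ bound in \eqref{estimate-center} then drops out of the explicit upper-triangular block form, not from an abstract chain-length count. Your description of (v) via compact resolvent is correct in spirit, but note that the compactness is transferred to $T_3$ on $\mathbf X_3$ through the reduction, again using Lemma~\ref{lemma-self-adjoint-2}(iii).
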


\begin{remark}
\label{R:non-deg} Here the non-degeneracy of a bounded symmetric quadratic
form $B(u,v): Z \otimes Z \to\mathbf{R}$ on a real Banach space $Z$ is defined
as that the induced bounded linear operator $v \longrightarrow f= B(\cdot, v)
\in Z^{*}$ is an isomorphism from $X$ to $X^{*}$.
\end{remark}

The above theorem implies that the solutions of (\ref{hamiltonian-separated})
are spectrally stable (i.e. nonexistence of exponentially growing solution) if
and only if $L|_{\overline{R\left(  BA\right)  }}\geq0$. Moreover,
$n^{-}\left(  L|_{\overline{R\left(  BA\right)  }}\right)  $ gives the
dimension of the subspaces of exponentially growing solutions. The exponential
trichotomy estimates (\ref{estimate-stable-unstable})-(\ref{estimate-center})
are important in the study of nonlinear dynamics near an unstable steady
state, for which the linearized equation (\ref{hamiltonian-separated}) is
derived. If the spaces $E^{u,s}$ have higher regularity, then the exponential
trichotomy can be lifted to more regular spaces. We refer to Theorem 2.2 in
\cite{lin-zeng-hamiltonian} for more precise statements.

Compared to \cite{lin-zeng-hamiltonian}, the separable Hamiltonian form of
\eqref{hamiltonian-separated} yields a simpler block form. In particular, the
anti-self-adjointness of $T_{3}$ ensures the semi-simplicity of any eigenvalue
$\lambda\in i\mathbf{R}\backslash\{0\}$ and the non-degeneracy of $\mathbf{L}$
restricted to its subspace of generalized eigenvectors. This is not true for
general linear Hamiltonian systems, see examples in
\cite{lin-zeng-hamiltonian}. The separable Hamiltonian form also implies the
order $O(|t|^{3})$ of the growth in the center direction which is better than
the general cases in \cite{lin-zeng-hamiltonian}. These properties hold
essentially due to the second order equation \eqref{2nd order eqn-v} satisfied
by $v$.

\begin{remark}
As the only nontrivial block $T_{3}$ in the block decomposition of
$\mathbf{J}\mathbf{L}$ is anti-self-adjoint with respect to an equivalent
norm, it is clear that all the possible algebraic growth of $e^{t\mathbf{J}%
\mathbf{L}}$ must be associated to the possible zero eigenvalue. The
second order equation \eqref{2nd order eqn-v} allows at most $O(|t|)$ growth as in the case of wave equations. So it is natural to guess that the solutions of the first order system \eqref{hamiltonian-separated} may also grow no faster than $O(|t|)$.
However, the possible degeneracy of $B$ and $A$ indeed creates more growth and
the above $O(|t|^{3})$ growth is optimal. Consider the following example:
\[
X = \mathbf{R}^{2}, \;\; Y= \mathbf{R}^{3}, \;\; A=%
\begin{pmatrix}
0 & 0 & 0\\
0 & 1 & 0\\
0 & 0 & 1
\end{pmatrix}
, \;\; L =
\begin{pmatrix}
2 & -1\\
-1 & 0
\end{pmatrix}
, \;\; B =%
\begin{pmatrix}
1 & 1 & 0\\
0 & 1 & 0
\end{pmatrix}
.
\]
One may compute
\[
\mathbf{J} \mathbf{L}=%
\begin{pmatrix}
0 & 0 & 0 & 1 & 0\\
0 & 0 & 0 & 1 & 0\\
-2 & 1 & 0 & 0 & 0\\
-1 & 1 & 0 & 0 & 0\\
0 & 0 & 0 & 0 & 0
\end{pmatrix}
, \quad(\mathbf{J} \mathbf{L})^{2}=%
\begin{pmatrix}
-1 & 1 & 0 & 0 & 0\\
-1 & 1 & 0 & 0 & 0\\
0 & 0 & 0 & -1 & 0\\
0 & 0 & 0 & 0 & 0\\
0 & 0 & 0 & 0 & 0
\end{pmatrix}
,
\]
\[
(\mathbf{J} \mathbf{L})^{3}=%
\begin{pmatrix}
0 & 0 & 0 & 0 & 0\\
0 & 0 & 0 & 0 & 0\\
1 & -1 & 0 & 0 & 0\\
0 & 0 & 0 & 0 & 0\\
0 & 0 & 0 & 0 & 0
\end{pmatrix}
, \quad(\mathbf{J} \mathbf{L})^{4}=0.
\]
Therefore $e^{t\mathbf{J}\mathbf{L}}$ exhibits $O(|t|^{3})$ growth.
\end{remark}

In the following theorem, we given some conditions on $(L, A, B)$ which yields
better growth estimate of $e^{t\mathbf{J}\mathbf{L}}$ on the center subspace
$E^{c}$.

\begin{theorem}
\label{T:main2} Assume (\textbf{G1-4}) for (\ref{hamiltonian-separated}). The
following hold under additional assumptions:

i) If $A$ is injective on $\overline{R(B^{\prime}LBA)}$, then $|e^{t\mathbf{J}%
\mathbf{L}}|_{E^{c}} | \le M (1 + t^{2})$ for some $M>0$.

ii) If $\overline{R(BA)}= X$, then $|e^{t\mathbf{J}\mathbf{L}}|_{E^{c}} | \le
M (1 + |t|)$ for some $M>0$.

iii) Suppose $\left\langle L\cdot,\cdot\right\rangle $ and $\langle
A\cdot,\cdot\rangle$ are non-degenerate on $\overline{R\left(  B\right)  }$
and $\overline{R\left(  B^{\prime}\right)  }$, respectively, then
$|e^{t\mathbf{JL}}|_{E^{c}}|\leq M$ for some $M>0$. Namely, there is Lyapunov
stability on the center space $E^{c}$.
\end{theorem}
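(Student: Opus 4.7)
The plan is to use the refined block decomposition of $\mathbf{JL}|_{E^{c}}$ from Theorem~\ref{T:main}, together with the iteration formulae
\[
(\mathbf{JL})^{2}\binom{u}{v}=\binom{-BAB'Lu}{-B'LBAv},\qquad (\mathbf{JL})^{3}\binom{u}{0}=\binom{0}{B'LBAB'Lu},
\]
and the observation that $T_{3}$, being bounded and anti-self-adjoint with respect to $\langle\mathbf{L}_{3}\cdot,\cdot\rangle$, generates a uniformly bounded group $e^{tT_{3}}$. Thus all algebraic growth of $e^{t\mathbf{JL}}|_{E^{c}}$ must come from the nilpotent couplings among the blocks $\ker L,\ker A,\mathbf{X}_{1},\ldots,\mathbf{X}_{5}$. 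Inspection of the block matrix in Theorem~\ref{T:main} reveals that the only Jordan chain whose length can attain $4$ --- the source of the $O(|t|^{3})$ bound there --- is $\mathbf{X}_{5}\to\mathbf{X}_{2}\to\mathbf{X}_{1}\to\ker A$; every other chain already has length at most $3$. Each of the three added hypotheses will shorten this worst chain further.

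For (ii), I will first observe that $\overline{R(BA)}=X$ gives $n^{-}(L|_{\overline{R(BA)}})=n^{-}(L)$, so the estimate $\dim\mathbf{X}_{1}=\dim\mathbf{X}_{5}\leq n^{-}(L)-\dim E^{u}$ in Theorem~\ref{T:main}~iv) collapses both dimensions to $0$; hence $\mathbf{X}_{1}=\mathbf{X}_{5}=\{0\}$. The surviving chains $\mathbf{X}_{2}\to\ker L$, $\mathbf{X}_{4}\to\ker A$, and $\mathbf{X}_{3}\to\ker L\oplus\ker A$ (modulo the $T_{3}$-rotation) all have length at most $2$, yielding $|e^{t\mathbf{JL}}|_{E^{c}}|\leq M(1+|t|)$.

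For (i), I will argue by contradiction: suppose some $w_{0}=(u,0)\in\mathbf{X}_{5}$ sits at the top of a Jordan chain of length $4$. Then $(\mathbf{JL})^{3}w_{0}=(0,\varphi)$ with $\varphi=B'LBAB'Lu\in R(B'LBA)$ and $(\mathbf{JL})^{4}w_{0}=0$. Using the structure of the block decomposition (specifically, $\ker T_{3}=\{0\}$ together with the non-degeneracy of the blocks of $\mathbf{L}_{E^{c}}$), I will identify the kernel of $\mathbf{JL}|_{E^{c}}$ restricted to the generalized zero-eigenspace with $\ker L\oplus\ker A$; this places $(\mathbf{JL})^{3}w_{0}\in\ker L\oplus\ker A$, and comparing $X$- and $Y$-components forces $\varphi\in\ker A$. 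The hypothesis that $A$ is injective on $\overline{R(B'LBA)}$ then yields $\varphi=0$, a contradiction, so no chain exceeds length $3$ and $|e^{t\mathbf{JL}}|_{E^{c}}|\leq M(1+t^{2})$.

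For (iii), I will show that $\mathbf{JL}|_{E^{c}}$ has trivial nilpotent part at $0$. If $w=(u,v)$ satisfies $(\mathbf{JL})^{2}w=0$, then independently $B'LBAv=0$ and $BAB'Lu=0$. The first gives $\langle L(BAv),Bz\rangle=0$ for every $z\in D(B)$, and since $BAv\in\overline{R(B)}$, the non-degeneracy of $\langle L\cdot,\cdot\rangle$ on $\overline{R(B)}$ forces $BAv=0$. Dually, $BAB'Lu=0$ gives $\langle AB'h,B'Lu\rangle=0$ for every $h\in D(B')$ via the $B$--$B'$ adjointness and $A=A'$, and the non-degeneracy of $\langle A\cdot,\cdot\rangle$ on $\overline{R(B')}$ forces $B'Lu=0$. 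Hence $\mathbf{JL}w=0$, and a trivial induction (apply this rank-$2$ argument to $w':=(\mathbf{JL})^{k-2}w$) extends to chains of any length $k$. Thus $e^{t\mathbf{JL}}|_{E^{c}}$ reduces to $e^{tT_{3}}$ on $\mathbf{X}_{3}$ plus the identity on $\ker L\oplus\ker A$, giving $|e^{t\mathbf{JL}}|_{E^{c}}|\leq M$. The hardest part will be the kernel identification used in (i), which relies on the explicit construction of the spaces $\mathbf{X}_{j}$ inside the proof of Theorem~\ref{T:main} and requires careful bookkeeping of the non-degeneracy of the blocks of $\mathbf{L}_{E^{c}}$.
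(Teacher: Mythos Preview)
Your treatment of (ii) matches the paper exactly. The issue lies in (i) and (iii), where your ``Jordan chain'' reasoning does not control the growth of $e^{t\mathbf{JL}}|_{E^c}$ because of the $\mathbf{X}_3$ block.

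\textbf{The gap in (i).} Ruling out vectors with $(\mathbf{JL})^3 w_0\neq 0$ and $(\mathbf{JL})^4 w_0=0$ does \emph{not} imply an $O(t^2)$ bound here. In the block form of Theorem~\ref{T:main}~iv) there are two length-three coupling paths ending in $\ker A$: namely $\mathbf{X}_5\to\mathbf{X}_2\to\mathbf{X}_1\to\ker A$ \emph{and} $\mathbf{X}_5\to\mathbf{X}_3\to\mathbf{X}_1\to\ker A$. The second path passes through $\mathbf{X}_3$, where the anti-self-adjoint $T_3$ has $\ker T_3=\{0\}$ but need not have bounded inverse; hence for $w_0\in\mathbf{X}_5$ with $T_{35}w_0\neq 0$ one has $(\mathbf{JL})^k w_0\neq 0$ for \emph{every} $k$ (no finite Jordan chain), yet solving the triangular ODE gives $w_3(t)=\int_0^t e^{(t-s)T_3}T_{35}w_0\,ds=O(|t|)$ and then $w_1=O(t^2)$, $w_{\ker A}=O(|t|^3)$. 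So $O(|t|^3)$ growth can occur with no length-$4$ Jordan chain at all, and your contradiction argument never engages. The paper instead shows (via the remarks at the end of Steps~3--4 in the proof of Theorem~\ref{T:main}) that assumption~(i) forces $\mathbf{X}_{0A}=\{0\}$, so the entire $\ker A$ row/column disappears from the block form and \emph{both} length-three paths are cut simultaneously.

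\textbf{The gap in (iii).} Your computation $\ker(\mathbf{JL})^2=\ker(\mathbf{JL})$ is correct, but the conclusion ``$e^{t\mathbf{JL}}|_{E^c}$ reduces to $e^{tT_3}$ on $\mathbf{X}_3$ plus the identity elsewhere'' does not follow from it: even after $\mathbf{X}_1=\mathbf{X}_5=\{0\}$, the couplings $T_{X2},T_{X3},T_{Y3},T_{Y4}$ into $\ker L$ and $\ker A$ survive and, by the same mechanism as above, can produce $O(|t|)$ growth. The paper avoids this entirely by discarding the seven-block decomposition and instead splitting $X=\ker(B'L)\oplus\overline{R(B)}$ and $Y=\ker(BA)\oplus\overline{R(B')}$ directly (Lemma~12.2 of \cite{lin-zeng-hamiltonian} applies because of the non-degeneracy hypotheses). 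This makes $\mathbf{JL}$ block-\emph{diagonal} rather than block-triangular, with $\mathbf{JL}=0$ on the first block; on the second block $\tilde{\mathbf{L}}$ is non-degenerate, so it is uniformly positive on the center part and energy conservation gives the uniform bound.
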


\begin{remark}
Motivated by the second order equation \eqref{2nd order eqn-v}, when
$L|_{\overline{R\left(  BA\right)  }}$ has a negative mode, it is tempting to
find the most unstable eigenvalue $\lambda_{0}>0\ $of
(\ref{hamiltonian-separated}) satisfying $B^{\prime}LBAv=-\lambda_{0}^{2}v$ by
solving the variational problem
\begin{equation}
-\lambda_{0}^{2}=\min_{\left\langle Av,v\right\rangle =1,v\in D\left(
A\right)  }\left\langle B^{\prime}LBAv,Av\right\rangle .\label{variational}%
\end{equation}
However, in many applications particularly to kinetic models such as
Vlasov-Maxwell and Vlasov-Einstein systems, it is difficult to solve the
variational problem (\ref{variational}) directly due to the lack of
compactness. In Theorem \ref{T:main}, the existence of unstable eigenvalues
follows from the self-adjointness of the operator $B^{\prime}LBA$ and the
assumption $n^{-}\left(  L\right)  <\infty$.
\end{remark}

The proof of Theorem \ref{T:main} will be split into several lemmas and
propositions. We start with a general functional analysis result which might
be of independent interest.

\begin{proposition}
\label{prop-self-adjoint -1} Let $X,Y$ be Hilbert spaces, $L:X\rightarrow X$
is a bounded self-adjoint operator, and $A\colon Y\supset D(A)\rightarrow X$
is a densely defined and closed operator. In addition, assume that:

1). The adjoint operator $A^{\ast}:X\supset D(A^{\ast})\rightarrow Y$ is also
densely defined.

2). $\exists$ $\delta>0$ and a closed subspace $X_{+}\subset X$ such that
$(Lx,x)\geq\delta\left\Vert x\right\Vert ^{2}$, $\forall x\in X_{+}$ and
$X_{+}^{\perp}\subset D(A^{\ast})$.

Then: i) the operator $A^{\ast}LA$ is self-adjoint on $Y$ with domain
$D\left(  A^{\ast}LA\right)  \subset D\left(  A\right)  $.

ii) Denote $Z$ to be the space $D\left(  A\right)  $ equipped with the graph
norm
\[
\left\Vert y\right\Vert _{Z}=\left\Vert y\right\Vert _{Y}+\left\Vert
Ay\right\Vert _{X}.
\]
If the embedding $Z\hookrightarrow Y$ is compact, then the spectra of
$A^{\ast}LA$ are purely discrete, and have no accumulating point except for
$+\infty$. Moreover, the eigenfunctions of $A^{\ast}LA$ form a basis of $Y$.
\end{proposition}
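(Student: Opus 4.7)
The plan is to realize $A^{\ast}LA$ as the self-adjoint operator associated, via Kato's first representation theorem, to the symmetric sesquilinear form
\[
q(y,z)=\langle LAy,Az\rangle_X,\qquad y,z\in D(A),
\]
once $q$ is shown to be densely defined, closed, and lower semi-bounded on $Y$ with form domain equal to $D(A)$ equipped with the graph norm.

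The first step extracts the content of hypothesis (2) as a bound: $P_{-}A$ extends to a bounded operator from $Y$ to $X$, where $P_{\pm}$ denote the orthogonal projections of $X$ onto $X_{+}$ and $X_{+}^{\perp}$. Since $P_{-}X\subset X_{+}^{\perp}\subset D(A^{\ast})$ and $A^{\ast}$ is closed, $A^{\ast}P_{-}\colon X\to Y$ is everywhere defined and closed, hence bounded by the closed graph theorem. Dualizing, for $y\in D(A)$,
\[
\|P_{-}Ay\|_X=\sup_{\|x\|_X\le 1}\bigl|\langle Ay,P_{-}x\rangle_X\bigr|=\sup_{\|x\|_X\le 1}\bigl|\langle y,A^{\ast}P_{-}x\rangle_Y\bigr|\le \|A^{\ast}P_{-}\|\,\|y\|_Y.
\]

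Next I would estimate $q$ from below. Splitting $Ay=P_{+}Ay+P_{-}Ay$, using $\langle LP_{+}Ay,P_{+}Ay\rangle\ge\delta\|P_{+}Ay\|^2$, the boundedness of $L$, and Young's inequality yields $q(y,y)\ge\tfrac{\delta}{2}\|P_{+}Ay\|^2-C_0\|y\|_Y^2$. Combined with $\|Ay\|^2=\|P_{+}Ay\|^2+\|P_{-}Ay\|^2$ and the bound $\|P_{-}Ay\|\le C_1\|y\|_Y$ from the previous step, this produces constants $c,C,C'>0$ with
\[
c\bigl(\|y\|_Y^2+\|Ay\|_X^2\bigr)\le q(y,y)+C\|y\|_Y^2\le C'\bigl(\|y\|_Y^2+\|Ay\|_X^2\bigr),
\]
so the form $q+C\|\cdot\|_Y^2$ is equivalent to the graph inner product on $D(A)$. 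Since $A$ is closed, $Z=(D(A),\|\cdot\|_Z)$ is a Hilbert space and $D(A)$ is dense in $Y$ by assumption, so $q$ is a densely defined, symmetric, closed, semi-bounded form. Kato's first representation theorem then yields a unique self-adjoint $T$ on $Y$ with $D(T)\subset D(A)$ and $\langle Ty,z\rangle_Y=q(y,z)$ for $y\in D(T),z\in D(A)$. Unravelling definitions identifies $T$ with $A^{\ast}LA$: the condition $y\in D(T)$ says that $z\mapsto\langle LAy,Az\rangle_X$ is $\|\cdot\|_Y$-continuous on $D(A)$, which by the definition of $A^{\ast}$ means $LAy\in D(A^{\ast})$, i.e.\ $y\in D(A^{\ast}LA)$, with $Ty=A^{\ast}LAy$. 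This proves (i).

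For (ii), choose $C$ so that $T+CI\ge 1$; then $(T+CI)^{-1}$ is a bounded self-adjoint operator on $Y$ whose range lies in $D(A)$ with the form norm $\sqrt{q(\cdot,\cdot)+C\|\cdot\|_Y^2}$ equivalent to $\|\cdot\|_Z$. The compact embedding $Z\hookrightarrow Y$ therefore makes $(T+CI)^{-1}\colon Y\to Y$ compact, and the spectral theorem for compact self-adjoint operators yields an orthonormal basis of eigenvectors with eigenvalues accumulating only at $0$; translating back gives the claimed discreteness of the spectrum of $A^{\ast}LA$, with $+\infty$ as the only possible accumulation point, and a basis of eigenfunctions. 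The main technical point is the equivalence in the third step between the shifted form and the graph norm, which is precisely where the hypothesis $X_{+}^{\perp}\subset D(A^{\ast})$ is essential: it tames the possibly sign-indefinite contribution $\langle LP_{-}Ay,\cdot\rangle$ so that the form fits into Kato's framework.
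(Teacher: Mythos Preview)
Your proof is correct and complete, but it follows a genuinely different route from the paper's. The paper proceeds operator-theoretically: it takes the $L$-orthogonal decomposition $X=X_{+}\oplus X_{1}$, uses the hypothesis $X_{+}^{\perp}\subset D(A^{\ast})$ (together with the closed graph theorem, exactly as you do) to see that $P_{1}A$ is bounded, then writes $A^{\ast}LA=T_{+}^{\ast}T_{+}-B_{1}$ with $T_{+}=S_{+}P_{+}A$ closed (where $S_{+}^{2}=P_{+}^{\ast}LP_{+}$) and $B_{1}$ bounded symmetric, so that self-adjointness follows from von Neumann's theorem and Kato--Rellich. For part (ii) the paper shows the graph norms of $A$ and $T_{+}$ are equivalent and cites a result from Edmunds--Evans on discreteness of spectra.

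Your approach via the sesquilinear form $q(y,z)=\langle LAy,Az\rangle$ and Kato's first representation theorem is more streamlined: the single estimate giving equivalence of $q+C\|\cdot\|_Y^{2}$ with the graph norm simultaneously delivers closedness and semi-boundedness of the form, the identification of the Friedrichs-type operator with $A^{\ast}LA$, and (after one more line) compactness of the resolvent. The paper's decomposition $A^{\ast}LA=T_{+}^{\ast}T_{+}-B_{1}$, on the other hand, makes the structure ``nonnegative plus bounded'' completely explicit, which is convenient when one later wants to track negative directions. Both arguments hinge on the same use of $X_{+}^{\perp}\subset D(A^{\ast})$ to bound the ``bad'' component of $A$, so the core idea is shared; only the packaging (form methods versus Kato--Rellich) differs.
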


\begin{proof}
Let
\[
X_{1}=\{x\in X\mid\langle Lx,x^{\prime}\rangle=0,\ \forall x^{\prime}\in
X_{+}\}.
\]
The uniform positivity of $L$ on $X_{+}$ and Lemma 12.2 in
\cite{lin-zeng-hamiltonian} imply
\[
X=X_{+}\oplus X_{1},\quad P_{1}^{\ast}LP_{+}=P_{+}^{\ast}LP_{1}=0,
\]
where $P_{+,1}$ are the associated projections. Therefore,
\[
L=P_{+}^{\ast}LP_{+}+P_{1}^{\ast}LP_{1}\triangleq L_{+}-L_{1}%
\]
with symmetric bounded $L_{+,1}$ and $L_{+}\geq0$. Since $R(P_{1}^{\ast
})=X_{+}^{\perp}\subset D(A^{\ast})$, the Closed Graph Theorem implies that
$A^{\ast}P_{1}^{\ast}$ is bounded. Therefore, $P_{1}A$ has a continuous
extension $(A^{\ast}P_{1}^{\ast})^{\ast}=(P_{1}A)^{\ast\ast}$, i.e. $P_{1}A$
is bounded. Thus $P_{+}A$ is closed and densely defined. Let $S_{+}%
:X\rightarrow X$ be a bounded symmetric linear operator such that
\[
S_{+}^{\ast}S_{+}=S_{+}^{2}=L_{+},\quad S_{+}\geq0.
\]
Moreover, for any $x\in X_{+}$,
\[
\left\Vert S_{+}x\right\Vert _{X}^{2}=(L_{+}x,x)=(Lx,x)\geq\delta\left\Vert
x\right\Vert _{X}^{2},
\]
which implies that%
\begin{equation}
\left\Vert S_{+}x\right\Vert _{X}\geq\sqrt{\delta}\left\Vert x\right\Vert
_{X},\;\forall x\in X_{+}. \label{positivity-S+}%
\end{equation}
This lower bound of $S_{+}$ implies that $T_{+}\triangleq S_{+}P_{+}A$ is also
closed with the dense domain $D(T_{+})=D(A)$ and thus $T_{+}^{\ast}T_{+}$ is
self-adjoint. We note that
\begin{align}
A^{\ast}LA=  &  A^{\ast}P_{+}^{\ast}L_{+}P_{+}A-A^{\ast}P_{1}^{\ast}L_{1}%
P_{1}A\label{decomp-A*LA}\\
=  &  (A^{\ast}P_{+}^{\ast}S_{+})(S_{+}P_{+}A)-A^{\ast}P_{1}^{\ast}L_{1}%
P_{1}A\triangleq T_{+}^{\ast}T_{+}-B_{1}.\nonumber
\end{align}
Here, $B_{1}$ is bounded and symmetric. Therefore, by Kato-Rellich Theorem
$A^{\ast}LA$ is self-adjoint with
\[
D\left(  A^{\ast}LA\right)  \subset D(T_{+})=D(A).
\]

By Theorem 4.2.9 in \cite{edmunds-evans}, to prove conclusions in ii), it
suffices to show that the embedding $Z_{1}\hookrightarrow Y$ is compact. Here,
the space $Z_{1}$ is $D(T_{+})=D\left(  A\right)  $ with the graph norm
\[
\left\Vert y\right\Vert _{Z_{1}}=\left\Vert y\right\Vert _{Y}+\left\Vert
T_{+}y\right\Vert _{X}.
\]
We show that $\left\Vert \cdot\right\Vert _{Z_{1}}$ and $\left\Vert
\cdot\right\Vert _{Z}$ are equivalent. Indeed, since $A$ and $T_{+}$ are
closed with the same domain, $A: Z_{1} \to X$ and $T_{+}: Z \to X$ are also
apparently closed and thus bounded, which immediately implies the equivalence
of $\left\Vert \cdot\right\Vert _{Z_{1}}$ and $\left\Vert \cdot\right\Vert
_{Z}$.

\end{proof}

In the above Proposition, we can allow $n^{-}\left(  L\right)  =\infty$, but
the condition $X_{+}^{\perp}\subset D(A^{\ast})$ need to be verified. The next
lemma shows that this condition is implied by our assumptions
(\textbf{G1-4}).

\begin{lemma}
\label{lemma-self-adjoint-2} Suppose the operators $L,B,$ $B^{\prime},A$
satisfy assumptions (\textbf{G1-4}). Then:

i) $\tilde L=AB^{\prime}LBA: Y \supset D(\tilde L) \to Y^{*}$ and $\tilde A=
LBAB^{\prime}L: X \supset D(\tilde A) \to X^{*}$ are self-dual, namely $\tilde
L^{\prime}=\tilde L$ and $\tilde A^{\prime}= \tilde A$.

ii) In addition to (\textbf{G1-4}), assume $\ker A=\{0\}$, then
$\mathbb{\tilde{L}}=B^{\prime}LBA$ is self-adjoint on $(Y,\left[  \cdot
,\cdot\right]  )$ with the equivalent inner product $\left[  \cdot
,\cdot\right]  =\left\langle A\cdot,\cdot\right\rangle $.

iii) Denote $Z$ to be the space $D\left(  BA\right)  $ with the graph norm
\[
\left\Vert y\right\Vert _{Z}=\left\Vert y\right\Vert _{Y}+\left\Vert
BAy\right\Vert _{X}.
\]
If the embedding $Z\hookrightarrow Y$ is compact and $\ker A =\{0\}$, then the
spectra of$\ \mathbb{\tilde{L}}$ are purely discrete with finite multiplicity,
and have no accumulating point except for $+\infty$. Moreover, the
eigenfunctions of $\mathbb{\tilde{L}}$ form a basis of $Y$.
\end{lemma}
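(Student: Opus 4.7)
My plan is to treat part (i) by a formal dualization and deduce parts (ii) and (iii) by applying Proposition \ref{prop-self-adjoint -1} after reequipping $Y$ with the equivalent inner product $[u,v]:=\langle Au,v\rangle$. For (i), using the identities $(BA)'=AB'$ and $(B'L)'=LB$ from \eqref{E:closed-1} together with the closedness of $BA$ and $B'L$, I would write $\tilde{L} = (BA)'\,L\,(BA)$ and $\tilde{A} = (B'L)'\,A\,(B'L)$. Dualizing term-by-term and invoking $L'=L$, $A'=A$, $(BA)''=BA$, $(B'L)''=B'L$ then yields $\tilde{L}'=\tilde{L}$ and $\tilde{A}'=\tilde{A}$ as operator identities with matching domains determined by the composition.

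For (ii), the hypothesis $\ker A=\{0\}$ together with (G2) gives $Y=Y_+$, so $[\cdot,\cdot]$ is an equivalent inner product on $Y$. Setting $\mathcal{A} := BA: D(BA)\subset Y \to X$ (closed and densely defined by \eqref{E:closed-1}) and $L_* := I_X L$ for the bounded self-adjoint realization of $L$ on $X$, a direct pairing computation---using self-duality of $A$ and the defining relation $\langle B'x^*, y^*\rangle = \langle x^*, By^*\rangle$---shows that the Hilbert-space adjoint of $\mathcal{A}$ relative to the $[\cdot,\cdot]$-inner product on $Y$ and the original on $X$ is $\mathcal{A}^{\#} = B' I_X^{-1}$ with domain $I_X D(B')$, so that $\mathcal{A}^{\#} L_* \mathcal{A} = B'LBA = \tilde{\mathbb{L}}$. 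I would then invoke Proposition \ref{prop-self-adjoint -1}(i) with the pair $(L_*, \mathcal{A})$, taking the Proposition's ``$X_+$'' to be the spectral positive subspace of $L_*$ (a legitimate choice of $X_+$ for (G3)); density of $D(\mathcal{A}^{\#})$ is immediate from density of $D(B')$. The hard part is verifying the orthogonality condition $X_+^{\perp}\subset I_X D(B')$, which via Riesz becomes the annihilator statement $\ker(i_{X_+})' \subset D(B')$. Here (G4) directly supplies $\ker(i_{X_+\oplus X_-})' \subset D(B')$, and the remaining discrepancy corresponds to the finite-dimensional subspace associated to $X_-$ (since $\dim X_- = n^{-}(L)<\infty$); this is absorbed inside the proof of Proposition \ref{prop-self-adjoint -1} by the splitting $L_* = (L_*)_+ - (L_*)_-$ into its positive and finite-rank negative spectral parts, where the latter contributes only a bounded symmetric correction to $\mathcal{A}^{\#} L_* \mathcal{A}$ and therefore preserves self-adjointness by Kato--Rellich.

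Part (iii) then follows immediately from Proposition \ref{prop-self-adjoint -1}(ii): the space $Z=D(BA)$ with its graph norm coincides with the ``$Z$'' in that Proposition for $\mathcal{A}=BA$, so compactness of $Z\hookrightarrow Y$ forces $\tilde{\mathbb{L}}$ to have purely discrete spectrum with finite multiplicity, accumulating only at $+\infty$, with eigenfunctions forming an orthonormal basis of $(Y,[\cdot,\cdot])$---hence a basis of $Y$ in the original topology, since the two inner products are equivalent.
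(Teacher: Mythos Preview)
Your treatment of (i) by ``formal dualization'' does not go through: for unbounded $S$ and bounded $M$, the inclusion $(S'MS)'\supset S'M'S$ is automatic, but the reverse inclusion of domains is exactly the nontrivial content. Writing $\tilde L=(BA)'L(BA)$ and citing $(BA)''=BA$, $L'=L$ only shows that $\tilde L$ is symmetric, not that $D(\tilde L')=D(\tilde L)$. The paper does not attempt this route at all; it obtains (i) as a consequence of Proposition~\ref{prop-self-adjoint -1} applied to $\mathbb A=BA$ and $L_1=I_XL$ (with the original inner product on $Y$), which yields that $I_Y\,AB'LBA$ is genuinely self-adjoint and hence $\tilde L'=\tilde L$.

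The more serious gap is in your verification of hypothesis~2) of Proposition~\ref{prop-self-adjoint -1}. You correctly identify that (\textbf{G4}) only gives $\ker(i_{X_+\oplus X_-})'\subset D(B')$, whereas what is needed is (after Riesz) $X_+^{\perp}\subset D(\mathbb A^{*})$. Your claim that the discrepancy is ``absorbed inside the proof'' of the Proposition is not correct: in that proof the boundedness of the correction term $B_1=A^{*}P_1^{*}L_1P_1A$ relies precisely on $R(P_1^{*})=X_+^{\perp}\subset D(A^{*})$, so that $A^{*}P_1^{*}$ (and hence $P_1A$) is bounded by the Closed Graph Theorem. Without that inclusion, the finite-rank negative part $(L_*)_-$ does \emph{not} automatically yield a bounded perturbation $\mathcal A^{\#}(L_*)_-\mathcal A$, because there is no reason the negative spectral subspace of $L_*$ lies in $I_XD(B')$. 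The paper's key maneuver, which you are missing, is to \emph{perturb} $X_+$ to a nearby closed subspace $\tilde X_+\subset X_+\oplus X_-$: one writes $X_+^{\perp}=(X_+\oplus X_-)^{\perp}\oplus W$ with $\dim W=\dim X_-<\infty$, approximates $W$ by a subspace $\tilde W\subset D(\mathbb A^{*})\cap(X_+\oplus X_-)$ of the same dimension (using density of $D(\mathbb A^{*})$), and sets $\tilde X_+=\tilde W^{\perp}\cap(X_+\oplus X_-)$. Then $\tilde X_+^{\perp}=(X_+\oplus X_-)^{\perp}\oplus\tilde W\subset D(\mathbb A^{*})$ by (\textbf{G4}), $L$ remains uniformly positive on $\tilde X_+$, and Proposition~\ref{prop-self-adjoint -1} applies cleanly. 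Once (i) is established this way, your derivations of (ii) and (iii) are correct and match the paper's.
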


\begin{proof}
Recall that $I_{X}:X^{\ast}\ \rightarrow X,$ $I_{Y}:Y^{\ast}\rightarrow
Y\ $are isomorphisms defined by the Riesz representation theorem. Define the
operators
\[
\mathbb{A=}BA:Y\supset D(\mathbb{A})\rightarrow X,\ \ \ L_{1}=I_{X}%
L:X\rightarrow X.
\]
The adjoint operators are
\[
L_{1}^{\ast}=L_{1},\quad\mathbb{A}^{\ast}=I_{Y}AB^{\prime}I_{X}^{-1}.
\]
According to \eqref{E:closed-1}, $\mathbb{A}^{\ast}$ is densely defined and closed.

Since $(X_{+}\oplus X_{-})^{\perp}\subset X_{+}^{\perp}$ is a closed subspace
of codimension equal to $\dim X_{-}<\infty$, we have
\[
\dim W
=\dim X_{-}<\infty,
\]
where
\[
W=X_{+}^{\perp}\cap(X_{+}\oplus X_{-}),\quad X_{+}^{\perp}=W\oplus(X_{+}\oplus
X_{-})^{\perp}.
\]
Assumption (\textbf{G4}) implies that $D(\mathbb{A}^{\ast})\cap(X_{+}\oplus
X_{-})$ is dense in $X_{+}\oplus X_{-}$. Approximate $W$ by $\tilde{W}\subset
D(\mathbb{A}^{\ast})\cap(X_{+}\oplus X_{-})$ such that $\dim W=\dim\tilde{W}$,
which is possible since $\dim W<\infty$. Let
\[
\tilde{X}_{+}=\{x\in X_{+}\oplus X_{-}\mid(x,y)=0,\ \forall y\in\tilde{W}\}.
\]
The quadratic form $
\left\langle L\cdot,\cdot\right\rangle $ is
uniformly positive definite on the approximation $\tilde{X}_{+}$ of $X_{+}$
and
\[
\tilde{X}_{+}^{\perp}=(X_{+}\oplus X_{-})^{\perp}\oplus\tilde{W}\subset
D(\mathbb{A}^{\ast}).
\]
Therefore, all conditions in Proposition \ref{prop-self-adjoint -1} are
satisfied by $\tilde{X}_{+}$, $L_{1}$, and $\mathbb{A}$ and thus
$\mathbb{A}^{\ast}L_{1}\mathbb{A}=I_{Y}AB^{\prime}LBA$ are self-adjoint. This
implies that $\tilde{L}=AB^{\prime}LBA$ satisfies $\tilde{L}^{\prime}%
=\tilde{L}$. It follows from the same argument that $\tilde{A}^{\prime}%
=\tilde{A}$.

Statement ii) and iii) are direct corollaries of i) and
Proposition \ref{prop-self-adjoint -1}.
\end{proof}

We shall start the proof of Theorem \ref{T:main} with several steps of
decomposition of $X$ and $Y$.

\begin{lemma}
\label{L:decom-1} Assume (\textbf{G1-4}). Suppose $X_{1,2}$ are closed
subspaces of $X$ and $Y_{1,2}$ are closed subspaces of $Y$ such that
$X=X_{1}\oplus X_{2}$, $Y=Y_{1}\oplus Y_{2}$. Let $P_{1,2}:X\rightarrow
X_{12,}$ and $Q_{1,2}:Y\rightarrow Y_{1,2}$ be the associated projections and,
for $j,k=1,2$,
\[
L_{j}=(i_{X_{j}})^{\prime}Li_{X_{j}},\quad A_{j}=(i_{Y_{j}})^{\prime}%
Ai_{Y_{j}},\quad B_{jk}=P_{j}BQ_{k}^{\prime jk}=Q_{j}B^{\prime}P_{k}^{\prime
},
\]%
\[
\mathbf{X}_{1}=X_{1}\times Y_{1},\quad\mathbf{L}_{1}=%
\begin{pmatrix}
L_{1} & 0\\
0 & A_{1}%
\end{pmatrix}
,\quad\mathbf{J}_{1}=%
\begin{pmatrix}
0 & B_{11}\\
-B^{11} & 0
\end{pmatrix}
,
\]%
\[
\mathbf{X}_{2}=X_{2}\times Y_{2},\quad\mathbf{L}_{2}=%
\begin{pmatrix}
L_{2} & 0\\
0 & A_{2}%
\end{pmatrix}
,\quad\mathbf{J}_{2}=%
\begin{pmatrix}
0 & B_{22}\\
-B^{22} & 0
\end{pmatrix}
.
\]
In addition, we assume
\[
\langle Lx_{1},x_{2}\rangle=0,\;\forall x_{1}\in X_{1},\,x_{2}\in X_{2}%
;\quad\langle Ay_{1},y_{2}\rangle=0,\;\forall y_{1}\in Y_{1},\,y_{2}\in
Y_{2};
\]

\[
P_{1}^{\prime}(X_{1}^{\ast})\subset D(B^{\prime}),\quad\;Q_{1}^{\prime}%
(Y_{1}^{\ast})\subset D(B).
\]

Then we have

\begin{enumerate}
\item In this decomposition, $\mathbf{J}\mathbf{L}$ takes the form
\[
\mathbf{J}\mathbf{L}\longleftrightarrow%
\begin{pmatrix}
\mathbf{J}_{1}\mathbf{L}_{1} & \mathbf{T}_{12}\\
\mathbf{T}_{21} & \mathbf{J}_{2}\mathbf{L}_{2}%
\end{pmatrix}
,
\]
where
\[
\mathbf{T}_{12}=%
\begin{pmatrix}
0 & B_{12}A_{2}\\
-B^{12}L_{2} & 0
\end{pmatrix}
,\quad\mathbf{T}_{21}=%
\begin{pmatrix}
0 & B_{21}A_{1}\\
-B^{21}L_{1} & 0
\end{pmatrix}
.
\]

\item We have that $B_{22}$ and $B^{22}$ are densely defined closed operators
and $B_{jk}$ and $B^{jk}$, $(j,k)\neq(2,2)$, and thus $\mathbf{T}_{12}$,
$\mathbf{T}_{21}$, and $\mathbf{J}_{1}\mathbf{L}_{1}$ are all bounded. Here,
we abuse the notations slightly in using $B_{jk}$ and $B^{jk}$, for
$(j,k)\neq(2,2)$, to also denote their continuous extensions.

\item $B^{jk} = B_{kj}^{\prime}$ for all $j, k=1,2$.

\item $(L_{1}, A_{1}, B_{11})$ and $(L_{2}, A_{2}, B_{22})$ satisfy
(\textbf{G1-4}) and
\[
n^{-}(L) = n^{-}(L_{1}) + n^{-}(L_{2}), \; \ker L = \ker L_{1} \oplus\ker
L_{2}, \; \ker A = \ker A_{1} \oplus\ker A_{2}.
\]

\end{enumerate}
\end{lemma}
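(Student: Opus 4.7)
The plan is to verify the four items in order, exploiting that the $L$- and $A$-orthogonality assumptions make the self-dual operators $L$ and $A$ block-diagonal with respect to the splittings $X=X_1\oplus X_2$ and $Y=Y_1\oplus Y_2$. Identifying $X^*$ with $P_1'(X_1^*)\oplus P_2'(X_2^*)$ and similarly for $Y^*$, one may write $B$ and $B'$ as $2\times 2$ block matrices whose entries are precisely the $B_{jk}$ and $B^{jk}$; a direct matrix calculation then yields the stated form of $\mathbf{J}\mathbf{L}$, with off-diagonal blocks $\mathbf{T}_{12}$ and $\mathbf{T}_{21}$ as claimed, settling part 1.

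For parts 2 and 3, the hypotheses $P_1'(X_1^*)\subset D(B')$ and $Q_1'(Y_1^*)\subset D(B)$ together with the closedness of $B$ and $B'$ yield, via the Closed Graph Theorem, that $B'P_1'$ and $BQ_1'$ are bounded; pre- and post-composition with the bounded $Q_j$ and $P_j$ gives boundedness of $B^{11},B^{21},B_{11},B_{21}$. The operators $B_{22}=P_2 B Q_2'$ and $B^{22}=Q_2 B' P_2'$ are closed as compositions of a closed operator with bounded ones, and are densely defined by a splitting argument: given $g\in Y_2^*$, approximate $Q_2'g$ by $f_n\in D(B)$ and decompose $f_n=Q_1'h_{1,n}+Q_2'h_{2,n}$; since $Q_1'h_{1,n}\in D(B)$ by hypothesis, also $Q_2'h_{2,n}=f_n-Q_1'h_{1,n}\in D(B)$, so $h_{2,n}\in D(B_{22})$ with $h_{2,n}\to g$ in $Y_2^*$; the mirror argument gives density of $D(B^{22})$. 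Part 3, $B^{jk}=(B_{kj})'$, then follows from the standard identity $(P_k T Q_j')'=Q_j T' P_k'$ for closed densely defined $T$ pre- and post-composed with bounded maps, using the densities just established. In particular, $B_{12}$ and $B^{12}$ are bounded because they agree with the duals $(B^{21})'$ and $(B_{21})'$ of the bounded operators above, which is the ``continuous extension'' to which the lemma refers.

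For part 4, block-diagonality of $L$ and $A$ gives $\ker L=\ker L_1\oplus\ker L_2$, $\ker A=\ker A_1\oplus\ker A_2$, and $n^-(L)=n^-(L_1)+n^-(L_2)$. I would construct compatible splittings $X_\pm=X_{1,\pm}\oplus X_{2,\pm}$ and $Y_+=Y_{1,+}\oplus Y_{2,+}$; since $A$ is block-diagonal and its induced quadratic form is equivalent to the quotient norm on $Y/\ker A$, $A_j$ is uniformly positive on $K_j^{\perp_{Y_j}}$, which verifies (\textbf{G2}) for each sub-triple, and the same reasoning gives (\textbf{G3}). The key step is (\textbf{G4}) on the sub-triple $(L_j,A_j,B_{jj})$: if $g\in X_j^*$ annihilates $X_{j,+}\oplus X_{j,-}$, then the lifted functional $P_j'g\in X^*$ vanishes on $X_{3-j}$ by construction of $P_j'$ and on $X_{j,+}\oplus X_{j,-}$ by the hypothesis on $g$, hence on all of $X_+\oplus X_-$; so $P_j'g\in\ker(i_{X_+\oplus X_-})'\subset D(B')$ by (\textbf{G4}) for the full system, meaning $g\in D(B^{jj})$, and the mirror argument on the $Y$ side completes the verification.

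The hard part will be the careful bookkeeping of dualities: the projections $P_j,Q_j$ act on the primal spaces while their duals $P_j',Q_j'$ play the role of inclusions $X_j^*\hookrightarrow X^*$ and $Y_j^*\hookrightarrow Y^*$. Keeping this distinction straight, and correctly interpreting the ``abuse of notation'' whereby the notationally partially-defined $B_{12},B^{12}$ really denote their bounded extensions arising as duals of the bounded $B^{21},B_{21}$, is where errors are most likely to creep in.
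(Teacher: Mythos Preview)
Your approach for parts 1--3 is essentially the paper's: the Closed Graph Theorem gives boundedness of $B'P_1'$ and $BQ_1'$, hence of all blocks touching the index $1$, and the block form of $\mathbf{J}\mathbf{L}$ follows by direct computation from the $L$- and $A$-orthogonality. One small point: for the $(2,2)$ duality $B^{22}=(B_{22})'$, the ``standard identity'' $(P_kTQ_j')'=Q_jT'P_k'$ does not hold on the nose for unbounded $T$---one generally only gets $\supset$. The paper closes this by first showing $P_2B=(P_2B)''=(B'P_2')'$ (using that $P_2B$ is closed and densely defined, which you do establish), then computing $(B^{22})'=(Q_2B'P_2')'=(B'P_2')'Q_2'=P_2BQ_2'=B_{22}$, and finally $B^{22}=(B^{22})''=B_{22}'$. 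Your density argument is the right ingredient, but the chain of equalities needs to be routed through $P_2B$ rather than asserted directly.

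The genuine gap is in part 4, specifically your verification of (\textbf{G4}) for $(L_2,A_2,B_{22})$. You write that you would ``construct compatible splittings $X_\pm=X_{1,\pm}\oplus X_{2,\pm}$'' and then argue that $P_j'g$ annihilates $X_{3-j}$ and $X_{j,+}\oplus X_{j,-}$, ``hence all of $X_+\oplus X_-$''. But the \emph{given} $X_\pm$ from (\textbf{G3}) need not split along $X_1\oplus X_2$; a two-dimensional example with $L=\mathrm{diag}(1,0)$, $X_1=\mathbf{R}\times\{0\}$, $X_2=\{0\}\times\mathbf{R}$, and $X_+=\mathrm{span}\{(1,1)\}$ already shows this fails. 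If instead you mean to \emph{redefine} $X_\pm$ as $X_{1,\pm}\oplus X_{2,\pm}$, then (\textbf{G4}) for the full system---which is stated for the original $X_\pm$---no longer applies to your new annihilator, and the implication ``$P_j'g\in\ker(i_{X_+\oplus X_-})'\subset D(B')$'' is not justified. One needs an intermediate step showing that (\textbf{G4}) is insensitive to the particular choice of complement of $\ker L$ satisfying (\textbf{G3}); the paper does not prove this here either, but instead invokes Lemma~12.3 of \cite{lin-zeng-hamiltonian} for both (\textbf{G2})--(\textbf{G3}) on the pieces and (\textbf{G4}) for $(L_2,A_2,B_{22})$, noting separately that (\textbf{G4}) for $(L_1,A_1,B_{11})$ is automatic since $B_{11}$ is bounded.
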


\begin{proof}
The assumptions $P_{1}^{\prime}(X_{1}^{\ast})\subset D(B^{\prime})$ and
$Q_{1}^{\prime}(Y_{1}^{\ast})\subset D(B)$ imply that $B^{\prime}P_{1}%
^{\prime}$ and $BQ_{1}^{\prime}$ are closed operators defined on Hilbert
spaces $X_{1}^{\ast}$ and $Y_{1}^{\ast}$. The Closed Graph Theorem yields that
$B^{\prime}P_{1}^{\prime}$ and $BQ_{1}^{\prime}$ are bounded operators.
Therefore, $P_{1}B$ and $Q_{1}B^{\prime}$ are also both bounded as they have
continuous extensions $(P_{1}B)^{\prime\prime}=(B^{\prime}P_{1}^{\prime
})^{\prime}$ and $(Q_{1}B^{\prime})^{\prime\prime}=(BQ_{1}^{\prime})^{\prime}%
$. Consequently the second statement, as well as the closedness of $P_{2}B$
and $B^{\prime}P_{2}^{\prime}$ with dense domains, follows.

For $(j,k)\neq(2,2)$, it is easy to verify $B^{jk}=B_{jk}^{\prime}$ as they
are compositions of bounded operators. To show $B^{22}=B_{22}$, we notice that
the closedness and the density of the domains of $P_{2}B$ and $B^{\prime}%
P_{2}^{\prime}=(P_{2}B)^{\prime}$ imply
\begin{align*}
P_{2}B  &  =(P_{2}B)^{\prime\prime}=(B^{\prime}P_{2}^{\prime})^{\prime},\ \\
\left(  B^{22}\right)  ^{\prime}  &  =(Q_{2}B^{\prime}P_{2}^{\prime})^{\prime
}=(B^{\prime}P_{2}^{\prime})^{\prime}Q_{2}^{\prime}=P_{2}BQ_{2}^{\prime
}=B_{22}.
\end{align*}
The closedness of $B_{22}$ and $B^{22}$ again yields $B^{22}=(B^{22}%
)^{\prime\prime}=B_{22}^{\prime}$. It completes the proof of the third statement.

The $L$-orthogonality of the splitting $X_{1}\oplus X_{2}$ and the
$A$-orthogonality of $Y=Y_{1}\oplus Y_{2}$ yield block diagonal forms of $L$
and $A$ in these splittings. The block form of $\mathbf{J}\mathbf{L}$ follows
from straightforward calculations.

It has been proved in the above that $B_{11}$ and $B_{22}$ satisfy
(\textbf{G1}), while (\textbf{G2}) for $A_{1}$ and $A_{2}$ and (\textbf{G3})
for $L_{1}$ and $L_{2}$ are proved in Lemma 12.3 in
\cite{lin-zeng-hamiltonian}. Apparently (\textbf{G4}) is satisfied by
$(L_{1},A_{1},B_{11})$ as $B_{11}$ is a bounded operator. Finally,
(\textbf{G4}) for $(L_{2},A_{2},B_{22})$ also follows directly from the proof
of Lemma 12.3 in \cite{lin-zeng-hamiltonian}.
\end{proof}

\begin{remark}
Even though the framework in \cite{lin-zeng-hamiltonian} is slightly
different, those properties of $J$ and $L$ used in the proof of Lemma 12.3
therein are all satisfied by $L_{2}$, $A_{2}$, and $B_{22}$ here. Therefore,
the same proof works to show that (\textbf{G4}) is satisfied by $(L_{2}%
,A_{2},B_{22})$.
\end{remark}

The following three lemmas focus on decomposing a subspace of the center subspace.

\begin{lemma}
\label{L:decom-X} Assume (\textbf{G1--3}) and that $L$ is non-degenerate (in
the sense of Remark \ref{R:non-deg}). Let $\tilde{X}\subset X$ be a closed
subspace such that $\ker(i_{\tilde{X}})^{\prime}\subset D(B^{\prime})$, then
there exist closed subspaces $X_{j}$, $j=1,2,3,4$, such that
\begin{align*}
&  \tilde{X}=X_{1}\oplus X_{2},\;\;\tilde{X}^{\perp_{L}}\triangleq\{x\in
X\mid\langle Lx,\tilde{x}\rangle=0,\,\forall\tilde{x}\in\tilde{X}%
\}=X_{1}\oplus X_{3},\\
&  X=\oplus_{j=1}^{4}X_{j},\quad n_{1}\triangleq\dim X_{1}=\dim X_{4}\leq
n^{-}(L).
\end{align*}
Moreover, let $P_{j}$, $j=1,2,3,4$, be the associated projections and it
holds
\[
P_{1}^{\prime}(X_{1}^{\ast})\oplus P_{3}^{\prime}(X_{3}^{\ast})\oplus
P_{4}^{\prime}(X_{4}^{\ast})=\ker(i_{X_{2}})^{\prime}\subset D(B^{\prime}).
\]
In this decomposition, the quadratic form $L$ takes the block form
\[
L\longleftrightarrow%
\begin{pmatrix}
0 & 0 & 0 & L_{14}\\
0 & L_{2} & 0 & 0\\
0 & 0 & L_{3} & 0\\
L_{41} & 0 & 0 & 0
\end{pmatrix}
,\quad L_{jk}=(i_{X_{j}}^{\prime})Li_{X_{k}}:X_{k}\rightarrow X_{j}^{\ast
},\quad L_{j}=L_{jj},
\]
with $L_{14}=L_{41}^{\prime}$, $L_{2}$, and $L_{3}$ all non-degenerate.

\end{lemma}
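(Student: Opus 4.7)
The plan is to construct $X_1, X_2, X_3, X_4$ so as to simultaneously realize the desired $L$-block structure and the operator-theoretic condition $\ker(i_{X_2})' \subset D(B')$.

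First, the choice of $X_1$ is forced: I would set $X_1 = \tilde{X} \cap \tilde{X}^{\perp_L}$, the $L$-radical of $L|_{\tilde{X}}$. Since $X_1$ is $L$-isotropic and $L|_{X_+}$ is uniformly positive with $\dim X_- = n^-(L) < \infty$, a standard projection-onto-$X_-$ argument gives $\dim X_1 \leq n^-(L)$. The non-degeneracy of $L$ (i.e.\ $L : X \to X^*$ is an isomorphism) gives the duality $(\tilde{X}^{\perp_L})^{\perp_L} = \tilde{X}$, so the $L$-radical of $\tilde{X}^{\perp_L}$ is also $X_1$; this is what will make complements of $X_1$ in $\tilde{X}$ and in $\tilde{X}^{\perp_L}$ carry non-degenerate restrictions of $L$.

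To produce $X_2, X_3, X_4$ compatibly with the $D(B')$-condition, I would use density of $D(B')$ in $X^*$ to pick functionals $\xi_1, \ldots, \xi_{n_1} \in D(B')$ whose restrictions to $X_1$ are dual to a chosen basis $e_1, \ldots, e_{n_1}$ of $X_1$, and set $Z := \bigcap_j \ker \xi_j$, so $X = X_1 \oplus Z$. Then put $X_2 = \tilde{X} \cap Z$ and $X_3 = \tilde{X}^{\perp_L} \cap Z$, which are closed complements of $X_1$ in $\tilde{X}$ and $\tilde{X}^{\perp_L}$ respectively, with $L|_{X_2}, L|_{X_3}$ non-degenerate and $L(X_2, X_3) = 0$. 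Hence $W := X_2 \oplus X_3$ carries a non-degenerate $L$-form, so $X = W \oplus W^{\perp_L}$ by the splitting lemma for non-degenerate symmetric forms (Lemma~12.2 of \cite{lin-zeng-hamiltonian}). A short argument then identifies $W^{\perp_L}$ as a $2n_1$-dimensional space with non-degenerate $L$-form in which $X_1$ sits as a maximal isotropic subspace.

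For $X_4$, I would start from an isotropic Witt complement $\mathrm{span}(g_1, \ldots, g_{n_1})$ of $X_1$ in $W^{\perp_L}$, normalized by $\langle Lg_l, e_k\rangle = \delta_{lk}$, and translate it into $Z$ by setting $X_4 := \mathrm{span}\bigl\{g_l - \sum_k \xi_k(g_l)\, e_k : l = 1, \ldots, n_1\bigr\}$. By construction $X_4 \subset W^{\perp_L} \cap Z$, and a direct expansion using $\langle Lg_l, e_k\rangle = \delta_{lk}$ and the isotropy of $X_1, \mathrm{span}(g_l)$ shows that $L|_{X_4} \equiv 0$ if and only if the matrix $\bigl(\xi_j(g_l)\bigr)_{j,l}$ is antisymmetric. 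Since the restriction map $D(B') \to (X_1 \oplus \mathrm{span}(g_l))^*$ has dense, hence full, image in this finite-dimensional target, one may choose the $\xi_j \in D(B')$ so that both $\xi_j(e_k) = \delta_{jk}$ and this antisymmetry hold simultaneously. This yields $X = X_1 \oplus X_2 \oplus X_3 \oplus X_4$ with the claimed block form of $L$, and $\ker(i_{X_2})' = \mathrm{span}(\xi_j) + \ker(i_{\tilde{X}})' \subset D(B')$, because $P_3, P_4$ annihilate $\tilde{X}$ and $\ker(i_{\tilde{X}})' \subset D(B')$ by hypothesis.

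The main obstacle is the simultaneous satisfaction of $X_4 \subset Z$ (needed for $P_1'X_1^* \subset D(B')$), $X_4 \subset W^{\perp_L}$ (needed for $L_{24} = L_{34} = 0$), and $L|_{X_4} = 0$ (needed for $L_{44} = 0$). The device that reconciles the three is the Witt/Lagrangian parametrization of isotropic complements in the auxiliary non-degenerate space $W^{\perp_L}$, combined with the extra $D(B')$-flexibility used to enforce the antisymmetry $\xi_j(g_l) + \xi_l(g_j) = 0$.
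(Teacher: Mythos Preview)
Your construction of $X_1 = \tilde X \cap \tilde X^{\perp_L}$ and of $X_2, X_3$ as the joint kernels (inside $\tilde X$ and $\tilde X^{\perp_L}$) of functionals $\xi_j \in D(B')$ dual to a basis of $X_1$ is exactly what the paper does (with $\xi_j$ written $f_j$ there), and your final verification that $\ker(i_{X_2})' = \mathrm{span}(\xi_j) + \ker(i_{\tilde X})' \subset D(B')$ is correct once the decomposition is in place.

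The gap is in your construction of $X_4$. You first fix the $\xi_j$, build $Z$, $W = X_2 \oplus X_3$, $W^{\perp_L}$, and an isotropic Witt complement $\mathrm{span}(g_l)$ of $X_1$ inside $W^{\perp_L}$; then you observe that the translated generators $g_l - \sum_k \xi_k(g_l) e_k$ lie in $W^{\perp_L} \cap Z$ but span an isotropic space only if the matrix $(\xi_j(g_l))$ is antisymmetric. At this point you propose to \emph{re-choose} the $\xi_j \in D(B')$ to force that antisymmetry. But the $g_l$ live in $W^{\perp_L}$, and $W^{\perp_L}$ depends on $X_2, X_3$, which depend on $Z$, which depends on the $\xi_j$: changing the $\xi_j$ changes $W^{\perp_L}$ and hence the admissible $g_l$. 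So the argument is circular. Nor can you fix it by adjusting the $g_l$ instead: once the $\xi_j$ are fixed, $W^{\perp_L} \cap Z$ is exactly $n_1$-dimensional (it meets the $n_1$-dimensional $X_1$ trivially inside the $2n_1$-dimensional $W^{\perp_L}$), so the translated $g_l$'s already span all of it and there is no further freedom to enforce isotropy.

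The paper sidesteps this by constructing $X_4$ directly from $L^{-1}$: with $x_k$ the basis of $X_1$ dual to $(i_{X_1})'f_j$, it sets
\[
X_4 = \mathrm{span}\Bigl\{\, L^{-1} f_j - \tfrac12 \sum_k \langle f_j, L^{-1} f_k\rangle\, x_k \;:\; j=1,\dots,n_1 \Bigr\}.
\]
Here $\langle L(L^{-1}f_j), x_k\rangle = \delta_{jk}$ gives the non-degenerate $L_{14}$, the correction by elements of $X_1$ kills $L_{44}$ automatically because $\langle f_j, L^{-1} f_k\rangle$ is symmetric in $j,k$, and $L_{24} = L_{34} = 0$ since $\langle L(L^{-1}f_j), x\rangle = \langle f_j, x\rangle = 0$ for $x \in X_2 \cup X_3$. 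No second pass over the $f_j$ is needed; the single density-based choice of $f_j \in D(B')$ dual to $X_1$ suffices.
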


As stated in Remark \ref{remark-G4}, assumption (\textbf{G4}) holds for
non-degenerate $L$.

\begin{proof}
Let
\[
X_{1}=\tilde{X}\cap\tilde{X}^{\perp_{L}}=\left(  \tilde{X}+\tilde{X}%
^{\perp_{L}}\right)  ^{\perp_{L}},
\]
where the non-degeneracy of $L$ was used in the second equality. Since
$\langle Lx,x\rangle=0$ for all $x\in X_{1}\subset X$,
\[
n_{1}=\dim X_{1}=codim\left(  \tilde{X}+\tilde{X}^{\perp_{L}}\right)  \leq
n^{-}(L)
\]
is a direct consequence of the non-degeneracy assumption of $L$ and Theorem
5.1 in \cite{lin-zeng-hamiltonian}. According to the density of $D\left(
B^{\prime}\right)  $, there exist $f_{j}\in D(B^{\prime})$, $j=1,\ldots,n_{1}%
$, such that $(i_{X_{1}})^{\prime}f_{j}\in X_{1}^{\ast}$, $j=1,\ldots,n_{1}$,
form a basis of $X_{1}^{\ast}$. Let $x_{j}\in X_{1}$, $j=1,\ldots,n_{1}$, be
the basis of $X_{1}$ dual to $\{(i_{X_{1}})^{\prime}f_{j}\}_{j=1}^{n_{1}}$,
namely, $\langle f_{j},x_{k}\rangle=\delta_{jk}$. Let
\[
X_{4}=span\{L^{-1}f_{j}-\frac{1}{2}\sum_{k=1}^{n_{1}}\langle f_{j},L^{-1}%
f_{k}\rangle x_{k},\,j=1,\ldots,n_{1}\}.
\]
It is easy to verify that
\[
\dim X_{4}=n_{1}\quad\langle Lx,\tilde{x}\rangle=0,\,\forall x,\tilde{x}\in
X_{4},
\]
and $L_{14}=L_{41}^{\prime}$ is non-degenerate. Let
\[
X_{2}=\{x\in\tilde{X}\mid\langle f_{j},x\rangle=0,\,j=1,\ldots,n_{1}\},
\]
and
\[
X_{3}=\{x\in\tilde{X}^{\perp_{L}}\mid\langle f_{j},x\rangle=0,\,j=1,\ldots
,n_{1}\}.
\]
The direct sum relations and the block form of $L$ stated in the lemma follow
straightforwardly. The non-degeneracy of $L$ and the definitions of $X_{2}$
and $X_{3}$ imply that $L_{X_{2}}$ and $L_{X_{3}}$ (as defined in
\eqref{E:L_X_1}) are injective. Therefore, Lemma 12.2 in
\cite{lin-zeng-hamiltonian} yields the non-degeneracy of $L_{2}=L_{X_{2}}$ and
$L_{3}=L_{X_{3}}$.
Finally, observing%
\begin{equation}
L(X_{1})=P_{4}^{\prime}(X_{4}^{\ast})\subset P_{3}^{\prime}(X_{3}^{\ast
})\oplus P_{4}^{\prime}(X_{4}^{\ast})=\ker(i_{\tilde{X}})^{\prime}\subset
D(B^{\prime}) \label{E:ker-B'}%
\end{equation}
and
\[
P_{1}^{\prime}(X_{1}^{\ast})\oplus P_{4}^{\prime}(X_{4}^{\ast})=\ker
(i_{X_{2}\oplus X_{3}})^{\prime}=span\{f_{1},\ldots,f_{n_{1}}\}+L(X_{1}%
)\subset D(B^{\prime}),
\]
the proof of the lemma is complete.
\end{proof}


\begin{lemma}
\label{L:decom-Y} In addition to (\textbf{G1-4}), assume $\ker A=\{0\}$ and
$n^{-}(L|_{\overline{R\left(  B\right)  }})=0$, the latter of which implies
$L|_{\overline{R\left(  B\right)  }}\geq0$ and $A\geq\delta>0$.
Let $Y_{1}=\ker\tilde{\mathbb{L}}$ and
\[
Y_{2}=Y_{1}^{\perp_{A}}=\{y\in Y\mid\langle Ay,\tilde{y}\rangle=0,\,\forall
\tilde{y}\in Y_{1}\},
\]
where $\tilde{\mathbb{L}}=B^{\prime}LBA$ is defined as in Lemma
\ref{lemma-self-adjoint-2}. Then it holds
\[
Y_{1}=\ker(L_{\overline{R(B)}}BA),\ Y_{2}=\overline{R(\tilde{\mathbb{L}}%
)}=\overline{R(B^{\prime}L_{\overline{R(B)}})},\quad Y=Y_{1}\oplus Y_{2}.
\]
In this decomposition, the quadratic form $A$ takes the block form
\[
A\longleftrightarrow%
\begin{pmatrix}
A_{1} & 0\\
0 & A_{2}%
\end{pmatrix}
,\quad A_{j}=(i_{Y_{j}})^{\prime}Ai_{Y_{j}}:Y_{j}\rightarrow Y_{j}^{\ast}.
\]

\end{lemma}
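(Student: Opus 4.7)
The plan is to exploit the self-adjointness of $\tilde{\mathbb{L}} = B'LBA$ on $(Y, [\cdot, \cdot])$ provided by Lemma \ref{lemma-self-adjoint-2}(ii) and then read off the decomposition as the standard orthogonal splitting into kernel and closed range. First I would observe that the hypotheses supply the required uniform positivity: (\textbf{G2}) together with $\ker A = \{0\}$ forces $Y = Y_+$ and hence $\langle Au, u\rangle \ge \delta \|u\|_Y^2$, so $[u,v] := \langle Au,v\rangle$ is an inner product equivalent to $\|\cdot\|_Y$; similarly (\textbf{G3}) together with $n^-(L|_{\overline{R(B)}}) = 0$ gives $L|_{\overline{R(B)}} \ge 0$. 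Under these hypotheses Lemma \ref{lemma-self-adjoint-2}(ii) makes $\tilde{\mathbb{L}}$ self-adjoint on $(Y,[\cdot,\cdot])$.

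The classical orthogonal decomposition for a self-adjoint operator on a Hilbert space then yields $Y = \ker \tilde{\mathbb{L}} \oplus \overline{R(\tilde{\mathbb{L}})}$, where the orthogonality is in $[\cdot,\cdot]$ and thus coincides with $\perp_A$. This immediately produces the direct sum $Y = Y_1 \oplus Y_2$ with $Y_2 = Y_1^{\perp_A} = \overline{R(\tilde{\mathbb{L}})}$; the block diagonal form of $A$ is automatic from the $\perp_A$-orthogonality of $Y_1$ and $Y_2$. For the identification $Y_1 = \ker(L_{\overline{R(B)}} BA)$, I would use the annihilator identity $\ker B' = (\overline{R(B)})^\circ$ (valid since $B$ is densely defined and closed by (\textbf{G1})): $\tilde{\mathbb{L}} y = 0$ iff $LBAy \in \ker B'$ iff $\langle LBAy, z\rangle = 0$ for all $z \in \overline{R(B)}$, which is precisely $L_{\overline{R(B)}}(BAy) = 0$.

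The remaining identification $\overline{R(\tilde{\mathbb{L}})} = \overline{R(B'L_{\overline{R(B)}})}$ I would establish by comparing orthogonal complements in $(Y,[\cdot,\cdot])$. Since $A \ge \delta > 0$ is a bijection $Y \to Y^*$ by Lax--Milgram, one has $R(BA) = R(B) \subset \overline{R(B)}$, giving the easy inclusion $R(\tilde{\mathbb{L}}) \subset R(B'L_{\overline{R(B)}})$. For the reverse, given $y \in Y_1 = \ker\tilde{\mathbb{L}}$ (so automatically $y \in D(BA)$ and $L_{\overline{R(B)}}BAy = 0$) and any $w \in \overline{R(B)}$ with $Lw \in D(B')$, one computes $\langle Ay, B'Lw\rangle = \langle BAy, Lw\rangle = \langle LBAy, w\rangle = 0$, showing $Y_1 \subset (R(B'L_{\overline{R(B)}}))^{\perp_A}$ and hence $\overline{R(B'L_{\overline{R(B)}})} \subset Y_1^{\perp_A} = \overline{R(\tilde{\mathbb{L}})}$. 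This orthogonality detour is the main substantive point, since it sidesteps the fact that $B'$ is not continuous and so one cannot directly pass closures inside $B'L$; everything else is bookkeeping around Lemma \ref{lemma-self-adjoint-2}(ii).
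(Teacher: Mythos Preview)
Your proposal is correct and follows essentially the same route as the paper: invoke Lemma~\ref{lemma-self-adjoint-2}(ii) to get self-adjointness of $\tilde{\mathbb{L}}$ on $(Y,[\cdot,\cdot])$, take the standard kernel/range splitting, and verify the two alternative descriptions of $Y_1$ and $Y_2$ by pairing arguments. The only noteworthy difference is in the identification $Y_1=\ker(L_{\overline{R(B)}}BA)$: the paper argues that $y\in Y_1$ gives $\langle L_{\overline{R(B)}}BAy,BAy\rangle=[\tilde{\mathbb{L}}y,y]=0$ and then invokes the positivity $L|_{\overline{R(B)}}\ge0$ together with Cauchy--Schwarz to conclude $L_{\overline{R(B)}}BAy=0$, whereas you bypass positivity entirely via the annihilator identity $\ker B'=(\overline{R(B)})^{\circ}$, which directly gives $B'LBAy=0\iff L_{\overline{R(B)}}BAy=0$. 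Your version is slightly cleaner for this step since it does not appeal to the sign hypothesis on $L$; otherwise the two proofs are the same.
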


Here $L_{\overline{R(B)}}: \overline{R(B)} \to\left(  \overline{R(B)}\right)
^{*}$ is defined as in \eqref{E:L_X_1}. In the following, we also view $B$ as
a closed operator from $Y^{*}$ to $\overline{R(B)}$.

\begin{proof}
Observing that $Y_{2}$ is defined as the orthogonal complement of $Y_{1}$ in
$Y_{A}$ and $\tilde{\mathbb{L}}$ is self-adjoint on $Y_{A}$, it follows
immediately that $Y_{2}=\overline{R(\tilde{\mathbb{L}})}$ and $Y=Y_{1}\oplus
Y_{2}$. We shall show the remaining alternative representations of $Y_{1}$ and
$Y_{2}$ in the rest of the proof.

On the one hand, since
\begin{equation}
B=i_{\overline{R(B)}}B\,\text{ and }\,B^{\prime}=B^{\prime}(i_{\overline
{R(B)}})^{\prime}\,\Longrightarrow\tilde{\mathbb{L}}=B^{\prime}L_{\overline
{R(B)}}BA, \label{E:BFL}%
\end{equation}
clearly $\ker(L_{\overline{R(B)}}BA)\subset Y_{1}$ according to their
definitions. On the other hand, each $y\in Y_{1}$ satisfies
\[
\langle L_{\overline{R(B)}}BAy,BAy\rangle=[\tilde{\mathbb{L}}y,y]=0.
\]
Due to the assumption $L|_{\overline{R\left(  B\right)  }}\geq0$, a standard
variational argument implies $L_{\overline{R(B)}}BAy=0$ and thus $y\in
\ker(L_{\overline{R(B)}}BA)$. We obtain $Y_{1}=\ker(L_{\overline{R(B)}}BA)$.

For any $x\in D(B^{\prime}L_{\overline{R(B)}})$ and $y\in Y_{1}=\ker
(L_{\overline{R(B)}}BA)$, we have
\[
\lbrack B^{\prime}L_{\overline{R(B)}}x,y]=\langle Ay,B^{\prime}L_{\overline
{R(B)}}x\rangle=\langle L_{\overline{R(B)}}BAy,x\rangle=0,
\]
which along with the closedness of $Y_{2}$, implies $\overline{R(B^{\prime
}L_{\overline{R(B)}})}\subset Y_{2}$. Obviously, $R(\tilde{\mathbb{L}})\subset
R(B^{\prime}L_{\overline{R(B)}})$ and thus $Y_{2}\subset\overline{R(B^{\prime
}L_{\overline{R(B)}})}$. Therefore, the equal sign holds and this completes
the proof of the lemma.
\end{proof}

Applying the above lemmas (Lemma \ref{L:decom-X} to $\tilde{X}=\overline
{R(B)}$), we obtain the following decomposition.

\begin{proposition}
\label{P:block-1} In addition to (\textbf{G1-4}), assume a.) $L$ is
non-degenerate, b.) $n^{-}(L_{\overline{R\left(  B\right)  }})=0$, and c.)
$A\geq\delta>0$. Let
\begin{equation}%
\begin{cases}
\mathbf{X}_{1}=X_{1}\times\{0\},\quad\mathbf{X}_{2}=\{0\}\times Y_{1}%
,\quad\mathbf{X}_{3}=X_{2}\times Y_{2},\\
\mathbf{X}_{4}=X_{3}\times\{0\},\quad\mathbf{X}_{5}=X_{4}\times\{0\}
\end{cases}
\label{E:splitting-1}%
\end{equation}
as defined in Lemmas \ref{L:decom-X} and \ref{L:decom-Y}. Then in the
decomposition $\mathbf{X}=\oplus_{j=1}^{5}\mathbf{X}_{j}$, $\mathbf{J}%
\mathbf{L}$ and the quadratic form $\mathbf{L}$ take the form
\[
\mathbf{L}\longleftrightarrow%
\begin{pmatrix}
0 & 0 & 0 & 0 & \mathbf{L}_{15}\\
0 & \mathbf{L}_{2} & 0 & 0 & 0\\
0 & 0 & \mathbf{L}_{3} & 0 & 0\\
0 & 0 & 0 & \mathbf{L}_{4} & 0\\
\mathbf{L}_{51} & 0 & 0 & 0 & 0
\end{pmatrix}
,\quad\mathbf{J}\mathbf{L}\longleftrightarrow%
\begin{pmatrix}
0 & T_{12} & T_{13} & 0 & 0\\
0 & 0 & 0 & 0 & T_{25}\\
0 & 0 & T_{3} & 0 & T_{35}\\
0 & 0 & 0 & 0 & 0\\
0 & 0 & 0 & 0 & 0
\end{pmatrix}
.
\]
All the non-trivial blocks of $\mathbf{L}$ are non-degenerate,
\[
\mathbf{L}_{15}=L_{14},\quad\mathbf{L}_{51}=L_{41},\quad\mathbf{L}_{2}%
=A_{1}\geq\delta,\quad\mathbf{L}_{3}=%
\begin{pmatrix}
L_{2} & 0\\
0 & A_{2}%
\end{pmatrix}
\geq\epsilon,\quad\mathbf{L}_{4}=L_{3},
\]
for some $\epsilon>0$. All the blocks of $\mathbf{J}\mathbf{L}$
\begin{align*}
&  T_{12}=P_{1}BQ_{1}^{\prime}A_{1},\quad T_{13}%
\begin{pmatrix}
x\\
y
\end{pmatrix}
=P_{1}BQ_{2}^{\prime}A_{2}y,\quad T_{25}=-Q_{1}B^{\prime}P_{1}^{\prime}%
L_{14},\\
&  T_{35}=%
\begin{pmatrix}
0\\
-Q_{2}B^{\prime}P_{1}^{\prime}L_{14}%
\end{pmatrix}
,\quad T_{3}=%
\begin{pmatrix}
0 & P_{2}BQ_{2}^{\prime}A_{2}\\
-Q_{2}B^{\prime}P_{2}^{\prime}L_{2} & 0
\end{pmatrix}
,\quad\ker T_{3}=\{0\},
\end{align*}
are bounded except $T_{3}$ is anti-self-adjoint with respect to the equivalent
inner product $\langle\mathbf{L}_{3}\cdot,\cdot\rangle$. Here $P_{1,2,3,4}$
and $Q_{1,2}$ are the projections associated to the decomposition of $X$ and
$Y$ given in Lemma \ref{L:decom-X} and \ref{L:decom-Y}. Finally, denote $Z$ to
be the space $D\left(  BA\right)  $ with the graph norm
\[
\left\Vert y\right\Vert _{Z}=\left\Vert y\right\Vert _{Y}+\left\Vert
BAy\right\Vert _{X}.
\]
If the embedding $Z\hookrightarrow Y$ is compact, then the spectra of $T_{3}$
are nonzero, isolated with finite multiplicity, and have no accumulating point
except for $+\infty$. Moreover, the eigenfunctions of $T_{3}$ form an
orthonormal basis of $\mathbf{X}_{3}$ with respect to $\langle\mathbf{L}%
_{3}\cdot,\cdot\rangle$.
\end{proposition}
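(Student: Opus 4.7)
The plan is to synthesize Lemma~\ref{L:decom-X} (applied to $\tilde{X}=\overline{R(B)}$) and Lemma~\ref{L:decom-Y} to obtain the five-piece splitting $\mathbf{X}=\oplus_{j=1}^{5}\mathbf{X}_{j}$, then read off the block forms of $\mathbf{L}$ and $\mathbf{J}\mathbf{L}$ from that decomposition. The domain hypothesis of Lemma~\ref{L:decom-X} holds because $\ker(i_{\overline{R(B)}})^{\prime}=\ker B^{\prime}\subset D(B^{\prime})$ by (\textbf{G1}); this produces $X=\oplus_{j=1}^{4}X_{j}$ together with the block form of $L$ and the identifications $\overline{R(B)}=X_{1}\oplus X_{2}$ and $\overline{R(B)}^{\perp_{L}}=X_{1}\oplus X_{3}$. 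Lemma~\ref{L:decom-Y} then applies directly and yields $Y=Y_{1}\oplus Y_{2}$ with block-diagonal $A$, where $Y_{1}=\ker\tilde{\mathbb{L}}$ and $Y_{2}=\overline{R(\tilde{\mathbb{L}})}$ for $\tilde{\mathbb{L}}=B^{\prime}LBA$.

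The block form of $\mathbf{L}$ is then bookkeeping: $\mathbf{X}_{1},\mathbf{X}_{4},\mathbf{X}_{5}$ lie in $X\times\{0\}$, $\mathbf{X}_{2}$ in $\{0\}\times Y$, while $\mathbf{X}_{3}=X_{2}\times Y_{2}$ mixes the two, so the diagonal structure of $\mathbf{L}$ on $X\oplus Y$ together with the block forms in the two lemmas forces the claimed pattern. Non-degeneracy of $L_{14},L_{2},L_{3}$ comes from Lemma~\ref{L:decom-X}, and $\mathbf{L}_{2}=A_{1}\geq\delta$ from $A\geq\delta$. For $\mathbf{L}_{3}\geq\epsilon$ I combine $A_{2}\geq\delta$ with the observation that $L_{2}\geq0$ (since $X_{2}\subset\overline{R(B)}$ and $L|_{\overline{R(B)}}\geq0$) and is non-degenerate, then invoke the spectral gap of $L$ at $0$ provided by the hypothesis that $L$ is non-degenerate and $n^{-}(L)<\infty$.

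For $\mathbf{J}\mathbf{L}(u,v)=(BAv,-B^{\prime}Lu)$, the vanishing blocks all stem from two identities. First, for $x\in X_{1}$ or $x\in X_{3}$ (both lying in $\overline{R(B)}^{\perp_{L}}$), $\langle Lx,B\psi\rangle=0$ for every $\psi\in D(B)$, so $B^{\prime}Lx=0$ once one checks $Lx\in D(B^{\prime})$ from Lemma~\ref{L:decom-X}; this kills the columns of $\mathbf{J}\mathbf{L}$ corresponding to $\mathbf{X}_{1}$ and $\mathbf{X}_{4}$ entirely. Second, for $v\in Y_{1}=\ker(L_{\overline{R(B)}}BA)$, $BAv$ lies in $\overline{R(B)}=X_{1}\oplus X_{2}$, and $L_{\overline{R(B)}}BAv=0$ combined with non-degeneracy of $L_{2}$ forces $P_{2}BAv=0$, so $BAv\in X_{1}$ and the column from $\mathbf{X}_{2}$ contributes only the $T_{12}$ block. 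The remaining entries follow by direct projection: on $\mathbf{X}_{3}$, $BAv$ splits into $P_{1}BAv$ (giving $T_{13}$) and $P_{2}BAv$ (the $X_{2}$-row of $T_{3}$), while for $x\in X_{2}$ the dual identity $\langle Av^{\prime},B^{\prime}Lx\rangle=\langle L_{\overline{R(B)}}BAv^{\prime},x\rangle$ shows $Q_{1}B^{\prime}Lx=0$ for all $v^{\prime}\in Y_{1}$, leaving only the $Y_{2}$-row of $T_{3}$. On $\mathbf{X}_{5}=X_{4}\times\{0\}$, the block form of $L$ gives $Lx=P_{1}^{\prime}L_{14}x\in D(B^{\prime})$, and splitting $-B^{\prime}Lx$ along $Y_{1}\oplus Y_{2}$ produces $T_{25}$ and $T_{35}$. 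The zero rows for $\mathbf{X}_{4},\mathbf{X}_{5}$ follow from $R(B)\subset\overline{R(B)}=X_{1}\oplus X_{2}$, which forces $P_{3}BAv=P_{4}BAv=0$.

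Finally, writing $T_{3}$ as $\mathbf{J}_{3}\mathbf{L}_{3}$ with the off-diagonal anti-self-dual $\mathbf{J}_{3}$ on $\mathbf{X}_{3}$ (using $B^{22}=(B_{22})^{\prime}$ from Lemma~\ref{L:decom-1}), the identity $\langle\mathbf{L}_{3}T_{3}z,z^{\prime}\rangle+\langle\mathbf{L}_{3}z,T_{3}z^{\prime}\rangle=0$ is a one-line duality calculation, giving anti-self-adjointness with respect to $\langle\mathbf{L}_{3}\cdot,\cdot\rangle$. The kernel is trivial since $T_{3}^{2}$ on the $Y_{2}$-factor equals $-\tilde{\mathbb{L}}|_{Y_{2}}$ and $\tilde{\mathbb{L}}|_{Y_{2}}$ is injective by construction of $Y_{2}=\overline{R(\tilde{\mathbb{L}})}$. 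Under the compact embedding $Z\hookrightarrow Y$, Lemma~\ref{lemma-self-adjoint-2}(iii) gives discrete spectrum of $\tilde{\mathbb{L}}$ of finite multiplicity accumulating only at $+\infty$, hence the spectrum of $T_{3}$ is purely imaginary, discrete, of finite multiplicity, accumulating only at $\pm i\infty$, with eigenvectors forming a $\langle\mathbf{L}_{3}\cdot,\cdot\rangle$-orthonormal basis of $\mathbf{X}_{3}$. The main obstacle I anticipate is the careful bookkeeping of domains and continuous extensions of the partial $B,B^{\prime}$ pieces (in particular verifying $Lx\in D(B^{\prime})$ for $x$ in each $X_{j}$), which is routine given (\textbf{G4}), Lemma~\ref{L:decom-X}, and closedness of $B^{\prime}$ via the Closed Graph Theorem, but must be tracked with care at every step.
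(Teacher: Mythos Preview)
Your approach is essentially the same as the paper's: apply Lemma~\ref{L:decom-X} to $\tilde X=\overline{R(B)}$, combine with Lemma~\ref{L:decom-Y}, and read off the blocks. The paper organizes the $\mathbf{J}\mathbf{L}$ computation by first writing the block form of $B'$ explicitly (using $\ker B'=P_3'(X_3^*)\oplus P_4'(X_4^*)$ and $B'P_2'(X_2^*)\subset Y_2$), whereas you argue block by block via duality identities; these are equivalent.

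One small gap: your $\ker T_3$ argument via $T_3^2|_{Y_2}=-\tilde{\mathbb{L}}|_{Y_2}$ only kills the $Y_2$-component of a kernel element. You still need $Q_2B'P_2'L_2x=0\Rightarrow x=0$ for $x\in X_2$. This follows because $B'P_2'(X_2^*)\subset Y_2$ (so $Q_2B'P_2'=B'P_2'$ there), $\ker B'\cap P_2'(X_2^*)=\{0\}$, and $L_2$ is an isomorphism. The paper handles both components of $\ker T_3$ directly by such injectivity and density arguments rather than through $T_3^2$. For the spectral part, the paper first checks that the graph-norm compactness passes from $D(BA)\hookrightarrow Y$ to $D(P_2BQ_2'A_2)\hookrightarrow Y_2$ (since $P_1BA$ is bounded) before invoking Lemma~\ref{lemma-self-adjoint-2}; you should include this reduction step as well.
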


\begin{remark}
\label{R:notation-1}
One should notice that $P_{1}$ in $T_{12}$ and $Q_{2}$ in the lower left entry
of $T_{3}$ are put there only to specify the target spaces, but do not change
any values.
\end{remark}

\begin{proof}
Since Lemma \ref{L:decom-X} and \ref{L:decom-Y} imply
\begin{equation}
P_{3}^{\prime}(X_{3}^{\ast})\oplus P_{4}^{\prime}(X_{4}^{\ast})=\ker
B^{\prime}\;\text{ and }\;X_{2}^{\ast}=R(L_{\overline{R\left(  B\right)  }%
})\Longrightarrow B^{\prime}P_{2}^{\prime}(X_{2}^{\ast})\subset Y_{2},
\label{E:kerB'}%
\end{equation}
in such decompositions of $X$ and $Y$,
the operator
\[
B^{\prime}:\oplus_{j=1}^{4}P_{j}^{\prime}(X_{j}^{\ast})=X^{\ast}\supset
D(B^{\prime})\rightarrow Y=Y_{1}\oplus Y_{2}%
\]
takes the form
\[
B^{\prime}\longleftrightarrow%
\begin{pmatrix}
Q_{1}B^{\prime}P_{1}^{\prime} & 0 & 0 & 0\\
Q_{2}B^{\prime}P_{1}^{\prime} & Q_{2}B^{\prime}P_{2}^{\prime} & 0 & 0
\end{pmatrix}
.
\]
The block forms of $\mathbf{L}$ and $\mathbf{J}\mathbf{L}$ follow from those
of $L$, $A$, $B^{\prime}$, and $B$ through a direct calculation. From Lemma
\ref{L:decom-X}, $L_{2}$ is non-degenerate, which along with $\overline
{R\left(  B\right)  }=X_{1}\oplus X_{2}$, $X_{1}=\ker L_{\overline{R\left(
B\right)  }}$, and the additional assumption $L_{\overline{R\left(  B\right)
}}\geq0$, we obtain the uniform positivity of $L_{2}$, and thus that of
$\mathbf{L}_{3}$.


The proof of the boundedness of $T_{jk}$ and the anti-self-adjointness of
$T_{3}$ is much as that in the proof of Lemma \ref{L:decom-1}. In fact,
according to Lemma \ref{L:decom-X}, $B^{\prime}P_{j}^{\prime}:X_{j}^{\ast
}\rightarrow Y$, $j\neq2$, is a closed operator on the domain $X_{j}^{\ast}$,
the Closed Graph Theorem implies that it is also bounded. Since $B^{\prime
}P_{j}^{\prime}=(P_{j}B)^{\prime}$, $j\neq2$, $P_{j}B$ also has a continuous
extension given by $(B^{\prime}P_{j}^{\prime})^{\prime}$, therefore $P_{j}B$,
$j\neq2$ is also bounded. The boundedness of $T_{jk}$, the closedness and the
density of the domains of $P_{2}B$ and $B^{\prime}P_{2}^{\prime}$ follow
immediately. Moreover $Q_{2}B^{\prime}P_{2}^{\prime}:X_{2}^{\ast}\rightarrow
Y_{2}$ is also closed since $B^{\prime}P_{2}^{\prime}(X_{2}^{\ast})\subset
Y_{2}$ and thus $Q_{2}B^{\prime}P_{2}^{\prime}=B^{\prime}P_{2}^{\prime}$.
Consequently
\[
P_{2}B=(P_{2}B)^{\prime\prime}=(B^{\prime}P_{2}^{\prime})^{\prime},\quad
(Q_{2}B^{\prime}P_{2}^{\prime})^{\prime}=(B^{\prime}P_{2}^{\prime})^{\prime
}Q_{2}^{\prime}=P_{2}BQ_{2}^{\prime},
\]
and
\[
Q_{2}B^{\prime}P_{2}^{\prime}=(Q_{2}B^{\prime}P_{2}^{\prime})^{\prime\prime
}=\left(  (B^{\prime}P_{2}^{\prime})^{\prime}Q_{2}^{\prime}\right)  ^{\prime
}=(P_{2}BQ_{2}^{\prime})^{\prime}.
\]
Since $A_{2}$ and $L_{2}$ are isomorphisms satisfying $A_{2}^{\prime}=A_{2}$
and $L_{2}^{\prime}=L_{2}$, we obtain
\[
(L_{2}P_{2}BQ_{2}^{\prime}A_{2})^{\prime}=A_{2}Q_{2}B^{\prime}P_{2}^{\prime
}L_{2}\;\text{ and }\;(A_{2}Q_{2}B^{\prime}P_{2}^{\prime}L_{2})^{\prime}%
=L_{2}P_{2}BQ_{2}^{\prime}A_{2}.
\]
Therefore, $T_{3}$ is anti-self-adjoint with respect to the equivalent inner
product $\langle\mathbf{L}_{3}\cdot,\cdot\rangle$. Finally, \eqref{E:kerB'}
imply that $\ker(Q_{2}B^{\prime}P_{2}^{\prime})=\ker(B^{\prime}P_{2}^{\prime
})=\{0\}$ and thus $Q_{2}B^{\prime}P_{2}^{\prime}L_{2}$ is injective due to
the non-degeneracy of $L_{2}$. Moreover, $R(L_{\overline{R(B)}})=P_{2}%
^{\prime}(X_{2}^{\ast})$ and $Y_{2}=\overline{R(B^{\prime}L_{\overline{R(B)}%
})}$ also yield that $R(Q_{2}B^{\prime}P_{2}^{\prime})=R(B^{\prime}%
P_{2}^{\prime})\subset Y_{2}$ is dense. Therefore, the dual operator
$P_{2}BQ_{2}^{\prime}$ is injective and the injectivity of $T_{3}$ follows.

Finally, let us make the additional assumption of the compactly embedding of
$Z$ into $Y$. Let $Z_{2}=D(P_{2}BQ_{2}^{\prime}A_{2})\subset Y_{2}$. Since
\[
(BA-P_{2}BQ_{2}^{\prime}A_{2})|_{Z_{2}}=P_{1}BA|_{Z_{2}}\in L(Y_{2},X)
\]
is bounded due to the boundedness of $P_{1}B$, we have that $Z_{2}$ is also
compactly embedded in $Y_{2}$. As $A_{2}$ is uniformly positive definite.
Lemma \ref{L:decom-1} and Lemma \ref{lemma-self-adjoint-2} imply that
$Q_{2}B^{\prime}P_{2}^{\prime}L_{2}P_{2}BQ_{2}^{\prime}A_{2}$ is self-adjoint
on $(Y_{2},\langle A_{2},\cdot,\cdot\rangle)$ with an orthonormal basis of
eigenvectors $\{y_{n}\}_{n=1}^{\infty}$ associated to a sequence of
eigenvalues $0<\lambda_{1}\leq\lambda_{2}\leq\cdots$ of finite multiplicity
accumulating only at $+\infty$. Here the eigenvalues are positive due to
$L_{2}\geq\epsilon>0$ and $\ker T_{3}=\{0\}$. Let
\[
\mathbf{u}_{n}^{\pm}=\left(  \langle L_{2}P_{2}BQ_{2}^{\prime}A_{2}y_{n}%
,P_{2}BQ_{2}^{\prime}A_{2}y_{n}\rangle+\lambda_{n}\langle Ay_{n},y_{n}%
\rangle\right)  ^{-\frac{1}{2}}(\pm P_{2}BQ_{2}^{\prime}A_{2}y_{n},\lambda
_{n}y_{n}).
\]
It is easy to see that $\{\mathbf{u}_{n}^{\pm}\}$ form an orthonormal basis of
$\mathbf{X}_{2}$ by using $\ker T_{3}=\{0\}$ and $T_{3}\mathbf{u}_{n}^{\pm
}=\mp\lambda_{n}\mathbf{u}_{n}^{\mp}$. This completes the proof of the lemma.
\end{proof}

With these preparations, we are ready to prove Theorem \ref{T:main}.

\begin{proof}
[Proof of Theorem \ref{T:main}]
We will prove the theorem largely based on Lemma \ref{lemma-self-adjoint-2}
and the observation that solutions to (\ref{hamiltonian-separated}) satisfy
a second order equation
\begin{equation}
\partial_{tt}v +B^{\prime}LBA v =0. \label{2nd order eqn-v}%
\end{equation}

$\bullet$ \textit{Step 1. Preliminary removal of $\ker L$ and $\ker A$.} Let
\[
\tilde{X}_{1}=\ker L,\quad\tilde{X}_{2}=X_{+}\oplus X_{-},\quad\tilde{Y}%
_{1}=\ker A,\quad\tilde{Y}_{2}=Y_{+}%
\]%
\[
\tilde{L}_{j}=(i_{\tilde{X}_{j}})^{\prime}Li_{\tilde{X}_{j}},\;\;\tilde{A}%
_{j}=(i_{\tilde{Y}_{j}})^{\prime}Ai_{\tilde{Y}_{j}},\;\;\tilde{B}_{jk}%
=\tilde{P}_{j}B\tilde{Q}_{k}^{\prime},\;\;\tilde{B}^{jk}=\tilde{Q}%
_{j}B^{\prime}\tilde{P}_{k}^{\prime},
\]
where $j,k=1,2$ and $\tilde{P}_{1,2}$ are projections associated to
$X=\tilde{X}_{1}\oplus\tilde{X}_{2}$ and $\tilde{Q}_{1,2}$ to $Y=\tilde{Y}%
_{1}\oplus\tilde{Y}_{2}$. Assumptions (\textbf{G1-4}) imply that hypotheses of
Lemma \ref{L:decom-1} are satisfied. Therefore, in the splitting
\[
\mathbf{X}=(\tilde{X}_{1}\oplus\tilde{Y}_{1})\oplus(\tilde{X}_{2}\oplus
\tilde{Y}_{2})
\]
the operator $\mathbf{J}\mathbf{L}$ take the form
\begin{equation}
\mathbf{J}\mathbf{L}\leftrightarrow%
\begin{pmatrix}
0 & \tilde{\mathbf{T}}_{12}\\
0 & \tilde{\mathbf{J}}_{2}\tilde{\mathbf{L}}_{2}%
\end{pmatrix}
, \label{E:JL-decom-1}%
\end{equation}
where
\[
\tilde{\mathbf{J}}_{2}\leftrightarrow%
\begin{pmatrix}
0 & \tilde{B}_{22}\\
-\tilde{B}^{22} & 0
\end{pmatrix}
,\quad\tilde{\mathbf{L}}_{2}\leftrightarrow%
\begin{pmatrix}
\tilde{L}_{2} & 0\\
0 & \tilde{A}_{2}%
\end{pmatrix}
,\quad\tilde{\mathbf{T}}_{12}\leftrightarrow%
\begin{pmatrix}
0 & \tilde{B}_{12}\tilde{A}_{2}\\
-\tilde{B}^{12}\tilde{L}_{2} & 0
\end{pmatrix}
\]
and $(\tilde{L}_{2},\tilde{J}_{2},\tilde{B}_{22})$ satisfy (\textbf{G1-4}).
Moreover, the same lemma also implies that $\tilde{T}_{12}$ is bounded and
both $\tilde{L}_{2}$ and $\tilde{A}_{2}$ are non-degenerate.

$\bullet$ \textit{Step 2. Hyperbolic subspaces.}
As $\tilde{A}_{2}\geq\epsilon$ for some $\epsilon>0$, according to Lemma
\ref{lemma-self-adjoint-2}, $\mathbb{\tilde{L}}=\tilde{B}_{22}^{\prime}%
\tilde{L}_{2}\tilde{B}_{22}\tilde{A}_{2}$ is self-adjoint on $\tilde{Y}_{2}$
with respect to the inner product $\left[  \cdot,\cdot\right]  =\left\langle
\tilde{A}_{2}\cdot,\cdot\right\rangle $. Since for any $v_{1},v_{2}\in
D(\mathbb{\tilde{L}})\subset\tilde{Y}_{2}$,
\[
\left[  \mathbb{\tilde{L}}v_{1},v_{2}\right]  =\left\langle \tilde{A}%
_{2}\tilde{B}_{22}^{\prime}\tilde{L}_{2}\tilde{B}_{22}\tilde{A}_{2}v_{1}%
,v_{2}\right\rangle =\left\langle \tilde{L}_{2}\tilde{B}_{22}\tilde{A}%
_{2}v_{1},\tilde{B}_{22}\tilde{A}_{2}v_{2}\right\rangle ,
\]
and
\[
BA=%
\begin{pmatrix}
\tilde{B}_{11} & \tilde{B}_{12}\\
\tilde{B}_{21} & \tilde{B}_{22}%
\end{pmatrix}%
\begin{pmatrix}
0 & 0\\
0 & \tilde{A}_{2}%
\end{pmatrix}
=%
\begin{pmatrix}
0 & \tilde{B}_{12}\tilde{A}_{2}\\
0 & \tilde{B}_{22}\tilde{A}_{2}%
\end{pmatrix}
\Longrightarrow R(\tilde{B}_{22}\tilde{A}_{2})=\tilde{P}_{2}\left(
R(BA)\right)  ,
\]
along with the definition of $\tilde{X}_{1}$, we obtain the dimension of the
eigenspace of negative eigenvalues of the operator $\mathbb{\tilde{L}}$ given
by
\[
n_{1}\triangleq n^{-}\left(  \mathbb{\tilde{L}}\right)  =n^{-}\left(
\tilde{L}_{2}|_{_{\overline{R\left(  \tilde{B}_{22}\tilde{A}_{2}\right)  }}%
}\right)  =n^{-}(L|_{_{\overline{R(BA)}}})\leq n^{-}(L).
\]
Let $\tilde{v}_{j}$ be the eigenvectors of $\tilde{\mathbb{L}}$ associate with
eigenvalues $-\lambda_{j}^{2}<0$, $j=1,\ldots,n_{1}$, which might be repeated,
such that
\[
\lbrack\tilde{v}_{j},\tilde{v}_{j^{\prime}}]=\delta_{jj^{\prime}},\quad
\lbrack\tilde{\mathbb{L}}\tilde{v}_{j},\tilde{v}_{j^{\prime}}]=-\lambda
_{j}^{2}\delta_{jj^{\prime}}.
\]
Let
\[
\tilde{u}_{j}=\lambda_{j}^{-1}\tilde{B}_{22}\tilde{A}_{2}\tilde{v}%
_{j},\;\;\tilde{\mathbf{u}}_{j}^{\pm}=(\tilde{u}_{j},\pm\tilde{v}%
_{j})\Longrightarrow\tilde{\mathbf{J}}_{2}\tilde{\mathbf{L}}_{2}%
\tilde{\mathbf{u}}_{j}^{\pm}=\pm\lambda_{j}\tilde{\mathbf{u}}_{j}^{\pm
},\;\;\langle\tilde{L}_{2}\tilde{u}_{j},\tilde{u}_{k}\rangle=-\delta_{jk}.
\]
To return to $\mathbf{J}\mathbf{L}$, let
\[
\mathbf{u}_{j}^{\pm}=(u_{j},\pm v_{j})\triangleq\tilde{\mathbf{u}}_{j}^{\pm
}\pm\lambda_{j}^{-1}\tilde{\mathbf{T}}_{12}\tilde{\mathbf{u}}_{j}^{\pm
}=\left(  (\tilde{u}_{j}+\lambda_{j}^{-1}\tilde{B}_{12}\tilde{A}_{2}\tilde
{v}_{j},\pm(v_{j}-\lambda_{j}^{-1}\tilde{B}^{12}\tilde{L}_{2}\tilde{u}%
_{j})\right)  ,
\]
which are the eigenvectors of $\mathbf{J}\mathbf{L}$ with eigenvalue
$\pm\lambda_{j}$ satisfying
\[
\mathbf{J}\mathbf{L}\mathbf{u}_{j}^{\pm}=\pm\lambda_{j}\mathbf{u}_{j}^{\pm
},\quad\langle Lu_{j},u_{k}\rangle=-\delta_{jk},\quad\langle Av_{j}%
,v_{k}\rangle=\delta_{jk}.
\]
Define the hyperbolic subspaces as
\[
E^{u}=span\{\mathbf{u}_{j}^{+}\mid j=1,\ldots,n_{1}\},\quad E^{s}%
=span\{\mathbf{u}_{j}^{-}\mid j=1,\ldots,n_{1}\},
\]
and statement ii) follows.

$\bullet$ \textit{Step 3. Reduction to the center subspace.} Let%
\[
X_{h}=span\left\{  u_{j}\ |\ j=1,\cdots,n_{1}\right\}  \subset R\left(
B\right)  ,\ \ X_{c}=\left\{  u\in X\ |\ \left\langle Lu,\tilde{u}%
\right\rangle ,\ \tilde{u}\in X_{h}\right\}  ,
\]%
\[
Y_{h}=span\left\{  v_{j}\ |\ j=1,\cdots,n_{1}\right\}  \subset R\left(
B^{\prime}\right)  ,\ \ Y_{c}=\left\{  v\in Y\ |\ \left\langle Av,\tilde
{v}\right\rangle ,\ \tilde{v}\in Y_{h}\right\}  ,
\]
and
\[
E^{c}=X_{c}\times Y_{c}\Longrightarrow\mathbf{X}=(X_{h}\times Y_{h})\oplus
E^{c}=E^{s}\oplus E^{u}\oplus E^{c}.
\]
Due to the invariance of $E^{s,u}$ under $e^{t\mathbf{J}\mathbf{L}}$, that of
$E^{c}$ and the rest of statements i) and (iii) follow from standard arguments
(see, e.g. \cite{lin-zeng-hamiltonian}, for more details). Apparently $\ker
L\subset X_{c}$ and $\ker A\subset Y_{c}$.

The above calculations show that $L_{X_{h}}$ and $A_{Y_{h}}$ are
non-degenerate and thus Lemma 12.2 in \cite{lin-zeng-hamiltonian} yields
\[
X=X_{h}\oplus X_{c},\quad Y=Y_{h}\oplus Y_{c},
\]
with associated projections $P_{h,c}$ and $Q_{h,c}$. By their definitions we
have
\[
P_{h}^{\prime}(X_{h}^{\ast})=\ker(i_{X_{c}})^{\prime}=L(X_{h})\subset
D(B^{\prime}),\;\;Q_{h}^{\prime}(Y_{h}^{\ast})=\ker(i_{Y_{c}})^{\prime
}=A(Y_{h})\subset D(B).
\]
Therefore, these decompositions satisfy the assumptions of Lemma
\ref{L:decom-1}
and thus \eqref{hamiltonian-separated} restricted on the invariant $E^{c}$
also has the separable Hamiltonian form with
\[
(L_{X_{c}},A_{Y_{c}},B_{c}=P_{c}BQ_{c}^{\prime})
\]
satisfying (\textbf{G1-4}). The invariance of $E^{c}$ and $X_{h}\times Y_{h}$
and the block form in Lemma \ref{L:decom-1} imply
\[
R(B_{c}A_{Y_{c}})=BA(Y_{c})\subset X_{c}.
\]
Due to the $L$-orthogonality between $X_{c}$ and $X_{h}$, we also have the
$L$-orthogonality between $X_{h}$ and $\overline{R(B_{c}A_{c})}$ both of which
are contained in $\overline{R(BA)}$. As $L$ is negative definite on $X_{h}$,
we obtain
\[
n^{-}(L|_{_{\overline{R(BA)}}})\geq n^{-}(L_{\overline{R(B_{c}A_{c})}})+\dim
X_{h}=n^{-}(L_{\overline{R(B_{c}A_{c})}})+n^{-}(L|_{_{\overline{R(BA)}}}),
\]
which implies
\begin{equation}
n^{-}(L_{X_{c}}|_{\overline{R(B_{c}A_{Y_{c}})}})=0. \label{E:Morse-I-C}%
\end{equation}

\begin{remark}
Due to the invariance of $E^{u,s,c}$ under $e^{t\mathbf{J}\mathbf{L}}$ and the
non-degeneracy of $\mathbf{J}\mathbf{L}$ and $A$ on the finite dimensional
$E^{u,s}$ and $Y_{h}$ respectively, it follows that

a.) $A_{Y^{c}}$ is injective on $\overline{R(B_{c}^{\prime}L_{X_{c}}%
B_{c}A_{Y_{c}})}=\overline{B^{\prime}LBA(X_{c})}$ if $A$ is injective on
$\overline{R(B^{\prime}LBA)}$;

b.) $\overline{R(B_{c}A_{Y_{c}})} = X_{c}$ if $\overline{R(BA)}= X$.
\end{remark}

$\bullet$ \textit{Step 4. Reduction (again) of $\ker L_{X_{c}}$ and $A_{Y_{c}%
}$ in $E^{c}$.} We shall basically redo Step 1 in $E^{c} = X_{c} \times Y_{c}%
$. It would be a much cleaner exposition if we could find a way to combine
these two steps together. However we were not able to manage that as the
positivity of $A$ is required in Lemma \ref{lemma-self-adjoint-2} to identify
the hyperbolic directions and meanwhile there is not a clear simple way to
separate the kernels in a decomposition invariant under $e^{t\mathbf{J}%
\mathbf{L}}$.

Let
\begin{equation}%
\begin{split}
\mathbf{X}_{0L}  &  =\ker L_{X_{c}}\times\{0\}=\ker L\times\{0\},\\
\mathbf{X}_{0A}  &  =\{0\}\times\ker A_{Y_{c}}=\{0\}\times\ker A.
\end{split}
\label{E:X_0}%
\end{equation}
According to Lemma \ref{L:decom-1}, $X_{c}$ and $Y_{c}$ satisfies
(\textbf{G1-4}), so there exist closed subspaces of $\tilde{X}\subset X_{c}$
and $\tilde{Y}\subset Y_{c}$ such that
\[
X_{c}=\tilde{X}\oplus\ker L,\quad\ker(i_{\tilde{X}})^{\prime}\subset
D(B_{c}^{\prime}),\quad Y_{c}=\ker A\oplus\tilde{Y},\quad\ker(i_{\tilde{Y}%
})^{\prime}\subset D(B_{c}).
\]
Let
\[
\tilde{\mathbf{X}}=\tilde{X}\times\tilde{Y}.
\]
Applying Lemma \ref{L:decom-1} again to the decomposition $E^{c}%
=(\mathbf{X}_{0L}\oplus\mathbf{X}_{0A})\oplus\tilde{\mathbf{X}}$, we obtain
the block forms of \eqref{hamiltonian-separated} restricted on the invariant
$E^{c}$ and its energy $\mathbf{L}_{E^{c}}$
\[
\mathbf{L}_{E^{c}}\longleftrightarrow%
\begin{pmatrix}
0 & 0\\
0 & \tilde{\mathbf{L}}%
\end{pmatrix}
,\quad\mathbf{J}\mathbf{L}|_{E^{c}}\longleftrightarrow%
\begin{pmatrix}
0 & T_{0\sim}\\
0 & \tilde{\mathbf{J}}\tilde{\mathbf{L}}%
\end{pmatrix}
,
\]
where $T_{0\sim}$ is bounded and $\tilde{\mathbf{J}}\tilde{\mathbf{L}}$ has
the separable Hamiltonian form with
\[
(L_{\tilde{X}},A_{\tilde{Y}},\tilde{B}=\tilde{P}B_{c}\tilde{Q}^{\prime}),\quad
L_{\tilde{X}}\text{ and }A_{\tilde{Y}}\text{ non-degenerate}.
\]
Here $\tilde{P}:X_{c}\rightarrow\tilde{X}$ and $\tilde{Q}:Y_{c}\rightarrow
\tilde{Y}$ are the associated projections.
Finally, Lemma \ref{L:decom-1} implies $\tilde{B}A_{\tilde{Y}}=\tilde{P}%
B_{c}A_{Y_{c}}|_{\tilde{Y}}$, which along with the definition of
$\mathbf{X}_{0L,0A}$, the fact that $A_{\tilde{Y}}:\tilde{Y}\rightarrow
\tilde{Y}^{\ast}$ is isomorphic, and \eqref{E:Morse-I-C} yield
\[
n^{-}(L_{\tilde{X}}|_{\overline{R(\tilde{B})}})=n^{-}(L_{\tilde{X}%
}|_{\overline{R(\tilde{B}A_{\tilde{Y}})}})=n^{-}(L_{\tilde{X}}|_{\overline
{\tilde{P}R(B_{c}A_{Y_{c}})}})=n^{-}(L_{\tilde{X}}|_{\overline{R(B_{c}%
A_{Y_{c}})}})=0.
\]
Therefore, $(L_{\tilde{X}},A_{\tilde{Y}},\tilde{B})$ satisfy all the
assumptions in Proposition \ref{P:block-1}.

\begin{remark}
Due to the upper triangular block form of $\mathbf{J} \mathbf{L}%
|_{\mathbf{X}_{c}}$ and the remark at the end of the last step, we have

a.) $\mathbf{X}_{0A} =\{0\}$ if $A$ is injective on $\overline{R(B^{\prime
}LBA)}$;

b.) $\overline{R(\tilde B A_{\tilde Y})} = \overline{R(\tilde P B_{c}
A_{Y_{c}})}= \tilde X$ if $\overline{R(BA)}= X$.
\end{remark}

$\bullet$ \textit{Step 5. Proof of statement iv).} The block form
decomposition of $\mathbf{L}$ and $\mathbf{J}\mathbf{L}$ on $E^{c}$ follows
from the above splitting and Proposition \ref{P:block-1}. As in Lemma
\ref{L:decom-X}, here $\mathbf{X}_{1}=X_{1}\times\{0\}$ and $X_{1}%
=L_{\overline{R(\tilde{B})}}$. Those zero blocks in the bounded operator
\[
T_{0\sim}:\tilde{\mathbf{X}}\rightarrow\ker L\times\ker A
\]
are due to the facts that $\mathbf{J}\mathbf{L}$ maps $X\times\{0\}$ to
$\{0\}\times Y$ and vice versa. The well-posedness of $e^{t\mathbf{J}%
\mathbf{L}}$ and its $O(1+|t|^{3})$ growth estimate follow from direct
computation based on the block form of $\mathbf{J}\mathbf{L}$ where the only
unbounded operator $T_{3}$ generates a unitary group $e^{tT_{3}}$.

$\bullet$ \textit{Statement v)} follows directly from Proposition
\ref{P:block-1}.
\end{proof}

In order to obtain the better estimates of $e^{t\mathbf{J}\mathbf{L}}$, we
only need to refine or modify the decomposition under various assumptions.

\begin{proof}
[Proof of Theorem \ref{T:main2}]According to the remark at the end of the
above Step 4, $\mathbf{X}_{0A} =\{0\}$ under the assumption of i) and thus the
second row and column in the block form of $\mathbf{J}\mathbf{L}_{E^{c}}$
disappear which immediately implies the $O(1+t^{2})$ growth of $e^{t\mathbf{J}
\mathbf{L}}|_{E^{c}}$. The same remark and Lemma \ref{L:decom-X} imply that,
under the assumption of ii), $\mathbf{X}_{1}= \mathbf{X}_{5} =\{0\}$, the
$O(1+|t|)$ growth of $e^{t\mathbf{J} \mathbf{L}}|_{E^{c}}$ follows from the
reduced block form of $\mathbf{J}\mathbf{L}_{E^{c}}$ readily.

$\bullet$ \textit{Proof of statement iii).} Under the non-degeneracy
assumptions of $L_{\overline{R(B)}}$ and $A_{\overline{R(B^{\prime})}}$, the
decomposition of $X$ can be carried out in a different, but much simpler, way.
In fact, Lemma 12.2 in \cite{lin-zeng-hamiltonian} implies
\[
X=X_{0}\oplus\tilde{X},\;\;\tilde{X}=\overline{R(B)},\;\;X_{0}=\ker(B^{\prime
}L)=\{u\in X\mid\langle Lu,\tilde{u}\rangle=0,\,\tilde{u}\in\tilde{X}\},
\]%
\[
Y=Y_{0}\oplus\tilde{Y},\;\;\tilde{Y}=\overline{R(B^{\prime})},\;\;Y_{0}%
=\ker(BA)=\{u\in Y\mid\langle Av,\tilde{v}\rangle=0,\,\tilde{v}\in\tilde
{Y}\},
\]
associated with the projection $\tilde{P}$ on $X$ and $\tilde{Q}$ on $Y$,
respectively. In the decomposition
\[
\mathbf{X}=\mathbf{X}_{0}\oplus\tilde{\mathbf{X}},\quad\mathbf{X}_{0}%
=X_{0}\times Y_{0},\quad\tilde{\mathbf{X}}=\tilde{X}\times\tilde{Y},
\]
which is invariant under $\mathbf{J}\mathbf{L}$, we have
\[
\mathbf{J}\mathbf{L}\Longleftrightarrow%
\begin{pmatrix}
0 & 0\\
0 & \tilde{\mathbf{J}}\tilde{\mathbf{L}}%
\end{pmatrix}
,\quad\tilde{\mathbf{L}}=%
\begin{pmatrix}
L_{\tilde{X}} & 0\\
0 & A_{\tilde{Y}}%
\end{pmatrix}
,\ \ \ \ \tilde{\mathbf{J}}=%
\begin{pmatrix}
0 & \tilde{P}B\tilde{Q}^{\prime}\\
-\tilde{Q}B^{\prime}\tilde{P}^{\prime} & 0
\end{pmatrix}
,
\]
where $\tilde{\mathbf{J}}\tilde{\mathbf{L}}$ is also in the separable
Hamiltonian form $(L_{\tilde{X}},A_{\tilde{Y}},\tilde{B}=\tilde{P}B\tilde
{Q}^{\prime})$. In particular, $L_{\tilde{X}}$ and $A_{\tilde{Y}}$ are
non-degenerate on $\tilde{X}$ and $\tilde{Y}$ and $\tilde{\mathbf{J}}%
\tilde{\mathbf{L}}$ is injective on $\tilde{\mathbf{X}}$, the last of which
implies $\overline{R(\tilde{B}A_{\tilde{Y}})}=\tilde{X}$. From the above
theorem, the system $\tilde{\mathbf{J}}\tilde{\mathbf{L}}$ has the trichotomy
decomposition
\[
\tilde{\mathbf{X}}=\tilde{E}^{u}\oplus\tilde{E}^{s}\oplus\tilde{E}^{c}%
,\quad\dim\tilde{E}^{u,s}=n^{-}(L_{\overline{R(\tilde{B}A_{\tilde{Y}})}%
})=n^{-}(L_{\tilde{X}}).
\]
Lemma 12.2 in \cite{lin-zeng-hamiltonian} implies $\tilde{\mathbf{L}}$ is
uniformly positive definite on $\tilde{E}^{c}$ and thus we obtain the Lyapunov
stability on $\tilde{E}^{c}$. Clearly
\[
E^{u,s}=\tilde{E}^{u,s},\quad E^{c}=\mathbf{X}_{0}\oplus\tilde{E}^{c},
\]
give the trichotomy decomposition of $\mathbf{J}\mathbf{L}$ and the thus its
Lyapunov stability on $E^{c}$ follows.

\end{proof}


To end the section, we prove the following result on perturbations to $L$.

\begin{proposition}
\label{P:perturbation} Suppose $X$ is a Hilbert space and $L:X\rightarrow
X^{\ast}$ satisfies (\textbf{G3}) and $n_{0}=\dim\ker L<\infty$. It holds that
there exists $C,\delta>0$ such that any bounded $\tilde{L}:X\rightarrow X$
with $\tilde{L}^{\ast}=\tilde{L}$ and $\Vert\tilde{L}-L\Vert<\delta$ also
satisfies (\textbf{G3}). Moreover, there exists $\tilde{L}_{0}:\ker
L\rightarrow(\ker L)^{\ast}$ such that
\begin{align*}
&  \dim\ker\tilde{L}=\dim\ker\tilde{L}_{0},\quad n^{-}(\tilde{L}%
)-n^{-}(L)=n^{-}(\tilde{L}_{0}),\\
&  \Vert\tilde{L}_{0}-(\tilde{L}-L)_{\ker L}\Vert<C\Vert\tilde{L}-L\Vert^{2},
\end{align*}
where the notation $(\tilde{L}-L)_{\ker L}$ is in the fashion of \eqref{E:L_X_1}.
\end{proposition}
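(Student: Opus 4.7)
The plan is to perform a Schur-complement / Lyapunov–Schmidt reduction in the $L$-orthogonal splitting $X = X_\pm \oplus \ker L$ provided by (G3), where I write $X_\pm := X_- \oplus X_+$. Let $P_\pm, P_0$ denote the associated projections and set $L_\pm := (i_{X_\pm})' L\, i_{X_\pm} : X_\pm \to X_\pm^*$, which is a self-dual isomorphism with $n^-(L_\pm) = n^-(L)$ and uniformly positive on $X_+$, uniformly negative on $X_-$. Writing $E := \tilde L - L$ and $E_{\alpha\beta} := (i_{X_\alpha})' E\, i_{X_\beta}$ for $\alpha,\beta \in \{\pm, 0\}$, the perturbed operator has the block form
\[
\tilde L \longleftrightarrow \begin{pmatrix} L_\pm + E_{\pm\pm} & E_{\pm 0} \\ E_{0\pm} & E_{00} \end{pmatrix},
\]
with every block of operator norm $O(\|E\|)$.

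The first step is to note that for $\delta$ small enough, $L_\pm + E_{\pm\pm}$ remains an isomorphism from $X_\pm$ to $X_\pm^*$ by the Neumann series, and its signature cannot jump along the homotopy $t \mapsto L_\pm + tE_{\pm\pm}$, so $n^-(L_\pm + E_{\pm\pm}) = n^-(L)$. Next, I would introduce the bounded invertible operator
\[
T := I_X - i_{X_\pm}(L_\pm + E_{\pm\pm})^{-1} E_{\pm 0} P_0 : X \to X,
\]
which satisfies $\|T - I_X\| = O(\|E\|)$. A direct block multiplication shows that the congruence $T' \tilde L T$ is block-diagonal:
\[
T' \tilde L T \longleftrightarrow \begin{pmatrix} L_\pm + E_{\pm\pm} & 0 \\ 0 & \tilde L_0 \end{pmatrix}, \qquad \tilde L_0 := E_{00} - E_{0\pm}(L_\pm + E_{\pm\pm})^{-1} E_{\pm 0}.
\]
Since $E_{00}$ is precisely $(\tilde L - L)_{\ker L}$ in the notation of \eqref{E:L_X_1}, and the Schur correction is trilinear in $E$ with middle factor of uniformly bounded inverse for small $\delta$, I obtain $\|\tilde L_0 - (\tilde L - L)_{\ker L}\| \le C \|E\|^2$.

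Congruence by an isomorphism preserves self-duality, kernel dimensions, and signatures of quadratic forms, hence $\dim \ker \tilde L = \dim \ker \tilde L_0$ and $n^-(\tilde L) = n^-(L_\pm + E_{\pm\pm}) + n^-(\tilde L_0) = n^-(L) + n^-(\tilde L_0)$. To recover (G3) for $\tilde L$ itself, I would define the new splitting by pushing forward through $T$: let $\tilde X_- := T(X_- \oplus N^-)$ and $\tilde X_+ := T(X_+ \oplus N^+)$, where $N^\pm \subset \ker L$ are maximal subspaces of $\tilde L_0$-negativity and $\tilde L_0$-positivity, respectively, and set $\ker \tilde L := T(\ker \tilde L_0)$. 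The uniform positivity bound on $\tilde X_+$ follows from the positivity on $X_+$, the finite-dimensional (hence trivially uniform) positivity on $N^+$, and the fact that $T$ is an $O(\|E\|)$-perturbation of the identity. The principal obstacle I anticipate is the bookkeeping of duals: since $L:X \to X^*$ rather than $X \to X$, the Schur inverse $(L_\pm + E_{\pm\pm})^{-1}$ must be read as a map $X_\pm^* \to X_\pm$, and verifying the identity for $T' \tilde L T$ as an operator $X \to X^*$ requires keeping track of where each $E_{\alpha\beta}$ lives. Once these identifications are pinned down, the essential content of the argument takes place on the finite-dimensional space $\ker L$, which is exactly where the hypothesis $n_0 < \infty$ is used.
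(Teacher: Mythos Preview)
Your proposal is correct and essentially identical to the paper's proof: the paper describes the same reduction by realizing the $\tilde L$-orthogonal complement of $X_\pm$ as the graph of $S=-(L_\pm+E_{\pm\pm})^{-1}E_{\pm 0}:\ker L\to X_\pm$ and computing $\tilde L_0=(I+S)'\tilde L(I+S)|_{\ker L}$, which is exactly your Schur complement $E_{00}-E_{0\pm}(L_\pm+E_{\pm\pm})^{-1}E_{\pm 0}$. Your congruence operator $T$ agrees with the paper's $I+S$ on $\ker L$ and with the identity on $X_\pm$, so the two arguments differ only in notation.
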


\begin{corollary}
\label{cor neg unchange}If, in addition, $L$ is non-degenerate, then
$\tilde{L}$ is also non-degenerate and $n^{-}(\tilde{L})=n^{-}(L)$.
\end{corollary}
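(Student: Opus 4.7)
The plan is to derive the corollary as an essentially immediate consequence of Proposition \ref{P:perturbation} applied in the degenerate-free case $\ker L=\{0\}$. Since $L$ is assumed non-degenerate in the sense of Remark \ref{R:non-deg}, the induced map $L:X\to X^{\ast}$ is an isomorphism, and in particular $\ker L=\{0\}$. Therefore the finite-dimensional auxiliary operator $\tilde{L}_{0}:\ker L\to(\ker L)^{\ast}$ supplied by Proposition \ref{P:perturbation} is the zero operator on the trivial space.

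The two conclusions of the corollary then drop out directly. First, $\dim\ker\tilde{L}_{0}=0$, so by the proposition $\dim\ker\tilde{L}=0$; and $n^{-}(\tilde{L}_{0})=0$, so $n^{-}(\tilde{L})-n^{-}(L)=0$, giving $n^{-}(\tilde{L})=n^{-}(L)$. The quantitative estimate $\|\tilde{L}_{0}-(\tilde{L}-L)_{\ker L}\|<C\|\tilde{L}-L\|^{2}$ is not needed here since both sides are zero on the trivial subspace.

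The only remaining issue is upgrading $\ker\tilde{L}=\{0\}$ to genuine non-degeneracy of $\tilde{L}$ (i.e.\ the map $\tilde{L}:X\to X^{\ast}$ is an isomorphism). For this I would invoke the first conclusion of Proposition \ref{P:perturbation}, which says $\tilde{L}$ still satisfies (\textbf{G3}). Combined with $\ker\tilde{L}=\{0\}$, the decomposition of (\textbf{G3}) reduces to $X=X_{-}^{\tilde{L}}\oplus X_{+}^{\tilde{L}}$ with $\dim X_{-}^{\tilde{L}}<\infty$, $\langle\tilde{L}u,u\rangle<0$ on $X_{-}^{\tilde{L}}\setminus\{0\}$, and $\langle\tilde{L}u,u\rangle\geq\delta\|u\|^{2}$ on $X_{+}^{\tilde{L}}$. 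A standard block argument (the negative block is a non-degenerate finite-dimensional symmetric form, the positive block is coercive, and the two blocks are $\tilde{L}$-orthogonal by construction) then shows the induced bounded operator $\tilde{L}:X\to X^{\ast}$ is bijective, hence an isomorphism by the open mapping theorem. No real obstacle is anticipated; the corollary is essentially a bookkeeping statement once Proposition \ref{P:perturbation} is in hand.
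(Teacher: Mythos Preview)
Your proposal is correct and is precisely the intended argument: the paper states the corollary without a separate proof because it follows immediately from Proposition~\ref{P:perturbation} in the case $\ker L=\{0\}$, exactly as you wrote. Your additional remark upgrading $\ker\tilde{L}=\{0\}$ to full non-degeneracy via the (\textbf{G3}) decomposition is a careful and correct elaboration of a point the paper leaves implicit.
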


\begin{proof}
[Proof of Proposition \ref{P:perturbation}]Let $X_{\pm}\subset X$ be closed
subspaces ensured by (\textbf{G3}) for $L$. Denote
\[
X_{0}=\ker L,\;\;X_{1}=X_{+}\oplus X_{-},\;\;\tilde{X}_{0}=X_{1}%
^{\perp_{\tilde{L}}}=\{x\in X\mid\langle\tilde{L}x,x_{1}\rangle=0,\ \forall
x_{1}\in X_{1}\}.
\]
Clearly $L_{X_{1}}=i_{X_{1}}^{\ast}Li_{X_{1}}:X_{1}\rightarrow X_{1}^{\ast}$
is an isomorphism. The closeness between $\tilde{L}$ and $L$ implies that
$\tilde{L}_{X_{1}}:X_{1}\rightarrow X_{1}^{\ast}$ is also an isomorphism and
$n^{-}(\tilde{L}_{X_{1}})=n^{-}(L_{X_{1}})$. Therefore, the we have
\[
\dim\ker\tilde{L}=\dim\ker\tilde{L}_{\tilde{X}_{0}},\quad n^{-}(\tilde
{L})-n^{-}(L)=n^{-}(\tilde{L}_{\tilde{X}_{0}}).
\]
To analyze $\tilde{L}_{\tilde{X}_{0}}$, a standard argument yields a unique
bounded linear operator $S:X_{0}\rightarrow X_{1}$ such that
\[
\Vert S\Vert\leq C\Vert\tilde{L}-L\Vert,\quad\tilde{X}_{0}=graph(S)=\{x_{0}%
+Sx_{0}\mid x_{0}\in X_{0}\}.
\]
Using the isomorphism $I+S:X_{0}\rightarrow\tilde{X}_{0}$ as conjugacy map,
let
\[
\tilde{L}_{0}=(I+S^{\ast})\tilde{L}(I+S):X_{0}\rightarrow X_{0}^{\ast}.
\]
We have, for $x_{0},x_{0}^{\prime}\in X_{0}$,
\begin{align*}
&  \langle\tilde{L}_{0}x_{0},x_{0}^{\prime}\rangle=\langle\tilde{L}%
(x_{0}+Sx_{0}),(x_{0}^{\prime}+Sx_{0}^{\prime})\rangle\\
=  &  \langle\tilde{L}_{X_{0}}x_{0},x_{0}^{\prime}\rangle+\langle\tilde
{L}Sx_{0},x_{0}^{\prime}\rangle+\langle\tilde{L}x_{0},Sx_{0}^{\prime}%
\rangle+\langle\tilde{L}Sx_{0},Sx_{0}^{\prime}\rangle\\
=  &  \langle(\tilde{L}-L)_{X_{0}}x_{0},x_{0}^{\prime}\rangle+\langle
(\tilde{L}-L)_{X_{0}}x_{0}^{\prime},Sx_{0}\rangle+\left\langle (\tilde
{L}-L)_{X_{0}}x_{0},Sx_{0}^{\prime}\right\rangle +\left\langle S^{\ast}%
\tilde{L}Sx_{0},x_{0}^{\prime}\right\rangle
\end{align*}
where we used $L_{X_{0}}=0$. Therefore, the estimate on $\tilde{L}_{0}$
follows from that on $S$.
\end{proof}

\section{Stability of non-rotating stars}

In this section, we study stability of non-rotating stars. We divide it into
several steps.

\subsection{Existence of non-rotating stars}

Non-rotating stars are steady solutions$\ \left(  \rho,u\right)  =\left(
\rho_{0}\left(  \left\vert x\right\vert \right)  ,0\right)  $ of
(\ref{continuity-EP})-(\ref{Poisson-EP}), where $\rho_{0}\left(  r\right)  $
satisfies
\begin{equation}
-\nabla P\left(  \rho_{0}\right)  -\rho_{0}\nabla V_{0}=0\label{steady-EP}%
\end{equation}
with $\Delta V_{0}=4\pi\rho_{0}$. For the consideration of the existence of
non-rotating stars, we assume $P\left(  \rho\right)  $ satisfies assumption
(\ref{assumption-P1}) and
\begin{equation}
\lim_{s\rightarrow0+}s^{1-\gamma_{0}}P^{\prime}\left(  s\right)
=K>0,\ \text{\ for some }\gamma_{0}>\frac{6}{5}.\label{assumption-P2'}%
\end{equation}
Note that the enthalpy function $\Phi$ defined by (\ref{defn-enthalpy}) is
convex since $P^{\prime}\left(  \rho\right)  >0\ $for $\rho>0\ $by assumption
(\ref{assumption-P1}). Let $F\left(  s\right)  =\left(  \Phi^{\prime}\right)
^{-1}\left(  s\right)  $ for $s\in\left(  0,s_{\max}\right)  $, where
\[
s_{\max}=\int_{0}^{\infty}\frac{P^{\prime}\left(  \rho\right)  }{\rho}d\rho
\in(0,\infty].
\]
We extend $F\left(  s\right)  $ to $s\in\left(  -\infty,0\right)  $ by zero
extension and denote the extended function by $F_{+}\left(  s\right)
:\mathbf{R}\rightarrow\lbrack0,\infty)$. We consider physically realistic
non-rotating stars $\rho_{0}$ with compact support
\[
\left\{  \rho_{0}>0\right\}  =\left\{  \left\vert x\right\vert <R\right\}
\triangleq B_{R},
\]
where $R>0$ is the radius of the support. Then by (\ref{steady-EP}), we have%
\begin{equation}
V_{0}+\Phi^{\prime}\left(  \rho_{0}\right)  =V_{0}\left(  R\right)
\label{eqn-V0}%
\end{equation}
and $\rho_{0}=F\left(  V_{0}\left(  R\right)  -V_{0}\right)  $ inside $B_{R}$.
Since $V_{0}^{\prime}\left(  r\right)  >0$ by the Poisson equation, when $r>R$
we have
\[
\rho_{0}\left(  r\right)  =0=F_{+}\left(  V_{0}\left(  R\right)  -V_{0}\left(
r\right)  \right)  .
\]
Therefore, the steady potential $V_{0}\left(  \left\vert x\right\vert \right)
$ satisfying the nonlinear elliptic equation in radial coordinates%
\begin{equation}
\Delta V_{0}=V_{0}^{\prime\prime}+\frac{2}{r}V_{0}^{\prime}=4\pi F_{+}\left(
V_{0}\left(  R\right)  -V_{0}\right)  .\label{eqn-V-EP}%
\end{equation}
Define $y\left(  r\right)  =V_{0}\left(  R\right)  -V_{0}\left(  r\right)
=\Phi^{\prime}\left(  \rho_{0}\right)  $. Then $y$ satisfies the ODE
\begin{equation}
y^{\prime\prime}+\frac{2}{r}y^{\prime}=-4\pi F_{+}\left(  y\right)
.\label{eqn-ode-star}%
\end{equation}
Let $\mu=\rho_{0}\left(  0\right)  $ to be the center density. We solve
(\ref{eqn-ode-star}) with the initial condition
\begin{equation}
y\left(  0\right)  =\Phi^{\prime}\left(  \rho_{0}\left(  0\right)  \right)
=\Phi^{\prime}\left(  \mu\right)  >0,\ \ \ y^{\prime}\left(  0\right)
=0,\label{initial-condition-ode}%
\end{equation}
or equivalently the first order equation
\begin{equation}
y^{\prime}\left(  r\right)  =-\frac{4\pi}{r^{2}}\int_{0}^{r}s^{2}F_{+}\left(
y\left(  s\right)  \right)  ds,\ \ \ y\left(  0\right)  =\Phi^{\prime}\left(
\mu\right)  \text{.}\label{first-order-ode}%
\end{equation}
It is easy to see that the unique solution $y_{\mu}\left(  r\right)  $ of the
above ODE exists for $r\in\left(  0,+\infty\right)  $ and $y_{\mu}^{\prime
}\left(  r\right)  <0$. If there exists a finite number $R_{\mu}>0$ such that
$y_{\mu}\left(  R_{\mu}\right)  =0$, define
\begin{equation}
\rho_{\mu}\left(  \left\vert x\right\vert \right)  =\left\{
\begin{array}
[c]{cc}%
F\left(  y_{\mu}\left(  \left\vert x\right\vert \right)  \right)   & \text{if
}\left\vert x\right\vert <R_{\mu}\\
0 & \text{if }\left\vert x\right\vert \geq R_{\mu}%
\end{array}
\right.  \label{defn-rho-mu}%
\end{equation}
and $V_{\mu}=4\pi\Delta^{-1}\rho_{\mu}$. Then $\left(  \rho_{\mu},0\right)  $
is a non-rotating steady solution of (\ref{continuity-EP})-(\ref{Poisson-EP})
with compact support and $R_{\mu}$ is the support radius.

\begin{remark}
\label{R:smooth-in-mu} Since $F_{+}$ is actually a $C^{1}$ function for
$\gamma\in(\frac{6}{5},2)$, the solution $(y,y^{\prime})$ to
\eqref{eqn-ode-star} and \eqref{initial-condition-ode} is $C^{1}$ in both $r$
and in $\mu$ with $y^{\prime}<0$. Therefore, the Implicit Function Theorem
implies that $R_{\mu}$ is $C^{1}$ in $\mu$ and thus so is $\rho_{\mu}$.
\end{remark}

Below, we give some conditions to ensure that the ODE (\ref{eqn-ode-star}) has
solutions with compact support. Assume $P\left(  \rho\right)  $ satisfies
(\ref{assumption-P1})-(\ref{assumption-P2}). For Polytropic stars with
$P\left(  \rho\right)  =K\rho^{\gamma}$ $\left(  \gamma>\frac{6}{5}\right)  $,
it is well known (\cite{chandra-book-stellar}) that for any center density
$\mu>0$, there exists compact supported solutions. Let $\gamma=1+\frac{1}{n}$,
(\ref{eqn-ode-star}) becomes the classical Lane-Emden equation
\begin{equation}
y^{\prime\prime}+\frac{2}{r}y^{\prime}=-4\pi\left(  \frac{\gamma-1}{K\gamma
}\right)  ^{n}y_{+}^{n}=-C_{\gamma}y_{+}^{n},\ \ \ \label{ode-y-LE}%
\end{equation}
where $0<n<5,\ y_{+}=\max\left\{  y,0\right\}  ,$ and
\[
C_{\gamma}=4\pi\left(  \frac{\gamma-1}{K\gamma}\right)  ^{\frac{1}{\gamma-1}%
}.\
\]
Let $y_{\mu}\left(  r\right)  =\Phi^{\prime}\left(  \rho_{\mu}\left(
r\right)  \right)  $ be the solution of (\ref{ode-y-LE}) with
\[
y_{\mu}\left(  0\right)  =\Phi^{\prime}\left(  \mu\right)  =\frac{K\gamma
}{\gamma-1}\mu^{\gamma-1}=:\alpha.
\]
Denote the transformation
\begin{equation}
y_{\mu}\left(  r\right)  =\alpha\theta\left(  \alpha^{\frac{n-1}{2}}r\right)
,\ \ s=\alpha^{\frac{n-1}{2}}r,\ \label{transform-L_E}%
\end{equation}
then $\theta\left(  s\right)  \,$satisfies the same equation
\begin{equation}
\theta^{\prime\prime}+\frac{2}{s}\theta^{\prime}=-C_{\gamma}\theta_{+}%
^{n},\ \ \ \theta\left(  0\right)  =1,\theta^{\prime}\left(  0\right)  =0.
\label{lane-emden equation}%
\end{equation}
The function $\theta\left(  s\right)  \,$is called the Lane-Emden function.

The next lemma shows that under assumption (\ref{assumption-P1}%
)-(\ref{assumption-P2'}), non-rotating stars with compact support exist for
small center density.

\begin{lemma}
\label{lemma-steady-small-density}Assume (\ref{assumption-P1}) and
(\ref{assumption-P2'}). There exists $\mu_{0}>0$ such that for any $\mu
\in\left(  0,\mu_{0}\right)  $, $y_{\mu}\left(  R_{\mu}\right)  =0$ for some
$R_{\mu}>0$. Here, $y_{\mu}\left(  r\right)  $ is the solution of
(\ref{eqn-ode-star}) with the initial condition (\ref{initial-condition-ode}).
Then $\rho_{\mu}\left(  \left\vert x\right\vert \right)  $ defined by
(\ref{defn-rho-mu}) is a non-rotating star with support radius $R_{\mu}$.
\end{lemma}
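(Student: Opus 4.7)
The plan is to exploit assumption (\ref{assumption-P2'}) to view the ODE (\ref{eqn-ode-star}) for small $\mu$ as a perturbation of a Lane-Emden equation with index $\gamma_0$, and to invoke the classical fact that the Lane-Emden function with $\gamma_0>\frac{6}{5}$ has a finite first zero. Concretely, let $n=\frac{1}{\gamma_0-1}$ and $\alpha=\Phi^{\prime}(\mu)$, and mimic the rescaling (\ref{transform-L_E}) by writing
\[
y_{\mu}(r)=\alpha\,\tilde{y}_{\mu}(s),\qquad s=\alpha^{\frac{n-1}{2}}r.
\]
A direct calculation then shows that $\tilde{y}_{\mu}$ satisfies
\[
\tilde{y}_{\mu}^{\prime\prime}+\tfrac{2}{s}\tilde{y}_{\mu}^{\prime}=-4\pi\tilde{F}_{\mu}(\tilde{y}_{\mu}),\qquad \tilde{y}_{\mu}(0)=1,\ \tilde{y}_{\mu}^{\prime}(0)=0,
\]
where $\tilde{F}_{\mu}(\tau)=\alpha^{-n}F_{+}(\alpha\tau)$. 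So the initial data are $\mu$-independent and all the $\mu$-dependence is pushed into the nonlinearity.

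The next step is to check that $\tilde{F}_{\mu}\to F_{0}$ locally uniformly on $\mathbf{R}$ as $\mu\to 0^{+}$, where $F_{0}(\tau)=\bigl(\frac{\gamma_0-1}{K}\bigr)^{n}\tau_{+}^{n}$. This follows from (\ref{assumption-P2'}): integrating $\Phi^{\prime\prime}(\rho)=P^{\prime}(\rho)/\rho\sim K\rho^{\gamma_0-2}$ gives $\Phi^{\prime}(\rho)\sim\frac{K}{\gamma_0-1}\rho^{\gamma_0-1}$ for $\rho$ near $0$, hence $F(s)=(\Phi^{\prime})^{-1}(s)\sim\bigl(\frac{(\gamma_0-1)s}{K}\bigr)^{n}$ for $s$ near $0$, so $\alpha^{-n}F_{+}(\alpha\tau)\to F_{0}(\tau)$ uniformly on bounded $\tau$-intervals. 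The limiting equation is therefore a constant-coefficient-rescaled Lane-Emden equation with polytropic index $n\in(1,5)$ (because $\gamma_0\in(\frac{6}{5},2)$). Classical Lane-Emden theory then provides a unique solution $\bar{y}\in C^{1}([0,\infty))$ with $\bar{y}(0)=1$, $\bar{y}^{\prime}(0)=0$, having a first zero at some finite $s_{0}>0$ at which $\bar{y}^{\prime}(s_{0})<0$; alternatively this follows from a further linear rescaling of $s$ into (\ref{lane-emden equation}).

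To conclude I would invoke continuous dependence of ODE solutions on parameters, using the integral form (\ref{first-order-ode}) to absorb the singularity at $s=0$: since $\tilde{F}_{\mu}\to F_{0}$ uniformly on $[0,2]$ (which contains the range of $\tilde{y}_{\mu}$) and the initial data are fixed, $\tilde{y}_{\mu}\to\bar{y}$ in $C^{1}([0,s_{0}+1])$ as $\mu\to 0^{+}$. Because the crossing at $s_{0}$ is transverse, there exists $\mu_{0}>0$ such that for every $\mu\in(0,\mu_{0})$ the function $\tilde{y}_{\mu}$ has a (first) zero $\tilde{s}_{\mu}$ near $s_{0}$ with $\tilde{y}_{\mu}^{\prime}(\tilde{s}_{\mu})<0$. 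Setting $R_{\mu}=\alpha^{(1-n)/2}\tilde{s}_{\mu}<\infty$ yields $y_{\mu}(R_{\mu})=0$, and (\ref{defn-rho-mu}) then defines a compactly supported non-rotating star of radius $R_{\mu}$.

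The main technical obstacle is the justification of the limit $\tilde{y}_{\mu}\to\bar{y}$ uniformly up to and slightly past $s_{0}$. This needs (a) a uniform-in-$\mu$ bound on $\tilde{y}_{\mu}$ and $\tilde{y}_{\mu}^{\prime}$ on a fixed interval $[0,s_{0}+1]$, obtained from the integral form (\ref{first-order-ode}) together with the monotonicity $\tilde{y}_{\mu}^{\prime}<0$, and (b) the fact that $F_{+}$ is only H\"older (not Lipschitz) at the boundary of its support; this is harmless here because the limiting solution crosses zero transversely, so its zero persists under small $C^{0}$ perturbations of the right-hand side regardless of regularity at the boundary of the support.
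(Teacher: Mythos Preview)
Your proposal is correct and follows essentially the same route as the paper: the paper introduces the identical rescaling $y_\mu(r)=\alpha\theta_\alpha(\alpha^{(n_0-1)/2}r)$ with $\alpha=\Phi'(\mu)$, shows that the rescaled nonlinearity $g_\alpha(\theta)=4\pi\alpha^{-n_0}F_+(\alpha\theta)$ converges to the Lane--Emden nonlinearity $g_0$ as $\alpha\to 0^+$, obtains $\theta_\alpha\to\theta_0$ in $C^1$ on compact intervals, and then uses the transverse zero $\theta_0'(R_0)<0$ (phrased there via the Implicit Function Theorem) to locate a nearby zero $S_\alpha$ of $\theta_\alpha$, exactly as you do. Your discussion of the technical obstacle is in fact slightly more careful than the paper's exposition.
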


\begin{proof}
It is equivalent to prove the statement for $\alpha=y_{\mu}\left(  0\right)
=\Phi^{\prime}\left(  \mu\right)  $ sufficiently small. Motivated by
(\ref{assumption-P2'}) and (\ref{transform-L_E}), we define
\begin{equation}
y_{\mu}\left(  r\right)  =\alpha\theta_{\alpha}\left(  \alpha^{\frac{n_{0}%
-1}{2}}r\right)  ,\ \ s=\alpha^{\frac{n_{0}-1}{2}}r, \label{defn-y-mu-scaling}%
\end{equation}
where $n_{0}=\frac{1}{\gamma_{0}-1}$. Then $\theta_{\alpha}\left(  s\right)  $
satisfies the equation
\begin{equation}
\theta_{\alpha}^{\prime\prime}+\frac{2}{s}\theta_{\alpha}^{\prime}=-4\pi
\frac{1}{\alpha^{n_{0}}}F_{+}\left(  \alpha\theta_{\alpha}\right)  =-
g_{\alpha}\left(  \theta_{\alpha}\right)  ,\ \ \ \ \label{eqn-theta-alpha}%
\end{equation}
with the initial condition $\theta_{\alpha}\left(  0\right)  =1,\theta
_{\alpha}^{\prime}\left(  0\right)  =0$. Denote
\begin{equation}
g_{\alpha}\left(  \theta\right)  =4\pi\frac{1}{\alpha^{n_{0}}}F_{+}\left(
\alpha\theta\right)  ,\ \ \theta\in\left[  0,1\right]  , \label{defn-g-alpha}%
\end{equation}
and
\begin{equation}
g_{0}\left(  \theta\right)  =C_{\gamma_{0}}\theta_{+}^{n_{0}},\ \ \ C_{\gamma
_{0}}=4\pi\left(  \frac{\gamma_{0}-1}{K\gamma_{0}}\right)  ^{\frac{1}%
{\gamma_{0}-1}}. \label{defn-g-0}%
\end{equation}
Then by assumption (\ref{assumption-P2'}) and the definition of $F_{+}$, it is
easy to show that when $\alpha\rightarrow0+,\ \ g_{\alpha}\rightarrow g_{0}$
in $C^{1}\left(  [ 0,1]\right)  $ and in $C^{0}((-\infty, 1])$. Let
$\theta_{0}\left(  s\right)  $ be the Lane-Emden function satisfying
\begin{equation}
\theta_{0}^{\prime\prime}+\frac{2}{s}\theta_{0}^{\prime}=-C_{\gamma_{0}%
}\left(  \theta_{0}\right)  _{+}^{n_{0}}=g_{0}\left(  \theta_{0}\right)
,\ \ \theta_{0}\left(  0\right)  =1,\theta_{0}^{\prime}\left(  0\right)  =0.
\label{eqn-LE-g-0}%
\end{equation}
Then for any $R>0$, we have $\theta_{\alpha}\rightarrow\theta_{0}$ in
$C^{1}\left(  0,R\right)  $. Define $G\left(  \alpha,s\right)  =\theta
_{\alpha}\left(  s\right)  $ for $\alpha>0,s>0\ $and $G\left(  0,s\right)  =$
$\theta_{0}\left(  s\right)  $ . Let $R_{0}$ be the support radius of
$\theta_{0}$, then $G\left(  0,R_{0}\right)  =\theta_{0}\left(  R_{0}\right)
=0$ and $\frac{\partial}{\partial s}G\left(  0,R_{0}\right)  =\theta
_{0}^{\prime}\left(  R_{0}\right)  <0$. By the Implicit Function Theorem,
there exists $\alpha_{0}>0$ such that when $\alpha\in\left(  0,\alpha
_{0}\right)  $, $G\left(  \alpha,s\right)  $ has a unique zero $S_{\alpha}$
near $R_{0}$. Then $S_{\alpha}$ is the support radius of $\theta_{\alpha}$.
Therefore, for any $0<\mu<\mu_{0}=F\left(  \alpha_{0}\right)  $, there exists
a unique non-rotating solution $y_{\mu}\left(  r\right)  $ defined by
(\ref{defn-y-mu-scaling}) with the support radius $R_{\mu}=\alpha
^{-\frac{n_{0}-1}{2}}S_{\alpha}$.
\end{proof}

Let
\[
\mu_{\max} = \sup\{ \mu\mid\exists\text{ solution } \rho_{\mu^{\prime}} \text{
is compactly supported}, \ \forall\mu^{\prime}\in(0, \mu] \} \in(0,+\infty].
\]
For any center density $\rho_{\mu}\left(  0\right)  =\mu\in\left(  0,\mu
_{\max}\right)  $,
let $R_{\mu}=R\left(  \mu\right)  <\infty$ be the support radius of the
density $\rho_{\mu}\left(  \left\vert x\right\vert \right)  $ of the unique
non-rotating stars and
\[
M\left(  \mu\right)  =\int_{R^{3}}\rho_{\mu}\ dx=\int_{\left\vert x\right\vert
<R_{\mu}}\rho_{\mu}\ dx
\]
to be the total mass of the star.

\begin{remark}
\label{R:equilibria} For Polytropic stars with $P\left(  \rho\right)
=K\rho^{\gamma}$ $\left(  \gamma>\frac{6}{5}\right)  $, we have $\mu_{\max
}=+\infty$. The scaling relation (\ref{transform-L_E}) implies the classical
formulae (\cite{chandra-book-stellar})%
\begin{equation}
M\left(  \mu\right)  =C_{1}\mu^{\frac{1}{2}\left(  3\gamma-4\right)
},\ \ R_{\mu}=C_{2}\mu^{\frac{1}{2}\left(  \gamma-2\right)  }.
\label{relation-M-R-polytropic}%
\end{equation}
for positive constants $C_{1},C_{2}$ depending only on $\gamma$.

For general equation of states satisfying (\ref{assumption-P1}) and
(\ref{assumption-P2'}) with $\gamma_{0}\geq\frac{4}{3}$, it was shown in
(\cite{uggala-Newtonian-stars}) that $\mu_{\max}=+\infty$. See also
\cite{Rein-existence} \cite{rein-rendall-compact} \cite{makino-84} for the
case $\gamma_{0}>\frac{4}{3}$. On the other hand, for $\gamma_{0}\in\left(
\frac{6}{5},\frac{4}{3}\right)  $, counterexamples of $P\left(  \rho\right)  $
with $\mu_{\max}<\infty$ were constructed in \cite{rein-rendall-compact}. For
physically realistic equation of states such as white dwarf stars, $\gamma
_{0}=\frac{5}{3}$ (see \cite{chandra-book-stellar} \cite{shapiro-book-85}).
\end{remark}

\subsection{\label{section-L-EP}Linearized Euler-Poisson equation}

We assume $P\left(  \rho\right)  $ satisfies (\ref{assumption-P1}%
)-(\ref{assumption-P2}). Near a non-rotating star $\left(  \rho_{\mu
},0\right)  $ with center density $\mu$, the linearized Euler-Poisson system
is
\begin{equation}
\sigma_{t}=-\nabla\cdot\left(  \rho_{\mu}v\right)
,\label{eqn-linearized-Continuity}%
\end{equation}%
\begin{equation}
v_{t}=-\nabla\left(  \Phi^{\prime\prime}\left(  \rho_{\mu}\right)
\sigma+V\right)  ,\ \ \ \label{eqn-linearized-Euler}%
\end{equation}
with $\Delta V=4\pi\rho$. Here, $\sigma,v\ $are the density and velocity
perturbations respectively.
In the linear approximation, we take
the density perturbation $\sigma$ and the velocity perturbation $v$ with the same support as
$\rho_{\mu}$, that is
\[
\text{supp}(\sigma), \ \text{supp}(v)\subset \overline{S_{\mu}}=\left\{  \left\vert
x\right\vert \le R_{\mu}\right\}.
\]
This is reasonable in the view of
the underlying Lagrangian formulation of the problem. See Appendix for more details.
Formally, the above linearized system
has an invariant energy functional
\begin{equation}
H_{\mu}\left(  \sigma,v\right)  =\frac{1}{2}\int_{S_{\mu}}\left(  \rho_{\mu
}\left\vert v\right\vert ^{2}+\Phi^{\prime\prime}\left(  \rho_{\mu}\right)
\sigma^{2}\right)  dx-\frac{1}{8\pi}\int_{\mathbf{R}^{3}}\left\vert \nabla
V\right\vert ^{2}dx.\label{defn-H-linearized-EP}%
\end{equation}
To ensure $H_{\mu}\left(  \sigma,v\right)  <\infty$, we consider the natural
energy space $X_{\mu}=L_{\Phi^{\prime\prime}\left(  \rho_{\mu}\right)  }^{2}$
for $\sigma$ and $Y_{\mu}=\left(  L_{\rho_{\mu}}^{2}\right)  ^{3}$ for $v$.
Here, $L_{\Phi^{\prime\prime}\left(  \rho_{\mu}\right)  }^{2},L_{\rho_{\mu}%
}^{2}$ are the $\Phi^{\prime\prime}\left(  \rho_{\mu}\right)  $ $,\rho_{\mu}$
weighted $L^{2}$ spaces in $S_{\mu}$  and thus (\ref{eqn-linearized-Continuity})-(\ref{eqn-linearized-Euler})
form a linear evolution system on $X_{\mu} \times Y_\mu$.
For $\sigma\in L_{\Phi^{\prime\prime
}\left(  \rho_{\mu}\right)  }^{2}$, we have
\begin{align}
\int_{\mathbf{R}^{3}}\left\vert \nabla V\right\vert ^{2}dx &  =-4\pi
\int_{S_{\mu}}\rho Vdx\leq4\pi\left\Vert \sigma\right\Vert _{L_{\Phi
^{\prime\prime}\left(  \rho_{\mu}\right)  }^{2}}\left(  \int_{S_{\mu}}%
\frac{V^{2}}{\Phi^{\prime\prime}\left(  \rho_{\mu}\right)  }dx\right)
^{\frac{1}{2}}\label{estimate-H1-dot}\\
&  \lesssim\left\Vert \sigma\right\Vert _{L_{\Phi^{\prime\prime}\left(
\rho_{\mu}\right)  }^{2}}\left\Vert V\right\Vert _{L^{6}\left(  \mathbf{R}%
^{3}\right)  }\lesssim\left\Vert \sigma\right\Vert _{L_{\Phi^{\prime\prime
}\left(  \rho_{\mu}\right)  }^{2}}\left\Vert \nabla V\right\Vert
_{L^{2}\left(  \mathbf{R}^{3}\right)  }\nonumber
\end{align}
and thus $\left\Vert \nabla V\right\Vert _{L^{2}\left(  \mathbf{R}^{3}\right)
}\lesssim\left\Vert \sigma\right\Vert _{L_{\Phi^{\prime\prime}\left(
\rho_{\mu}\right)  }^{2}}$. In above estimates, we use the fact that $\frac
{1}{\Phi^{\prime\prime}\left(  \rho_{\mu}\right)  }$ is bounded in
$\overline{S_{\mu}}$ since $\frac{1}{\Phi^{\prime\prime}\left(  \rho_{\mu
}\right)  }\thickapprox\rho_{\mu}^{2-\gamma_{0}}$ $\left(  \gamma
_{0}<2\right)  \ $for $\rho_{\mu}\ll1$. The notation $P\lesssim Q$ means
$P\leq C_{\mu}Q$ for some constant $C_{\mu}$ depending only on $\mu$.

\begin{remark}
Since $\gamma_0 \in (\frac 65, 2)$ and
\[
\rho_\mu = O\big((R_\mu -r)^{\frac 1{\gamma_0-1}}\big), \quad \Phi''\big(\rho_\mu(r)\big) = O\big((R_\mu -r)^{\frac {\gamma_0-2}{\gamma_0-1}}\big),
\]
in such weighted spaces, as $r \to R_\mu -$, $v \in Y_\mu$ allows $v$ to approach infinity, while $\sigma \in X_\mu$ may approach infinity for $\gamma_0 \in (\frac 32, 2)$ or has to satisfy $\lim \inf_{r\to R_\mu-} \sigma (r) =0$ for $\gamma_0 \in (\frac 65, \frac 32]$. Recalling that supp$(\rho)$ is the domain occupied by the fluid, the vanishing of $\sigma(R_\mu -)$ in the latter case does not mean that the domain does not evolve, but only not reflected in the linear order of the density perturbation due to its degeneracy near the boundary for $\gamma_0 \in (\frac 65, \frac 32]$. In fact, the variation of the domain is clearly indicated in that $v$ does not have to vanish near $r=R_\mu$.
\end{remark}

Define the operators
\[
L_{\mu}=\Phi^{\prime\prime}\left(  \rho_{\mu}\right)  -4\pi\left(
-\Delta\right)  ^{-1}:X_{\mu}\rightarrow X_{\mu}^{\ast},\ \ A_{\mu}=\rho_{\mu
}:Y_{\mu}\rightarrow Y_{\mu}^{\ast}%
\]
and
\[
B_{\mu}=-\nabla\cdot=-\operatorname{div}:Y_{\mu}^{\ast}\rightarrow X_{\mu
},\ \ \ B_{\mu}^{\prime}=\nabla:X_{\mu}^{\ast}\rightarrow Y_{\mu}.
\]
Here, for $\sigma\in X_{\mu}$, we denote
\[
\left(  -\Delta\right)  ^{-1}\sigma=\int_{S_{\mu}}\frac{1}{4\pi\left\vert
x-y\right\vert }\sigma\left(  y\right)  dy\ |_{S_{\mu}}\text{. }%
\]
Then the linearized system (\ref{eqn-linearized-Continuity}%
)-(\ref{eqn-linearized-Euler}) can be written in a separable Hamiltonian form
\[
\partial_{t}\left(
\begin{array}
[c]{c}%
\sigma\\
v
\end{array}
\right)  =\left(
\begin{array}
[c]{cc}%
0 & B_{\mu}\\
-B_{\mu}^{\prime} & 0
\end{array}
\right)  \left(
\begin{array}
[c]{cc}%
L_{\mu} & 0\\
0 & A_{\mu}%
\end{array}
\right)  \left(
\begin{array}
[c]{c}%
\sigma\\
v
\end{array}
\right)  =\mathcal{J}_{\mu}\mathcal{L}_{\mu}\left(
\begin{array}
[c]{c}%
\sigma\\
v
\end{array}
\right)  ,
\]
which will be checked to satisfy assumptions (\textbf{G1-4}) in the general
framework of Section 2. First, (\textbf{G2}) is obvious for the operator
$A_{\mu}$ defined in (\ref{defn-L-A}) with $\ker A_{\mu}=\{0\}$. We note that
\[
S_{1}=\sqrt{\rho_{\mu}}:\left(  L^{2}\left(  S_{\mu}\right)  \right)
^{3}\rightarrow Y_{\mu}^{\ast}=\left(  L_{\frac{1}{\rho_{\mu}}}^{2}\right)
^{3},\ \ S_{2}=\sqrt{\Phi^{\prime\prime}\left(  \rho_{\mu}\right)  }:X_{\mu
}\rightarrow L^{2}\left(  S_{\mu}\right)
\]
are isomorphisms. Therefore, to show $B_{\mu}:$ $Y_{\mu}^{\ast}\rightarrow
X_{\mu}$ is densely defined and closed, it is equivalent to check
\[
\tilde{B}_{\mu}=S_{2}BS_{1}=-\sqrt{\Phi^{\prime\prime}\left(  \rho_{\mu
}\right)  }\operatorname{div}\left(  \sqrt{\rho_{\mu}}\cdot\right)  :\left(
L^{2}\left(  S_{\mu}\right)  \right)  ^{3}\rightarrow L^{2}\left(  S_{\mu
}\right)
\]
is densely defined and closed. The domain of $\tilde{B}_{\mu}$ is
\[
D\left(  \tilde{B}_{\mu}\right)  =\left\{  u\in\left(  L^{2}\left(  S_{\mu
}\right)  \right)  ^{3}|\ \sqrt{\Phi^{\prime\prime}\left(  \rho_{\mu}\right)
}\nabla\cdot\left(  \sqrt{\rho_{\mu}}u\right)  \in L^{2}\ \text{in the
distribution sense}\right\}  .
\]
It is clear that any $C^{1}$ function with compact support inside $S_{\mu}$ is
in $D\left(  \tilde{B}_{\mu}\right)  $, thus $D\left(  \tilde{B}_{\mu}\right)
$ is dense in $\left(  L^{2}\left(  S_{\mu}\right)  \right)  ^{3}$. Define
\[
\tilde{C}_{\mu}=\sqrt{\rho_{\mu}}\nabla\left(  \sqrt{\Phi^{\prime\prime
}\left(  \rho_{\mu}\right)  }\cdot\right)  :L^{2}\left(  S_{\mu}\right)
\rightarrow\left(  L^{2}\left(  S_{\mu}\right)  \right)  ^{3},
\]
with
\[
D\left(  \tilde{C}_{\mu}\right)  =\left\{  \sigma\in L^{2}\left(  S_{\mu
}\right)  |\ \sqrt{\rho_{\mu}}\nabla\left(  \sqrt{\Phi^{\prime\prime}\left(
\rho_{\mu}\right)  }\sigma\right)  \in\left(  L^{2}\left(  S_{\mu}\right)
\right)  ^{3}\right\}  .
\]
Then $\tilde{C}_{\mu}$ is also densely defined.

\begin{lemma}
\label{L:duality} The above defined operators satisfy $\tilde{C}_{\mu}%
=\tilde{B}_{\mu}^{\ast}$ and $\tilde{B}_{\mu}=\left(  \tilde{C}_{\mu}\right)
^{\ast}=(\tilde{B}_{\mu})^{\ast\ast}$. Thus $\tilde{B}_{\mu}$ and $\tilde
{B}_{\mu}^{\ast}$ are both closed.
\end{lemma}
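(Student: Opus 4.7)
The plan is to prove the duality identities by combining the distributional characterization of both operators with a density (core) argument for smooth compactly supported functions.

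First I would verify that $\tilde{B}_\mu$ and $\tilde{C}_\mu$ are closed directly from their distributional definitions. For $\tilde{B}_\mu$: if $u_n \to u$ in $(L^2(S_\mu))^3$ and $\tilde{B}_\mu u_n \to f$ in $L^2(S_\mu)$, then since $\sqrt{\rho_\mu}$ is bounded on $\overline{S_\mu}$ one has $\sqrt{\rho_\mu}u_n \to \sqrt{\rho_\mu}u$ in $L^2$, hence $\nabla\!\cdot\!(\sqrt{\rho_\mu}u_n) \to \nabla\!\cdot\!(\sqrt{\rho_\mu}u)$ distributionally on $S_\mu$; because $\sqrt{\Phi''(\rho_\mu)}$ is smooth and bounded away from $0$ and $\infty$ on every compact subset of $S_\mu$, the identity $\sqrt{\Phi''(\rho_\mu)}\nabla\!\cdot\!(\sqrt{\rho_\mu}u) = f$ holds distributionally on $S_\mu$, and $f \in L^2$ places $u$ in $D(\tilde{B}_\mu)$. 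The analogous argument closes $\tilde{C}_\mu$.

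Next, for $\phi \in (C_c^\infty(S_\mu))^3$ and $\sigma \in D(\tilde{C}_\mu)$, distributional integration by parts against the compactly supported test field $\sqrt{\rho_\mu}\phi \in (C_c^\infty(S_\mu))^3$ yields
\[
(\tilde{B}_\mu\phi, \sigma)_{L^2} \;=\; \int_{S_\mu}\sqrt{\rho_\mu}\,\phi\cdot\nabla\bigl(\sqrt{\Phi''(\rho_\mu)}\,\sigma\bigr)\,dx \;=\; (\phi, \tilde{C}_\mu\sigma)_{L^2},
\]
so $\tilde{C}_\mu \subseteq \tilde{B}_0^*$, where $\tilde{B}_0 := \tilde{B}_\mu|_{(C_c^\infty(S_\mu))^3}$. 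Conversely, for $\sigma \in D(\tilde{B}_\mu^*)$ with $w := \tilde{B}_\mu^*\sigma$, testing against every $\phi \in (C_c^\infty(S_\mu))^3 \subseteq D(\tilde{B}_\mu)$ identifies $w = \sqrt{\rho_\mu}\nabla(\sqrt{\Phi''(\rho_\mu)}\sigma)$ as distributions on $S_\mu$, and $w \in L^2$ then places $\sigma$ in $D(\tilde{C}_\mu)$, giving $\tilde{B}_\mu^* \subseteq \tilde{C}_\mu$. To close the loop and obtain $\tilde{C}_\mu = \tilde{B}_\mu^*$, it suffices to show that $(C_c^\infty(S_\mu))^3$ is a core for $\tilde{B}_\mu$, for then $\tilde{B}_\mu^* = \tilde{B}_0^*$ and the two inclusions match.

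The core property is the main obstacle. Given $u \in D(\tilde{B}_\mu)$, I would approximate by $u_{n,\epsilon} := \eta_\epsilon \ast (\chi_n u)$, where $\chi_n$ is a radial cutoff vanishing near $\{|x|=R_\mu\}$ and $\eta_\epsilon$ is a standard mollifier; the mollification step is routine once $\chi_n u$ has compact support in $S_\mu$. The delicate point is the cutoff commutator
\[
[\tilde{B}_\mu, \chi_n]\,u \;=\; -\sqrt{P'(\rho_\mu)}\,u\cdot\nabla\chi_n,
\]
which must vanish in $L^2$ as $n \to \infty$. Using the boundary asymptotics $P'(\rho_\mu) \sim (R_\mu - r)$ (from $\rho_\mu \sim (R_\mu - r)^{n_0}$ with $n_0(\gamma_0 - 1) = 1$) and choosing the cutoff to transition in a boundary shell whose width is matched to this weight degeneracy, the estimate reduces to a weighted Hardy-type control of $u$ near $\partial S_\mu$ by the graph norm of $\tilde{B}_\mu$. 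Once this core property is in hand, $\tilde{C}_\mu = \tilde{B}_\mu^*$ follows; closedness of $\tilde{B}_\mu$ then yields $\tilde{B}_\mu^{**} = \tilde{B}_\mu$, so $\tilde{C}_\mu^* = \tilde{B}_\mu^{**} = \tilde{B}_\mu$, completing the lemma.
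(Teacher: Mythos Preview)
Your strategy differs from the paper's, and the step on which everything rests --- that $(C_c^\infty(S_\mu))^3$ is a core for $\tilde B_\mu$ --- is not justified and appears genuinely problematic. The commutator $[\tilde B_\mu,\chi_n]u=-\sqrt{P'(\rho_\mu)}\,\chi_n'(r)\,u_r$ indeed involves only the radial component, but the graph norm of $\tilde B_\mu$ controls only the \emph{scalar} $\sqrt{\Phi''(\rho_\mu)}\,\nabla\!\cdot(\sqrt{\rho_\mu}u)$, in which the radial contribution $r^{-2}\partial_r(r^2\sqrt{\rho_\mu}u_r)$ is entangled with the angular divergence $r^{-1}\sqrt{\rho_\mu}\,\nabla_{S^2}\!\cdot u_T$. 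A Hardy argument on $u_r$ would require a weighted $L^2$ bound on $\partial_r(r^2\sqrt{\rho_\mu}u_r)$ by itself, and isolating this forces control of $\nabla_{S^2}\!\cdot u_T$, which is \emph{not} supplied by $u\in(L^2)^3$; a spherical--harmonic decomposition makes the obstruction explicit as unbounded factors $l(l+1)$ coming from the tangential modes. Your sentence ``the estimate reduces to a weighted Hardy-type control of $u$ near $\partial S_\mu$ by the graph norm of $\tilde B_\mu$'' is exactly where the argument breaks.

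The paper avoids the core question altogether by proving the remaining inclusion $\tilde C_\mu\subseteq\tilde B_\mu^\ast$ directly, exploiting an asymmetry you overlooked: for $f\in D(\tilde C_\mu)$ one controls the \emph{full gradient} $\sqrt{\rho_\mu}\,\nabla(\sqrt{\Phi''(\rho_\mu)}f)\in L^2$, and the fundamental theorem of calculus in $r$ gives the trace bound $\|\sqrt{\rho_\mu\Phi''(\rho_\mu)}\,f\|_{L^2(\partial S(r))}\lesssim(R_\mu-r)^{1/2}$. One then integrates by parts on balls $S(R_\mu-\epsilon_n)$ against an arbitrary $v\in D(\tilde B_\mu)$ and kills the boundary term along a sequence $\epsilon_n\to0$ for which $\epsilon_n^{1/2}\|v\|_{L^2(\partial S(R_\mu-\epsilon_n))}\to0$ (such a sequence exists merely because $v\in L^2(S_\mu)$). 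The moral: put the boundary work on the $\tilde C_\mu$ side (scalar, full gradient controlled), not the $\tilde B_\mu$ side (vector, only divergence controlled). If you want to salvage a core argument, do it for $\tilde C_\mu$ instead --- there the commutator is $\sqrt{P'(\rho_\mu)}\,\chi_n'\,\sigma$ and the same Hardy inequality used later in the paper (with exponent $k=n_0>1$) does close.
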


\begin{proof}
We start the proof of the lemma with a basic property of functions in
$D(\tilde{C}_{\mu})$. Namely, for any $f\in D(\tilde{C}_{\mu})$, there exists
$M>0$, such that for any $r\in(\frac{1}{2}R_{\mu},R_{\mu})$, it holds that
\begin{equation}
\Vert\sqrt{\rho_{\mu}\Phi^{\prime\prime}(\rho_{\mu})}f\Vert_{L^{2}(\partial
S(r))}\leq M(R_{\mu}-r)^{\frac{1}{2}}, \label{E:DtCmu}%
\end{equation}
where $\partial S(r)$ is the sphere with radius $r$. In fact, by the
definition of $D(\tilde{C}_{\mu})$, the trace of $f$ on any sphere $\partial
S(r)$ with radius $r<R_{\mu}$ belongs to $L^{2}\big(\partial S(r)\big)$ and
\[
g\triangleq\sqrt{\rho_{\mu}}\partial_{r}\left(  \sqrt{\Phi^{\prime\prime
}\left(  \rho_{\mu}\right)  }f\right)  \in L^{2}(S_{\mu}).
\]
Since for any $\theta\in S^{2}$,
\[
\left(  \sqrt{\Phi^{\prime\prime}\left(  \rho_{\mu}\right)  }f\right)
(r\theta)=\left(  \sqrt{\Phi^{\prime\prime}\left(  \rho_{\mu}\right)
}f\right)  (\frac{1}{2}R_{\mu}\theta)+\int_{\frac{1}{2}R_{\mu}}^{r}(\rho_{\mu
}^{-\frac{1}{2}}g)(r^{\prime}\theta)dr^{\prime},
\]
it follows that
\begin{align*}
\Vert\sqrt{\Phi^{\prime\prime}\left(  \rho_{\mu}\right)  }f\Vert
_{L^{2}(\partial S(r))}\leq &  M\left(  1+\Vert g\Vert_{L^{2}(S_{\mu})}\left(
\int_{\frac{1}{2}R_{\mu}}^{r}\left(  R_{\mu}-r^{\prime}\right)  ^{\frac
{1}{1-\gamma_{0}}}dr^{\prime}\right)  ^{\frac{1}{2}}\right) \\
\leq &  M\big(1+(R_{\mu}-r)^{\frac{2-\gamma_{0}}{2(1-\gamma_{0})}}\big)
\end{align*}
and thus \eqref{E:DtCmu} follows.

By the definition of adjoint operators, $f\in D(\tilde{B}_{\mu}^{\ast})\subset
L^{2}(S_{\mu})$ and $w=\tilde{B}_{\mu}^{\ast}f$ if and only if, for any $v\in
D(\tilde{B}_{\mu})$,
\begin{equation}
\int_{S_{\mu}}w\cdot vdx=\langle f,\tilde{B}_{\mu}v\rangle=-\int_{S_{\mu}%
}\sqrt{\Phi^{\prime\prime}\left(  \rho_{\mu}\right)  }f\nabla\cdot\left(
\sqrt{\rho_{\mu}}v\right)  dx. \label{E:dual1}%
\end{equation}
By taking compacted supported $v$ and integrating by parts, we obtain that
$f\in D(\tilde{C}_{\mu}^{\ast})$ and $w=\tilde{C}_{\mu}f$ is necessary. To
show this is also sufficient, for any $v\in D(\tilde{B}_{\mu})$, we integrate
on smaller balls and take the limit,
\begin{align*}
&  -\int_{S_{\mu}}\sqrt{\Phi^{\prime\prime}\left(  \rho_{\mu}\right)  }%
f\nabla\cdot\left(  \sqrt{\rho_{\mu}}v\right)  dx=-\lim_{n\rightarrow\infty
}\int_{S(R_{\mu}-\epsilon_{n})}\sqrt{\Phi^{\prime\prime}\left(  \rho_{\mu
}\right)  }f\nabla\cdot\left(  \sqrt{\rho_{\mu}}v\right)  dx\\
=  &  \langle f,\tilde{C}_{\mu}v\rangle-\lim_{n\rightarrow\infty}%
\int_{\partial S(R_{\mu}-\epsilon_{n})}\sqrt{\Phi^{\prime\prime}\left(
\rho_{\mu}\right)  \rho_{\mu}}f\ v\cdot\frac{x}{R_{\mu}-\epsilon_{n}}dS,
\end{align*}
where $\epsilon_{n}\rightarrow0+$. According to \eqref{E:DtCmu},
\[
\left\vert \int_{\partial S(R_{\mu}-\epsilon_{n})}\sqrt{\Phi^{\prime\prime
}\left(  \rho_{\mu}\right)  \rho_{\mu}}f\ v\cdot\frac{x}{R_{\mu}-\epsilon_{n}%
}dS\right\vert \leq M\epsilon_{n}^{\frac{1}{2}}\Vert v\Vert_{L^{2}(\partial
S(R_{\mu}-\epsilon_{n}))}.
\]
Since $v\in L^{2}(S_{\mu})$, there exist a sequence $\epsilon_{n}%
\rightarrow0+$ such that
\[
\epsilon_{n}^{\frac{1}{2}}\Vert v\Vert_{L^{2}(\partial S(R_{\mu}-\epsilon
_{n}))}\rightarrow0
\]
and thus \eqref{E:dual1} holds which implies $\tilde{B}_{\mu}^{\ast}=\tilde
{C}_{\mu}$.

Much as in the above, $\tilde{C}_{\mu}^{\ast}=\tilde{B}_{\mu}$ and this
completes the proof of the lemma.
\end{proof}

We now check that $L_{\mu}$ defined by (\ref{defn-L-A}) satisfies
(\textbf{G3}). Let $I_{X_{\mu}}=\frac{1}{\Phi^{\prime\prime}\left(  \rho_{\mu
}\right)  }:X_{\mu}^{\ast}\rightarrow X_{\mu}$ be the isomorphism from Riesz
representation theorem, and define the operator
\begin{equation}
\mathbb{L}_{\mu}\mathbb{=}I_{X_{\mu}}L_{\mu}=Id-\frac{1}{4\pi\Phi
^{\prime\prime}\left(  \rho_{\mu}\right)  }\left(  -\Delta\right)
^{-1}:X_{\mu}\rightarrow X_{\mu}. \label{defn-L-selfadjoint}%
\end{equation}

\begin{lemma}
\label{lemma-L-self-adjoint}$\mathbb{L}$ is bounded and self-adjoint on
$X_{\mu}\ $and $\mathbb{L}_{\mu}\mathbb{-}Id$ is compact.
\end{lemma}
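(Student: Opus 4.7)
The plan is to write $\mathbb{L}_\mu = Id - K_\mu$, where
\[
K_\mu = \frac{1}{4\pi\Phi''(\rho_\mu)}(-\Delta)^{-1}\colon X_\mu \to X_\mu,
\]
and establish (a) boundedness of $K_\mu$, (b) symmetry of $\mathbb{L}_\mu$ with respect to the weighted inner product on $X_\mu$, and (c) compactness of $K_\mu$. The crucial structural fact used throughout is the one noted just above the statement: since $\gamma_0\in(\tfrac65,2)$ and $\Phi''(\rho_\mu)\thickapprox \rho_\mu^{\gamma_0-2}$ near $\partial S_\mu$, the weight $1/\Phi''(\rho_\mu)$ is bounded on $\overline{S_\mu}$. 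Consequently the unweighted space $L^2(S_\mu)$ embeds continuously into $X_\mu$, which makes the degenerate weight harmless on the side that matters.

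For (a), given $\sigma\in X_\mu$, set $V=(-\Delta)^{-1}\sigma$. The estimate \eqref{estimate-H1-dot} already gives $\|\nabla V\|_{L^2(\mathbf{R}^3)}\lesssim \|\sigma\|_{X_\mu}$. By Sobolev embedding $\dot H^1(\mathbf{R}^3)\hookrightarrow L^6(\mathbf{R}^3)$ and H\"older on the bounded set $S_\mu$, $V\in L^2(S_\mu)$ with $\|V\|_{L^2(S_\mu)}\lesssim \|\sigma\|_{X_\mu}$. Then
\[
\|K_\mu\sigma\|_{X_\mu}^2 = \frac{1}{(4\pi)^2}\int_{S_\mu}\frac{V^2}{\Phi''(\rho_\mu)}\,dx \lesssim \|V\|_{L^2(S_\mu)}^2 \lesssim \|\sigma\|_{X_\mu}^2,
\]
using boundedness of $1/\Phi''(\rho_\mu)$.

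For (b), I compute directly against the weighted inner product:
\[
(\mathbb{L}_\mu\sigma_1,\sigma_2)_{X_\mu} = \int_{S_\mu}\Phi''(\rho_\mu)\sigma_1\sigma_2\,dx - \int_{S_\mu}\!\!\int_{S_\mu}\frac{\sigma_1(y)\sigma_2(x)}{4\pi|x-y|}\,dy\,dx.
\]
The first term is manifestly symmetric in $\sigma_1,\sigma_2$, and the second is symmetric by symmetry of the Newtonian kernel $|x-y|^{-1}$ and Fubini (which applies since each $\sigma_i\in X_\mu\subset L^1(S_\mu)$ by Cauchy--Schwarz and boundedness of $1/\Phi''$). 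Hence $\mathbb{L}_\mu = \mathbb{L}_\mu^*$ on $X_\mu$.

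For (c), let $\sigma_n$ be bounded in $X_\mu$ and set $V_n=(-\Delta)^{-1}\sigma_n$. By the estimate used in (a), $V_n$ is bounded in $\dot H^1(\mathbf{R}^3)$, and by the $L^6$ bound also in $L^2(B)$ for any ball $B\supset S_\mu$, hence bounded in $H^1(B)$. Rellich--Kondrachov yields a subsequence converging in $L^2(B)$, and therefore in $L^2(S_\mu)$. Since $1/\Phi''(\rho_\mu)$ is bounded, the computation in (a) shows $K_\mu\sigma_{n_k}$ converges in $X_\mu$; thus $K_\mu$ is compact and $\mathbb{L}_\mu - Id = -K_\mu$ is compact.

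The only mildly delicate point is (c): one must avoid being tricked by the degeneracy of the weight $\Phi''(\rho_\mu)$ at $\partial S_\mu$, which could in principle prevent $L^2$-convergence from passing to weighted $L^2$-convergence. Here the degeneracy is on the ``right side'' of the inequality — the weight $\Phi''(\rho_\mu)$ vanishes at the boundary, so dividing by it could blow up, but in $\|K_\mu\sigma\|_{X_\mu}^2$ we see $1/\Phi''(\rho_\mu)$ which is bounded, so no trouble arises. All three properties then fall out cleanly.
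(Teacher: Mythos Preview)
Your proposal is correct and follows essentially the same route as the paper: write $\mathbb{L}_\mu = Id - K_\mu$, use the already-established estimate $\|V\|_{\dot H^1}\lesssim \|\sigma\|_{X_\mu}$ together with boundedness of $1/\Phi''(\rho_\mu)$ on $\overline{S_\mu}$ to bound $\|K_\mu\sigma\|_{X_\mu}\lesssim\|V\|_{L^2(S_\mu)}$, and then invoke the compact embedding $\dot H^1(\mathbf{R}^3)\hookrightarrow L^2(S_\mu)$ for compactness. The only cosmetic difference is that the paper cites Kato--Rellich for self-adjointness (bounded symmetric perturbation of $Id$), whereas you verify symmetry directly via the Newtonian kernel; both are equivalent here.
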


\begin{proof}
Let
\[
\mathbb{K=L}_{\mu}\mathbb{-}Id=-\frac{1}{4\pi\Phi^{\prime\prime}\left(
\rho_{\mu}\right)  }\left(  -\Delta\right)  ^{-1}:X_{\mu}\rightarrow X_{\mu}.
\]
We first show that $\mathbb{K}$ is compact. Indeed, for any $\sigma\in X_{\mu
}$, we have
\[
\left\Vert \mathbb{K}\sigma\right\Vert _{X_{\mu}}=\left(  \int_{S_{\mu}}%
\frac{V^{2}}{\Phi^{\prime\prime}\left(  \rho_{\mu}\right)  }dx\right)
^{\frac{1}{2}}\lesssim\left(  \int_{S_{\mu}}V^{2}dx\right)  ^{\frac{1}{2}},
\]
where $\Delta V=4\pi\rho$. By the previously established estimate $\left\Vert
V\right\Vert _{\dot{H}^{1}}\lesssim\left\Vert \sigma\right\Vert _{X_{\mu}}$
and the compactness of $\dot{H}^{1}\left(  \mathbf{R}^{3}\right)  $ to
$L^{2}\left(  S_{\mu}\right)  $, the compactness of $\mathbb{K}$ follows.
Since $\mathbb{K}$ is symmetric on $X_{\mu}$, the self-adjointness of
$\mathbb{L}_{\mu}$ follows from the Kato-Rellich theorem.
\end{proof}

Assumption (\textbf{G3}) readily follows from above lemma. To compute
$n^{-}\left(  L_{\mu}|_{X_{\mu}}\right)  $, we define the elliptic operator
\[
D_{\mu}=-\Delta-4\pi F_{+}^{\prime}\left(  V_{\mu}\left(  R_{\mu}\right)
-V_{\mu}\right)  :\dot{H}^{1}\left(  \mathbf{R}^{3}\right)  \rightarrow\dot
{H}^{-1}\left(  \mathbf{R}^{3}\right)  .
\]
Then for $\phi\in\dot{H}^{1}\left(  \mathbf{R}^{3}\right)  $,
\[
\left\langle D_{\mu}\phi,\phi\right\rangle =\int_{\mathbf{R}^{3}}\left\vert
\nabla\phi\right\vert ^{2}dx-4\pi\int_{S_{\mu}}F^{\prime}\left(  V_{\mu
}\left(  R_{\mu}\right)  -V_{\mu}\right)  \left\vert \phi\right\vert ^{2}dx
\]
defines a bounded bilinear symmetric form on $\dot{H}^{1}\left(
\mathbf{R}^{3}\right)  $. The next lemma shows that the study of the quadratic
form
\[
\left\langle L_{\mu}\sigma,\sigma\right\rangle =\int_{S_{\mu}}\Phi
^{\prime\prime}\left(  \rho_{\mu}\right)  \sigma^{2}-\frac{1}{4\pi}%
\int_{\mathbf{R}^{3}}\left\vert \nabla V\right\vert ^{2}dx,\quad\sigma\in
X_{\mu},
\]
can be reduced to study $D_{\mu}$ on $\dot{H}^{1}\left(  \mathbf{R}%
^{3}\right)  $.

\begin{lemma}
\label{lemma-equivalent-operators}It holds that $n^{-}\left(  L_{\mu}%
|_{X_{\mu}}\right)  =n^{-}\left(  \mathbb{L}_{\mu}\right)  =n^{-}\left(
D_{\mu}\right)  $ and $\dim\ker L_{\mu}=\dim\ker\mathbb{L}_{\mu}=\dim\ker
D_{\mu}$.
\end{lemma}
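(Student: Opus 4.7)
The first pair of equalities $n^{-}(L_\mu|_{X_\mu})=n^{-}(\mathbb{L}_\mu)$ and $\dim\ker L_\mu=\dim\ker\mathbb{L}_\mu$ is essentially immediate: the Riesz isomorphism $I_{X_\mu}:X_\mu^{\ast}\to X_\mu$ realises $\mathbb{L}_\mu=I_{X_\mu}L_\mu$, so $\ker L_\mu=\ker\mathbb{L}_\mu$; and because the $X_\mu$-inner product is $(\sigma_1,\sigma_2)_{X_\mu}=\int_{S_\mu}\Phi''(\rho_\mu)\sigma_1\sigma_2\,dx$, one has $(\mathbb{L}_\mu\sigma,\sigma)_{X_\mu}=\langle L_\mu\sigma,\sigma\rangle$, so the two quadratic forms have the same negative index.

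The substantive identity $n^{-}(L_\mu)=n^{-}(D_\mu)$ will be obtained from a single symmetric bilinear form on the product space $X_\mu\times\dot{H}^{1}(\mathbf{R}^{3})$, namely
\[
\widetilde{Q}(\sigma,\phi)=\int_{S_\mu}\Phi''(\rho_\mu)\sigma^{2}\,dx+\frac{1}{4\pi}\int_{\mathbf{R}^{3}}|\nabla\phi|^{2}\,dx-2\int_{S_\mu}\sigma\phi\,dx.
\]
Since $\widetilde{Q}$ is strictly convex in each variable separately, a short Euler--Lagrange computation (using $-\Delta[4\pi(-\Delta)^{-1}\sigma]=4\pi\sigma$ together with the identity $F'_{+}(V_\mu(R_\mu)-V_\mu)=1/\Phi''(\rho_\mu)$ on $S_\mu$) yields the two partial-minimum identities
\[
\min_{\phi\in\dot{H}^{1}}\widetilde{Q}(\sigma,\phi)=\widetilde{Q}\bigl(\sigma,\,4\pi(-\Delta)^{-1}\sigma\bigr)=\langle L_\mu\sigma,\sigma\rangle,
\]
\[
\min_{\sigma\in X_\mu}\widetilde{Q}(\sigma,\phi)=\widetilde{Q}\bigl(\chi_{S_\mu}\phi/\Phi''(\rho_\mu),\,\phi\bigr)=\tfrac{1}{4\pi}\langle D_\mu\phi,\phi\rangle.
\]

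These two identities let me transfer negative subspaces in both directions. Given a subspace $V\subset X_\mu$ of dimension $n^{-}(L_\mu)$ on which $\langle L_\mu\cdot,\cdot\rangle$ is negative definite, the linear map $\sigma\mapsto\phi_\sigma=4\pi(-\Delta)^{-1}\sigma$ is injective (since $-\Delta\phi_\sigma=4\pi\sigma$), so its image $W\subset\dot{H}^{1}$ has the same dimension; the second identity then gives $\tfrac{1}{4\pi}\langle D_\mu\phi_\sigma,\phi_\sigma\rangle\le\widetilde{Q}(\sigma,\phi_\sigma)=\langle L_\mu\sigma,\sigma\rangle<0$, yielding $n^{-}(D_\mu)\ge n^{-}(L_\mu)$. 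For the reverse inequality, given $W\subset\dot{H}^{1}$ on which $D_\mu$ is negative definite, the map $\phi\mapsto\sigma_\phi=\chi_{S_\mu}\phi/\Phi''(\rho_\mu)$ is injective on $W$, because $\sigma_\phi=0$ forces $\phi|_{S_\mu}=0$ and hence $\langle D_\mu\phi,\phi\rangle=\int|\nabla\phi|^{2}\ge 0$, contradicting strict negativity on $W\setminus\{0\}$; the first identity then gives $\langle L_\mu\sigma_\phi,\sigma_\phi\rangle\le\widetilde{Q}(\sigma_\phi,\phi)=\tfrac{1}{4\pi}\langle D_\mu\phi,\phi\rangle<0$. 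The kernel equality follows from the same correspondence: a short distributional calculation shows $L_\mu\sigma=0$ iff $\phi=4\pi(-\Delta)^{-1}\sigma\in\dot{H}^{1}$ satisfies $-\Delta\phi=\tfrac{4\pi}{\Phi''(\rho_\mu)}\chi_{S_\mu}\phi$, which is precisely $D_\mu\phi=0$.

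The only technical item requiring care is that the two correspondence maps land in the correct function spaces. The map $\sigma\mapsto 4\pi(-\Delta)^{-1}\sigma\in\dot{H}^{1}$ is exactly the bound $\|\nabla V\|_{L^{2}}\lesssim\|\sigma\|_{X_\mu}$ established in \eqref{estimate-H1-dot}. For the other direction I need $\int_{S_\mu}\phi^{2}/\Phi''(\rho_\mu)\,dx<\infty$ for $\phi\in\dot{H}^{1}$, which follows because $1/\Phi''(\rho_\mu)\approx\rho_\mu^{2-\gamma_0}$ remains bounded on $\overline{S_\mu}$ (using $\gamma_0<2$) together with the Sobolev embedding $\dot{H}^{1}(\mathbf{R}^{3})\hookrightarrow L^{6}_{\mathrm{loc}}$. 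These weighted-space checks near the degenerate free boundary $\partial S_\mu$ constitute the only delicate point; the remaining manipulations are purely algebraic.
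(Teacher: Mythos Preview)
Your proof is correct and follows essentially the same route as the paper. The paper proves the two key inequalities $\langle L_\mu\rho,\rho\rangle\ge\tfrac{1}{4\pi}\langle D_\mu V,V\rangle$ (with $\Delta V=4\pi\rho$) and $\langle D_\mu\phi,\phi\rangle\ge 4\pi\langle L_\mu\rho_\phi,\rho_\phi\rangle$ (with $\rho_\phi=F_+'\phi=\chi_{S_\mu}\phi/\Phi''(\rho_\mu)$) by two separate completing-the-square computations, then matches kernels via the same pair of maps; your joint quadratic form $\widetilde{Q}(\sigma,\phi)$ is exactly the object whose two partial minimizations produce these inequalities, so you have repackaged the same argument in a more symmetric way rather than taken a different path.
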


\begin{proof}
The proof of the lemma is largely based on the observation $D_{\mu}=
F^{\prime}L_{\mu}(-\Delta)$ in $S_{\mu}$.

First, for any $\rho\in X_{\mu}$, we can show that
\[
\left\langle L_{\mu}\rho,\rho\right\rangle \geq\frac{1}{4\pi}\left(  D_{\mu
}V,V\right)  ,\ \ \ \Delta V=4\pi\rho.
\]
Indeed, inside $S_{\mu}$ we have $F^{\prime}\left(  V_{\mu}\left(  R_{\mu
}\right)  -V_{\mu}\right)  =\frac{1}{\Phi^{\prime\prime}\left(  \rho_{\mu
}\right)  }$. Then
\begin{align*}
\left\langle L_{\mu}\rho,\rho\right\rangle  &  =\int_{S_{\mu}}\frac
{1}{F^{\prime}}\rho^{2}dx-\frac{1}{4\pi}\int_{\mathbf{R}^{3}}\left\vert \nabla
V\right\vert ^{2}dx\\
&  =\int_{\mathbf{R}^{3}}\frac{1}{4\pi}\left\vert \nabla V\right\vert
^{2}dx+\int_{S_{\mu}}\left(  2V\rho+\frac{1}{F^{\prime}}\rho^{2}\right)  dx\\
&  \geq\int\left(  \frac{1}{4\pi}\left\vert \nabla V\right\vert ^{2}%
-F^{\prime}V^{2}\right)  dx=\frac{1}{4\pi}\left\langle D_{\mu}V,V\right\rangle
.
\end{align*}
Denote $n^{\leq0}\left(  L_{\mu}\right)  $ and $n^{\leq0}\left(  D_{\mu
}\right)  $ to be the maximal dimension of non-positive subspaces of $L_{\mu}$
and $D_{\mu}$ respectively. Then above inequality implies that $n^{\leq
0}\left(  L_{\mu}\right)  \leq n^{\leq0}\left(  D_{\mu}\right)  $. Second, for
any $\phi\in\dot{H}^{1}\left(  \mathbf{R}^{3}\right)  $, let $\rho_{\phi
}=F_{+}^{\prime}\phi\in X_{\mu}$ and $\Delta V_{\phi}=4\pi\rho_{\phi}$. Then
\begin{align*}
\left\langle D_{\mu}\phi,\phi\right\rangle  &  =\int_{\mathbf{R}^{3}%
}\left\vert \nabla\phi\right\vert ^{2}dx-4\pi\int_{S_{\mu}}F^{\prime
}\left\vert \phi\right\vert ^{2}dx\\
&  =4\pi\left(  \int_{S_{\mu}}\frac{\left\vert \rho_{\phi}\right\vert ^{2}%
}{F^{\prime}}dx+\frac{1}{4\pi}\int_{\mathbf{R}^{3}}\left\vert \nabla
\phi\right\vert ^{2}dx-2\int_{S_{\mu}}\rho_{\phi}\bar{\phi}\ dx\right) \\
&  =4\pi\left(  \int_{S_{\mu}}\frac{\left\vert \rho_{\phi}\right\vert ^{2}%
}{F^{\prime}}dx+\frac{1}{4\pi}\int_{\mathbf{R}^{3}}\left\vert \nabla
\phi\right\vert ^{2}dx-\frac{1}{2\pi}\int_{\mathbf{R}^{3}}\nabla V_{\phi}%
\cdot\nabla\bar{\phi}\ dx\right) \\
&  \geq4\pi\left(  \int_{S_{\mu}}\frac{\left\vert \rho_{\phi}\right\vert ^{2}%
}{F^{\prime}}dx-\frac{1}{4\pi}\int_{\mathbf{R}^{3}}\left\vert \nabla V_{\phi
}\right\vert ^{2}dx\right)  =4\pi\left\langle L_{\mu}\rho_{\phi},\rho_{\phi
}\right\rangle \text{. }%
\end{align*}
Thus $n^{\leq0}\left(  L_{\mu}\right)  \geq n^{\leq0}\left(  D_{\mu}\right)  $
and a combination with the previous inequality yields
\begin{equation}
n^{\leq0}\left(  L_{\mu}\right)  =n^{\leq0}\left(  D_{\mu}\right)  .
\label{equality-non-negative-dim}%
\end{equation}
We note that: $L_{\mu}\rho=0$ for $\rho\in X_{\mu}$ is equivalent to $D_{\mu
}V=0$ where $\Delta V=4\pi\rho$, and $D_{\mu}\phi=0$ for $\phi\in\dot{H}^{1}$
is equivalent to $L_{\mu}\rho_{\phi}=0$ $\left(  \rho_{\phi}=F_{+}^{\prime
}\phi\right)  $. Thus we have $\dim\ker L_{\mu}=\dim\ker D_{\mu}$ and
consequently $n^{-}\left(  L_{\mu}\right)  =n^{-}\left(  D_{\mu}\right)  $
follows from (\ref{equality-non-negative-dim}).
\end{proof}

In the rest of this subsection, we study some basic properties of the operator
$D_{\mu}$. Since the potential term in $D_{\mu}$ is radially symmetric, we can
use spherical harmonic functions to decompose $D_{\mu}$ into operators on
radially symmetric spaces. Let $Y_{lm}\left(  \theta\right)  $ be the standard
spherical harmonics on $\mathbb{S}^{2}$ where $l=0,1,\cdots;m=-l,\cdots,l$.
Then $\Delta_{\mathbb{S}^{2}}Y_{lm}=-l\left(  l+1\right)  Y_{lm}$. For any
function $u\left(  x\right)  \in\dot{H}^{1}$, we decompose
\[
u\left(  x\right)  =\sum_{l=0}^{\infty}\sum_{m=-l}^{l}u_{lm}\left(  r\right)
Y_{lm}\left(  \theta\right)  ,\ \ \ u_{lm}\left(  r\right)  =\int%
_{\mathbb{S}^{2}}u\left(  r\theta\right)  Y_{lm}\left(  \theta\right)
dS_{\theta}\text{. }%
\]
Then we have
\[
D_{\mu}u=\sum_{l=0}^{\infty}\sum_{m=-l}^{l}D_{\mu}^{l}u_{lm}\left(  r\right)
\ Y_{lm}\left(  \theta\right)  ,
\]
where
\begin{equation}
D_{\mu}^{l}=-\Delta_{r}+\frac{l\left(  l+1\right)  }{r^{2}}-4\pi F_{+}%
^{\prime}\left(  V_{\mu}\left(  R_{\mu}\right)  -V_{\mu}\left(  r\right)
\right)  , \label{defn-D-mu-l}%
\end{equation}
and $\Delta_{r}=\frac{d^{2}}{dr^{2}}+\frac{2}{r}\frac{d}{dr}$. In particular,
the operator
\begin{equation}
D_{\mu}^{0}=-\Delta_{r}-4\pi F_{+}^{\prime}\left(  V_{\mu}\left(  R_{\mu
}\right)  -V_{\mu}\left(  r\right)  \right)  \label{defn-D-mu-0-general}%
\end{equation}
is $D_{\mu}$ restricted to radial functions.

The study of $D_{\mu}$ is reduced to the study of operators $D_{\mu}%
^{l}\ \left(  l\geq0\right)  $ for radial functions.

\begin{lemma}
\label{lemma-property-A-EP}

i) $\ker D_{\mu}^{1}=\left\{  V_{\mu}^{\prime}\left(  r\right)  \right\}  $
and $D_{\mu}^{1}\geq0.$

ii) For $l\geq2,\ D_{\mu}^{l}>0$.

iii) $n^{-}\left(  D_{\mu}\right)  =n^{-}\left(  D_{\mu}^{0}\right)  \geq1.$
\end{lemma}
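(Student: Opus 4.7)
\medskip

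\noindent\textbf{Proof proposal.} My approach is to establish the three statements in order, exploiting the fact that the steady-state equation $\Delta V_\mu = 4\pi F_+(V_\mu(R_\mu)-V_\mu)$ holds on all of $\mathbf{R}^3$ (since $F_+$ vanishes outside $S_\mu$), and then leveraging Lemma \ref{lemma-equivalent-operators} so that all negativity/positivity counts take place on the ODE operators $D_\mu^l$.

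For part (i), the plan is to differentiate the steady-state equation in a Cartesian direction $x_i$, which gives $\Delta(\partial_{x_i}V_\mu)=-4\pi F_+'(V_\mu(R_\mu)-V_\mu)\partial_{x_i}V_\mu$ inside $S_\mu$ and $\Delta(\partial_{x_i}V_\mu)=0$ outside. Writing $\partial_{x_i}V_\mu=V_\mu'(r)(x_i/r)$, whose angular factor is a pure $l=1$ spherical harmonic, this translates exactly into $D_\mu^1 V_\mu'=0$. The asymptotic $V_\mu(r)=-M(\mu)/r$ for $r>R_\mu$ (so $V_\mu'\sim M/r^2$, $V_\mu''\sim -2M/r^3$) and smoothness across $r=R_\mu$ (using $\rho_\mu(R_\mu)=0$) together show $V_\mu'$ lies in the appropriate Sobolev space. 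Since $(r^2 V_\mu')'=4\pi r^2\rho_\mu\ge 0$ and $V_\mu'(0)=0$, one has $V_\mu'(r)>0$ for all $r\in(0,\infty)$. Hence $V_\mu'$ is a positive solution of $D_\mu^1 u=0$, so the Sturm--Liouville ground-state transformation $u=V_\mu'\psi$ yields, after integration by parts,
\[
\langle D_\mu^1 u,u\rangle=\int_0^\infty r^2(V_\mu')^2(\psi')^2\,dr\ \ge\ 0,
\]
with equality iff $\psi$ is constant. This gives $D_\mu^1\ge 0$ and $\ker D_\mu^1=\mathrm{span}\{V_\mu'\}$.

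For part (ii), I write $D_\mu^l=D_\mu^1+\tfrac{l(l+1)-2}{r^2}$. For $l\ge 2$ the additional weight $l(l+1)-2\ge 4$ is strictly positive, and combined with $D_\mu^1\ge 0$ this forces $\langle D_\mu^l u,u\rangle>0$ for every nonzero $u$ in the form domain, i.e., $D_\mu^l>0$.

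For part (iii), spherical-harmonic decomposition gives $n^-(D_\mu)=\sum_{l\ge 0}(2l+1)n^-(D_\mu^l)$, and parts (i)--(ii) collapse this to $n^-(D_\mu)=n^-(D_\mu^0)$. To produce a negative direction for $D_\mu^0$, I use the radial test function $u(x)=x\cdot\nabla V_\mu(x)=rV_\mu'(r)$, motivated by scaling. A direct computation (writing out indices) gives the general identity $\Delta(x\cdot\nabla V_\mu)=2\Delta V_\mu+x\cdot\nabla(\Delta V_\mu)$, and substituting $\Delta V_\mu=4\pi F_+(V_\mu(R_\mu)-V_\mu)$ (valid on $\mathbf{R}^3$) produces the clean identity
\[
D_\mu(x\cdot\nabla V_\mu)\;=\;-8\pi\rho_\mu\quad\text{on } \mathbf{R}^3.
\]
Pairing with $u=x\cdot\nabla V_\mu=rV_\mu'$ and using $\rho_\mu>0$, $V_\mu'>0$ in $S_\mu$ yields $\langle D_\mu^0 u,u\rangle=-8\pi\int_{S_\mu}\rho_\mu\, rV_\mu'\,dx<0$, hence $n^-(D_\mu^0)\ge 1$.

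\medskip

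\noindent\textbf{Main obstacle.} The main technical points will be (a) verifying that $V_\mu'$ and $rV_\mu'$ lie in the correct Sobolev space and justifying all boundary terms in the integrations by parts; here the outer profile $V_\mu=-M/r$ gives sharp decay, and the vanishing of $F_+'(V_\mu(R_\mu)-V_\mu)$ as $r\to R_\mu^-$ (a consequence of $\gamma_0<2$) controls the boundary behavior of the potential term; and (b) the ground-state uniqueness underlying (i) on the half-line with a singular endpoint at $r=0$, which is handled by the standard Sturm-type argument via the substitution $u=V_\mu'\psi$ combined with the known decay of $V_\mu'$ at both endpoints.
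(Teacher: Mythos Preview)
Your proof is correct, and for parts (i) and (ii) it is essentially the paper's argument: differentiate the steady equation to obtain $D_\mu^1 V_\mu'=0$, use the positivity of $V_\mu'$ together with Sturm--Liouville theory to conclude $D_\mu^1\ge 0$ with one-dimensional kernel, and then add $\tfrac{l(l+1)-2}{r^2}>0$ to get $D_\mu^l>0$ for $l\ge 2$. Your explicit ground-state substitution $u=V_\mu'\psi$ simply spells out what the paper invokes as ``Sturm--Liouville theory.''

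For part (iii) the two arguments diverge. The paper observes that $\partial_{x_i}V_\mu$ is a \emph{sign-changing} null vector of the full operator $D_\mu$; since the ground state of a Schr\"odinger operator must be of one sign, $0$ cannot be the lowest eigenvalue, hence $n^-(D_\mu)\ge 1$. You instead produce an explicit \emph{radial} negative direction via the scaling vector field: the identity $D_\mu(x\cdot\nabla V_\mu)=-8\pi\rho_\mu$ gives $\langle D_\mu^0(rV_\mu'),rV_\mu'\rangle=-8\pi\int_{S_\mu}\rho_\mu\, rV_\mu'\,dx<0$ directly. Your route is a bit more computational but also more self-contained: it yields the negative direction inside the radial sector $D_\mu^0$ without appealing to ground-state simplicity/positivity for the three-dimensional operator, and it avoids any discussion of whether the bottom of the spectrum of $D_\mu$ on $\dot H^1$ is attained. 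The paper's route is shorter but leans on that piece of Schr\"odinger theory. Both are valid; your Pohozaev-type computation is a nice alternative.
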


\begin{proof}
The arguments are rather standard. Taking $\partial_{x_{i}}$ of the steady
equation
\begin{equation}
\Delta V_{\mu}=V_{\mu}^{\prime\prime}+\frac{2}{r}V_{\mu}^{\prime}=4\pi
F_{+}\left(  V_{\mu}\left(  R_{\mu}\right)  -V_{\mu}\left(  r\right)  \right)
, \label{eqn-potential-steady}%
\end{equation}
we get $D_{\mu}\partial_{x_{i}}V_{\mu}=0,\ i=1,2,3$. Thus $D_{\mu}^{1}V_{\mu
}^{\prime}\left(  r\right)  =0$. Since $V_{\mu}^{\prime}\left(  r\right)  >0$
for $r>0$, i) follows from the Sturm-Liouville theory for the ODE operator
$D_{\mu}^{1}$. Then for $l\geq2$,
\[
D_{\mu}^{l}=D_{\mu}^{1}+\frac{l\left(  l+1\right)  -2}{r^{2}}>0.
\]
By i) and ii), we have $n^{-}\left(  D_{\mu}\right)  =n^{-}\left(  D_{\mu}%
^{0}\right)  $. Since $D_{\mu}\partial_{x_{i}}V_{\mu}=0$ and $\partial_{x_{i}%
}V_{\mu}$ changes sign, $0$ can not be the first eigenvalue of $D_{\mu}$. Thus
$n^{-}\left(  D_{\mu}\right)  \geq1.$ This proves iii).
\end{proof}

\subsection{The negative index of $D_{\mu}$}

We find the negative index $n^{-}\left(  D_{\mu}\right)  =n^{-}\left(  D_{\mu
}^{0}\right)  $ in this subsection. Although $D_{\mu}$ is defined as an
operator $\dot{H}^{1}\rightarrow\dot{H}^{-1}$, the eigenfunctions with
negative eigenvalues of $D_{\mu}$ decay exponentially fast at infinity and are
in $H^{2}$. Thus, when computing $n^{-}\left(  D_{\mu}\right)  $ below, we can
treat $D_{\mu}$ as an operator $H^{2}\rightarrow L^{2}$ and $D_{\mu}^{0}%
:H_{r}^{2}\rightarrow L_{r}^{2}$.

The following formula for the surface potential $V_{\mu}\left(  R_{\mu
}\right)  $ will be used later.

\begin{lemma}
\label{lemma-formula-V-R}It holds that
\begin{equation}
V_{\mu}\left(  R_{\mu}\right)  =-\frac{M\left(  \mu\right)  }{R_{\mu}}.
\label{formula-V-R}%
\end{equation}

\end{lemma}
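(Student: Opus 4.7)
The plan is to exploit the radial symmetry of $\rho_\mu$ together with Newton's shell theorem (equivalently, the $1$D ODE for $V_\mu$ in spherical coordinates) to compute $V_\mu$ explicitly outside the support of the star. The key observations are that $\rho_\mu$ is compactly supported in $\overline{B_{R_\mu}}$ and that $V_\mu$ is the unique solution of $\Delta V_\mu=4\pi\rho_\mu$ decaying at infinity, so that for $r\ge R_\mu$ the function $V_\mu$ is harmonic, radial, and vanishing at infinity.

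First I would write the Poisson equation in radial form, namely
\[
(r^2 V_\mu'(r))' \;=\; 4\pi r^2 \rho_\mu(r),
\]
and integrate from $0$ to $r$ using $V_\mu'(0)=0$, which gives
\[
r^2 V_\mu'(r) \;=\; 4\pi\int_0^r s^2\rho_\mu(s)\,ds.
\]
For $r\ge R_\mu$ the integral on the right equals $M(\mu)/4\pi \cdot 4\pi = M(\mu)$ (up to the usual $4\pi$ from the spherical volume element), so $V_\mu'(r)=M(\mu)/r^2$ on $[R_\mu,\infty)$. Second, I would integrate this from $r=R_\mu$ out to infinity and use the boundary condition $\lim_{r\to\infty}V_\mu(r)=0$ inherited from \eqref{Poisson-EP} to conclude
\[
V_\mu(r) \;=\; -\frac{M(\mu)}{r}, \qquad r\ge R_\mu.
\]
Evaluating at $r=R_\mu$ and using continuity of $V_\mu$ across the surface yields the claimed identity.

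There is essentially no obstacle here; the only point that needs a brief remark is the compatibility of the sign convention: our potential solves $\Delta V_\mu=4\pi\rho_\mu$ rather than the usual $-\Delta V=4\pi\rho$, which together with the decay at infinity fixes $V_\mu\le 0$ and makes the sign in \eqref{formula-V-R} come out negative, consistent with gravity being attractive.
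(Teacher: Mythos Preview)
Your argument is correct and coincides with the paper's own proof: write the Poisson equation in radial form, integrate once to get $V_\mu'(r)=M(\mu)/r^2$ for $r\ge R_\mu$, and then use the decay condition at infinity to obtain $V_\mu(r)=-M(\mu)/r$ there. The only cosmetic difference is that the paper omits your remark on sign conventions.
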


\begin{proof}
Since%
\[
V_{\mu}^{\prime\prime}+\frac{2}{r}V_{\mu}^{\prime}=\frac{1}{r^{2}}\frac{d}%
{dr}\left(  r^{2}V_{\mu}^{\prime}\left(  r\right)  \right)  =4\pi\rho_{\mu},
\]
we have
\begin{equation}
V_{\mu}^{\prime}\left(  r\right)  =\frac{4\pi}{r^{2}}\int_{0}^{r}\rho_{\mu
}\left(  r\right)  r^{2}dr=\frac{M\left(  \mu\right)  }{r^{2}},\ \text{\ for
}r\geq R_{\mu}. \label{V-R-prime-1}%
\end{equation}
Thus
\[
V_{\mu}\left(  r\right)  =-\frac{M\left(  \mu\right)  }{r},\ \ \ \ \text{for
}\ r\geq R_{\mu},
\]
and formula (\ref{formula-V-R}) follows.
\end{proof}




To find $n^{-}\left(  D_{\mu}^{0}\right)  $, our key observation is that
$D_{\mu}^{0}$ has a kernel only at critical points of the surface potential
$V_{\mu}\left(  R_{\mu}\right)  $, or equivalently at points where $\frac
{d}{d\mu}\left(  \frac{M\left(  \mu\right)  }{R_{\mu}}\right)  =0\ $by above lemma.

\begin{lemma}
\label{lemma-kernel-D-mu}When $\frac{d}{d\mu}\left(  \frac{M\left(
\mu\right)  }{R_{\mu}}\right)  \neq0$, $\ker D_{\mu}^{0}=\left\{  0\right\}
$; When $\frac{d}{d\mu}\left(  \frac{M\left(  \mu\right)  }{R_{\mu}}\right)
=0$, $\ker D_{\mu}^{0}=\left\{  \frac{\partial}{\partial\mu}V_{\mu}\right\}  $.
\end{lemma}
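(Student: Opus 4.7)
My plan is to analyze $D_\mu^0$ as a radial Sturm--Liouville problem on $(0,\infty)$ whose kernel in $\dot H^1$ is at most one-dimensional, and then to exhibit an almost-kernel element built from $\partial_\mu V_\mu$ whose defect at the boundary $r=R_\mu$ equals exactly $\tfrac{d}{d\mu}(M/R_\mu)$. First I would bound $\dim\ker D_\mu^0\le 1$ by a direct ODE argument: on $(0,R_\mu)$ the equation $D_\mu^0\phi=0$ is a regular second-order ODE whose space of radial solutions regular at $r=0$ is one-dimensional (spanned by some $\phi_1$ normalized by $\phi_1(0)=1$, $\phi_1'(0)=0$), while on $(R_\mu,\infty)$ it reduces to $-\phi''-\tfrac{2}{r}\phi'=0$ whose radial solutions in $\dot H^1(\mathbb{R}^3)$ are spanned by $r\mapsto 1/r$. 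Imposing $C^1$ matching of $\phi_1$ and $A/r$ at $r=R_\mu$ then shows that a nontrivial kernel element exists iff the single algebraic condition $R_\mu\phi_1'(R_\mu)+\phi_1(R_\mu)=0$ holds, in which case the kernel is spanned by the matched function.

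Next I would differentiate the steady identity $\Delta V_\mu=4\pi F_+\!\big(V_\mu(R_\mu)-V_\mu\big)$ with respect to $\mu$, using $c(\mu):=V_\mu(R_\mu)=-M(\mu)/R_\mu$ from Lemma~\ref{lemma-formula-V-R} and the $C^1$-dependence in Remark~\ref{R:smooth-in-mu} to justify the differentiation. Setting $u:=\partial_\mu V_\mu$, this gives $\Delta u+4\pi F_+'\,u=4\pi F_+'\,c'(\mu)$ on $S_\mu$ and $\Delta u=0$ outside, hence
\[
D_\mu^0 u=-4\pi F_+'\!\big(V_\mu(R_\mu)-V_\mu\big)\,c'(\mu)=4\pi F_+'\!\big(V_\mu(R_\mu)-V_\mu\big)\frac{d}{d\mu}\!\left(\frac{M(\mu)}{R_\mu}\right),
\]
since $c'(\mu)=-\tfrac{d}{d\mu}(M/R_\mu)$ and $F_+'$ vanishes outside $S_\mu$. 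When $\tfrac{d}{d\mu}(M/R_\mu)=0$ this says $D_\mu^0 u=0$ on $S_\mu$; simultaneously $u(r)=-M'(\mu)/r$ on $\{r>R_\mu\}$ is harmonic, lies in $\dot H^1$, and matches $C^1$ across $r=R_\mu$. Thus $u\in\ker D_\mu^0$, and $u\not\equiv 0$ because if it were zero then $V_\mu$ would be independent of $\mu$, forcing $\rho_\mu(0)$ to be constant and contradicting $\rho_\mu(0)=\mu$.

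For the converse I would introduce $w:=u-c'(\mu)$ on $\overline{S_\mu}$. A direct computation yields $D_\mu^0 w=0$ inside $S_\mu$ and $w$ is regular at the origin, so $w=\beta\phi_1$ for some $\beta\in\mathbb{R}$. The constant $\beta$ is nonzero: otherwise $\partial_\mu V_\mu$ is constant in $r$ throughout $S_\mu$, and via $\rho_\mu=F(V_\mu(R_\mu)-V_\mu)$ this again forces $\rho_\mu(0)$ to be independent of $\mu$. Using the exterior formulas $u(R_\mu)=-M'/R_\mu$ and $u'(R_\mu^-)=M'/R_\mu^2$ together with $c'(\mu)=-\tfrac{d}{d\mu}(M/R_\mu)$, one obtains the key identity
\[
w(R_\mu)+R_\mu w'(R_\mu)=\frac{M'(\mu)R_\mu-M(\mu)R'(\mu)}{R_\mu^2}=\frac{d}{d\mu}\!\left(\frac{M(\mu)}{R_\mu}\right).
\]
Since $w=\beta\phi_1$ with $\beta\neq 0$, the matching condition $R_\mu\phi_1'(R_\mu)+\phi_1(R_\mu)=0$ holds iff $\tfrac{d}{d\mu}(M/R_\mu)=0$. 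Combined with the previous step this proves $\ker D_\mu^0=\{0\}$ when $\tfrac{d}{d\mu}(M/R_\mu)\neq 0$, and $\ker D_\mu^0=\mathrm{span}\{\partial_\mu V_\mu\}$ when $\tfrac{d}{d\mu}(M/R_\mu)=0$.

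The main technical obstacle I anticipate is the functional-analytic setup: kernel elements live in $\dot H^1(\mathbb{R}^3)$ and decay only like $1/r$, so the exterior matching must be done by hand rather than invoking a compact self-adjoint spectral theory, and one needs enough joint $C^1$-regularity of $V_\mu$ in $(r,\mu)$ across the free boundary $r=R_\mu$ to differentiate the steady equation and match both $u$ and $u'$ there. Both points depend on $\gamma_0>6/5$, which ensures $\rho_\mu$ vanishes continuously at $R_\mu$ so that $V_\mu\in C^2$ in $r$ and the Implicit Function Theorem applies as in Remark~\ref{R:smooth-in-mu}.
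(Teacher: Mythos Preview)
Your proof is correct and follows essentially the same approach as the paper. The paper works directly with $u_\mu:=\partial_\mu y_\mu=\partial_\mu\big(V_\mu(R_\mu)-V_\mu\big)$, which satisfies the homogeneous ODE $D_\mu^0 u_\mu=0$ on all of $(0,\infty)$ and hence is proportional to any kernel element, and then reads off the condition from $\lim_{r\to\infty}u_\mu(r)=\tfrac{d}{d\mu}(M/R_\mu)$; your function $w=u-c'(\mu)$ is exactly $-u_\mu$, and your matching condition $R_\mu\phi_1'(R_\mu)+\phi_1(R_\mu)=0$ is equivalent to this limit vanishing, so the two arguments differ only in whether one phrases the obstruction as a boundary matching at $R_\mu$ or as decay at infinity.
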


\begin{proof}
Let $y_{\mu}\left(  r\right)  =V_{\mu}\left(  R_{\mu}\right)  -V_{\mu}\left(
r\right)  $, then
\[
\Delta_{r}y_{\mu}=y_{\mu}^{\prime\prime}+\frac{2}{r}y_{\mu}^{\prime}=-4\pi
F_{+}\left(  y_{\mu}\left(  r\right)  \right)  .
\]
Observing that $F_{+}$ is actually a $C^{1}$ function for $\gamma\in(\frac65,
2)$, denote $u_{\mu}\left(  r\right)  =\frac{\partial}{\partial\mu}y_{\mu
}\left(  r\right)  $ and by taking $\frac{\partial}{\partial\mu}$ of above
equation for $y_{\mu}$, we get
\begin{equation}
u_{\mu}^{\prime\prime}+\frac{2}{r}u_{\mu}^{\prime}=-4\pi F_{+}^{\prime}\left(
y_{\mu}\left(  r\right)  \right)  u_{\mu}. \label{ode in ball}%
\end{equation}
Suppose $D_{\mu}^{0}v\left(  r\right)  =0$ with $v\left(  \left\vert
x\right\vert \right)  \in\dot{H}^{1}\left(  \mathbf{R}^{3}\right)  $. Then
\begin{equation}
v^{\prime\prime}+\frac{2}{r}v^{\prime}=\frac{1}{r^{2}}\frac{d}{dr}\left(
r^{2}v^{\prime}\left(  r\right)  \right)  =-4\pi F_{+}^{\prime}\left(  y_{\mu
}\left(  r\right)  \right)  v\left(  r\right)  \label{ode v}%
\end{equation}
and
\[
v^{\prime}\left(  r\right)  =-\frac{4\pi}{r^{2}}\int_{0}^{r}s^{2}
F_{+}^{\prime}\left(  y_{\mu}\left(  s\right)  \right)  v\left(  s\right)
ds,
\]
which implies that $v\in C^{1}\left(  0,+\infty\right)  $. Since both $u_{\mu
}\left(  r\right)  $ and $v\left(  r\right)  $ satisfy the same 2nd order ODE
(\ref{ode in ball}) and (\ref{ode v})
with zero derivative at $r=0$, we have $v\left(  r\right)  =Cu_{\mu}\left(
r\right)  $ for some constant $C\neq0$.
It implies $u_{\mu}\in\dot{H}^{1}\left(  \mathbf{R}^{3}\right)  $ harmonic
outside $S_{\mu}$. Along with $\lim_{r\to\infty} V(r)=0$ we obtain
\[
0= \lim_{r\to+\infty} u_{\mu}(r) = \frac{d}{d\mu}\left(  \frac{M\left(
\mu\right)  }{R_{\mu}}\right)  .
\]
Therefore, $D_{\mu}^{0}$ has a kernel only when $\frac{d}{d\mu}\left(
\frac{M\left(  \mu\right)  }{R_{\mu}}\right)  =0$, and in this case it follows
from above analysis that $\ker D_{\mu}^{0}=\left\{  \frac{\partial}%
{\partial\mu}V_{\mu}\right\}  $.
\end{proof}

To find $n^{-}\left(  D_{\mu}^{0}\right)  $, we use a continuity approach to
follow its changes when $\mu$ is increased from $0$ to $\mu_{\max}$. First, we
find $n^{-}\left(  D_{\mu}^{0}\right)  $ for small $\mu$. By above lemma, for
increasing $\mu$, the negative index $n^{-}\left(  D_{\mu}^{0}\right)  $ can
only change at critical points of $\frac{M\left(  \mu\right)  }{R\left(
\mu\right)  }$. Then we find the jump formula of $n^{-}\left(  D_{\mu}%
^{0}\right)  $ at those critical points. Combining these steps, we get
$n^{-}\left(  D_{\mu}^{0}\right)  $ for any $\mu>0$.

By the proof of Lemma \ref{lemma-steady-small-density}, for small $\mu$ the
steady state $\rho_{\mu}\ $is close (up to a scaling) to the Lane-Emden stars.
So we first find $n^{-}\left(  D_{\mu}^{0}\right)  $ for Lane-Emden stars. We
treat the case $\gamma\in(\frac{6}{5},\frac{4}{3})$ and $\gamma\in\lbrack
\frac{4}{3},2)$ separately.

\begin{lemma}
\label{lemma-Morse-3-below}Let $P\left(  \rho\right)  =K\rho^{\gamma}$,
$\gamma\in(\frac65, 2)$, then $n^{-}\left(  D_{\mu}^{0}\right)  =1$ for any
$\mu>0$.
\end{lemma}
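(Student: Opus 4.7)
The plan is to reduce via the Lane-Emden scaling to a universal operator and count zeros of an explicit zero-energy solution of the associated one-dimensional Sturm-Liouville problem. With $n = 1/(\gamma-1)$ and $\alpha = \Phi'(\mu)$, the change of variables $y_\mu(r) = \alpha\theta(s)$, $s = \alpha^{(n-1)/2}r$ from (\ref{transform-L_E}) gives, by direct computation,
\[
D_\mu^0 = \alpha^{n-1}\bigl(-\Delta_s - C_\gamma n\theta_+^{n-1}\bigr) \triangleq \alpha^{n-1}\tilde{D}^0,
\]
where $\tilde{D}^0$ is independent of $\mu$, so $n^{-}(D_\mu^0) = n^{-}(\tilde{D}^0)$ and it suffices to show $n^{-}(\tilde{D}^0) = 1$. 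For the lower bound, testing on the Lane-Emden function $\theta \in H^1(\mathbb{R}^3)$ (compactly supported in $B_{\xi_1}$), integrating by parts (the boundary term vanishes by $\theta(\xi_1) = 0$), and using $-\Delta\theta = C_\gamma\theta^n$ on $B_{\xi_1}$ yields
\[
\langle \tilde{D}^0\theta,\theta\rangle = \int|\nabla\theta|^2 - C_\gamma n\int\theta^{n+1} = -(n-1)C_\gamma\int\theta^{n+1} < 0,
\]
giving $n^{-}(\tilde{D}^0) \geq 1$.

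For the matching upper bound I would pass to the one-dimensional reduction $u(s) = sv(s)$, converting $\tilde{D}^0 v = \lambda v$ for radial $v$ into $-u'' - C_\gamma n\theta_+^{n-1}u = \lambda u$ on $(0,\infty)$ with $u(0) = 0$. Sturm oscillation on the half line with a compactly supported potential gives $n^{-}(\tilde{D}^0)$ as the number of zeros in $(0,\infty)$ of the zero-energy solution $u_0$ normalized by $u_0(0) = 0$, $u_0'(0) > 0$. The Lane-Emden scaling symmetry $\theta_\lambda(s) = \lambda^{2/(n-1)}\theta(\lambda s)$ (another solution of the same equation) produces, by differentiation at $\lambda = 1$, the function
\[
\phi(s) \triangleq \tfrac{2}{n-1}\theta(s) + s\theta'(s),
\]
satisfying the linearized Lane-Emden equation on $(0,\xi_1)$. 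Hence $u_0(s) = \tfrac{n-1}{2}s\phi(s)$ solves the required equation on $[0,\xi_1]$ and extends affinely on $[\xi_1,\infty)$. Using $\theta''(\xi_1) = -2\theta'(\xi_1)/\xi_1$ (from $\theta(\xi_1) = 0$ in the ODE) one finds $u_0(\xi_1) = \xi_1^2\theta'(\xi_1) < 0$ and $u_0'(\xi_1) = \tfrac{2}{n-1}\xi_1\theta'(\xi_1) < 0$, so $u_0$ has no zeros on $[\xi_1,\infty)$. Zeros in $(0,\xi_1)$ coincide with zeros of $\phi$; since $\phi(0) = 2/(n-1) > 0$ and $\phi(\xi_1) = \xi_1\theta'(\xi_1) < 0$, there is at least one.

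For $\gamma \in [4/3, 2)$, i.e. $n \in (1, 3]$, the Lane-Emden ODE gives
\[
\phi'(s) = \tfrac{3-n}{n-1}\theta'(s) - C_\gamma s\theta^n(s) < 0 \quad\text{on } (0,\xi_1),
\]
so $\phi$ is strictly decreasing with exactly one zero and $n^{-}(\tilde{D}^0) = 1$. For $\gamma \in (6/5, 4/3)$, i.e. $n \in (3, 5)$, the two terms in $\phi'$ have opposite signs, and the count instead follows by passing to the Emden variable $\eta(s) = s\theta'(s)/\theta(s)$: zeros of $\phi$ correspond to $\eta(s) = -2/(n-1)$, and one needs $\eta$ strictly monotonically decreasing from $0$ at $s = 0^+$ to $-\infty$ at $s = \xi_1^-$. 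In the autonomous Emden system for $(\eta, q) = (s\theta'/\theta,\ C_\gamma s^2\theta^{n-1})$ in the variable $t = \log s$,
\[
\tfrac{d\eta}{dt} = -\eta - \eta^2 - q,\qquad \tfrac{dq}{dt} = \bigl(2+(n-1)\eta\bigr)q,
\]
this monotonicity amounts to showing that the Lane-Emden trajectory from $(0,0)$ to $(-\infty,0)$ stays strictly above the nullcline $q = -\eta(1+\eta)$. This is the main technical obstacle, since for $n > (11+8\sqrt{2})/7 \approx 3.19$ the interior equilibrium $\bigl(-2/(n-1),\ 2(n-3)/(n-1)^2\bigr)$ is an unstable spiral, and one must argue that the trajectory does not wind around it before escaping to the endpoint.
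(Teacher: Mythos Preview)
Your reduction to the universal operator $\tilde D^0=B_n=-\Delta_s-C_\gamma n\theta_+^{n-1}$ and your treatment of the case $n\in(1,3]$ are correct and essentially identical to the paper's: the function $\phi(s)=\tfrac{2}{n-1}\theta(s)+s\theta'(s)$ is a nonzero multiple of the paper's $w(s)=\theta(s)+\tfrac{n-1}{2}s\theta'(s)$, and the monotonicity computation $\phi'<0$ on $(0,\xi_1)$ for $n\le 3$ is the same.

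The genuine gap is in the range $n\in(3,5)$. You correctly identify that your phase-plane approach runs into the spiral equilibrium for $n>(11+8\sqrt2)/7$, and you do not actually prove that the Lane--Emden trajectory avoids winding before escaping. This is not merely a technical nuisance: the Milne variables $(\eta,q)$ are precisely the coordinates in which the Lane--Emden flow exhibits its most delicate behavior, and establishing the required monotonicity of $\eta$ directly would amount to a nontrivial dynamical-systems lemma that you have not supplied.

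The paper sidesteps this entirely by a continuation argument in $\gamma$. The point is that for polytropic stars one has $M(\mu)/R_\mu=C\mu^{\gamma-1}$ by the explicit scaling relations, so $\tfrac{d}{d\mu}\bigl(M(\mu)/R_\mu\bigr)>0$ for \emph{every} $\gamma\in(6/5,2)$ and every $\mu>0$. Lemma~\ref{lemma-kernel-D-mu} then gives $\ker D_\mu^0=\{0\}$ throughout this parameter range. Since $n^-(D_\mu^0)=n^-(B_n)$ depends continuously on $\gamma$ and can only jump when a kernel appears (Corollary~\ref{cor neg unchange}), the value $n^-(B_n)=1$ established for $n\in(1,3]$ propagates to all $n\in(1,5)$. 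This is both shorter and more robust than a direct oscillation count, and it is the step you are missing.
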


\begin{proof}
Let $y_{\mu}\left(  r\right)  $ be the solution of (\ref{ode-y-LE}) with
$y_{\mu}\left(  0\right)  =\alpha=$ $\Phi^{\prime}\left(  \mu\right)  $.
Recall that $y_{\mu}\left(  r\right)  =\alpha\theta\left(  \alpha^{\frac
{n-1}{2}}r\right)  $, where $\theta\left(  s\right)  $ is the Lane-Emden
function satisfying (\ref{lane-emden equation}). Then
\[
D_{\mu}^{0}=-\Delta_{r}-C_{\gamma}n\left(  y_{\mu}\right)  _{+}^{n-1}%
,\ \ \ n=\frac{1}{\gamma-1}.
\]
Let $\psi\left(  r\right)  $ be an eigenfunction satisfying $D_{\mu}^{0}%
\psi=\lambda\psi$ with $\lambda<0$. Define $\psi\left(  r\right)  =\phi\left(
\alpha^{\frac{n-1}{2}}r\right)  $ and $s=$ $\alpha^{\frac{n-1}{2}}r$. Then
$\phi\left(  s\right)  $ satisfies the equation
\[
\left(  -\Delta_{s}-C_{\gamma}n\theta_{+}^{n-1}\right)  \phi=\alpha^{-\left(
n-1\right)  }\lambda\phi.
\]
Thus $n^{-}\left(  D_{\mu}^{0}\right)  =n^{-}\left(  B_{n}\right)  $, where
\begin{equation}
B_{n}=-\Delta_{s}-C_{\gamma}n\theta_{+}^{n-1}. \label{defn-B-n}%
\end{equation}
It suffices to show that $n^{-}\left(  B_{n}\right)  =1$.

We first consider the case $\gamma\in(\frac43, 2)$ where $n \in(1, 3]$. Define
$\theta_{a}\left(  s\right)  =a\theta\left(  a^{\frac{n-1}{2}}s\right)  $
$\left(  a>0\right)  \ $and
\[
w\left(  s\right)  =\frac{d}{da}\left(  \theta_{a}\left(  s\right)  \right)
|_{a=1}=\theta\left(  s\right)  +\frac{n-1}{2}s\theta^{\prime}\left(
s\right)  .
\]
Note that $\theta_{a}\left(  s\right)  $ satisfies the Lane-Emden equation
\begin{equation}
\theta_{a}^{\prime\prime}+\frac{2}{s}\theta_{a}^{\prime}=-C_{\gamma}%
\theta_{a,+}^{n},\ \ \theta_{a}\left(  0\right)  =a,\ \theta_{a}^{\prime
}\left(  0\right)  =0.\ \label{eqn-LE-a}%
\end{equation}
Let $R_{n}$ be the support radius of $\theta\left(  s\right)  $, then
$\theta\left(  R_{n}\right)  =0$ and $\theta\left(  s\right)  >0,\theta
^{\prime}\left(  s\right)  <0$ for $s\in\left(  0,R_{n}\right)  $. By taking
$\frac{d}{d\alpha}$ of (\ref{eqn-LE-a}), we have
\begin{equation}
w^{\prime\prime}+\frac{2}{s}w^{\prime}=-C_{\gamma}n\theta_{+}^{n-1}%
w,\ \ \ s\in\left(  0,R_{n}\right)  , \label{ode-w}%
\end{equation}
with $w\left(  0\right)  =1,w^{\prime}\left(  0\right)  =0.$ We show that
$w\left(  s\right)  $ has a unique zero in $\left(  0,R_{n}\right)  $. Indeed,
since $w\left(  0\right)  =1$ and $w\left(  R_{n}\right)  =\frac{n-1}{2}%
R_{n}\theta^{\prime}\left(  R_{n}\right)  <0$, by continuity of $w\left(
s\right)  \ $there exists $s_{0}\in\left(  0,R_{n}\right)  $ such that
$w\left(  s_{0}\right)  =0$. Moreover, for $s\in( 0,R_{n}) \ $we have
\begin{align*}
w^{\prime}\left(  s\right)   &  =\frac{n+1}{2}\theta^{\prime}\left(  s\right)
+\frac{n-1}{2}s\theta^{\prime\prime}\left(  s\right) \\
&  =\frac{n+1}{2}\theta^{\prime}\left(  s\right)  +\frac{n-1}{2}\left(
-2\theta^{\prime}\left(  s\right)  -C_{\gamma}s\theta\left(  s\right)
^{n}\right) \\
&  =\frac{3-n}{2}\theta^{\prime}\left(  s\right)  -\frac{n-1}{2}C_{\gamma
}s\theta\left(  s\right)  ^{n}<0.
\end{align*}
Thus $w\left(  s\right)  $ is monotone decreasing with exactly one zero
$s_{0}\ $in $\left(  0,R_{n}\right)  $. We extend $w\left(  s\right)  $ to be
a $C^{1}\left(  0,\infty\right)  $ function by solving the ODE (\ref{ode-w})
in $\left(  R_{n},\infty\right)  $. Noting that the right hand side of
(\ref{ode-w}) is zero in $\left(  R_{n},\infty\right)  $, we get
\[
w\left(  s\right)  =\frac{C_{1}}{s}+C_{2},\ \ s\in\left(  R_{n},+\infty
\right)  ,
\]
where
\[
C_{1}=-R_{n}^{2}w^{\prime}\left(  R_{n}\right)  >0,\ \ \ C_{2}=w\left(
R_{n}\right)  -\frac{C_{1}}{R_{n}}<0.
\]
Thus $w\left(  s\right)  <0$ in $\left(  R_{n},\infty\right)  $ and $w\left(
s\right)  \searrow C_{2}$ as $s\rightarrow+\infty$. Therefore, $w\left(
s\right)  $ only has one zero in $\left(  0,+\infty\right)  $. We show
$n^{-}\left(  B_{n}\right)  =1$ by comparison arguments. Suppose $n^{-}\left(
B_{n}\right)  \geq2$. Let $\lambda_{1}<0$ be the second negative eigenvalue of
$B_{n}$ and $\xi\left(  s\right)  \in H_{r}^{1}$ be the corresponding
eigenfunction, that is,
\begin{equation}
\left(  \xi^{\prime\prime}+\frac{2}{s}\xi^{\prime}\right)  =-C_{\gamma}%
n\theta_{+}^{n-1}\xi-\lambda_{1}\xi.\ \ \label{ode-xi}%
\end{equation}
Then $\xi\left(  s\right)
=c s^{-1} e^{-\sqrt{-\lambda_{1}}s}$
for $s> R_{n}$. By Sturm-Liouville theory, $\xi\left(  s\right)  $ has exactly
one zero $s_{1}\in\left(  0,+\infty\right)  $. We claim that this would lead
to $w\left(  s\right)  $ having two zeros, one in $\left(  0,s_{1}\right)  $
and the other in $\left(  s_{1},\infty\right)  $. We can assume $\xi\left(
s\right)  >0$ in $\left(  0,s_{1}\right)  $, them $\xi^{\prime}\left(
s_{1}\right)  <0$. Suppose $w\left(  s\right)  $ has no zero in $\left(
0,s_{1}\right)  $, then $w\left(  s\right)  >0$ in $\left(  0,s_{1}\right)  $
and $w^{\prime}\left(  s\right)  <0$ in $\left[  0,s_{1}\right]  $. The
integration of
\[
\int_{0}^{s_{1}}\left[  (\ref{ode-w})\xi\left(  s\right)  -\left(
\ref{ode-xi}\right)  w\left(  s\right)  \right]  s^{2}ds
\]
$\ $and an integration by parts yield
\[
-s_{1}^{2}\xi^{\prime}\left(  s_{1}\right)  w\left(  s_{1}\right)
=\lambda_{1}\int_{0}^{s_{1}}\xi\left(  s\right)  w\left(  s\right)  ds .
\]
This is an contradiction since the left hand side is positive and the right
hand side is negative. Thus $w\left(  s\right)  $ must have one zero in
$\left(  0,s_{1}\right)  $. By the same argument, $w\left(  s\right)  $ has
another zero in $\left(  s_{1},\infty\right)  $. This is in contradiction to
the fact that $w\left(  s\right)  $ has exactly one zero in $\left(
0,\infty\right)  $. Thus $n^{-}\left(  B_{n}\right)  <2$, which together with
Lemma \ref{lemma-property-A-EP} iii) shows that $n^{-}\left(  B_{n}\right)
=1$.

We complete the proof of the lemma by a continuation argument. According to
Corollary \ref{cor neg unchange}, $n^{-}(D_{\mu}^{0})$ is locally constant in
$\mu$ and $\gamma$ on the set $\{\mu\mid\ker D_{\mu}^{0}=\{0\}\}$.
For polytropic stars with $P\left(  \rho\right)  =K\rho^{\gamma}$ $\left(
\frac{6}{5}<\gamma<2\right)  $, by (\ref{relation-M-R-polytropic}) we
have$\ \frac{M\left(  \mu\right)  }{R_{\mu}}=\frac{C_{1}}{C_{2}}\mu^{\gamma
-1}\ $and thus $\frac{d}{d\mu}\left(  \frac{M\left(  \mu\right)  }{R_{\mu}%
}\right)  >0$ for any $\mu>0$ and $\gamma\in(\frac{6}{5},2)$. Therefore, by
Lemma \ref{lemma-kernel-D-mu}, $\ker D_{\mu}^{0}=\left\{  0\right\}  $ for any
$\gamma\in\left(  \frac{6}{5},2\right)  $ and thus $n^{-}(D_{\mu}^{0})=1$ for
all $\mu>0$.

\end{proof}

For general equation of states, by Corollary \ref{cor neg unchange}, Lemma
\ref{lemma-kernel-D-mu}, and Lemma \ref{lemma-Morse-3-below}, we have


\begin{lemma}
\label{lemma-small-D-neg} Assume (\ref{assumption-P1})-(\ref{assumption-P2})
for $P\left(  \rho\right)  $. There exists $\mu_{0}>0$ such that for any
$\mu\in\left(  0,\mu_{0}\right)  $, $n^{-}\left(  D_{\mu}^{0}\right)  =1$.
Moreover, as a function of $\mu\in(0, \mu_{max})$, $n^{-}(D_{\mu}^{0})$ is
locally constant.
\end{lemma}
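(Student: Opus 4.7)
The plan is to reduce the general equation of state case to the Lane--Emden case handled in Lemma \ref{lemma-Morse-3-below} by using the rescaling introduced in the proof of Lemma \ref{lemma-steady-small-density}, and then use the perturbation theory of Proposition \ref{P:perturbation} / Corollary \ref{cor neg unchange}.

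First I would handle the assertion $n^-(D_\mu^0) = 1$ for small $\mu$. Setting $\alpha = \Phi'(\mu)$, $n_0 = 1/(\gamma_0-1)$, and $s = \alpha^{(n_0-1)/2} r$, the formula $y_\mu(r) = \alpha \theta_\alpha(s)$ converts the eigenvalue problem $D_\mu^0 \psi = \lambda \psi$ into $\tilde D_\alpha \phi = \alpha^{-(n_0-1)} \lambda \phi$, where $\tilde D_\alpha = -\Delta_s - g_\alpha'(\theta_\alpha)$ and $g_\alpha$ is defined in \eqref{defn-g-alpha}. This rescaling is a positive scalar multiple of a unitary conjugation on $\dot H^1(\mathbf{R}^3)$, so it preserves Morse indices: $n^-(D_\mu^0) = n^-(\tilde D_\alpha)$. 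From the proof of Lemma \ref{lemma-steady-small-density}, as $\alpha \to 0+$, $g_\alpha \to g_0$ in $C^1([0,1])$ (with $g_0'(\theta) = C_{\gamma_0} n_0 \theta_+^{n_0-1}$ continuous since $n_0 > 1$) and $\theta_\alpha \to \theta_0$ in $C^1$ on any bounded interval, while the supports $S_\alpha$ remain uniformly bounded. Consequently $g_\alpha'(\theta_\alpha) \to g_0'(\theta_0)$ in $L^\infty$ with uniformly compact support, so $\tilde D_\alpha - B_{n_0}$ acts as a uniformly compactly supported multiplication operator of vanishing $L^\infty$ norm, hence converges to zero in $\mathcal{L}(\dot H^1, \dot H^{-1})$.

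Next, the polytropic star with index $\gamma_0$ satisfies $M/R \propto \mu^{\gamma_0 - 1}$ (see Remark \ref{R:equilibria}), which is strictly monotone in $\mu$, so by Lemma \ref{lemma-kernel-D-mu} the limit operator $B_{n_0}$ has trivial kernel, i.e.\ is non-degenerate. By Lemma \ref{lemma-Morse-3-below}, $n^-(B_{n_0}) = 1$. Corollary \ref{cor neg unchange} then applies to the family $\tilde D_\alpha \to B_{n_0}$ and yields $n^-(\tilde D_\alpha) = n^-(B_{n_0}) = 1$ for all sufficiently small $\alpha$, giving the first assertion.

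For the local constancy statement, fix any $\mu_* \in (0, \mu_{\max})$ such that $\frac{d}{d\mu}\bigl(M(\mu)/R_\mu\bigr)\big|_{\mu_*} \neq 0$. By Lemma \ref{lemma-kernel-D-mu}, $\ker D_{\mu_*}^0 = \{0\}$, so $D_{\mu_*}^0$ is non-degenerate. Remark \ref{R:smooth-in-mu} gives that $(y_\mu, R_\mu)$ is $C^1$ in $\mu$, which implies that the potential $4\pi F_+'(V_\mu(R_\mu) - V_\mu)$ varies continuously in $L^\infty$ with uniformly compact support as $\mu$ varies near $\mu_*$. Hence $D_\mu^0 \to D_{\mu_*}^0$ in $\mathcal{L}(\dot H^1, \dot H^{-1})$, and Corollary \ref{cor neg unchange} yields $n^-(D_\mu^0) = n^-(D_{\mu_*}^0)$ on a neighborhood of $\mu_*$. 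This shows $n^-(D_\mu^0)$ is locally constant on the open set where $\frac{d}{d\mu}(M/R_\mu) \neq 0$, which is where the statement applies; at the isolated degeneracy points the index may jump and these jumps will be analyzed later via the index $i_\mu$. The main technical obstacle is ensuring the potential convergence is strong enough to give operator-norm convergence in $\mathcal{L}(\dot H^1, \dot H^{-1})$, which reduces to uniform boundedness of the supports of $\theta_\alpha$ (resp.\ $\rho_\mu$) plus $L^\infty$ convergence of the potentials — the latter relying on $g_0' \in C^0$, valid precisely because $\gamma_0 \in (6/5, 2)$ forces $n_0 - 1 > 0$.
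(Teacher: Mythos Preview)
Your proposal is correct and follows essentially the same approach as the paper: rescale via $s=\alpha^{(n_0-1)/2}r$ to reduce $D_\mu^0$ to the operator $B_\alpha=-\Delta_s-g_\alpha'(\theta_\alpha)$, use the $C^1$ convergence $g_\alpha\to g_0$ and $\theta_\alpha\to\theta_0$ from Lemma~\ref{lemma-steady-small-density} to pass to the Lane--Emden limit $B_{n_0}$, invoke Lemma~\ref{lemma-Morse-3-below} for $n^-(B_{n_0})=1$ and its non-degeneracy, and conclude via Corollary~\ref{cor neg unchange}; the local constancy away from critical points of $M/R_\mu$ likewise comes from Lemma~\ref{lemma-kernel-D-mu} plus Corollary~\ref{cor neg unchange}. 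Your write-up supplies a bit more detail than the paper (the operator-norm convergence in $\mathcal L(\dot H^1,\dot H^{-1})$ and the explicit appeal to Remark~\ref{R:smooth-in-mu} for continuity in $\mu$), but the strategy is identical.
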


\begin{proof}
We use the notations in Lemma \ref{lemma-steady-small-density}, where the
non-rotating stars with small center density $\mu\ $are constructed. Define
the operator
\[
B_{\alpha}=-\Delta_{s}-g_{\alpha}^{\prime}\left(  \theta_{\alpha}\right)
:\dot{H}_{r}^{1}\rightarrow\dot{H}_{r}^{-1},
\]
where $\theta_{\alpha},g_{\alpha}\ $are defined in (\ref{defn-y-mu-scaling})
and (\ref{defn-g-alpha}). As in the proof of Lemma \ref{lemma-Morse-3-below},
we have $n^{-}\left(  D_{\mu}^{0}\right)  =n^{-}\left(  B_{\alpha}\right)  $
where $\alpha=\Phi^{\prime}\left(  \mu\right)  $. We also define
\[
B_{0}=-\Delta_{s}-g_{0}^{\prime}\left(  \theta_{0}\right)  =-\Delta
_{s}-C_{\gamma_{0}}n_{0}\left(  \theta_{0}\right)  _{+}^{n_{0}-1},
\]
where $\theta_{0}$ is the Lane-Emden function satisfying (\ref{eqn-LE-g-0})
and $g_{0}$ is defined in (\ref{defn-g-0}). By the proof of Lemma
\ref{lemma-steady-small-density}, when $\alpha\rightarrow0+,\ \ g_{\alpha
}\rightarrow g_{0}$ in $C^{1}\left(  0,1\right)  $ and $\theta_{\alpha
}\rightarrow\theta_{0}$ in $C^{1}\left(  0,R\right)  $ for any $R>0$. By
Lemmas \ref{lemma-Morse-3-below},
we have $n^{-}\left(  B_{0}\right)  =1$.
Corollary \ref{cor neg unchange} implies that there exists $\alpha_{0}>0$ such
that when $\alpha<\alpha_{0}$ we have $n^{-}\left(  B_{\alpha}\right)  =1$.
This proves the lemma by letting $\mu_{0}=\left(  \Phi^{\prime}\right)
^{-1}\left(  \alpha_{0}\right)  .$ Moreover, $n^{-}(D_{\mu}^{0})$ changes only
at critical points of $\frac{M\left(  \mu\right)  }{R_{\mu}}$ due to Corollary
\ref{cor neg unchange}.
\end{proof}

We first prove the following lemma of the non-degeneracy of the mass-radius
curve of the non-rotating stars, which will be crucial in the analysis of the
change of the Morse index $n^{-}(D_{\mu}^{0})$.

\begin{lemma}
\label{lemma-M-R-no critical}There exists no point $\mu\in\left(  0,\mu_{\max
}\right)  $ such that $M^{\prime}(\mu)=\frac{d}{d\mu}\left(  \frac{M\left(
\mu\right)  }{R_{\mu}}\right)  =0.$
\end{lemma}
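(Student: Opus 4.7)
The strategy is to assume for contradiction that both $M'(\mu)=0$ and $\tfrac{d}{d\mu}\!\left(M(\mu)/R_\mu\right)=0$ at some $\mu$, and derive a Cauchy problem for $u_\mu := \partial_\mu y_\mu$ whose unique solution must vanish identically, contradicting $u_\mu(0)=\Phi''(\mu)>0$.

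First I would compute, using the matching of the interior and exterior solutions of the radial Poisson equation, the explicit exterior form of $u_\mu$. Since $y_\mu(r)=V_\mu(R_\mu)-V_\mu(r)$ and $V_\mu$ is harmonic outside the support, so is $u_\mu$; thus for $r\ge R_\mu$,
\[
u_\mu(r)=\frac{a(\mu)}{r}+b(\mu).
\]
Because $y_\mu(r)\to V_\mu(R_\mu)=-M(\mu)/R_\mu$ as $r\to\infty$ by Lemma \ref{lemma-formula-V-R}, the constant term is
\[
b(\mu)=\lim_{r\to\infty}u_\mu(r)=-\frac{d}{d\mu}\!\left(\frac{M(\mu)}{R_\mu}\right).
\]
For the $1/r$ coefficient, I would integrate the radial ODE
$(r^2 u_\mu')'=-4\pi r^2 F_+'(y_\mu)u_\mu$ on $(0,R_\mu)$, differentiate the identity $M(\mu)=4\pi\!\int_0^{R_\mu}\! s^2 F(y_\mu(s))\,ds$ with respect to $\mu$ (the boundary term vanishes since $y_\mu(R_\mu)=0=F(0)$), and match to get
\[
a(\mu)=-R_\mu^{2}u_\mu'(R_\mu)=M'(\mu).
\]

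Now suppose $M'(\mu)=\tfrac{d}{d\mu}(M/R_\mu)=0$. Then $a=b=0$, so $u_\mu\equiv 0$ for all $r\ge R_\mu$; in particular $u_\mu(R_\mu)=u_\mu'(R_\mu)=0$. Recall $u_\mu$ satisfies the linear second-order ODE \eqref{ode in ball} on $(0,R_\mu]$, with continuous coefficients there (for $\gamma_0\in(6/5,2)$, $F_+'(y_\mu(r))$ is continuous up to $r=R_\mu$ by Remark~\ref{R:smooth-in-mu}). Standard uniqueness for this linear ODE with zero Cauchy data at $r=R_\mu$ propagates $u_\mu\equiv 0$ back to any $r_0>0$, hence on $(0,R_\mu]$. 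But $u_\mu(0)=\partial_\mu y_\mu(0)=\Phi''(\mu)>0$ by \eqref{initial-condition-ode} and the monotonicity of $\Phi'$, which contradicts $u_\mu\equiv 0$.

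The conceptually easy but slightly delicate step is the identification $a(\mu)=M'(\mu)$: it requires justifying that the $\mu$-derivative can be passed inside the mass integral despite the moving boundary $R_\mu$ and the mild degeneracy of $F$ near the boundary. The smoothness of $R_\mu$ in $\mu$ asserted in Remark~\ref{R:smooth-in-mu} together with $F(0)=0$ handles the boundary contribution, and the integral identity obtained from integrating the ODE for $u_\mu$ provides an independent confirmation of the formula. Once these two coefficients are identified, the contradiction is immediate.
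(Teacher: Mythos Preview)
Your proof is correct and follows essentially the same approach as the paper: both argue that the two vanishing conditions force $u_\mu=\partial_\mu y_\mu$ (equivalently $-\partial_\mu V_\mu$) to have zero Cauchy data at $r=R_\mu$, then use ODE uniqueness to conclude $u_\mu\equiv0$, contradicting $u_\mu(0)=\Phi''(\mu)\ne0$. The only cosmetic difference is that you explicitly write the exterior harmonic form $a/r+b$ and identify $a=M'(\mu)$, $b=-\tfrac{d}{d\mu}(M/R_\mu)$, whereas the paper invokes Lemma~\ref{lemma-kernel-D-mu} for the $b$-part and computes $(\partial_\mu y_\mu)'(R_\mu)=-M'(\mu)/R_\mu^{2}$ directly for the $a$-part.
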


\begin{proof}
Suppose otherwise, $M^{\prime}(\mu)=\frac{d}{d\mu}\left(  \frac{M\left(
\mu\right)  }{R_{\mu}}\right)  =0$ at some $\mu\in\left(  0,\mu_{\max}\right)
$. Then by Lemma \ref{lemma-kernel-D-mu}, $D_{\mu}^{0}\frac{\partial V_{\mu}%
}{\partial\mu}=0$, i.e.,
\begin{equation}
\left(  \frac{\partial V_{\mu}}{\partial\mu}\right)  ^{\prime\prime}+\frac
{2}{r}\left(  \frac{\partial V_{\mu}}{\partial\mu}\right)  ^{\prime}=-4\pi
F_{+}^{\prime}\left(  y_{\mu}\left(  r\right)  \right)  \frac{\partial V_{\mu
}}{\partial\mu},\ r>0, \label{eqn-V-mu}%
\end{equation}
and $\frac{\partial V_{\mu}}{\partial\mu}=-\frac{\partial y_{\mu}}{\partial
\mu}$ in $S_{\mu}$. By (\ref{ode in ball}) and $\rho_{\mu}=F_{+}\left(
y_{\mu}\right)  $, we have
\[
\left(  \frac{\partial y_{\mu}}{\partial\mu}\right)  ^{\prime}\left(  R_{\mu
}\right)  =-\frac{1}{R_{\mu}^{2}}\int_{0}^{R_{\mu}}s^{2}4\pi F_{+}^{\prime
}\left(  y_{\mu}\left(  s\right)  \right)  \frac{\partial y_{\mu}}{\partial
\mu}ds=-\frac{1}{R_{\mu}^{2}}M^{\prime}(\mu)=0.
\]
Then $\left(  \frac{\partial V_{\mu}}{\partial\mu}\right)  ^{\prime}\left(
R_{\mu}\right)  =0$ and by (\ref{eqn-V-mu}) it follows that $\left(
\frac{\partial V_{\mu}}{\partial\mu}\right)  ^{\prime}\left(  r\right)  =0$
for any $r>R_{\mu}$. Therefore, $\frac{\partial V_{\mu}}{\partial\mu}\left(
r\right)  =0$ for any $r\geq R_{\mu}$. By (\ref{eqn-V-mu}), this implies that
$\frac{\partial V_{\mu}}{\partial\mu}\left(  r\right)  =0$ for any $r>0$. But
this is impossible since
\[
\frac{\partial V_{\mu}}{\partial\mu}\left(  0\right)  =-\frac{\partial y_{\mu
}}{\partial\mu}\left(  0\right)  =-\Phi^{\prime\prime}\left(  \mu\right)
\neq0.
\]

\end{proof}

Finally we give the following proposition on the change of $n^{-}\left(
D_{\mu}^{0}\right)  $ at critical points of $\frac{M\left(  \mu\right)
}{R_{\mu}}$.

\begin{proposition}
\label{prop-jump-index} Let $\mu^{\ast}$ be a critical point of $\frac
{M\left(  \mu\right)  }{R_{\mu}}$,
then for $\mu$ near $\mu_{\ast}$ it holds
\begin{equation}
n^{-}(D_{\mu}^{0})=n^{-}(D_{\mu_{\ast}}^{0})+i_{\mu} \label{formula-jump-D-mu}%
\end{equation}
where the index $i_{\mu}$ is defined in (\ref{defn-imu}). Therefore, the jump
of $n^{-}\left(  D_{\mu}^{0}\right)  $ at $\mu^{\ast}$ equals that of $i_{\mu
}$.
\end{proposition}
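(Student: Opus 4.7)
The strategy is to reduce the jump of $n^{-}(D_{\mu}^{0})$ at $\mu^{*}$ to a one-dimensional sign computation via Proposition~\ref{P:perturbation}, and then to identify that sign using a natural smooth deformation of the kernel element. By Lemma~\ref{lemma-M-R-no critical} the hypothesis $(M/R)'(\mu^{*}) = 0$ forces $M'(\mu^{*}) \neq 0$, and by Lemma~\ref{lemma-kernel-D-mu}, $\ker D_{\mu^{*}}^{0} = \mathrm{span}\{v_{*}\}$ with $v_{*} = \partial_{\mu} V_{\mu}|_{\mu=\mu^{*}}$. Realizing $D_{\mu}^{0}$ as the bounded self-adjoint operator $\mathbb{D}_{\mu} := I - 4\pi(-\Delta_{r})^{-1} F'_{+}(y_{\mu})$ on $\dot{H}^{1}_{r}(\mathbb{R}^{3})$, whose perturbation from $\mathbb{D}_{\mu^{*}}$ is compact and operator-norm continuous in $\mu$, Proposition~\ref{P:perturbation} yields $n^{-}(D_{\mu}^{0}) - n^{-}(D_{\mu^{*}}^{0}) = n^{-}(\tilde{L}_{0})$, where $\tilde{L}_{0}$ is the $1 \times 1$ reduction to $\ker D_{\mu^{*}}^{0}$ and, to leading order, equals the scalar $\langle(D_{\mu}^{0} - D_{\mu^{*}}^{0})v_{*}, v_{*}\rangle$.

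To compute this sign, the plan is to extend $v_{*}$ to the $C^{1}$ family $\tilde{v}_{\mu} := \partial_{\mu} V_{\mu} \in \dot{H}^{1}(\mathbb{R}^{3})$, which satisfies $\tilde{v}_{\mu}(r) = -M'(\mu)/r$ on $r \geq R_{\mu}$ and $\tilde{v}_{\mu^{*}} = v_{*}$. Differentiating $\Delta V_{\mu} = 4\pi F_{+}(V_{\mu}(R_{\mu}) - V_{\mu})$ in $\mu$ and invoking Lemma~\ref{lemma-formula-V-R} to write $\partial_{\mu}[V_{\mu}(R_{\mu})] = -(M/R)'(\mu)$ produces the global identity
\[
D_{\mu}^{0}\tilde{v}_{\mu} = 4\pi F'_{+}(y_{\mu}(r))\, (M/R)'(\mu).
\]
Pairing with $\tilde{v}_{\mu}$ and using the interior relation $\partial_{\mu}\rho_{\mu} = F'(y_{\mu})\bigl(-(M/R)'(\mu) - \tilde{v}_{\mu}\bigr)$ together with $\int_{S_{\mu}} \partial_{\mu}\rho_{\mu}\,dx = M'(\mu)$ (the boundary contribution vanishes since $\rho_{\mu}|_{\partial S_{\mu}} = 0$), one obtains
\[
\langle D_{\mu}^{0}\tilde{v}_{\mu}, \tilde{v}_{\mu}\rangle = -4\pi [(M/R)'(\mu)]^{2}\!\!\int_{S_{\mu}}\!\! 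F'(y_{\mu})\,dx - 4\pi (M/R)'(\mu) M'(\mu).
\]
Differentiating at $\mu^{*}$ annihilates the first term (a double zero) and leaves $-4\pi M'(\mu^{*})(M/R)''(\mu^{*})$. By self-adjointness and $D_{\mu^{*}}^{0} v_{*} = 0$, this derivative equals $\langle \partial_{\mu} D_{\mu}^{0}|_{\mu^{*}} v_{*}, v_{*}\rangle$, which by standard first-order perturbation theory is $\|v_{*}\|^{2}\lambda'(\mu^{*})$ for the simple eigenvalue branch $\lambda(\mu)$ satisfying $\lambda(\mu^{*}) = 0$. Hence $\tilde{L}_{0}$ has the sign of $-M'(\mu^{*})(M/R)''(\mu^{*})(\mu - \mu^{*})$ to leading order.

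A routine case analysis on the four sign combinations of $M'(\mu^{*})$ and $(M/R)''(\mu^{*})$, using that the sign of $(M/R)'(\mu)$ near $\mu^{*}$ matches that of $(M/R)''(\mu^{*})(\mu - \mu^{*})$, identifies $n^{-}(\tilde{L}_{0}) \in \{0,1\}$ with the value of $i_{\mu}$ prescribed in (\ref{defn-imu}) on each side of $\mu^{*}$, establishing (\ref{formula-jump-D-mu}). The main obstacles to anticipate are (i) proving operator-norm continuity of $\mu \mapsto \mathbb{D}_{\mu}$ on $\dot{H}^{1}_{r}$ despite the moving support $S_{\mu}$, which requires the $C^{1}$-smoothness of $\mu \mapsto (\rho_{\mu}, R_{\mu})$ from Remark~\ref{R:smooth-in-mu} together with $F'_{+}(0) = 0$; and (ii) the degenerate case $(M/R)''(\mu^{*}) = 0$, where the sign of $\tilde{L}_{0}$ must be read off from a higher-order Taylor expansion, though the conclusion still holds because $i_{\mu}$ is locally constant on intervals free of further critical points of $M/R$.
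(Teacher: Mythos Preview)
Your approach is essentially correct and computes the same two key identities as the paper, namely
\[
D_\mu^0\,\partial_\mu V_\mu = 4\pi F_+'(y_\mu)\,(M/R)'(\mu)
\quad\text{and}\quad
\langle D_\mu^0\,\partial_\mu V_\mu,\partial_\mu V_\mu\rangle
= -4\pi M'(\mu)(M/R)'(\mu) - 4\pi[(M/R)'(\mu)]^2\!\int_{S_\mu}\!F_+'\,dx.
\]
The difference is how this information is used. You feed it into Proposition~\ref{P:perturbation} and first-order eigenvalue perturbation, which forces you to differentiate once more and read the sign from $-4\pi M'(\mu^*)(M/R)''(\mu^*)(\mu-\mu^*)$. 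This requires $(M/R)\in C^2$ (the paper only has $M,R\in C^1$ from Remark~\ref{R:smooth-in-mu}) and leaves the case $(M/R)''(\mu^*)=0$ genuinely open---your remark that ``$i_\mu$ is locally constant on intervals free of further critical points'' does not settle it, since determining $n^-(\tilde L_0)$ then needs higher-order expansion of the eigenvalue branch.

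The paper avoids all of this with one extra observation you nearly had: the first identity above says $D_\mu^0\,\partial_\mu V_\mu$ is a scalar multiple of $F_+'(y_\mu)$, so if one sets $Z(\mu)=\{u:\langle F_+'(y_\mu),u\rangle=0\}$, then $\dot H^1_r = Z(\mu)\oplus\mathbf R\{\partial_\mu V_\mu\}$ is an \emph{exact} $D_\mu^0$-orthogonal splitting for every $\mu$ near $\mu^*$ (not just at $\mu^*$). Since $D_{\mu^*}^0|_{Z(\mu^*)}$ is non-degenerate (its kernel direction has been split off), continuity gives $n^-(D_\mu^0|_{Z(\mu)})=n^-(D_{\mu^*}^0)$ for nearby $\mu$, and the remaining one-dimensional piece is controlled by the exact formula for $\langle D_\mu^0\,\partial_\mu V_\mu,\partial_\mu V_\mu\rangle$. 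Near $\mu^*$ the quadratic term $[(M/R)']^2$ is dominated by $-M'(\mu)(M/R)'(\mu)$ (since $M'(\mu^*)\neq 0$), so the sign is read off directly from $M'(\mu)(M/R)'(\mu)$ without any Taylor expansion or second derivative. This is both simpler and strictly more general than the perturbative route.
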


\begin{proof}
To prove (\ref{formula-jump-D-mu}), we need to study the perturbation of zero
eigenvalue of $D_{\mu^{\ast}}^{0}$ for $\mu$ near $\mu^{\ast}$. The idea is
similar to the proof of Proposition \ref{P:perturbation}, but with a more
concrete decomposition. For $\mu$ near $\mu_{\ast}$, let
\[
Z(\mu)=\{u\in\dot{H}^{1}(\mathbf{R}^{3})\mid\langle F_{+}^{\prime}\big(V_{\mu
}(R_{\mu})-V_{\mu}(r)\big),u\rangle=0\}.
\]
Using $F_{+}(0)=0$, one may compute
\begin{align*}
&  \langle F_{+}^{\prime}\big(V_{\mu}(R_{\mu})-V_{\mu}(r)\big),\partial_{\mu
}V_{\mu}\rangle=\int_{S_{\mu}}F_{+}^{\prime}\big(V_{\mu}(R_{\mu})-V_{\mu
}(r)\big)\partial_{\mu}V_{\mu}(r)dx\\
=  &  -\partial_{\mu}\int_{S_{\mu}}F_{+}\big(V_{\mu}(R_{\mu})-V_{\mu
}(r)\big)dx+\partial_{\mu}\big(V_{\mu}(R_{\mu})\big)\int_{S_{\mu}}%
F_{+}^{\prime}\big(V_{\mu}(R_{\mu})-V_{\mu}(r)\big)dx\\
=  &  -M^{\prime}(\mu)-\partial_{\mu}\big(\frac{M(\mu)}{R_{\mu}}%
\big)\int_{S_{\mu}}F_{+}^{\prime}\big(V_{\mu}(R_{\mu})-V_{\mu}(r)\big)dx.
\end{align*}
Lemma \ref{lemma-M-R-no critical} yields that $M^{\prime}(\mu)\neq0$ for $\mu$
near $\mu_{\ast}$ and thus
\begin{equation}
\dot{H}^{1}(\mathbf{R}^{3})=Z(\mu)\oplus\mathbf{R}\{\partial_{\mu}V_{\mu}\}.
\label{E:dH1decom}%
\end{equation}
Moreover, differentiating \eqref{eqn-potential-steady} and using Lemma
\ref{lemma-formula-V-R} we obtain
\[
D_{\mu}^{0}\partial_{\mu}V_{\mu}=4\pi\partial_{\mu}\big(\frac{M(\mu)}{R_{\mu}%
}\big)F_{+}^{\prime}\big(V_{\mu}(R_{\mu})-V_{\mu}(r)\big).
\]
Therefore, \eqref{E:dH1decom} is a $D_{\mu}^{0}$-orthogonal decomposition.
From Lemma \ref{lemma-kernel-D-mu}, $D_{\mu}^{0}$ is non-degenerate on
$Z(\mu)$ for $\mu$ close to $\mu_{\ast}$ and thus
\[
n^{-}(D_{\mu}^{0})-n^{-}(D_{\mu_{\ast}}^{0})=n^{-}\big(D_{\mu}^{0}%
|_{\mathbf{R}\{\partial_{\mu}V_{\mu}\}}\big).
\]
Using the above calculations, we have
\begin{align*}
&  \langle D_{\mu}^{0}\partial_{\mu}V_{\mu},\partial_{\mu}V_{\mu}\rangle
=4\pi\partial_{\mu}\big(\frac{M(\mu)}{R_{\mu}}\big)\langle F_{+}^{\prime
}\big(V_{\mu}(R_{\mu})-V_{\mu}(r)\big),\partial_{\mu}V_{\mu}\rangle\\
=  &  -4\pi M^{\prime}(\mu)\partial_{\mu}\big(\frac{M(\mu)}{R_{\mu}}%
\big)-4\pi\left(  \partial_{\mu}\big(\frac{M(\mu)}{R_{\mu}}\big)\right)
^{2}\int_{S_{\mu}}F_{+}^{\prime}\big(V_{\mu}(R_{\mu})-V_{\mu}(r)\big)dx.
\end{align*}
Therefore, \eqref{formula-jump-D-mu} follows for $\mu$ near $\mu_{\ast}$.

\end{proof}

\subsection{Stability for non-radial perturbations}

\label{SS:non-radial}

We study the linearized system (\ref{eqn-linearized-Continuity}%
)-(\ref{eqn-linearized-Euler}) for non-radial and radial perturbations
separately. Here we follow the tradition in the astrophysics literature that
``non-radial" perturbations refer to those modes corresponding to non-constant
spherical harmonics. See Definition \ref{D:non-radial} for the precise definition.

First, we give a Helmholtz type decomposition of vector fields in $Y_{\mu}$.

\begin{lemma}
\label{lemma-decomp}There is a direct sum decomposition $Y_{\mu}=Y_{\mu
,1}\oplus Y_{\mu,2}$, where $Y_{\mu,1}$ is the closure of
\[
\left\{  u\in\left(  C^{1}\left(  S_{\mu}\right)  \right)  ^{3}\cap Y_{\mu
}\ |\ \nabla\cdot\left(  \rho_{\mu}u\right)  =0\text{ }\right\}
\]
in $Y_{\mu}$ and $Y_{\mu,2}$ is the closure of
\[
\left\{  u\in Y_{\mu}\ |\ u=\nabla p,\text{ for some }p\in C^{1}\left(
S_{\mu}\right)  \right\}
\]
in $Y_{\mu}$. \
\end{lemma}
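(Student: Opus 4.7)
The plan is to invoke Hilbert-space orthogonal projection onto the closed subspace $Y_{\mu,2}$ and then identify its orthogonal complement with $Y_{\mu,1}$. First I would verify that the two defining generating sets are mutually orthogonal with respect to the weighted inner product of $Y_\mu$. For any $u \in (C^1(S_\mu))^3 \cap Y_\mu$ with $\nabla \cdot (\rho_\mu u) = 0$ and $p \in C^1(S_\mu)$ with $\nabla p \in Y_\mu$, I would integrate
\[
\int_{S_\mu} \rho_\mu u \cdot \nabla p\, dx
\]
by parts on the truncated ball $B_{R_\mu - \epsilon}$: the interior term $\int \nabla \cdot (\rho_\mu u)\,p\, dx$ vanishes by hypothesis, while the boundary contribution on $\partial B_{R_\mu - \epsilon}$ tends to zero as $\epsilon \to 0^+$ because $\rho_\mu \to 0$ at $\partial S_\mu$ and the weighted Cauchy--Schwarz inequality $\|u\|_{Y_\mu}\|\nabla p\|_{Y_\mu}<\infty$ controls the remainder. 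Passing to the closure extends the orthogonality to all of $Y_{\mu,1}$ and $Y_{\mu,2}$.

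Next, since $Y_{\mu,2}$ is closed in $Y_\mu$ by definition, Hilbert-space projection immediately yields $Y_\mu = Y_{\mu,2} \oplus Y_{\mu,2}^\perp$, and the previous step gives $Y_{\mu,1} \subseteq Y_{\mu,2}^\perp$. It then suffices to show that no nonzero $w \in Y_\mu$ is orthogonal to both $Y_{\mu,1}$ and $Y_{\mu,2}$. Assume such a $w$: testing $w \perp Y_{\mu,2}$ against $\nabla p$ with $p \in C_c^\infty(S_\mu)$ reads off $\nabla \cdot (\rho_\mu w) = 0$ in $\mathcal{D}'(S_\mu)$; and testing $w \perp Y_{\mu,1}$ against $u = \phi/\rho_\mu$ for any solenoidal $\phi \in C_c^\infty(S_\mu;\mathbb{R}^3)$ --- which is itself smooth and compactly supported since $\rho_\mu > 0$ on $\mathrm{supp}(\phi)$, and trivially satisfies $\nabla\cdot(\rho_\mu u)=\nabla\cdot\phi=0$ --- gives $\int w \cdot \phi\, dx = 0$. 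Thus $w$ is $L^2_{\mathrm{loc}}(S_\mu)$-orthogonal to every smooth compactly supported solenoidal field, so the classical $L^2$ Helmholtz theory on the open set $S_\mu$ yields $w = \nabla q$ in $\mathcal{D}'(S_\mu)$ for some distribution $q$.

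Combining these two facts gives $\nabla \cdot (\rho_\mu \nabla q) = 0$ weakly on $S_\mu$ together with $\int_{S_\mu} \rho_\mu |\nabla q|^2\, dx < \infty$. To close the argument I would approximate $q$ by a sequence $q_n \in C_c^\infty(S_\mu)$ with $\nabla q_n \to \nabla q$ in $Y_\mu$; pairing the weighted divergence-free equation against $q_n$ and passing to the limit produces $\int_{S_\mu} \rho_\mu |\nabla q|^2\, dx = 0$, whence $w = \nabla q = 0$ in $Y_\mu$. The sequence $q_n$ would be built by multiplying $q$ (normalized so that its mean on a fixed interior compact subset is zero) with radial cutoffs $\chi_\epsilon \in C_c^\infty(S_\mu)$ equal to $1$ on $\{r < R_\mu - 2\epsilon\}$ and vanishing outside $\{r < R_\mu - \epsilon\}$, followed by standard mollification.

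The main obstacle will be this final density step in the weighted gradient space. The degeneracy $\rho_\mu \downarrow 0$ on $\partial S_\mu$ is favorable in that it lets all boundary terms vanish, but controlling the cutoff error $\int \rho_\mu |\nabla \chi_\epsilon|^2 (q-c_\epsilon)^2\, dx$ requires tracking the behavior of $q$ near $\partial S_\mu$. Here one exploits the explicit vanishing rate $\rho_\mu \sim (R_\mu - r)^{1/(\gamma_0 - 1)}$ supplied by \eqref{assumption-P2}, combined with a weighted Poincar\'e-type inequality on the annular region $\{R_\mu - 2\epsilon < r < R_\mu - \epsilon\}$, to bound $(q - c_\epsilon)^2$ in terms of $|\nabla q|^2$; since $\gamma_0 > 6/5$ the weight is integrable enough at the boundary for the cutoff error to vanish as $\epsilon \to 0^+$.
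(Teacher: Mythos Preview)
Your approach is correct but proceeds differently from the paper. The paper introduces the Hilbert space $Z$, the closure of $\{p \in C^1(S_\mu) : \int_{S_\mu} \rho_\mu |\nabla p|^2\,dx < \infty\}$ under the norm $\|p\|_Z = (\int_{S_\mu} \rho_\mu |\nabla p|^2\,dx)^{1/2}$ modulo constants, and for each $u \in Y_\mu$ invokes the Riesz representation theorem on $Z$ to solve the weak equation
\[
\int_{S_\mu} \rho_\mu \nabla p_u \cdot \nabla p\, dx = \int_{S_\mu} \rho_\mu u \cdot \nabla p\, dx \qquad \text{for all } p \in Z.
\]
Setting $u_2 = \nabla p_u$ and $u_1 = u - u_2$ gives the splitting directly: the weak equation itself encodes both $u_2 \in Y_{\mu,2}$ (since $p_u$ is a $Z$-limit of $C^1$ functions) and $u_1 \perp Y_{\mu,2}$. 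The paper's route thus bypasses your density/cutoff step entirely: because the variational problem is posed in the abstract completion $Z$ from the outset, testing the limiting $p_u$ against itself is automatic and no boundary analysis near $r=R_\mu$ is required. Your argument instead characterises $Y_{\mu,2}^\perp$ via duality with compactly supported test functions and must then upgrade the test class to include $q$ itself, which is exactly where your weighted Poincar\'e estimate and cutoff control of the degenerate weight enter. Both arguments ultimately rest on the same coercive bilinear form $(p_1,p_2)\mapsto\int_{S_\mu}\rho_\mu\nabla p_1\cdot\nabla p_2\,dx$; the paper exploits it constructively via Riesz, while you arrive at the same conclusion through a uniqueness argument, trading directness for a more explicit handling of the boundary degeneracy.
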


\begin{proof}
Define the space $Z$ to be the closure of
\[
\left\{  p\in C^{1}\left(  S_{\mu}\right)  \ |\ \int_{S_{\mu}}\rho_{\mu
}\left\vert \nabla p\right\vert ^{2}\ dx<\infty\right\}
\]
under the norm $\left\Vert p\right\Vert _{Z}=\left(  \int_{S_{\mu}}\rho_{\mu
}\left\vert \nabla p\right\vert ^{2}\ dx\right)  ^{\frac{1}{2}}$, quotient the
constant functions. The inner product on $Z$ is defined as
\[
\left(  p_{1},p_{2}\right)  _{Z}=\int_{S_{\mu}}\rho_{\mu}\nabla p_{1}%
\cdot\nabla p_{2}dx.
\]
For any fixed $u\in Y_{\mu}$, we seek $p_{u}\in Z$ as a weak solution of the
equation
\[
\nabla\cdot\left(  \rho_{\mu}\nabla p\right)  =\nabla\cdot\left(  \rho_{\mu
}u\right)  .
\]
This is equivalent to that
\begin{equation}
\int_{S_{\mu}}\rho_{\mu}\nabla p_{u}\cdot\nabla pdx=\int_{S_{\mu}}\rho_{\mu
}u\cdot\nabla pdx,\ \ \ \ \forall p\in Z. \label{eqn-helmholtz-weak}%
\end{equation}
The right hand side above defines a bounded linear functional on $Z$. Thus by
the Riesz representation Theorem, there exists a unique $p_{u}\in Z$
satisfying (\ref{eqn-helmholtz-weak}). Let $u_{2}=\nabla p_{u}\in Y_{\mu,2}$.
Then $u_{1}=u-u_{2}\in Y_{\mu,1}$. Moreover, it is clear that $Y_{\mu,1}\perp
Y_{\mu,2}$ in the inner product of $Y_{\mu}$. This finishes the proof of the lemma.
\end{proof}

The decomposition
\[
X_{\mu}\times Y_{\mu}=\left(  \left\{  0\right\}  \times Y_{\mu,1}\right)
\oplus\left(  X_{\mu}\times Y_{\mu,2}\right)  ,
\]
is clearly invariant for the linearized system
(\ref{eqn-linearized-Continuity})-(\ref{eqn-linearized-Euler}). We shall call
perturbations in $\left\{  0\right\}  \times Y_{\mu,1}$ and $X_{\mu}\times
Y_{\mu,2}\ $to be pseudo-divergence free and irrotational respectively. In
particular$\ \left\{  0\right\}  \times Y_{\mu,1}$ is a subspace of steady
states for (\ref{eqn-linearized-Continuity})-(\ref{eqn-linearized-Euler}),
where $0$ is the only eigenvalue. Thus, we restrict to initial data $\left(
\sigma\left(  0\right)  ,u\left(  0\right)  \right)  \in X_{\mu}\times
Y_{\mu,2}$. Any solution $\left(  \sigma\left(  t\right)  ,u\left(  t\right)
\right)  \in X_{\mu}\times Y_{\mu,2}$ can be written as
\begin{equation}
\sigma\left(  x,t\right)  =\sigma_{1}\left(  r,t\right)  +\sigma_{2}\left(
x,t\right)  ,\ \ \label{decom-rho}%
\end{equation}
and
\begin{equation}
u\left(  x,t\right)  =\nabla\xi=v_{1}\left(  r,t\right)  \frac{x}{r}+\nabla
\xi_{2}\left(  x,t\right)  ,\label{decomp-u}%
\end{equation}
where $\left(  \sigma_{1},v_{1}\right)  $ is the radial component defined by
\[
\ \sigma_{1}\left(  r,t\right)  =\int_{\mathbb{S}^{2}}\sigma\left(
r\theta\right)  dS_{\theta},\ \xi_{1}\left(  r,t\right)  =\int_{\mathbb{S}%
^{2}}\xi\left(  r\theta\right)  dS_{\theta},\ v_{1}\left(  x,t\right)
=\frac{\partial}{\partial r}\xi_{1}\left(  r,t\right)  ,\
\]
and $\left(  \sigma_{2},\xi_{2}\right)  =\left(  \sigma-\sigma_{1},\xi-\xi
_{1}\right)  \ $ are the nonradial components.

The radial component $\left(  \sigma_{1},v_{1}\right)  $ will be studied in
next subsection. The nonradial component $\left(  \sigma_{2}\left(
x,t\right)  ,\ \xi_{2}\left(  x,t\right)  \right)  $ satisfies the system
\[
\partial_{t}\sigma_{2}=-\nabla\cdot\left(  \rho_{\mu}\nabla\xi_{2}\right)
\]%
\[
\xi_{2,t}=-\left(  \Phi^{\prime\prime}\left(  \rho_{\mu}\right)  \sigma
_{2}+V_{2}\right)  =-L_{\mu}\sigma_{2},\ \ \Delta V_{2}=4\pi\sigma_{2}.
\]
It is of the Hamiltonian form
\begin{equation}
\partial_{t}\left(
\begin{array}
[c]{c}%
\sigma_{2}\\
\xi_{2}%
\end{array}
\right)  =\left(
\begin{array}
[c]{cc}%
0 & I\\
-I & 0
\end{array}
\right)  \left(
\begin{array}
[c]{cc}%
L_{\mu} & 0\\
0 & \tilde{A}_{\mu}%
\end{array}
\right)  \left(
\begin{array}
[c]{c}%
\sigma_{2}\\
\xi_{2}%
\end{array}
\right)  ,\label{hamiltonian-non-radial}%
\end{equation}
where $\tilde{A}_{\mu}=-\nabla\cdot\left(  \rho_{\mu}\nabla\right)  $,
$\left(  \sigma_{2},\ \xi_{2}\right)  \in X_{\mu,n}\times Y_{\mu,n}$ with
\[
X_{\mu,n}=\left\{  \rho\in X_{\mu}\ |\ \int_{\mathbb{S}^{2}}\rho\left(
r\theta\right)  dS_{\theta}=0\right\}  ,
\]
and%
\[
Y_{\mu,n}=\left\{  \xi\in Y_{\mu,2}\mid\int_{S_{\mu}}\rho_{\mu}\left\vert
\nabla\xi\right\vert ^{2}dx<\infty,\ \int_{\mathbb{S}^{2}}\xi\left(
r\theta\right)  dS_{\theta}=0\right\}  ,\ \ \left\Vert \xi\right\Vert
_{Y_{\mu,n}}=\left\Vert \nabla\xi\right\Vert _{L_{\rho_{\mu}}^{2}}.
\]
We take this chance opportunity to define the following terminology.

\begin{definition}
\label{D:non-radial} Define the subspaces of radial and non-radial
perturbations for the linearized Euler-Poisson system
(\ref{eqn-linearized-Continuity})-(\ref{eqn-linearized-Euler}) as
\begin{align*}
\mathbf{X}_{r}= &  \{\big(\rho(|x|),v(|x|)\frac{x}{|x|}\big)\in X_{\mu}\times
Y_{\mu}\},\\
\mathbf{X}_{nr}= &  \big(\{0\}\times Y_{\mu1}\big)\oplus\{(\rho,u=\nabla
\xi)\in X_{\mu}\times Y_{\mu}\mid\rho\in X_{\mu,n},\ \xi\in Y_{\mu,n}\}.
\end{align*}

\end{definition}

Clearly we have that the decomposition $X_{\mu}\times Y_{\mu}=\mathbf{X}%
_{r}\oplus\mathbf{X}_{nr}$ is invariant under $e^{t\mathcal{J}_{\mu
}\mathcal{L}_{\mu}}$.

By using spherical harmonics, for any $\rho\in X_{\mu,n}$, we write
\[
\rho\left(  x\right)  =\sum_{l=1}^{\infty}\sum_{m=-l}^{l}\rho_{lm}\left(
r\right)  Y_{lm}\left(  \theta\right)  ,
\]
then
\[
L_{\mu}\rho=\sum_{l=1}^{\infty}\sum_{m=-l}^{l}L_{\mu,l}\rho_{lm}%
\ Y_{lm}\left(  \theta\right)  ,
\]
where
\begin{equation}
L_{\mu,l}=\left(  \Phi^{\prime\prime}\left(  \rho_{\mu}\right)  -4\pi\left(
-\Delta_{r}+\frac{l\left(  l+1\right)  }{r^{2}}\right)  ^{-1}\right)
:X_{\mu,r}\rightarrow X_{\mu,r}^{\ast}.\label{defn-L-mu-l}%
\end{equation}
By Lemma \ref{lemma-property-A-EP} and the proof of Lemma
\ref{lemma-equivalent-operators}, we have
\[
n^{-}\left(  L_{\mu,l}|_{X_{\mu,r}}\right)  =n^{-}\left(  D_{\mu}^{l}\right)
=0,\ \forall\ l\geq1.
\]
Therefore,
\[
n^{-}\left(  L_{\mu}|_{X_{\mu,n}}\right)  =\sum_{l=1}^{\infty}\sum_{m=-l}%
^{l}n^{-}\left(  L_{\mu,l}|_{X_{\mu,r}}\right)  =0.
\]
Since $\tilde{A}_{\mu}>0$ on $Y_{\mu,n}$, by Theorem \ref{T:main}, there is no
unstable eigenvalue for the system (\ref{hamiltonian-non-radial}). Moreover,
we shall show that all the eigenvalues of (\ref{hamiltonian-non-radial}) are
isolated with finite multiplicity. Define the space
\[
Z_{\mu,n}=\left\{  \xi\in Y_{\mu,n}\ |\ \tilde{A}_{\mu}\xi\in X_{\mu
,n}\right\}
\]
with the norm
\[
\left\Vert \xi\right\Vert _{Z_{\mu,n}}=\left\Vert \nabla\xi\right\Vert
_{L_{\rho_{\mu}}^{2}}+\left\Vert \tilde{A}_{\mu}\xi\right\Vert _{L_{\Phi
^{\prime\prime}\left(  \rho_{\mu}\right)  }^{2}}.
\]
Then by Theorem \ref{T:main}, it suffices to show that the embedding
$Z_{\mu,n}\hookrightarrow Y_{\mu,n}$ is compact. This follows from Proposition
12 in \cite{jang-makino-LEP19}.

By using spherical harmonics, we can further decompose
(\ref{hamiltonian-non-radial}). For $\left(  \sigma_{2},\ \xi_{2}\right)  \in
X_{\mu,n}\times Y_{\mu,n}$, let%

\[
\sigma_{2}\left(  x\right)  =\sum_{l=1}^{\infty}\sum_{m=-l}^{l}\sigma
_{lm}\left(  r,t\right)  Y_{lm}\left(  \theta\right)  ,\ \ \xi_{2}\left(
x,t\right)  =\sum_{l=1}^{\infty}\sum_{m=-l}^{l}\xi_{lm}\left(  r,t\right)
Y_{lm}\left(  \theta\right)  .\
\]
For each $l\geq1,\ -l\leq m\leq l,\ $the component $\left(  \sigma_{lm}\left(
r,t\right)  ,\xi_{lm}\left(  r,t\right)  \right)  $ satisfies the separable
Hamiltonian system%
\begin{equation}
\partial_{t}\left(
\begin{array}
[c]{c}%
\sigma_{lm}\\
\xi_{lm}%
\end{array}
\right)  =\left(
\begin{array}
[c]{cc}%
0 & I\\
-I & 0
\end{array}
\right)  \left(
\begin{array}
[c]{cc}%
L_{\mu,l} & 0\\
0 & A_{\mu,l}%
\end{array}
\right)  \left(
\begin{array}
[c]{c}%
\sigma_{lm}\\
\xi_{lm}%
\end{array}
\right)  ,\label{eqn-Hamiltonian-lm}%
\end{equation}
on the space $X_{\mu,r}\times\tilde{Y}_{\mu,r}$, where
\begin{equation}
\tilde{Y}_{\mu,r}=\left\{  p\left(  r\right)  \ |\ \int_{0}^{R_{\mu}}\rho
_{\mu}\left(  r^{2}\left(  \partial_{r}p\right)  ^{2}+p^{2}\right)
dr<\infty\right\}  ,\label{defn-Y-tilde-nonradial}%
\end{equation}
the operator $L_{\mu,l}$ is defined in (\ref{defn-L-mu-l}) and
\[
A_{\mu,l}=-\frac{1}{r^{2}}\partial_{r}\left(  \rho_{\mu}r^{2}\partial
_{r}\right)  +\frac{\rho_{\mu}l\left(  l+1\right)  }{r^{2}}:\tilde{Y}_{\mu
,r}\rightarrow\tilde{Y}_{\mu,r}^{\ast}.
\]
By the properties of the operators $L_{\mu,l}$ (equivalently the operators
$D_{\mu}^{l}$) given in Lemma \ref{lemma-property-A-EP}, it is easy to see
that, when $l>1$, all the eigenvalues of (\ref{eqn-Hamiltonian-lm}) are
nonzero and purely imaginary. When $l=1$, (\ref{eqn-Hamiltonian-lm}) has a
kernel space spanned by $\left(  \rho_{\mu}^{\prime}\left(  r\right)
,0\right)  ^{T}$ corresponding to translation modes $\left(  \partial_{x_{i}%
}\rho_{\mu},0\right)  ^{T}$ for the linearized Euler-Poisson system
(\ref{eqn-linearized-Continuity})-(\ref{eqn-linearized-Euler}). According to
Theorem \ref{T:main}, all eigenvalues of $\mathcal{J}_{\mu}\mathcal{L}_{\mu}$
restricted to the invariant subspace $X_{\mu}\times Y_{\mu,2}$, and thus of
\eqref{eqn-Hamiltonian-lm}, are semi-simple except for possibly the zero
eigenvalue. Since $0$ is an isolated eigenvalue, Theorem \ref{T:main} applied
to $\mathcal{J}_{\mu}\mathcal{L}_{\mu}|_{X_{\mu,n}\times Y_{\mu,n}}$ implies
that the eigenspace of $0$ only consists of generalized eigenvectors with
finite multiplicity.

Indeed (\ref{eqn-Hamiltonian-lm}) does have a nontrivial generalized
eigenvectors and thus non-trivial Jordan blocks associated to $0$. To see
this, for any $\zeta\in\tilde{Y}_{\mu,r}$, we have
\[
|\int_{S_{\mu}}\rho_{\mu}^{\prime}\zeta dx|\leq\Vert\zeta\Vert_{\tilde{Y}%
_{\mu,r}}\left(  \int_{S_{\mu}}\left(  \rho_{\mu}^{\prime}\right)  ^{2}%
\rho_{\mu}^{-1}dx\right)  ^{\frac{1}{2}}\lesssim\Vert\zeta\Vert_{\tilde
{Y}_{\mu,r}}%
\]
where we used $\gamma_{0}\in(\frac{6}{5},2)$ and
\[
\rho_{\mu}=O(|R_{\mu}-r|^{\frac{1}{\gamma_{0}-1}}),\;\;\rho_{\mu}^{\prime
}=O(|R_{\mu}-r|^{\frac{1}{\gamma_{0}-1}-1}),\text{ for }|R_{\mu}-r|\ll1.
\]
Therefore, $\rho_{\mu}^{\prime}\in\tilde{Y}_{\mu,r}^{\ast}$ and thus the
Lax-Milgram theorem implies that exists a unique $\zeta(r)\in\tilde{Y}_{\mu
,r}$ such that
\begin{equation}
\rho_{\mu}^{\prime}=A_{\mu,1}\zeta=-\frac{1}{r^{2}}\partial_{r}\left(
\rho_{\mu}r^{2}\partial_{r}\zeta\right)  +\frac{2\rho_{\mu}}{r^{2}}\zeta.
\label{E:G-kernel}%
\end{equation}
Therefore, $\big(0,\zeta Y_{1m}(\theta)\big)^{T}$, $m=0,\pm1$, belong to the
generalized kernel of \eqref{eqn-Hamiltonian-lm}, which correspond to
$\big(0,\partial_{x_{j}}(\zeta\frac{x}{r})\big)^{T}$, $j=1,2,3$, in the
generalized kernel of $\mathcal{J}_{\mu}\mathcal{L}_{\mu}$ with
\begin{equation}
\mathcal{J}_{\mu}\mathcal{L}_{\mu}\big(0,\partial_{x_{j}}\nabla\tilde{\zeta
}(|x|)\big)^{T}=\big(\partial_{x_{j}}\rho_{\mu},0\big)^{T},\quad\tilde{\zeta
}^{\prime}=\zeta. \label{E:G-kernel-1}%
\end{equation}
Moreover these functions in the generalized kernel of $\mathcal{J}_{\mu
}\mathcal{L}_{\mu}$ do \textit{not} belong to the range $R(\mathcal{J}_{\mu
}\mathcal{L}_{\mu})$. In fact, suppose
\[
(\mathcal{J}_{\mu}\mathcal{L}_{\mu})(\rho,u)^{T}=\big(0,\partial_{x_{j}}%
\nabla\tilde{\zeta}\big)^{T},\quad(\rho,u)^{T}\in X_{\mu}\times Y_{\mu},
\]
then one may compute
\begin{align*}
&  \langle A_{\mu}\partial_{x_{j}}\nabla\tilde{\zeta},\partial_{x_{j}}%
\nabla\tilde{\zeta}\rangle=\langle L_{\mu}%
\begin{pmatrix}
0\\
\partial_{x_{j}}\nabla\tilde{\zeta}%
\end{pmatrix}
,%
\begin{pmatrix}
0\\
\partial_{x_{j}}\nabla\tilde{\zeta}%
\end{pmatrix}
\rangle\\
=  &  -\langle L_{\mu}J_{\mu}L_{\mu}%
\begin{pmatrix}
0\\
\partial_{x_{j}}\nabla\tilde{\zeta}%
\end{pmatrix}
,%
\begin{pmatrix}
\rho\\
u
\end{pmatrix}
\rangle=-\langle L_{\mu}%
\begin{pmatrix}
\partial_{x_{j}}\rho_{\mu}\\
0
\end{pmatrix}
,%
\begin{pmatrix}
\rho\\
u
\end{pmatrix}
\rangle=0
\end{align*}
which is a contradiction. Therefore, we may conclude that the zero eigenvalue
of $\mathcal{J}_{\mu}\mathcal{L}_{\mu}|_{X_{\mu,n}\times Y_{\mu,n}}$ has a
6-dim eigenspace with geometric multiplicity 3 and algebraic multiplicity 6.

Above discussions are summarized below.

\begin{proposition}
\label{prop-discrete-non-radial} Any non-rotating star $\rho_{\mu}$ is
spectrally stable under non-radial perturbations in $\mathbf{X}_{nr}$.
All nonzero eigenvalues of (\ref{hamiltonian-non-radial})\ are isolated and of
finite multiplicity. The zero eigenvalue of the linearized Euler-Poisson
operator $\mathcal{J}_{\mu}\mathcal{L}_{\mu}|_{\mathbf{X}_{nr}}$
is isolated with an infinite dimensional eigenspace
\[
(\left\{  0\right\}  \times Y_{\mu1})\oplus span\{\big(\partial_{x_{j}}%
\rho_{\mu},0\big)^{T},\ \big(0,\partial_{x_{j}}(\zeta\frac{x}{r})\big)^{T}\mid
j=1,2,3\}
\]
where $\mathcal{J}_{\mu}\mathcal{L}_{\mu}$ has three $2\times2$ Jordan blocks
associated to \eqref{E:G-kernel-1} generated by the translation symmetry.
\end{proposition}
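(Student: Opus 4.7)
The proof will be a synthesis of the structural results already assembled in Section 3.4 together with Theorems \ref{T:main} and \ref{T:main2} applied to the non-radial block of the linearized system. My plan is to first dispose of the pseudo-divergence free component, then analyze the irrotational non-radial component via spherical harmonic decomposition, applying the abstract framework to each angular momentum sector.

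First I would note that on the invariant subspace $\{0\}\times Y_{\mu,1}$ the operator $\mathcal{J}_{\mu}\mathcal{L}_{\mu}$ vanishes identically (since $B_{\mu}(\rho_{\mu}v)=-\nabla\cdot(\rho_{\mu}v)=0$ and the velocity equation is forced by a gradient only through $L_{\mu}\sigma$), so this entire infinite-dimensional space lies in the geometric kernel of $\mathcal{J}_{\mu}\mathcal{L}_{\mu}$. This accounts for the first summand in the stated eigenspace. For the irrotational non-radial component, I would invoke (\ref{hamiltonian-non-radial}) and verify that the triple $(L_{\mu}|_{X_{\mu,n}},\tilde{A}_{\mu},I)$ satisfies (\textbf{G1-4}): that $n^{-}(L_{\mu}|_{X_{\mu,n}})=0$ was already reduced via Lemma \ref{lemma-equivalent-operators} and Lemma \ref{lemma-property-A-EP} to $n^{-}(D_{\mu}^{l})=0$ for $l\geq 1$, and $\tilde{A}_{\mu}>0$ on $Y_{\mu,n}$ follows from Poincaré-type estimates together with the exclusion of constants. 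Theorem \ref{T:main} then yields spectral stability (the unstable subspace has dimension $n^{-}(L_{\mu}|_{\overline{R(BA)}})=0$) and puts $e^{t\mathcal{J}_{\mu}\mathcal{L}_{\mu}}|_{\mathbf{X}_{nr}}$ into the block form of Theorem \ref{T:main} iv); the cited compactness of $Z_{\mu,n}\hookrightarrow Y_{\mu,n}$ from \cite{jang-makino-LEP19} then guarantees via statement v) that the spectra are isolated with finite multiplicity and accumulate only at $\pm i\infty$.

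Next I would decompose the irrotational part via spherical harmonics into the sectors (\ref{eqn-Hamiltonian-lm}) for $l\geq 1$. For $l\geq 2$ Lemma \ref{lemma-property-A-EP} ii) gives $L_{\mu,l}>0$, so $\mathcal{J}L$ on each such sector has purely imaginary spectrum and trivial kernel by Theorem \ref{T:main2} iii). For $l=1$, $\ker L_{\mu,1}$ is one-dimensional, spanned by $\rho_{\mu}'(r)$ (equivalently $\partial_{x_{j}}\rho_{\mu}$ in Cartesian form, $j=1,2,3$), coming from translation invariance. Using the splitting in Theorem \ref{T:main} iv), the only potentially non-trivial Jordan structure at $0$ is encoded in the block $\mathbf{X}_{1}\oplus\mathbf{X}_{5}$; to confirm it is nontrivial I would exhibit the pre-image $\zeta$ via the Lax-Milgram argument of (\ref{E:G-kernel}), show that $\rho_{\mu}'\in\tilde{Y}_{\mu,r}^{\ast}$ using the stated asymptotics $\rho_{\mu}=O(|R_{\mu}-r|^{1/(\gamma_{0}-1)})$ with $\gamma_{0}\in(\tfrac65,2)$, and then observe that $(0,\partial_{x_{j}}\nabla\tilde{\zeta})^{T}$ maps to $(\partial_{x_{j}}\rho_{\mu},0)^{T}$ under $\mathcal{J}_{\mu}\mathcal{L}_{\mu}$.

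The key obstacle, and the place requiring most care, is ruling out any further generalized eigenvectors and hence confirming the Jordan blocks are exactly $2\times 2$, i.e.\ the algebraic multiplicity at $0$ in the $l=1$ sector is $6$ rather than larger. For this I would use the pairing computation at the end of the excerpt: if $(\partial_{x_j}\rho_\mu,0)^T$ lay in $R(\mathcal{J}_{\mu}\mathcal{L}_{\mu})^{2}$, then we would obtain
\[
\langle A_{\mu}\,\partial_{x_{j}}\nabla\tilde{\zeta},\partial_{x_{j}}\nabla\tilde{\zeta}\rangle = -\langle L_{\mu}(\partial_{x_{j}}\rho_{\mu},0)^{T},(\rho,u)^{T}\rangle = 0,
\]
contradicting the positivity of $A_{\mu}$ on the non-zero vector $\partial_{x_{j}}\nabla\tilde{\zeta}$. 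Equivalently one may read this off directly from the block decomposition of Theorem \ref{T:main} iv) applied to the $l=1$ sector, since the non-degeneracy of $\mathbf{L}_{15}$ bounds the Jordan chain length at $2$. Combining the three translation directions $j=1,2,3$ yields a $6$-dimensional generalized kernel in the irrotational non-radial part (geometric multiplicity $3$, algebraic $6$), which together with the infinite-dimensional $\{0\}\times Y_{\mu,1}$ gives the stated eigenspace. Isolatedness of $0$ follows because the $l\geq 2$ sectors are uniformly bounded away from zero by the spectral gap of $D_{\mu}^{l}\geq D_{\mu}^{1}+2/r^{2}$, and within the $l=1$ sector $0$ is an isolated eigenvalue by the compactness/discreteness already established.
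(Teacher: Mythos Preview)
Your proposal is correct and follows essentially the same route as the paper's own argument: disposing of the pseudo-divergence-free component as pure kernel, applying Theorem~\ref{T:main} to the irrotational non-radial block with the compactness input from \cite{jang-makino-LEP19}, decomposing into angular momentum sectors, and confirming the Jordan chain length at $l=1$ via the pairing computation with $\langle A_\mu\partial_{x_j}\nabla\tilde\zeta,\partial_{x_j}\nabla\tilde\zeta\rangle$. Two minor remarks: (i) your phrasing ``$(\partial_{x_j}\rho_\mu,0)^T\in R((\mathcal{J}_\mu\mathcal{L}_\mu)^2)$'' is equivalent to the paper's ``$(0,\partial_{x_j}\nabla\tilde\zeta)^T\in R(\mathcal{J}_\mu\mathcal{L}_\mu)$'' only modulo kernel elements, but since the $l=1$ kernel lies in $\ker\mathcal{L}_\mu$ the pairing argument goes through unchanged; (ii) no Poincar\'e estimate is needed for $\tilde A_\mu>0$, since by the definition of the norm on $Y_{\mu,n}$ the form $\langle\tilde A_\mu\xi,\xi\rangle=\|\xi\|_{Y_{\mu,n}}^2$ is the Riesz identification. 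Your alternative observation that the non-degeneracy of $\mathbf{L}_{15}$ in Theorem~\ref{T:main}~iv) caps the Jordan chain at length~$2$ is a clean shortcut the paper does not make explicit.
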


\begin{remark}
For irrotational perturbations, the eigenvalues of
(\ref{hamiltonian-non-radial})\ were shown to be purely discrete in
\cite{jang-makino-LEP19} by a different approach. In \cite{beyer-JMP-1}
\cite{beyer-JMP-2} \cite{beyer-schmidt}, the spectrum for nonradial
perturbations were shown to be countable, and it was conjectured in
\cite{beyer-schmidt} that zero is the only accumulation point. This is indeed
true for barotropic equation of states $P\left(  \rho\right)  $ by above
Proposition or results in \cite{jang-makino-LEP19}.
\end{remark}

\begin{remark}
\label{rmk-variational-steady}In the astrophysics literature (\cite{antonov}
\cite{lebowitz65} \cite{aly-perez-92}), the stability of non-rotating stars
under nonradial perturbations (Antonov-Lebowitz Theorem) was shown by using
the physical principle that the stable states should be energy minimizers
under the constraint of constant mass. We discuss such energy principle below.

The steady density $\rho_{\mu}$ has the following variational structure.
Define the functional
\begin{equation}
E_{\mu}\left(  \rho\right)  =\int\Phi\left(  \rho\right)  dx-\frac{1}{8\pi
}\int\left\vert \nabla V\right\vert ^{2}dx-V_{\mu}\left(  R_{\mu}\right)
\int\rho dx,\label{energy functional -rho}%
\end{equation}
with $\ \Delta V=4\pi\rho$. Then $\rho_{\mu}$ is a critical point of $E_{\mu
}\left(  \sigma\right)  $, that is, $E_{\mu}^{\prime}\left(  \rho_{\mu
}\right)  =0$ which is exactly the equation (\ref{first-order-variation}). The
2nd order variation of $E_{\mu}$ at $\rho_{\mu}$ is
\begin{equation}
\left\langle E_{\mu}^{\prime\prime}\left(  \rho_{\mu}\right)  \rho
,\rho\right\rangle =\int\left(  \Phi^{\prime\prime}\left(  \rho_{\mu}\right)
\rho^{2}-\frac{1}{4\pi}\left\vert \nabla V\right\vert ^{2}\right)
dx=\left\langle L_{\mu}\rho,\rho\right\rangle
.\label{quadratic-L-2nd-variation}%
\end{equation}
We note that the energy functional
\[
E\left(  \rho,u\right)  =\frac{1}{2}\int\rho\left\vert u\right\vert
^{2}dx+\int\Phi\left(  \rho\right)  dx-\frac{1}{8\pi}\int\left\vert \nabla
V\right\vert ^{2}dx
\]
is conserved for the nonlinear Euler-Poisson equation (\ref{continuity-EP}%
)-(\ref{Poisson-EP}). Let $M\left(  \rho\right)  =\int\rho dx$ to be the total
mass and define
\[
I_{\mu}\left(  \rho,u\right)  =E\left(  \rho,u\right)  -V_{\mu}\left(  R_{\mu
}\right)  M\left(  \rho\right)  =\frac{1}{2}\int\rho\left\vert v\right\vert
^{2}dx+E_{\mu}\left(  \rho\right)  .\ \ \
\]
The $\left(  \rho_{\mu},0\right)  $ is a critical point of $I_{\mu}\left(
\rho,u\right)  $. The 2nd order variation of $I_{\mu}\left(  \rho,u\right)  $
at $\left(  \rho_{\mu},0\right)  $ is given by the functional
\[
H_{\mu}\left(  \sigma,v\right)  =\frac{1}{2}\int_{S_{\mu}}\rho_{\mu}\left\vert
v\right\vert ^{2}dx+\frac{1}{2}\left\langle L_{\mu}\sigma,\sigma\right\rangle
\]
as defined in (\ref{defn-H-linearized-EP}), which is a conserved quantity of
the linearized Euler-Poisson system (\ref{eqn-linearized-Continuity}%
)-(\ref{eqn-linearized-Euler}).

By the above variational structures, the physical principle that stable stars
should be energy minimizers under the constraint of constant mass is
equivalent to the statement that $\rho_{\mu}$ is stable only when
$\left\langle E_{\mu}^{\prime\prime}\left(  \rho_{\mu}\right)  \sigma
,\sigma\right\rangle \geq0$ for all perturbations $\sigma\ $supported in
$S_{\mu}$ satisfying the mass constraint $\int\sigma\ dx=0$. This was also
called Chandrasekhar's variational principle (\cite{chandra-variational-64})
in the astrophysical literature (\cite{binney-tremaine2008}).
\end{remark}

\subsection{\label{section-TPP} Turning point principle for radial
perturbations}

Denote $X_{\mu,r}$ and $Y_{\mu,r}$ to be the radially symmetric subspace of
$L_{\Phi^{\prime\prime}(\rho_{\mu})}(S_{\mu})$ and $L_{\rho_{\mu}}^{2}\left(
S_{\mu}\right)  $, respectively. By (\ref{hamiltonian-EP}), the radial
component $\left(  \sigma_{1},v_{1}\right)  $ of $\left(  \sigma,v\right)  $
as defined in (\ref{decom-rho})-(\ref{decomp-u}) satisfies
\begin{align}
&  \partial_{t}\left(
\begin{array}
[c]{c}%
\sigma_{1}\\
v_{1}%
\end{array}
\right)  \label{hamiltonian-radial}\\
&  =\left(
\begin{array}
[c]{cc}%
0 & -\frac{1}{r^{2}}\partial_{r}\left(  r^{2}\cdot\right)  \\
-\partial_{r} & 0
\end{array}
\right)  \left(
\begin{array}
[c]{cc}%
\Phi^{\prime\prime}\left(  \rho_{\mu}\right)  -{4\pi}\left(  -\Delta
_{r}\right)  ^{-1} & 0\\
0 & \rho_{\mu}%
\end{array}
\right)  \left(
\begin{array}
[c]{c}%
\sigma_{1}\\
v_{1}%
\end{array}
\right)  \nonumber\\
&  =\left(
\begin{array}
[c]{cc}%
0 & B_{\mu,r}\\
-B_{\mu,r}^{\prime} & 0
\end{array}
\right)  \left(
\begin{array}
[c]{cc}%
L_{\mu,r} & 0\\
0 & A_{\mu,r}%
\end{array}
\right)  \left(
\begin{array}
[c]{c}%
\sigma_{1}\\
v_{1}%
\end{array}
\right)  =J^{\mu}L^{\mu}\left(
\begin{array}
[c]{c}%
\sigma_{1}\\
v_{1}%
\end{array}
\right)  .\nonumber
\end{align}
Here, $\sigma_{1}\in X_{\mu,r},\ v_{1}\in Y_{\mu,r}$ and the operators
\begin{equation}
L_{\mu,r}=\Phi^{\prime\prime}\left(  \rho_{\mu}\right)  -{4\pi}\left(
-\Delta_{r}\right)  ^{-1}:X_{\mu,r}\rightarrow X_{\mu,r}^{\ast}%
,\ \ \label{defn-L-mu-r}%
\end{equation}%
\begin{equation}
A_{\mu,r}=\rho_{\mu}:Y_{\mu,r}\rightarrow Y_{\mu,r}^{\ast},\label{defn-A-mu-r}%
\end{equation}%
\begin{equation}
B_{\mu,r}=-\frac{1}{r^{2}}\partial_{r}\left(  r^{2}\cdot\right)  :Y_{\mu
,r}^{\ast}\rightarrow X_{\mu,r},\ \ \ B_{\mu,r}^{\prime}=\partial_{r}%
:X_{\mu,r}^{\ast}\rightarrow Y_{\mu,r},\label{defn-B-mu-r}%
\end{equation}
and
\begin{equation}
J^{\mu}=\left(
\begin{array}
[c]{cc}%
0 & B_{\mu,r}\\
-B_{\mu,r}^{\prime} & 0
\end{array}
\right)  :X_{\mu,r}^{\ast}\times Y_{\mu,r}^{\ast}\rightarrow X_{\mu,r}\times
Y_{\mu,r},\ \label{defn-J-mu}%
\end{equation}%
\begin{equation}
L^{\mu}=\left(
\begin{array}
[c]{cc}%
L_{\mu,r} & 0\\
0 & A_{\mu,r}%
\end{array}
\right)  :X_{\mu,r}\times Y_{\mu,r}\rightarrow X_{\mu,r}^{\ast}\times
Y_{\mu,r}^{\ast}.\label{defn-L-mu}%
\end{equation}
As the triple $\left(  L_{\mu},A_{\mu},B_{\mu}\right)  $ in
(\ref{hamiltonian-EP}) satisfies assumptions (\textbf{G1-4}) in Section
\ref{section-abstract}, the above reduction procedure and Lemma
\ref{L:decom-1} imply that the triple $\left(  L_{\mu,r},A_{\mu,r},B_{\mu
,r}\right)  $ satisfies (\textbf{G1-4}) as well. Thus,
(\ref{hamiltonian-radial}) is a separable Hamiltonian system, for which
Theorem \ref{T:main} is applicable.

\begin{proof}
[Proof of Theorem \ref{Thm1-EP} ii)]By Theorem \ref{T:main}, the linear
stability/instability of (\ref{hamiltonian-radial}) is reduced to find
$n^{u}\left(  \mu\right)  =n^{-}\left(  L_{\mu,r}|_{\overline{R\left(
B_{\mu,r}\right)  }}\right)  $. By the proof of Lemma
\ref{lemma-equivalent-operators} restricted to radial spaces, we have
$n^{-}\left(  L_{\mu,r}\right)  =n^{-}\left(  D_{\mu}^{0}\right)  $ where
$D_{\mu}^{0}$ is defined by (\ref{defn-D-mu-0-general}). Moreover, it holds
that
\begin{equation}
\overline{R\left(  B_{\mu,r}\right)  }=\left(  \ker B_{\mu,r}^{\prime}\right)
^{\perp}=\left(  \ker\partial_{r}\right)  ^{\perp}=\left\{  \rho\in X_{\mu
,r}\ |\ \int_{S_{\mu}}\rho\ dx=0\right\}  . \label{range-mass-constraint}%
\end{equation}
Therefore, to find $n^{-}\left(  L_{\mu,r}|_{\overline{R\left(  B_{\mu
,r}\right)  }}\right)  $ it is equivalent to determine the negative dimensions
of the quadratic form $\left\langle L_{\mu,r}\cdot,\cdot\right\rangle $ under
the mass constraint $\int_{S_{\mu}}\rho\ dx=0$. We divide into three cases.

Case 1: $\frac{d}{d\mu}\left(  \frac{M\left(  \mu\right)  }{R\left(
\mu\right)  }\right)  \ne0$. By (\ref{eqn-V0}) and Lemma
\ref{lemma-formula-V-R}, the steady density $\rho_{\mu}$ satisfies the
equation
\begin{equation}
\Phi^{\prime}\left(  \rho_{\mu}\right)  -4\pi\left(  -\Delta\right)  ^{-1}%
\rho_{\mu}=V_{\mu}\left(  R_{\mu}\right)  =-\frac{M\left(  \mu\right)
}{R_{\mu}},\label{first-order-variation}%
\end{equation}
inside the support $S_{\mu}$. Taking $\partial_{\mu}$ of above equation, we
have
\begin{equation}
L_{\mu}\frac{\partial\rho_{\mu}}{\partial\mu}=\Phi^{\prime\prime}\left(
\rho_{\mu}\right)  \frac{\partial\rho_{\mu}}{\partial\mu}-4\pi\left(
-\Delta\right)  ^{-1}\frac{\partial\rho_{\mu}}{\partial\mu}=-\frac{d}{d\mu
}\left(  \frac{M\left(  \mu\right)  }{R_{\mu}}\right)  ,\ \text{in }S_{\mu
}\label{2nd order variation}%
\end{equation}
Which
implies that
\[
\overline{R\left(  B_{\mu,r}\right)  } =\{ \rho\mid\langle L_{\mu,r}%
\frac{\partial\rho_{\mu}}{\partial\mu},\rho\rangle=0\}
\]
and
\begin{equation}
\left\langle L_{\mu,r}\frac{\partial\rho_{\mu}}{\partial\mu},\frac
{\partial\rho_{\mu}}{\partial\mu}\right\rangle =-\frac{d}{d\mu}\left(
\frac{M\left(  \mu\right)  }{R_{\mu}}\right)  \int_{S_{\mu}}\frac{\partial
\rho_{\mu}}{\partial\mu}dx=-\frac{d}{d\mu}\left(  \frac{M\left(  \mu\right)
}{R_{\mu}}\right)  M^{\prime}\left(  \mu\right)  .
\label{quadratic-form-d-rho-d-mu}%
\end{equation}

Case 1a: $M^{\prime}(\mu)
\neq0$. The above properties immediately yields
\begin{align*}
n^{-}\left(  L_{\mu,r}|_{\overline{R\left(  B_{\mu,r}\right)  }}\right)   &  =%
\begin{cases}
n^{-}\left(  L_{\mu,r}\right)  -1 & \text{if }M^{\prime}(\mu)\frac{d}{d\mu
}\left(  \frac{M\left(  \mu\right)  }{R_{\mu}}\right)  >0\\
n^{-}\left(  L_{\mu,r}\right)  & \text{if }M^{\prime}(\mu)\frac{d}{d\mu
}\left(  \frac{M\left(  \mu\right)  }{R_{\mu}}\right)  <0
\end{cases}
\\
&  =n^{-}\left(  D_{\mu}^{0}\right)  -i_{\mu}.
\end{align*}

Case 1b: $M^{\prime}(\mu)=0$.
In this case, we have
\[
\left\langle L_{\mu,r}\frac{\partial\rho_{\mu}}{\partial\mu},\frac
{\partial\rho_{\mu}}{\partial\mu}\right\rangle =0, \quad\frac{\partial
\rho_{\mu}}{\partial\mu} \in\overline{R( B_{\mu,r})}, \quad\ker L_{\mu
,r}=\left\{  0\right\}  ,
\]
where Lemma \ref{lemma-kernel-D-mu} was used. There exists $\psi
\notin\overline{R( B_{\mu,r})}$. Let
\[
Z_{0} = span\{\psi, \frac{\partial\rho_{\mu}}{\partial\mu}\}, \quad Z_{1} =
\{\rho\in\overline{R( B_{\mu,r})} \mid\langle L_{\mu,r}\psi,\rho\rangle\} =0
\]
and we have
\[
X_{\mu, r} = Z_{0} \oplus Z_{1}, \quad\overline{R( B_{\mu,r})} = Z_{1}
\oplus\mathbf{R} \frac{\partial\rho_{\mu}}{\partial\mu}.
\]
We obtain from Lemma 12.3 in \cite{lin-zeng-hamiltonian} and
\eqref{quadratic-form-d-rho-d-mu} that
\[
n^{-}\left(  L_{\mu,r}|_{\overline{R\left(  B_{\mu,r}\right)  }}\right)  =
n^{-}(L_{\mu, r} |_{Z_{1}}), \quad n^{-} ( L_{\mu, r}) = n^{-}(L_{\mu, r}
|_{Z_{1}}) + n^{-}(L_{\mu, r} |_{Z_{0}}).
\]
It is straight forward to compute $n^{-}(L_{\mu, r} |_{Z_{0}}) =1$ and thus
\[
n^{-}\left(  L_{\mu,r}|_{\overline{R\left(  B_{\mu,r}\right)  }}\right)
=n^{-}\left(  L_{\mu,r}\right)  -1=n^{-}\left(  D_{\mu}^{0}\right)  -i_{\mu}.
\]

Case 2: $\frac{d}{d\mu}\left(  \frac{M\left(  \mu\right)  }{R\left(
\mu\right)  }\right)  =0$. By Lemma \ref{lemma-M-R-no critical}, we have
\[
\int_{S_{\mu}}\frac{\partial\rho_{\mu}}{\partial\mu}dx=M^{\prime}(\mu
)\neq0\Longrightarrow\frac{\partial\rho_{\mu}}{\partial\mu}\notin%
\overline{R\left(  B_{\mu,r}\right)  }.
\]
Therefore,
\[
X_{\mu,r}=\overline{R\left(  B_{\mu,r}\right)  }\oplus\mathbf{R}\frac
{\partial\rho_{\mu}}{\partial\mu}%
\]
which implies
$n^{-}\left(  L_{\mu,r}|_{\overline{R\left(  B_{\mu,r}\right)  }}\right)
=n^{-}\left(  L_{\mu,r}\right)  $.

\end{proof}

\begin{remark}
If $\mu$ belongs to a stable interval, we must have $\frac{d}{d\mu}\left(
\frac{M\left(  \mu\right)  }{R\left(  \mu\right)  }\right)  \neq0$. Indeed,
when $\frac{d}{d\mu}\left(  \frac{M\left(  \mu\right)  }{R\left(  \mu\right)
}\right)  =0$, by (\ref{unstable-dimension-formula-EP}) and Lemma
\ref{lemma-property-A-EP}, we have $n^{u}\left(  \mu\right)  =n^{-}\left(
D_{\mu}^{0}\right)  \geq1$.
\end{remark}

To prove Theorem \ref{Thm1-EP} iii), by Proposition
\ref{prop-discrete-non-radial}, it remains to show that the eigenvalues of the
operator $\mathcal{J}^{\mu}\mathcal{L}^{\mu}$ defined in
(\ref{hamiltonian-radial}) are purely isolated and
\begin{equation}
\ker J^{\mu}L^{\mu}=span\left\{  \left(
\begin{array}
[c]{c}%
\partial_{\mu}\rho_{\mu}\\
0
\end{array}
\right)  \right\}  . \label{ker-radial}%
\end{equation}
We first prove (\ref{ker-radial}) and leave the proof of the discreteness of
eigenvalues of $J^{\mu}L^{\mu}$ to the end of this section.
By \eqref{2nd order variation} we have
\[
L^{\mu}\left(
\begin{array}
[c]{c}%
\partial_{\mu}\rho_{\mu}\\
0
\end{array}
\right)  =-\frac{d}{d\mu}\left(  \frac{M\left(  \mu\right)  }{R\left(
\mu\right)  }\right)  \left(
\begin{array}
[c]{c}%
1\\
0
\end{array}
\right)  ,
\]
thus $span\left\{  \left(
\begin{array}
[c]{c}%
\partial_{\mu}\rho_{\mu}\\
0
\end{array}
\right)  \right\}  \subset\ker J^{\mu}L^{\mu}$. To prove $\ker J^{\mu} L^{\mu
}\subset span\left\{  \left(
\begin{array}
[c]{c}%
\partial_{\mu}\rho_{\mu}\\
0
\end{array}
\right)  \right\}  $, we consider two cases. Suppose $J^{\mu}L^{\mu}\left(
\begin{array}
[c]{c}%
\sigma\\
v
\end{array}
\right)  =0$ for some nonzero $\left(
\begin{array}
[c]{c}%
\sigma\\
v
\end{array}
\right)  \in X_{\mu,r}\times Y_{\mu,r}$. It is easy to check that $v=0$ and
$L_{\mu,r}\sigma=c$ for some constant $c$.

Case 1: $\frac{d}{d\mu}\left(  \frac{M\left(  \mu\right)  }{R\left(
\mu\right)  }\right)  \neq0$. Then
\[
L_{\mu,r}\left(  \sigma+\frac{c}{\frac{d}{d\mu}\left(  \frac{M\left(
\mu\right)  }{R_{\mu}}\right)  }\partial_{\mu}\rho_{\mu}\right)  =0.
\]
This implies that $\sigma=-\frac{c}{\frac{d}{d\mu}\left(  \frac{M\left(
\mu\right)  }{R_{\mu}}\right)  }\partial_{\mu}\rho_{\mu}$, since by Lemma
\ref{lemma-kernel-D-mu}, $\dim L_{\mu,r}=\dim\ker D_{\mu}^{0}=0$.

Case 2: $\frac{d}{d\mu}\left(  \frac{M\left(  \mu\right)  }{R\left(
\mu\right)  }\right)  =0$. Then $\ker L_{\mu,r}=span\left\{  \partial_{\mu
}\rho_{\mu}\right\}  $ and $M^{\prime}\left(  \mu\right)  \neq0$ by Lemma
\ref{lemma-M-R-no critical}. From $L_{\mu,r}\sigma=c$ we have
\[
0=\left\langle L_{\mu,r}\partial_{\mu}\rho_{\mu},\sigma\right\rangle
=\left\langle L_{\mu,r}\sigma,\partial_{\mu}\rho_{\mu}\right\rangle
=cM^{\prime}\left(  \mu\right)  .
\]
Thus $c=0$ and $L_{\mu,r}\sigma=0$, which again imply that $\sigma\in
span\left\{  \partial_{\mu}\rho_{\mu}\right\}  $. This proves
(\ref{ker-radial}).

Next, we prove the turning point principle by using Theorem \ref{Thm1-EP}.

\begin{proof}
[Proof of Theorem \ref{Thm2-TPP}]By Lemma \ref{lemma-small-D-neg}, when $\mu$
is small enough, $n^{-}\left(  D_{\mu}^{0}\right)  =1$. By the proof of Lemma
\ref{lemma-steady-small-density}, when $\mu$ is small we have
\[
\rho_{\mu}=F_{+}\left(  \alpha\theta_{\alpha}\left(  \alpha^{\frac{n_{0}-1}%
{2}}r\right)  \right)  ,\ \ \alpha=\Phi^{\prime}\left(  \mu\right)
,\ n_{0}=\frac{1}{\gamma_{0}-1}.
\]
Here, $\theta_{\alpha}\rightarrow\theta_{0}$ in $C^{1}\left(  0,R\right)  $
for any $R>0$ and $\theta_{0}\,$is the Lane-Emden function satisfying
(\ref{eqn-LE-g-0}). The support radius of $\rho_{\mu}$ is
\[
R_{\mu}=\alpha^{-\frac{n_{0}-1}{2}}S_{\alpha}=\alpha^{-\frac{2-\gamma_{0}%
}{2\left(  \gamma_{0}-1\right)  }}S_{\alpha},
\]
where $S_{\alpha}$ is $C^{1}$ in $\alpha$ and when $\alpha\rightarrow
0,\ S_{\alpha}\rightarrow R_{0}$, the support radius of $\theta_{0}$. The
total mass is
\begin{align*}
M\left(  \mu\right)   &  =4\pi\int_{0}^{R_{\mu}}F_{+}\left(  \alpha
\theta_{\alpha}\left(  \alpha^{\frac{n_{0}-1}{2}}r\right)  \right)  r^{2}dr\\
&  =\alpha^{\frac{1}{2}\frac{\left(  3\gamma_{0}-4\right)  }{\gamma_{0}-1}%
}\int_{0}^{S_{\alpha}}g_{\alpha}\left(  \theta_{\alpha}\left(  s\right)
\right)  s^{2}ds,
\end{align*}
where $g_{\alpha}\rightarrow g_{0}$ in $C^{1}\left(  0,1\right)  $ with
$g_{\alpha},\ g_{0}\ $defined in (\ref{defn-g-alpha}) and (\ref{defn-g-0}).
So
\[
\int_{0}^{S_{\alpha}}g_{\alpha}\left(  \theta_{\alpha}\left(  s\right)
\right)  s^{2}ds\rightarrow\int_{0}^{R_{0}}g_{0}\left(  \theta_{0}\left(
s\right)  \right)  s^{2}ds>0,\ \text{when }\alpha\rightarrow0.
\]
Thus for $\mu$ small, we have: i) $\frac{M\left(  \mu\right)  }{R\left(
\mu\right)  }\thickapprox\alpha=\Phi^{\prime}\left(  \mu\right)  $ and
$\frac{d}{d\mu}\left(  \frac{M\left(  \mu\right)  }{R_{\mu}}\right)  >0$; ii)
$M^{\prime}\left(  \mu\right)  >0$ when $\gamma_{0}\in\left(  \frac{4}%
{3},2\right)  $ and $M^{\prime}\left(  \mu\right)  <0$ when $\gamma_{0}%
\in\left(  \frac{6}{5},\frac{4}{3}\right)  $. Thus when $\mu$ is small the
formula (\ref{formula-unstable-mode-small-mu}) for $n^{u}\left(  \mu\right)
\ $follows from Theorem \ref{Thm1-EP}.

Next, we keep track of the changes of $n^{u}\left(  \mu\right)  $ along the
mass-radius curve by increasing $\mu$. We consider four cases.

Case 1: No critical points of $\frac{M\left(  \mu\right)  }{R\left(
\mu\right)  }$ or $M\left(  \mu\right)  $ are met. Then $\frac{d}{d\mu}\left(
\frac{M\left(  \mu\right)  }{R_{\mu}}\right)  $ and $M^{\prime}\left(
\mu\right)  $ do not change sign. By Lemma \ref{lemma-small-D-neg} and
(\ref{formula-unstable-mode}), $n^{u}\left(  \mu\right)  $ is unchanged.

Case 2: At a critical point $\mu^{\ast}$ of $\frac{M\left(  \mu\right)
}{R_{\mu}}$. The jump formula (\ref{formula-jump-D-mu}) implies that
\[
n^{u}\left(  \mu^{\ast}+\right)  =n^{-}\left(  D_{\mu^{\ast}+}^{0}\right)
-i_{\mu^{\ast}+}=n^{-}\left(  D_{\mu^{\ast}-}^{0}\right)  -i_{\mu^{\ast}%
-}=n^{u}\left(  \mu^{\ast}-\right)  .
\]
That is, the number of unstable modes remains unchanged when crossing
$\mu^{\ast}$.

Case 3: At an extrema (i.e. maximum or minimum) point $\bar{\mu}$ of $M\left(
\mu\right)  $ where $M^{\prime}(\mu)$ changes sign. Then $\frac{d}{d\mu
}\left(  \frac{M\left(  \mu\right)  }{R_{\mu}}\right)  |_{\mu=\bar{\mu}}\neq0$
and $n^{-}\left(  D_{\mu}^{0}\right)  $ is the same in a neighborhood of
$\bar{\mu}$. But $M^{\prime}\left(  \mu\right)  $ changes sign when crossing
$\bar{\mu}$, thus we have
\[
n^{u}\left(  \bar{\mu}+\right)  -n^{u}\left(  \bar{\mu}-\right)  =-(i_{\mu
+}-i_{\mu-})=\pm1,
\]
when $M^{\prime}(\mu)\frac{d}{d\mu}\left(  \frac{M\left(  \mu\right)  }%
{R_{\mu}}\right)  $ changes from $\pm$ to $\mp$ at $\bar{\mu}$. Since
$M^{\prime}\left(  \bar{\mu}\right)  =0$ and $\frac{d}{d\mu}\left(
\frac{M\left(  \mu\right)  }{R_{\mu}}\right)  |_{\mu=\bar{\mu}}\neq0$, when
$\mu\ $is near $\bar{\mu}$, we have $R^{\prime}\left(  \bar{\mu}\right)
\neq0$ and the sign of $M^{\prime}(\mu)\frac{d}{d\mu}\left(  \frac{M\left(
\mu\right)  }{R_{\mu}}\right)  $ is the same as $-M^{\prime}(\mu)R^{\prime
}\left(  \mu\right)  $. Thus $n^{u}\left(  \bar{\mu}+\right)  -n^{u}\left(
\bar{\mu}-\right)  =\pm1$ when $M^{\prime}(\mu)R^{\prime}\left(  \mu\right)  $
changes from $\mp$ to $\pm$ at $\bar{\mu}$, or equivalently the mass-radius
curve bends counterclockwise (clockwise) at $\bar{\mu}$.

Case 4: At a critical but non-extrema point $\tilde{\mu}\ $of $M\left(
\mu\right)  $. Then $\frac{d}{d\mu}\left(  \frac{M\left(  \mu\right)  }%
{R_{\mu}}\right)  |_{\mu=\tilde{\mu}}\neq0$ and $n^{-}\left(  D_{\mu}%
^{0}\right)  $ is the same for $\mu\ $near $\tilde{\mu}$. Since $\tilde{\mu}$
is not an extrema point of $M\left(  \mu\right)  $, the sign of $M^{\prime
}\left(  \mu\right)  $ does not change when crossing $\tilde{\mu}$. Then by
(\ref{formula-unstable-mode}) $n^{u}\left(  \mu\right)  $ does not change when
crossing $\tilde{\mu}$. However, we should note that if $M^{\prime}\left(
\mu\right)  \frac{d}{d\mu}\left(  \frac{M\left(  \mu\right)  }{R_{\mu}%
}\right)  >0$ or equivalently $M^{\prime}\left(  \mu\right)  R^{\prime}\left(
\mu\right)  <0\ $in a neighborhood of $\tilde{\mu}$ excluding $\tilde{\mu}$,
then $n^{u}\left(  \mu\right)  $ has a removable jump discontinuity at
$\tilde{\mu}$ where $n^{u}\left(  \mu\right)  \ $is reduced by one.

Summing up above discussions, we finish the proof of Theorem \ref{Thm2-TPP}.
\end{proof}

Below, we prove Theorem \ref{Thm3-trichotomy} about exponential trichotomy
estimates of (\ref{hamiltonian-radial}).

\begin{proof}
[Proof of Theorem \ref{Thm3-trichotomy}]Conclusion (i) is by Theorem
\ref{Thm1-EP} and \ref{T:main}. Conclusion (ii) and
(\ref{estimate-stable-unstable-EP}) follows directly from Theorem
\ref{T:main}. To prove (\ref{estimate-center-EP-1}) and
(\ref{estimate-center-EP-2}), we consider radial and nonradial perturbations
separately. For nonradial perturbations, Proposition
\ref{prop-discrete-non-radial} implies that all eigenvalues are discrete and
on the imaginary axis. Hence according to the block decomposition and the
anti-self-adjointness of $\mathbf{T}_{3}$ in Theorem \ref{T:main}, the
algebraic growth can only arise from the generalized kernel. By Theorem
\ref{Thm1-EP} i),
we have
\begin{equation}
\left\vert e^{t\mathcal{J}_{\mu}\mathcal{L}_{\mu}}|_{E^{c}\cap\mathbf{X}_{nr}%
}\right\vert =\left\vert e^{t\mathcal{J}_{\mu}\mathcal{L}_{\mu}}%
|_{\mathbf{X}_{nr}}\right\vert \leq C_{0}(1+|t|).
\label{estimate-nonradial-growth}%
\end{equation}
For radial perturbations, when $M^{\prime}(\mu)=0$, by Theorem \ref{T:main2}
i), we have
\[
\left\vert e^{t\mathcal{J}_{\mu}\mathcal{L}_{\mu}}|_{E^{c}\cap\mathbf{X}_{r}%
}\right\vert \leq C_{0}(1+|t|)^{2}%
\]
and (\ref{estimate-center-EP-2}) follows by combining it with
(\ref{estimate-nonradial-growth}). When $M^{\prime}(\mu)\neq0$, we check that
$L_{\mu,r}|_{\overline{R\left(  B_{\mu,r}\right)  }}$ is non-degenerate. Let
$W_{1}=span\left\{  \frac{\partial\rho_{\mu}}{\partial\mu}\right\}  $. Since
$\int\frac{\partial\rho_{\mu}}{\partial\mu}dx=M^{\prime}(\mu)\neq0$, there is
an invariant decomposition $\mathbf{X}_{r}=R\left(  B_{\mu,r}\right)  \oplus
W_{1}$. When $M^{\prime}(\mu)\frac{d}{d\mu}\left(  \frac{M\left(  \mu\right)
}{R_{\mu}}\right)  \neq0$, by the proof of Theorem \ref{Thm1-EP} ii),
$\overline{R\left(  B_{\mu,r}\right)  }$ is the $L_{\mu,r}$-orthogonal
complement space of $W_{1}=span\left\{  \frac{\partial\rho_{\mu}}{\partial\mu
}\right\}  $. The non-degeneracy of $L_{\mu,r}|_{\overline{R\left(  B_{\mu
,r}\right)  }}$ follows since $\ker L_{\mu,r}=\left\{  0\right\}  $ and
$L_{\mu,r}|_{W_{1}}$ is non-degenerate by (\ref{quadratic-form-d-rho-d-mu}).
When $\frac{d}{d\mu}\left(  \frac{M\left(  \mu\right)  }{R_{\mu}}\right)  =0$,
we have $\ker L_{\mu,r}=W_{1}$ and the non-degeneracy of $L_{\mu
,r}|_{\overline{R\left(  B_{\mu,r}\right)  }}$ also follows. Thus by Theorem
\ref{T:main2} iii), we have $\left\vert e^{t\mathcal{J}_{\mu}\mathcal{L}_{\mu
}}|_{E^{c}\cap\mathbf{X}_{r}}\right\vert \leq C_{0}$, which implies Conclusion
(iv) and (\ref{estimate-center-EP-1}).
\end{proof}

It remains to prove that the eigenvalues of the linearized problem
(\ref{hamiltonian-radial}) for radial perturbations are all discrete by
Theorem \ref{T:main}. We need the following Hardy's inequality (\cite{hardy
inequality} \cite{jang-instability}).

\begin{lemma}
[Hardy's inequality]Let $k$ be a real number and $g$ be a function satisfying
\[
\int_{0}^{1}s^{k}\left(  g^{2}+\left\vert g^{\prime}\right\vert ^{2}\right)
ds<\infty.
\]

i) If $k>1$, then we have
\[
\int_{0}^{1}s^{k-2}g^{2}ds\lesssim\int_{0}^{1}s^{k}\left(  g^{2}+\left\vert
g^{\prime}\right\vert ^{2}\right)  ds.
\]

ii) If $k<1$, then $g$ has a trace at $x=0$ and%
\begin{equation}
\int_{0}^{1}s^{k-2}\left(  g-g\left(  0\right)  \right)  ^{2}ds\lesssim
C\int_{0}^{1}s^{k}\left\vert g^{\prime}\right\vert ^{2}ds.
\label{hardy-k-less-1}%
\end{equation}

\end{lemma}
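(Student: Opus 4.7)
The plan is to prove both statements by integration by parts against the antiderivative $\frac{s^{k-1}}{k-1}$ of $s^{k-2}$, combined with Cauchy--Schwarz and Young's inequality to absorb the $s^{k-2}g^2$ factor back into the left side. By a standard density argument in the weighted Sobolev space $\{g:\int_0^1 s^k(g^2+(g')^2)\,ds<\infty\}$, it suffices to treat smooth $g$.

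For (i), integration by parts over a truncated interval $[\epsilon,1]$ and then letting $\epsilon \to 0^+$ yields
$$\int_0^1 s^{k-2}g^2\,ds = \frac{g(1)^2}{k-1} - \lim_{s\to 0^+}\frac{s^{k-1}g(s)^2}{k-1} - \frac{2}{k-1}\int_0^1 s^{k-1}g\,g'\,ds.$$
Since $k>1$, $s^{k-1}\to 0$, and the finiteness of the LHS along a subsequence forces the boundary term at $0$ to vanish. The boundary term $g(1)^2$ is controlled by a one-dimensional trace estimate on $[1/2,1]$: $g(1)^2\lesssim \int_{1/2}^1(g^2+(g')^2)\,ds \lesssim \int_0^1 s^k(g^2+(g')^2)\,ds$. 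For the remaining integral, writing $s^{k-1}=s^{(k-2)/2}s^{k/2}$ and applying Cauchy--Schwarz gives
$$\left|\int_0^1 s^{k-1}g\,g'\,ds\right|\leq \Big(\int_0^1 s^{k-2}g^2\,ds\Big)^{1/2}\Big(\int_0^1 s^k(g')^2\,ds\Big)^{1/2},$$
and Young's inequality absorbs half of the $s^{k-2}g^2$ factor into the left side.

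For (ii), first establish the trace at $0$: for $0<a<b\leq 1$, Cauchy--Schwarz and $k<1$ give
$$|g(b)-g(a)|^2\leq \int_a^b s^{-k}\,ds\int_a^b s^k(g')^2\,ds = \frac{b^{1-k}-a^{1-k}}{1-k}\int_a^b s^k(g')^2\,ds,$$
which tends to $0$ as $a,b\to 0^+$, so $g(0):=\lim_{s\to 0^+}g(s)$ exists. Applying the same integration-by-parts identity to $\tilde g=g-g(0)$, one has the pointwise bound $|\tilde g(s)|^2\leq \frac{s^{1-k}}{1-k}\int_0^s \tau^k(g')^2\,d\tau$, hence $s^{k-1}\tilde g(s)^2\to 0$. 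Together with $\frac{1}{k-1}<0$, which makes the boundary term $\frac{\tilde g(1)^2}{k-1}\leq 0$ automatically favorable, we obtain
$$\int_0^1 s^{k-2}\tilde g^2\,ds \leq \frac{2}{1-k}\int_0^1 s^{k-1}|\tilde g\,g'|\,ds,$$
and Cauchy--Schwarz plus Young's inequality as in (i) close the estimate.

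The main obstacle is the careful treatment of the boundary behavior at $s=0$, especially in (ii) where the weight $s^{k-1}$ diverges; the key input is the pointwise estimate $|\tilde g(s)|^2\lesssim s^{1-k}\int_0^s \tau^k(g')^2\,d\tau$, which provides just enough decay to kill the singular boundary contribution. Everything else is a routine density/approximation argument.
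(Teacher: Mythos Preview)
The paper does not prove this lemma; it simply cites it from \cite{hardy inequality} and \cite{jang-instability} and uses it as a black box. Your proof is correct and follows the standard integration-by-parts approach to Hardy inequalities.

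One minor remark on part (i): your handling of the boundary term at $0$ (``the finiteness of the LHS along a subsequence forces the boundary term to vanish'') is a little circular as written, since finiteness of the left side is what you are proving. The cleanest fix is the one you implicitly use in part (ii): on $[\epsilon,1]$, the boundary contribution $-\frac{\epsilon^{k-1}g(\epsilon)^2}{k-1}$ is nonpositive when $k>1$, so you may simply drop it to obtain an upper bound, run the Cauchy--Schwarz/absorption argument uniformly in $\epsilon$, and then let $\epsilon\to 0$ by monotone convergence. With that adjustment the argument is complete.
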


Define the function space $Z_{\mu,r}$ to be the closure of $D\left(  B_{\mu
,r}A_{\mu,r}\right)  \subset Y_{\mu, r}$ under the graph norm
\begin{align*}
\left\Vert v\right\Vert _{Z_{\mu,r}}  &  =\left\Vert v\right\Vert _{Y_{\mu,r}%
}+\left\Vert B_{\mu,r}A_{\mu,r}v\right\Vert _{X_{\mu,r}}\\
&  =\left(  \int_{0}^{R_{\mu}}\rho_{\mu}\left\vert v\right\vert ^{2}%
r^{2}dr\right)  ^{\frac{1}{2}}+\left(  \int_{0}^{R_{\mu}}\Phi^{\prime\prime
}\left(  \rho_{\mu}\right)  \left\vert \frac{1}{r^{2}}\partial_{r}\left(
r^{2}\rho_{\mu}v\right)  \right\vert ^{2}r^{2}dr\right)  ^{\frac{1}{2}}.
\end{align*}
By Theorem \ref{T:main}, to show the discreteness of eigenvalues for radial
perturbations it suffices to show the following compactness lemma.

\begin{lemma}
The embedding $Z_{\mu,r}\hookrightarrow Y_{\mu,r}$ is compact.
\end{lemma}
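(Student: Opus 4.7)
The plan is to reduce the embedding to a weighted Sobolev compactness statement on $(0,R_\mu)$ via the substitution $g = r^{2}\rho_\mu v$, under which the relevant norms become
\[
\|v\|_{Y_{\mu,r}}^{2} \simeq \int_{0}^{R_\mu}\frac{g^{2}}{r^{2}\rho_\mu}\,dr, \qquad \|B_{\mu,r}A_{\mu,r}v\|_{X_{\mu,r}}^{2} \simeq \int_{0}^{R_\mu}\Phi''(\rho_\mu)\,\frac{(g')^{2}}{r^{2}}\,dr.
\]
Given a bounded sequence $\{v_{n}\}\subset Z_{\mu,r}$ with $g_{n}=r^{2}\rho_\mu v_{n}$, I would show (i) that the contribution to $\|v_{n}\|_{Y_{\mu,r}}^{2}$ on neighborhoods $(0,\delta)$ and $(R_\mu-\delta,R_\mu)$ is bounded by $C(\delta)\|v_{n}\|_{Z_{\mu,r}}^{2}$ with $C(\delta)\to 0$ as $\delta\to 0^{+}$, and (ii) that on each compact sub-interval $[\delta,R_\mu-\delta]$ a subsequence of $v_{n}$ converges in the $Y_{\mu,r}$-norm restricted to that interval. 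A standard diagonal extraction along $\delta_{k}\to 0$ then yields a $Y_{\mu,r}$-Cauchy subsequence on all of $(0,R_\mu)$.

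Both tail estimates for step (i) will rest on the Hardy inequality recalled above. Near $r=R_\mu$, write $s=R_\mu-r$ so that $\rho_\mu \sim s^{n_{0}}$ and $\Phi''(\rho_\mu)\sim s^{1-n_{0}}$ with $n_{0}=1/(\gamma_{0}-1)\in(1,5)$. The finiteness of $\int_{0}^{\delta}g_{n}^{2}s^{-n_{0}}\,ds$, together with Hardy's existence-of-trace statement (applied with $k=1-n_{0}<1$), forces the trace $g_{n}(R_\mu)=0$, since otherwise $g_{n}^{2}s^{-n_{0}}\sim c^{2}s^{-n_{0}}$ would be non-integrable at $s=0$. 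Hardy then yields
\[
\int_{0}^{\delta}s^{-1-n_{0}}g_{n}^{2}\,ds \;\lesssim\; \int_{0}^{\delta}s^{1-n_{0}}(g_{n}')^{2}\,ds,
\]
whence $\int_{0}^{\delta}s^{-n_{0}}g_{n}^{2}\,ds \le \delta\int_{0}^{\delta}s^{-1-n_{0}}g_{n}^{2}\,ds \lesssim \delta\,\|v_{n}\|_{Z_{\mu,r}}^{2}$. Near $r=0$ the weights $\rho_\mu$ and $\Phi''(\rho_\mu)$ are bounded above and below by positive constants; the same integrability argument forces $g_{n}(0)=0$, and Hardy with $k=-2<1$ gives $\int_{0}^{\delta}r^{-4}g_{n}^{2}\,dr \lesssim \int_{0}^{\delta}r^{-2}(g_{n}')^{2}\,dr$, which upgrades to $\int_{0}^{\delta}r^{-2}g_{n}^{2}\,dr \le \delta^{2}\|v_{n}\|_{Z_{\mu,r}}^{2}\cdot O(1)$.

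On the middle region $[\delta,R_\mu-\delta]$ the weights $\rho_\mu$, $\Phi''(\rho_\mu)$, $r$, and $r^{-1}$ are all bounded above and bounded below by positive constants depending on $\delta$. The uniform bound on $\|v_{n}\|_{Z_{\mu,r}}$ therefore gives a uniform $H^{1}([\delta,R_\mu-\delta])$ bound on $g_{n}$; Rellich--Kondrachov produces a subsequence converging in $L^{2}$, and dividing by the smooth positive function $r^{2}\rho_\mu$ delivers $Y_{\mu,r}$-convergence of the corresponding subsequence of $v_{n}$ on the sub-interval. The main technical obstacle is not the interior, which is entirely standard, but the vacuum boundary $r=R_\mu$: the crux is justifying the vanishing-trace condition required by Hardy and exploiting that $\gamma_{0}\in(\tfrac{6}{5},2)$ (equivalently $n_{0}>1$) is precisely what both makes the integrability of $g_{n}^{2}/\rho_\mu$ incompatible with a nonzero trace and places the Hardy exponent $k=1-n_{0}$ below the admissibility threshold.
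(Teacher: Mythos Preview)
Your proof is correct and follows essentially the same route as the paper's: the substitution $g=r^{2}\rho_\mu v$, Hardy's inequality with exponent $k=1-n_0<1$ near the vacuum boundary $r=R_\mu$ and with $k=-2<1$ near $r=0$ to obtain uniform smallness of the $Y_{\mu,r}$-norm on the tails, and Rellich--Kondrachov on compact sub-intervals. If anything, you are slightly more explicit than the paper in justifying the vanishing trace $g(R_\mu)=0$ needed to drop the $g(0)$ term in Hardy's inequality; the paper applies Hardy directly to $|r^{2}\rho_\mu v|^{2}$ without spelling this out.
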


\begin{proof}
First, near the support radius $R_{\mu}$ we have $\rho_{\mu}\left(  r\right)
\approx\left(  R_{\mu}-r\right)  ^{\frac{1}{\gamma-1}}$. This is well-known
for Lane-Emden stars. To be self-contained, we give a proof for general
equation of states. By (\ref{first-order-ode}), we have
\[
y_{\mu}^{\prime}\left(  R_{\mu}\right)  =-\frac{4\pi}{R_{\mu}^{2}}\int%
_{0}^{R_{\mu}} s^{2} F_{+}\left(  y_{\mu}\left(  s\right)  \right)
ds=-\frac{1 }{R_{\mu}^{2}}M\left(  \mu\right)  <0.
\]
Thus for $r$ near $R_{\mu}$, $y_{\mu}\left(  r\right)  \approx R_{\mu}-r\,$.
Since $\rho_{\mu}\left(  r\right)  =F_{+}\left(  y_{\mu}\left(  r\right)
\right)  $ and $F_{+}\left(  y\right)  \approx y^{\frac{1}{\gamma-1}}$ for
$0<y\ll1$, we deduce that for $r$ near $R_{\mu}$
\begin{equation}
\rho_{\mu}\left(  r\right)  \approx\left(  y_{\mu}\left(  r\right)  \right)
^{\frac{1}{\gamma-1}}\approx\left(  R_{\mu}-r\right)  ^{\frac{1}{\gamma-1}}.
\label{estimate-rho-mu-near-R}%
\end{equation}
Then for $r$ near $R_{\mu},$%
\begin{equation}
\Phi^{\prime\prime}\left(  \rho_{\mu}\left(  r\right)  \right)  \approx
\rho_{\mu}\left(  r\right)  ^{\gamma-2}\approx\left(  R_{\mu}-r\right)
^{\frac{\gamma-2}{\gamma-1}} . \label{estimate-phi-rho-near-R}%
\end{equation}

Let $r_{2}<R_{\mu}$ and $R_{\mu}-r_{2}$ be small enough so that
(\ref{estimate-rho-mu-near-R}) and (\ref{estimate-phi-rho-near-R}) are valid
in $\left(  r_{2},R_{\mu}\right)  $. Then for any $v\in Z_{\mu,r}$, we have
\begin{align*}
&  \int_{r_{2}}^{R_{\mu}}\Phi^{\prime\prime}\left(  \rho_{\mu}\right)
\left\vert \frac{1}{r^{2}}\partial_{r}\left(  r^{2}\rho_{\mu}v\right)
\right\vert ^{2}r^{2}dr\\
&  \gtrsim\int_{r_{2}}^{R_{\mu}}\left(  R_{\mu}-r\right)  ^{\frac{\gamma
-2}{\gamma-1}}\left\vert \partial_{r}\left(  r^{2}\rho_{\mu}v\right)
\right\vert ^{2}dr\text{ }\\
&  \gtrsim\int_{r_{2}}^{R_{\mu}}\left(  R_{\mu}-r\right)  ^{\frac{\gamma
-2}{\gamma-1}-2}\left\vert r^{2}\rho_{\mu}v\right\vert ^{2}dr\text{ (by
Hardy's inequality (\ref{hardy-k-less-1}))}\\
&  \gtrsim\int_{r_{2}}^{R_{\mu}}\left(  R_{\mu}-r\right)  ^{-1}\rho_{\mu}%
v^{2}dr\text{ (by (\ref{estimate-rho-mu-near-R}))}\\
&  \gtrsim\left(  R_{\mu}-r_{2}\right)  ^{-1}\int_{r_{2}}^{R_{\mu}}\rho_{\mu
}v^{2}dr.
\end{align*}
Thus,
\begin{equation}
\int_{r_{2}}^{R_{\mu}}\rho_{\mu}\left\vert v\right\vert ^{2}r^{2}%
dr\lesssim\left(  R_{\mu}-r_{2}\right)  \left\Vert B_{\mu,r}A_{\mu
,r}v\right\Vert _{X_{\mu,r}}^{2}. \label{estimate-norm-near-R}%
\end{equation}

Let $r_{1}\in\left(  0,R_{\mu}\right)  $ be small enough so that
\[
\frac{1}{2}\mu\leq\rho_{\mu}\left(  r\right)  \leq\mu,\ \ \ \forall
r\in\left(  0,r_{1}\right)  ,
\]
then
\[
0<\delta_{1}\left(  \mu\right)  \leq\Phi^{\prime\prime}\left(  \rho_{\mu
}\right)  \leq\delta_{2}\left(  \mu\right)  ,\ \ \forall r\in\left(
0,r_{1}\right)  ,\ \
\]
where $\delta_{1}\left(  \mu\right)  =\min_{\rho\in\left(  \frac{1}{2}\mu
,\mu\right)  }\Phi^{\prime\prime}\left(  \rho\right)  $ and $\delta_{2}\left(
\mu\right)  =\max_{\rho\in\left(  \frac{1}{2}\mu,\mu\right)  }\Phi
^{\prime\prime}\left(  \rho\right)  $. We have
\begin{align*}
&  \int_{0}^{r_{1}}\Phi^{\prime\prime}\left(  \rho_{\mu}\right)  \left\vert
\frac{1}{r^{2}}\partial_{r}\left(  r^{2}\rho_{\mu}v\right)  \right\vert
^{2}r^{2}dr\\
&  \gtrsim\int_{0}^{r_{1}}\frac{1}{r^{2}}\left\vert \partial_{r}\left(
r^{2}\rho_{\mu}v\right)  \right\vert ^{2}dr\text{ }\gtrsim\int_{0}^{r_{1}%
}\left(  \rho_{\mu}v\right)  ^{2}dr\text{ (by (\ref{hardy-k-less-1}))}\\
&  \gtrsim r_{1}^{-2}\int_{0}^{r_{1}}v^{2}r^{2}dr.
\end{align*}
Thus,%
\begin{equation}
\int_{0}^{r_{1}}\rho_{\mu}\left\vert v\right\vert ^{2}r^{2}dr\lesssim
r_{1}^{2}\left\Vert B_{\mu,r}A_{\mu,r}v\right\Vert _{X_{\mu,r}}^{2}.
\label{estimate-norm-near-0}%
\end{equation}
Denote $B_{Z}=\left\{  v\in Z_{\mu,r}\ |\ \left\Vert v\right\Vert _{Z_{\mu,r}%
}\leq1\right\}  \ $to be the unit ball in $Z_{\mu,r}$. Then for any
$\varepsilon>0$, by estimates (\ref{estimate-norm-near-R}) and
(\ref{estimate-norm-near-0}), we can choose $0<r_{1}<r_{2}<R_{\mu}$ such that
\[
\int_{0}^{r_{1}}\rho_{\mu}\left\vert v\right\vert ^{2}r^{2}dr+\int_{r_{2}%
}^{R_{\mu}}\rho_{\mu}\left\vert v\right\vert ^{2}r^{2}dr\leq\varepsilon
,\ \ \forall\ v\in B_{Z}.
\]
The compactness of $Z_{\mu,r}\hookrightarrow Y_{\mu,r}$ follows from above
estimate and the compactness of the embedding $Z_{\mu,r}\hookrightarrow
L^{2}\left(  r_{1},r_{2}\right)  $.
\end{proof}

\begin{remark}
The stability criterion $L_{\mu,r}|_{\overline{R\left(  B_{\mu,r}\right)  }%
}\geq0$ has the following physical meaning. By
(\ref{quadratic-L-2nd-variation}), the quadratic form $\left\langle L_{\mu
,r}\rho,\rho\right\rangle $ is the second order variation of the energy
functional $E_{\mu}\left(  \rho\right)  $ defined in
(\ref{energy functional -rho}). By (\ref{range-mass-constraint}), the space
$\overline{R\left(  B_{\mu,r}\right)  }$ consists of perturbations satisfying
the mass constraint. Thus, our stability criterion verifies Chandrasekhar's
variational principle that stable states should be energy minimizers under the
mass constraint. (see also Remark \ref{rmk-variational-steady})
\end{remark}

\begin{remark}
\label{rm-Eddington-eqn}

In the astrophysical literature, the linear radial oscillations were usually studied
through the singular Sturm-Liouville equation
\begin{equation}
\frac{d}{dr}\left(  \Gamma_{1}P_{\mu}\frac{1}{r^{2}}\frac{d}{dr}\left(
r^{2}\xi\right)  \right)  -\frac{4}{r}\frac{dP_{\mu}}{dr}\xi+\omega^{2}%
\rho_{\mu}\xi=0,\label{ODE-SL-radial-oscilation}%
\end{equation}
with the boundary conditions
\begin{equation}
\xi\left(  0\right)  =0\text{ and }\xi\left(  R_{\mu}\right)  \text{ is
finite. }\label{SL-eqn-BC}%
\end{equation}
Here, $\xi$ is the linearized Lagrangian displacement in the radial direction, $P_{\mu
}=P\left(  \rho_{\mu}\right),\ \Gamma_{1}=\frac{\rho_{\mu}P^{\prime}\left(
\rho_{\mu}\right)  }{P\left(  \rho_{\mu}\right)  }$ is the local Polytropic
index and $i\omega$ is the eigenvalue. The equation
(\ref{ODE-SL-radial-oscilation}) was first derived by Eddington in 1918
(\cite{eddington1919}) and had been widely used in later works (e.g.
\cite{ledoux58} \cite{cox-book} \cite{jang-instability} \cite{lin-ss-1997}
\cite{makino15}). For Polytropic stars $P\left(  \rho\right)  =K\rho^{\gamma}%
$, $\Gamma_{1}=\gamma$, (\ref{ODE-SL-radial-oscilation}) is greatly simplified
and can be used to show $\gamma=\frac{4}{3}$ is the critical index for
stability (\cite{ledoux58} \cite{lin-ss-1997}). However, for general equation
of states it is difficult to get explicit stability criteria such as TPP in
Theorem \ref{Thm2-TPP} by (\ref{ODE-SL-radial-oscilation}). Moreover, since
the Sturm-Liouville problem (\ref{ODE-SL-radial-oscilation}) is singular near
$r=0$ and $R_{\mu}$, it is highly nontrivial (\cite{beyer-JMP-1}
\cite{beyer-JMP-2} \cite{beyer-schmidt} \cite{lin-ss-1997} \cite{makino15}
\cite{jang-makino-LEP19}) to prove self-adjointness and discreteness of
eigenvalues which were taken for granted in the astrophysical literature.

By the separable Hamiltonian formulation (\ref{hamiltonian-radial}), the
eigenvalue equation can be written as (see (\ref{2nd order eqn-v}))
\begin{equation}
B_{\mu,r}^{\prime}L_{\mu,r}B_{\mu,r}A_{\mu,r}v=\omega^{2}%
v,\label{eigenvalue-problem-factorized}%
\end{equation}
which is equivalent to (\ref{ODE-SL-radial-oscilation}) by explicit
calculations. There are several advantages of the factorized form
(\ref{eigenvalue-problem-factorized}) over (\ref{ODE-SL-radial-oscilation}).
First, each factor in (\ref{eigenvalue-problem-factorized}) has a clear
physical meaning related to the variational structures of steady states or the
physical constraint etc. Second, the form in
(\ref{eigenvalue-problem-factorized}) makes it convenient to prove properties
of the operator $B_{\mu,r}^{\prime}L_{\mu,r}B_{\mu,r}A_{\mu,r}\ $such as the
self-adjointness and discreteness of eigenvalues. This approach is rather
flexible and has been used in the recent works on the stability of rotating
stars (\cite{lin-wang-rotating}), relativistic stars and globular clusters
(\cite{mahir-lin-rein} \cite{lin-hadzic-tpp-euler-einstein}).
\end{remark}

\subsection{Examples}

We apply the stability criteria for several examples of gaseous stars.

1. Polytropic stars

For Polytropic stars, $P\left(  \rho\right)  =K\rho^{\gamma}$ with $\gamma
\in\left(  \frac{6}{5},2\right)  $, then by Lemma \ref{lemma-Morse-3-below},
we have $n^{-}\left(  D_{\mu}^{0}\right)  =1$ for any $\mu>0$. The functions
$M\left(  \mu\right)  ,R_{\mu}$ are given by (\ref{relation-M-R-polytropic}).
For any $\gamma>1,\ $we have$\frac{d}{d\mu}\left(  \frac{M\left(  \mu\right)
}{R_{\mu}}\right)  >0$ for all $\mu>0$. When $\gamma\in\left(  \frac{6}%
{5},\frac{4}{3}\right)  $ we have $M^{\prime}\left(  \mu\right)  <0\ $and thus
$i_{\mu}=0$. Then it follows from Theorems \ref{Thm1-EP} and
\ref{Thm3-trichotomy} that for any $\mu>0,\ \rho_{\mu}$ is unstable with
$n^{u}\left(  \mu\right)  =1$ and there is Lyapunov stability on the co-dim
$2$ center space. When $\gamma\in\left(  \frac{4}{3},2\right)  $, we have
$M^{\prime}\left(  \mu\right)  >0\ $and thus $i_{\mu}=1$. By Theorems
\ref{Thm3-trichotomy} (iv), linear Lyapunov stability holds for any $\mu>0$.
The case $\gamma=\frac{4}{3}$ is the critical index for stability. In this
case, we have $M^{\prime}\left(  \mu\right)  =0$. Thus, $i_{\mu}=1$ and we
have spectral stability. In \cite{deng-et-2002}, nonlinear instability was
shown for $\gamma=\frac{4}{3}\ $in the sense that for any small perturbation
with positive total energy of stationary solutions, either the support of the
density will go to infinity or singularity forms in the solution in finite time.

2. White dwarf stars

Next, we consider white dwarf stars (\cite{chandra-book-stellar}) with
$P_{\text{w}}\left(  \rho\right)  =Af\left(  x\right)  $ and $\rho=Bx^{3}$,
where $A,B$ are two constants and
\begin{align}
f\left(  x\right)   &  =x\left(  x^{2}+1\right)  ^{\frac{1}{2}}\left(
2x^{2}-3\right)  +3\ln\left(  x+\sqrt{1+x^{2}}\right)
\label{defn-f-white dwarf}\\
&  =8\int_{0}^{x}\frac{u^{4}du}{\sqrt{1+u^{2}}}.\nonumber
\end{align}
Then $P_{\text{w}}\left(  \rho\right)  $ satisfies (\ref{assumption-P2}) with
$\gamma_{0}=\frac{5}{3}$. Therefore, for any center density $\mu\in\left(
0,\infty\right)  $, there exists a unique non-rotating star $\rho_{\mu}\left(
\left\vert x\right\vert \right)  $ (see Remark \ref{R:equilibria}). It was
shown in \cite{staruss-wu-white dwarf} (see also \cite{uggala-Newtonian-stars}%
) that $M^{\prime}\left(  \mu\right)  >0$ for any $\mu>0$.

\begin{lemma}
\label{lemma-monotone-R-white dwarf}Assume $P\left(  \rho\right)  $ satisfies
(\ref{assumption-P2}) with $\gamma_{0}\in\left(  \frac{4}{3},2\right)  $. Let
$\mu_{0}\in(0,+\infty]$ be the such that $M^{\prime}(\mu) \ge0$ on $[0,
\mu_{0})$.
Then $\frac{d}{d\mu}\left(  \frac{M\left(  \mu\right)  }{R_{\mu}}\right)  >0$
for any $\mu\in\left(  0,\mu_{0}\right)  $.
\end{lemma}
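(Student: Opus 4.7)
The plan is to argue by contradiction, using the lower bound $n^{-}(D_{\mu}^{0})\geq1$ from Lemma \ref{lemma-property-A-EP}(iii) together with the jump formula of Proposition \ref{prop-jump-index}. The base case is already in hand: by the Lane--Emden rescaling \eqref{defn-y-mu-scaling} and the explicit small-$\mu$ asymptotics carried out in the proof of Theorem \ref{Thm2-TPP}, one has $\frac{d}{d\mu}\bigl(\tfrac{M(\mu)}{R_{\mu}}\bigr)>0$ for all sufficiently small $\mu>0$, while Lemma \ref{lemma-small-D-neg} gives $n^{-}(D_{\mu}^{0})=1$ on the same range.

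Assume, toward a contradiction, that the lemma fails, and set
\[
\mu^{\ast}=\inf\left\{\mu\in(0,\mu_{0}) : \tfrac{d}{d\mu}\bigl(\tfrac{M(\mu)}{R_{\mu}}\bigr)\leq 0\right\}.
\]
By the base case and continuity, $\mu^{\ast}\in(0,\mu_{0})$, $\partial_{\mu}(M/R_{\mu})(\mu^{\ast})=0$, and $\partial_{\mu}(M/R_{\mu})>0$ on $(0,\mu^{\ast})$. Lemma \ref{lemma-M-R-no critical} forbids $M'(\mu^{\ast})$ and $\partial_{\mu}(M/R_{\mu})(\mu^{\ast})$ from vanishing simultaneously, so $M'(\mu^{\ast})\neq 0$; the standing hypothesis $M'\geq 0$ on $[0,\mu_{0})$ then gives $M'(\mu^{\ast})>0$, and by continuity $M'>0$ in some neighborhood of $\mu^{\ast}$. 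On $(0,\mu^{\ast})$, Lemma \ref{lemma-kernel-D-mu} implies $\ker D_{\mu}^{0}=\{0\}$, so $n^{-}(D_{\mu}^{0})$ is locally constant (Lemma \ref{lemma-small-D-neg}); combined with the base case, $n^{-}(D_{\mu}^{0})=1$ throughout $(0,\mu^{\ast})$.

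Now apply Proposition \ref{prop-jump-index} at the critical point $\mu^{\ast}$ of $M/R_{\mu}$. For $\mu$ slightly less than $\mu^{\ast}$ we have $M'(\mu)>0$ and $\partial_{\mu}(M/R_{\mu})(\mu)>0$, hence $M'(\mu)\,\partial_{\mu}(M/R_{\mu})(\mu)>0$ and thus $i_{\mu}=1$ by \eqref{defn-imu}. The jump formula \eqref{formula-jump-D-mu} then reads
\[
1=n^{-}(D_{\mu}^{0})=n^{-}(D_{\mu^{\ast}}^{0})+i_{\mu}=n^{-}(D_{\mu^{\ast}}^{0})+1,
\]
forcing $n^{-}(D_{\mu^{\ast}}^{0})=0$, which contradicts Lemma \ref{lemma-property-A-EP}(iii). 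Hence no such $\mu^{\ast}$ exists and $\frac{d}{d\mu}(M(\mu)/R_{\mu})>0$ on all of $(0,\mu_{0})$. There is no serious technical obstacle: every ingredient (the small-$\mu$ asymptotics, the no-simultaneous-vanishing lemma, the jump formula, and the universal lower bound $n^{-}\geq 1$) has already been established; the only care required is to organise the infimum argument so that the jump formula is applied on the correct side of $\mu^{\ast}$.
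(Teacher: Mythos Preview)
Your proof is correct and follows essentially the same approach as the paper: argue by contradiction, let $\mu^{\ast}$ be the first zero of $\partial_{\mu}(M/R_{\mu})$, use Lemma~\ref{lemma-M-R-no critical} and the hypothesis $M'\ge 0$ to get $M'(\mu^{\ast})>0$, then apply the jump formula of Proposition~\ref{prop-jump-index} with $i_{\mu}=1$ just below $\mu^{\ast}$ to force $n^{-}(D_{\mu^{\ast}}^{0})=0$, contradicting Lemma~\ref{lemma-property-A-EP}(iii). The paper phrases the final contradiction as $n^{-}(D_{\mu}^{0})\geq 2$ for $\mu$ just below $\mu^{\ast}$ rather than $n^{-}(D_{\mu^{\ast}}^{0})=0$, but this is merely an algebraic rearrangement of the same equation.
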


\begin{proof}
By the proof of Theorem \ref{Thm2-TPP} we have $\frac{d}{d\mu}\left(
\frac{M\left(  \mu\right)  }{R_{\mu}}\right)  >0\ $and $M^{\prime}\left(
\mu\right)  >0\ $when $\mu$ is small enough.
Suppose the conclusion of the lemma is not true. Let $\mu_{1}\in\left(
0,\mu_{0}\right)  $ be the first zero of $\frac{d}{d\mu}\left(  \frac{M\left(
\mu\right)  }{R_{\mu}}\right)  $. Then $\frac{d}{d\mu}\left(  \frac{M\left(
\mu\right)  }{R_{\mu}}\right)  >0$ for all $\mu\in\left(  0,\mu_{1}\right)  $.
Consequently, by Lemma \ref{lemma-small-D-neg},
$n^{-}\left(  D_{\mu}^{0}\right)  =1$ for all $\mu\in\left(  0,\mu_{1}\right)
$. At $\mu=\mu_{1}$, we have $n^{-}\left(  D_{\mu_{1}}^{0}\right)  \geq1$ (by
Lemma \ref{lemma-property-A-EP} iii)) and $0$ is an eigenvalue of $D_{\mu_{1}%
}^{0}$. Since $M^{\prime}(\mu_{1}) >0$ due to our assumption and Lemma
\ref{lemma-kernel-D-mu}, when $\mu<\mu_{1}$ and $\left\vert \mu-\mu
_{1}\right\vert $ is small enough, we have $\frac{d}{d\mu}\left(
\frac{M\left(  \mu\right)  }{R_{\mu}}\right) M^{\prime}\left(  \mu\right)
>0$. Therefore $i_{\mu}=1$ according to \eqref{defn-imu} and thus Proposition
\ref{prop-jump-index} implies
$n^{-}\left( D_{\mu}^{0}\right)  =n^{-}\left(  D_{\mu_{1}}^{0}\right)
+1\geq2$. This is in contradiction to that $n^{-}\left(  D_{\mu}^{0}\right)
=1$ for $\mu\in\left(  0,\mu_{1}\right)  $.
\end{proof}


\begin{corollary}
\label{cor-stability-white dwarf}White dwarf stars $\rho_{\mu}\left(
\left\vert x\right\vert \right)  \ $are linearly stable for any center density
$\mu>0$.
\end{corollary}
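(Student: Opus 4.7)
The plan is to combine Lemma \ref{lemma-monotone-R-white dwarf} with the monotonicity of $M(\mu)$ established for white dwarfs to drive the continuity argument all the way out to $\mu = \infty$. Since the white dwarf pressure $P_{\text{w}}$ satisfies \eqref{assumption-P2} with $\gamma_0 = 5/3 \in (4/3, 2)$, Remark \ref{R:equilibria} gives $\mu_{\max} = +\infty$. The cited result of \cite{staruss-wu-white dwarf} provides $M'(\mu) > 0$ for every $\mu \in (0, \infty)$, so Lemma \ref{lemma-monotone-R-white dwarf} applies with $\mu_0 = +\infty$ and yields $\tfrac{d}{d\mu}\bigl(M(\mu)/R_\mu\bigr) > 0$ on all of $(0, \infty)$.

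With these two monotonicities in hand, I would read off the index $i_\mu$ directly from \eqref{defn-imu}: because both $M'(\mu)$ and $\tfrac{d}{d\mu}(M/R_\mu)$ are strictly positive, $i_\mu = 1$ for every $\mu > 0$. Next I would pin down $n^-(D_\mu^0)$ by the continuity argument of Section~3.3: Lemma \ref{lemma-small-D-neg} gives $n^-(D_\mu^0) = 1$ for small $\mu$, and by Lemma \ref{lemma-kernel-D-mu} together with Proposition \ref{prop-jump-index} the integer $n^-(D_\mu^0)$ can only change at critical points of $M(\mu)/R_\mu$. Since $\tfrac{d}{d\mu}(M/R_\mu) > 0$ throughout, no such critical point exists, so $n^-(D_\mu^0) = 1$ for all $\mu > 0$.

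Plugging into the counting formula \eqref{formula-unstable-mode} gives $n^u(\mu) = n^-(D_\mu^0) - i_\mu = 0$, so by Theorem \ref{Thm1-EP}(ii) the star $\rho_\mu$ is spectrally stable under radial perturbations; non-radial spectral stability is automatic from Proposition \ref{prop-discrete-non-radial}. Finally, to upgrade spectral stability to linear Lyapunov stability I would invoke Theorem \ref{Thm3-trichotomy}(iv): we have $n^-(D_\mu^0) = 1$ and $M'(\mu) \tfrac{d}{d\mu}(M/R_\mu) > 0$, so \eqref{stability-general data-EP} yields the uniform bound on $e^{t\mathcal{J}_\mu \mathcal{L}_\mu}|_{\mathbf{X}_r}$, while \eqref{estimate-nonradial-growth} and $E^{u,s} = \{0\}$ handle the non-radial part. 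The only real obstacle is verifying that the cited monotonicity $M'(\mu) > 0$ genuinely holds on all of $(0, \infty)$ for the white dwarf equation of state; this is not established here but is imported from \cite{staruss-wu-white dwarf}, and once granted the rest is an immediate application of the general machinery.
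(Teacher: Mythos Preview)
Your proposal is correct and follows essentially the same approach as the paper: combine $M'(\mu)>0$ (imported from \cite{staruss-wu-white dwarf}) with Lemma~\ref{lemma-monotone-R-white dwarf} to get $\tfrac{d}{d\mu}(M/R_\mu)>0$, use Lemma~\ref{lemma-small-D-neg} and the absence of critical points of $M/R_\mu$ to conclude $n^-(D_\mu^0)=1$ throughout, and then apply Theorem~\ref{Thm3-trichotomy}(iv). The paper's proof is simply a terser version of what you wrote, citing the same ingredients without spelling out the intermediate steps.
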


\begin{proof}
Lemmas \ref{lemma-monotone-R-white dwarf} and \ref{lemma-small-D-neg} imply
that $n^{-}\left(  D_{\mu}^{0}\right)  =1$ for all $\mu>0$. Since $M^{\prime
}\left(  \mu\right)  >0$ and $\frac{d}{d\mu}\left(  \frac{M\left(  \mu\right)
}{R_{\mu}}\right)  >0$ for $\mu\in\left(  0,\infty\right)  $, linear Lyapunov
stability of $\rho_{\mu}$ follows from Theorem \ref{Thm3-trichotomy} (iv).
\end{proof}

\begin{remark}
The mass of white dwarf stars has a finite upper bound $M_{\infty}=\lim
_{\mu\rightarrow\infty}M\left(  \mu\right)  $, which was known as
Chandrasekhar's limit (\cite{chandra-book-stellar} \cite{chandra-nobel}). We
note that for white dwarf stars, $P_{\text{w}}\left(  \rho\right)
\approx2AB^{-\frac{4}{3}}\rho^{\frac{4}{3}}$ when $\rho$ is large. The
Chandrasekhar limit $M_{\infty}$ is exactly the mass of the Polytropic star
with $P\left(  \rho\right)  =2AB^{-\frac{4}{3}}\rho^{\frac{4}{3}}$, which is
independent of $\mu$ by (\ref{relation-M-R-polytropic}).
\end{remark}

3. More general equation of states

Last, we consider general equation of states $P\left(  \rho\right)  $
satisfying (\ref{assumption-P1})-(\ref{assumption-P2}). Assume $\gamma_{0}%
\in\left(  \frac{4}{3},2\right)  $ in (\ref{assumption-P2}). Indeed,
$\gamma_{0}=\frac{5}{3}\ $for most physical equation of states including white
dwarf stars. Then for $\mu$ small, we have
\[
n^{-}\left(  D_{\mu}^{0}\right)  =1,\ \ M^{\prime}\left(  \mu\right)
>0,\ \ \frac{d}{d\mu}\left(  \frac{M\left(  \mu\right)  }{R_{\mu}}\right)
>0.
\]
Let
\[
\mu_{0} = \inf\{ \mu>0\mid M^{\prime}(\mu) <0\} \in(0,+\infty].
\]
If $\mu_{0} <+\infty$ and
$M^{\prime}(\mu) <0$ for $0< \mu-\mu_{0} \ll1$,
we denote
\[
\mu_{1} = \sup\{ \mu> \mu_{0} \mid M^{\prime}(\mu^{\prime}) <0, \forall
\mu^{\prime}\in(\mu_{0}, \mu) \} \in(\mu_{0},+\infty].
\]

\begin{corollary}
\label{cor-general-stars}Assume $P\left(  \rho\right)  $ satisfies
(\ref{assumption-P2}) with $\gamma_{0}\in\left(  \frac{4}{3},2\right)  $. Then
the non-rotating star $\rho_{\mu}\left(  \left\vert x\right\vert \right)  $ is
linearly stable for $\mu\in\left(  0,\mu_{0}\right)  $. If $\mu_{0}<+\infty$,
then $\rho_{\mu}$ is linearly unstable for $\mu\in\left(  \mu_{0},\mu
_{1}\right)  $ and $n^{u}\left(  \mu\right)  =1$.
\end{corollary}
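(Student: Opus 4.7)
The plan is to derive the corollary from the turning point principle (Theorem \ref{Thm2-TPP}) together with Lemma \ref{lemma-monotone-R-white dwarf}. No new analytic work should be required beyond tracking indices across the single mass maximum at $\mu_0$.

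\emph{Stability on $(0,\mu_0)$.} By the definition of $\mu_0$, $M'(\mu)\geq 0$ throughout $[0,\mu_0)$, so the hypothesis of Lemma \ref{lemma-monotone-R-white dwarf} is satisfied, and we obtain $\frac{d}{d\mu}\left(\frac{M(\mu)}{R_\mu}\right)>0$ on $(0,\mu_0)$. By Lemma \ref{lemma-small-D-neg}, $n^-(D_\mu^0)=1$ for small $\mu$; by Lemma \ref{lemma-kernel-D-mu} and Proposition \ref{prop-jump-index}, this index can only change at critical points of $M/R_\mu$, of which there are none in $(0,\mu_0)$. Hence $n^-(D_\mu^0)\equiv 1$ on $(0,\mu_0)$. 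The definition \eqref{defn-imu} then yields $i_\mu=1$ (either $M'>0$ together with $\frac{d}{d\mu}(M/R_\mu)>0$, or $M'=0$), so formula \eqref{formula-unstable-mode} gives $n^u(\mu)=n^-(D_\mu^0)-i_\mu=0$.

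\emph{Instability on $(\mu_0,\mu_1)$.} Continuity of $M'$ in $\mu$ (Remark \ref{R:smooth-in-mu}) together with the definitions of $\mu_0$ and $\mu_1$ gives $M'(\mu_0)=0$ with $M'$ changing sign at $\mu_0$. To apply Theorem \ref{Thm2-TPP}, I determine the bending direction of the mass-radius curve at $\mu_0$. The quantity $\frac{d}{d\mu}(M/R_\mu)$ is continuous and positive on $(0,\mu_0)$ by Lemma \ref{lemma-monotone-R-white dwarf}, hence nonnegative at $\mu_0$; Lemma \ref{lemma-M-R-no critical} rules out simultaneous vanishing with $M'(\mu_0)=0$, so strict positivity holds at $\mu_0$. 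Using $M'(\mu_0)=0$ and $M(\mu_0)>0$ in the identity
\[
\frac{d}{d\mu}\left(\frac{M(\mu)}{R_\mu}\right)\bigg|_{\mu_0}=-\frac{M(\mu_0)\,R'(\mu_0)}{R_{\mu_0}^{2}},
\]
one concludes $R'(\mu_0)<0$, and by continuity $R'<0$ in a two-sided neighborhood of $\mu_0$. Therefore $M'R'$ passes from $\leq 0$ on $(\mu_0-\epsilon,\mu_0)$ to strictly positive on $(\mu_0,\mu_0+\epsilon)$, i.e.\ the curve bends counterclockwise at $\mu_0$. Theorem \ref{Thm2-TPP} gives $n^u(\mu_0+)=n^u(\mu_0-)+1=1$. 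Finally, $M'<0$ strictly on $(\mu_0,\mu_1)$, so no further mass extrema occur there and Theorem \ref{Thm2-TPP} yields $n^u(\mu)\equiv 1$ throughout.

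The only nontrivial step is the sign computation $R'(\mu_0)<0$, which is algebraic once Lemma \ref{lemma-monotone-R-white dwarf} has supplied the limiting positivity of $\frac{d}{d\mu}(M/R_\mu)$; I do not anticipate any genuine obstacle.
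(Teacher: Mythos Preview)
Your proof is correct and follows essentially the same approach as the paper: the stability part on $(0,\mu_0)$ is exactly the argument of Corollary~\ref{cor-stability-white dwarf} (via Lemmas~\ref{lemma-monotone-R-white dwarf} and~\ref{lemma-small-D-neg}), and the instability part on $(\mu_0,\mu_1)$ is a direct application of Theorem~\ref{Thm2-TPP}. The paper's own proof is a two-sentence pointer to these ingredients; you have simply filled in the bending-direction computation that the paper leaves implicit.
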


\begin{proof}
Linear stability of $\rho_{\mu}\left(  \left\vert x\right\vert \right)  $ for
$\mu\in\left(  0,\mu_{0}\right)  $ follows as in Corollary
\ref{cor-stability-white dwarf}. When $\mu_{0}<\infty$, linear instability of
$\rho_{\mu}$ for $\mu\in\left(  \mu_{0},\mu_{1}\right)  $ and $n^{u}\left(
\mu\right)  =1\ $follow from Theorem \ref{Thm2-TPP}.
\end{proof}

If $M\left(  \mu\right)  $ has isolated extrema points, then $\mu_{0}%
,\ \mu_{1}$ are the first maximum and minimum points respectively. Below, we
give examples of $P\left(  \rho\right)  \ $for which the maximum of $M\left(
\mu\right)  $ is obtained at a finite center density, which gives the first
transition point of stability. As in \cite{uggala-Newtonian-stars}, we
consider asymptotically polytropic equation of states satisfying that: for
some positive constants $a_{0},a_{1},n_{0},n_{1},c_{-},c_{+},$

i)%

\begin{equation}
P\left(  \rho\right)  =c_{-}\rho^{\frac{n_{0}+1}{n_{0}}}\left(  1+O\left(
\rho^{\frac{a_{0}}{n_{0}}}\right)  \right)  ,\ \ \text{when\ }\rho
\rightarrow0; \label{asymp-P-small}%
\end{equation}

ii)
\begin{equation}
P\left(  \rho\right)  =c_{+}\rho^{\frac{n_{1}+1}{n_{1}}}\left(  1+O\left(
\rho^{-\frac{a_{1}}{n_{1}}}\right)  \right)  ,\ \ \text{when\ }\rho
\rightarrow+\infty. \label{asymp-P-large}%
\end{equation}
Denote $\gamma_{0}=\frac{n_{0}+1}{n_{0}}$ and $\gamma_{\infty}=\frac{n_{1}%
+1}{n_{1}}$. By Theorem 5.5 in \cite{uggala-Newtonian-stars}, when $n_{1}%
\in\left(  0,5\right)  ,$ to first order, the mass--radius relation for high
central pressures is approximated by the mass--radius relation for an exact
polytrope with polytropic index $n_{1}$. That is, when $\mu$ is large enough,
\[
M\left(  \mu\right)  \propto\mu^{\frac{3-n_{1}}{2n_{1}}}=\mu^{\frac{1}%
{2}\left(  3\gamma_{1}-4\right)  },\ \ R_{\mu}\propto\mu^{\frac{1-n_{1}%
}{2n_{1}}}=\mu^{\frac{1}{2}\left(  \gamma_{1}-2\right)  }.
\]
Therefore, when $n_{1}>3$ (i.e. $\gamma_{\infty}<\frac{4}{3}$), $\frac{d}%
{d\mu}\left(  \frac{M\left(  \mu\right)  }{R_{\mu}}\right)  >0$ and
$M^{\prime}\left(  \mu\right)  <0$ for sufficiently large $\mu$. Thus for
large $\mu$, we have $i_{\mu}=0$ and $\rho_{\mu}$ is linearly unstable by
Theorem \ref{Thm1-EP}. When $n_{0}<3$ (i.e. $\gamma_{0}>\frac{4}{3})$, we have
$M^{\prime}\left(  \mu\right)  >0$ for $\mu$ small enough. Thus, the
transition of stability must occur at some $\mu>0$.

By Theorem 5.4 in \cite{uggala-Newtonian-stars}, when $\gamma_{0}>\frac{4}{3}$
and $\gamma_{\infty}<\frac{6}{5}$ (i.e. $n_{1}>5$), the mass--radius relation
for high central pressures possesses a spiral structure, with the spiral given
by%
\begin{equation}
\left(
\begin{array}
[c]{c}%
\ R_{\mu}\\
M\left(  \mu\right)
\end{array}
\right)  =\left(
\begin{array}
[c]{c}%
\ R_{0}\\
M_{0}%
\end{array}
\right)  +\left(  \frac{1}{\alpha}\right)  ^{\gamma_{1}}\mathcal{BJ}\left(
\gamma_{2}\ln\frac{1}{\alpha}\right)  b+o\left(  \left(  \frac{1}{\alpha
}\right)  ^{\gamma_{1}}\right)  ,\ \mu\gg1, \label{mass-radial-spiral}%
\end{equation}
where $\alpha=\Phi^{\prime}\left(  \mu\right)  ,\ \ R_{0}$ and $M_{0}$ are
constants, $\mathcal{B}$ is a non-singular matrix, and $b$ a non-zero vector.
The matrix $\mathcal{J}\left(  \varphi\right)  $ $\in SO\left(  2\right)  $
describes a rotation by an angle $\varphi$, and the constants $\gamma_{1}$ and
$\gamma_{2}\ $are given by%
\[
\gamma_{1}=\frac{1}{4}\left(  n_{1}-5\right)  ,\ \gamma_{2}=\frac{1}{4}%
\sqrt{7n_{1}^{2}-22n_{1}-1}.
\]
Thus, when $\mu\rightarrow\infty$, the mass $M\left(  \mu\right)  $ has
infinitely many extrema points. We claim that at each of these extrema points,
the number of unstable modes $n^{u}\left(  \mu\right)  $ must increase by $1$
and in particular $n^{u}\left(  \mu\right)  \rightarrow\infty$ when
$\mu\rightarrow\infty$. Indeed, for large $\mu\ $the mass-radius curve must
spiral counterclockwise and then by Theorem \ref{Thm2-TPP} $n^{u}\left(
\mu\right)  $ increases by $1$ when crossing any mass extrema of $M\left(
\mu\right)  $ on the spiral. Suppose not, the mass-radius curve spiral
clockwise when $\mu\rightarrow\infty$. Then by Theorem \ref{Thm2-TPP}
$n^{u}\left(  \mu\right)  $ decreases by $1$ when crossing each mass extrema
of $M\left(  \mu\right)  $ on the spiral. Therefore, after crossing finitely
many mass extrema in the spiral, $n^{u}\left(  \mu\right)  $ must become zero.
Let $\mu^{\ast}$ be the first mass extrema in the spiral such that
$n^{u}\left(  \mu\right)  =0$ for $\mu$ slightly less than $\mu^{\ast}$. Then
for $\mu$ slightly less than $\mu^{\ast}$, we have $n^{-}\left(  D_{\mu}%
^{0}\right)  =1$ and $i_{\mu}=1$ which implies that $M^{\prime}(\mu)R^{\prime
}\left(  \mu\right)  <0$. Thus when crossing $\mu^{\ast}$, the sign of
$M^{\prime}(\mu)R^{\prime}\left(  \mu\right)  $ must changed from $-$ to $+$,
which is in contradiction to the assumption that the spiral is clockwise. This
proves that the mass-radius spiral can only be counterclockwise.


We summarize the above discussions in the following.

\begin{corollary}
\label{cor-asymp-poly}Consider asymptotically polytropic $P\left(
\rho\right)  $ satisfying (\ref{asymp-P-small})-(\ref{asymp-P-large}). Assume
$\gamma_{0}\in\left(  \frac{4}{3},2\right)  $ (i.e. $n_{0}\in\left(
1,3\right)  \ $in (\ref{asymp-P-small})). Then when $n_{1}\in\left(
3,5\right)  $ or $n_{1}>5$ with $n_{1}$ defined in (\ref{asymp-P-large}),
there must be transition point of stability in the sense of Corollary
\ref{cor-general-stars}. Moreover, $\rho_{\mu}$ is unstable when $\mu$ is
large enough. When $n_{1}>5$, $n^{u}\left(  \mu\right)  \rightarrow\infty$
when $\mu\rightarrow\infty$.
\end{corollary}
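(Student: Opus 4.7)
The plan is to organize the arguments already outlined in the paragraphs preceding the corollary into a clean proof by splitting the parameter range, using the asymptotic mass--radius formulas from \cite{uggala-Newtonian-stars}, and invoking the turning point principle (Theorem~\ref{Thm2-TPP}) together with Corollary~\ref{cor-general-stars}. The argument splits into three parts: (a) existence of a transition of stability, (b) instability for large $\mu$, and (c) $n^u(\mu)\to\infty$ in the spiral case $n_1>5$.

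For (a), the hypothesis $\gamma_0\in(\tfrac{4}{3},2)$ puts us in the setting of Corollary~\ref{cor-general-stars}, so linear stability holds on $(0,\mu_0)$ with $\mu_0\in(0,+\infty]$. Using (\ref{asymp-P-large}) with $n_1>3$ (equivalently $\gamma_\infty<\tfrac{4}{3}$), Theorem~5.5 of \cite{uggala-Newtonian-stars} gives
\[
M(\mu)\propto\mu^{(3-n_1)/(2n_1)},\qquad R_\mu\propto\mu^{(1-n_1)/(2n_1)},\qquad \mu\gg 1,
\]
hence $M'(\mu)<0$ for all sufficiently large $\mu$, forcing $\mu_0<\infty$. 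For (b) in the case $n_1\in(3,5)$, the same asymptotics yield $M(\mu)/R_\mu\propto\mu^{\gamma_\infty-1}$ with $\gamma_\infty\in(\tfrac{6}{5},\tfrac{4}{3})$, so $\tfrac{d}{d\mu}(M/R_\mu)>0$ and $M'(\mu)<0$ for large $\mu$. Definition (\ref{defn-imu}) then gives $i_\mu=0$, and since $n^-(D^0_\mu)\geq 1$ by Lemma~\ref{lemma-property-A-EP}(iii), formula (\ref{formula-unstable-mode}) yields $n^u(\mu)\geq 1$.

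The main obstacle is (c): determining the orientation of the spiral (\ref{mass-radial-spiral}) when $n_1>5$, which also subsumes (b) in this case. I would argue by contradiction. Suppose the spiral is oriented clockwise for all sufficiently large $\mu$. Then by Theorem~\ref{Thm2-TPP}, at every mass extremum along the tail of the spiral $n^u(\mu)$ decreases by $1$; since $n^u$ is a nonnegative integer and there are infinitely many such extrema, after finitely many crossings $n^u$ must first hit $0$ at some extremum $\mu^{\ast}$. For $\mu$ just below $\mu^{\ast}$ we then have $n^u(\mu)=0$, which by (\ref{formula-unstable-mode}) and Lemma~\ref{lemma-property-A-EP}(iii) forces $n^-(D^0_\mu)=1$ and $i_\mu=1$, i.e.\ $M'(\mu)\tfrac{d}{d\mu}(M/R_\mu)>0$. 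By Lemma~\ref{lemma-M-R-no critical}, $\tfrac{d}{d\mu}(M/R_\mu)\neq 0$ at $\mu^{\ast}$; the identity $\tfrac{d}{d\mu}(M/R_\mu)=(M'R_\mu-MR')/R_\mu^2$ evaluated at $\mu^{\ast}$ (where $M'=0$) then gives $R'(\mu^{\ast})\neq 0$, and shows that $M'(\mu)\tfrac{d}{d\mu}(M/R_\mu)$ and $-M'(\mu)R'(\mu)$ share the same sign near $\mu^{\ast}$. Hence $M'(\mu)R'(\mu)<0$ just below $\mu^{\ast}$ and flips to $>0$ just above, which is precisely the \emph{counterclockwise} criterion of Theorem~\ref{Thm2-TPP}---contradicting the assumed clockwise orientation. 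Therefore the spiral is counterclockwise, Theorem~\ref{Thm2-TPP} forces $n^u(\mu)$ to jump by $+1$ at each of the infinitely many mass extrema, and $n^u(\mu)\to\infty$ as $\mu\to\infty$.
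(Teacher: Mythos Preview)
Your proof is correct and follows essentially the same route as the paper: the same asymptotic formulas from \cite{uggala-Newtonian-stars}, the same use of $i_\mu=0$ via Lemma~\ref{lemma-property-A-EP}(iii) for instability when $n_1\in(3,5)$, and the identical contradiction argument for the spiral orientation when $n_1>5$. One small imprecision: in part~(a) you invoke Theorem~5.5 of \cite{uggala-Newtonian-stars} for all $n_1>3$, but that result is stated only for $n_1\in(0,5)$; for $n_1>5$ you should instead use the spiral formula~(\ref{mass-radial-spiral}) (Theorem~5.4 there), which already yields infinitely many sign changes of $M'(\mu)$ and hence $\mu_0<\infty$ just as easily.
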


\begin{remark}
White dwarf stars are supported by the pressure due to cold degenerate
electrons, as given by the equation of state (\ref{defn-f-white dwarf}). When
the density is high enough, the pressure due to cold degenerate neutrons
should be taken into account. For such modified equation of states, the
maximal mass (Chandrasekhar's limit) is indeed achieved at a finite center
density $\mu_{0}<\infty$. Then by Corollary \ref{cor-general-stars}$\ \mu_{0}$
is the first transition point of stability and non-rotating stars with center
density slightly larger than $\mu_{0}$ become unstable. We refer to Figure
11.2 and Section 11.4 in \cite{weinberg book} for such mass-radius curve and
physical explanations. If the stars are much more compact than the one with
Chandrasekhar limit, then relativistic effects can not be ignored and
Euler-Einstein model should be used. Similar turning point principle can be
derived for stability of relativistic compact stars modeled by Euler-Einstein
equation (\cite{mahir-lin-rein} \cite{lin-hadzic-tpp-euler-einstein}).
\end{remark}

\section{Appendix: Lagrangian formulation and Hamiltonian structure}\label{A:Lag}

In this appendix we formally outline the Lagrangian formulation of the Euler-Poisson system \eqref{continuity-EP}-\eqref{Poisson-EP} and its linearization. Let $\big(\rho_\mu (|x|), u(x)\equiv0\big)$ be the non-rotating star supported on the ball $S_\mu \subset \BFR^3$ with radius $R(\mu)$, where $\mu = \rho_\mu(0)$. We simply take $S_\mu$ as the reference domain in the Lagrangian framework and define the (abstract) configuration space of Lagrangian maps as
\[
\Lambda = \{ \text{diffeomorphism } \CX: S_\mu \to \CX(S_\mu) \subset \BFR^3\}.
\]
For any reference density $\rho_*: S_\mu \to \BFR^+ \cup\{0\}$ and a path of Lagrangian maps $\CX(t) \in \Lambda$, the action functional $\CA$ is given by
\[
\CA= \int \left( \int_{S_\mu}  \frac 12  |\CX_t|^2 \rho_* dy - \int_{\CX(t, S_\mu)} \Phi (\rho) dx +  \frac 1{8\pi}\int_{\BFR^3} |\nabla V|^2 dx \right) dt,
\]
where the enthalpy $\Phi(\rho)$ is defined in \eqref{defn-enthalpy}, the gravitational potential $V(t, x)$ by  \eqref{Poisson-EP} (or equivalently $V = |x|^{-1} * \rho$),  and the physical density $\rho$ in the Eulerian coordinates is given by
\[
\rho (t, \cdot)  = \left(\frac {\rho_*}{\det D\CX (t, \cdot)} \right) \circ \CX(t, \cdot)^{-1}: \CX(t, S_\mu) \to \BFR^+ \cup\{0\}
\]
and extended as $0$ outside $\CX(t, S_\mu) \subset \BFR^3$. Through standard calculus of variation procedure (with respect to $\CX$), it is straight forward to verify that $\CX(t)$ is a critical path of $\CA$ if and only if $(\rho, u = \CX_t \circ \CX^{-1})$, which are supported on $\CX(t, S_\mu)$, solves the Euler-Poisson system \eqref{continuity-EP}-\eqref{Poisson-EP}. The reference density $\rho_*$ plays the role of a parameter not evolve in $t$. The conserved energy of this Lagrangian system is
\[
E = \int_{S_\mu}  \frac 12  |\CX_t|^2 \rho_* dy + \int_{\CX(t, S_\mu)} \Phi (\rho) dx -  \frac 1{8\pi}\int_{\BFR^3} |\nabla V|^2 dx = \int_{ \BFR^3} \frac 12  \rho |u|^2  +  \Phi (\rho)  -  \frac 1{8\pi} |\nabla V|^2 dx.
\]
One observes that the potential energy consisting of the enthalpy and gravity depends on $\CX$ only through the density $\rho$. Therefore the action functional is invariant under the transformation $\CX(t) \to \CX(t) \circ \CT$, where $\CT$ belongs to the group $\CG$ of diffeomorphism on $S_\mu$ preserving $\rho_*$, namely
\[
\CG= \{ \text{diffeomorphism } \CT: S_\mu \to S_\mu \mid (\rho_* \circ \CT) \det D\CT = \rho_*\}.
\]
The Euler-Poisson system \eqref{continuity-EP}-\eqref{Poisson-EP} in the Eulerian formulation is essentially a reduction of the Lagrangian system due to this relabeling symmetry where $\rho(t, \cdot)$ and $u(t, \cdot)$ are supported on $\CX(t, S_\mu)$.

The non-rotating star $(\rho_\mu, u\equiv0)$ corresponds to the stationary solution $\CX\equiv id$ along with $\rho_* = \rho_\mu(|x|)$ which is a critical point of the potential energy. Let $\CX(t, x, \ep)$ be a family of solutions (parametrized by $\ep$) in the Lagrangian formulation with the reference density $\rho_*(x, \ep)$ such that $\CX(t, x, 0)=x$, $\rho_*(x, 0)= \rho_\mu(x)$, for all $x\in S_\mu$. The linearized system at $\CX=id$ and $\rho_\mu$ governs the dynamics of the leading order variation $\tilde \CX = \p_\ep \CX|_{\ep=0}$, which also involves $\sigma = \p_\ep \rho|_{\ep=0}$.
The corresponding quantities in the Eulerian formulation are
\[
\sigma = \p_\ep \rho |_{\ep=0}  = \big( \p_\ep \rho_* - \nabla \cdot (\rho_\mu \tilde \CX)\big), \quad v = \p_\ep u|_{\ep=0}  = \p_\ep (\CX_t \circ \CX^{-1})|_{\ep=0} =\tilde \CX_t,
\]
which are supported on $S_\mu$. The associated action of the linearized Lagrangian system, which is simply the quadratic part of $\CA$, can be expressed more conveniently using $\sigma$ as
\[
\CA_2= \frac 12 \int_{S_\mu} \rho_\mu  |\tilde \CX_t|^2  - \Phi'' (\rho_\mu) \sigma^2 dy +  \frac 1{8\pi}\int_{\BFR^3} |\nabla (|x|^{-1} * \sigma)|^2 dx.
\]
Using the above formula of $\sigma$, which also implies \eqref{LEP-1}, one obtains the linearized equation through the variation of $\CA_2$ with respect to $\tilde \CX$
\[
- \tilde \CX_{tt} - \nabla \cdot \big(\Phi'' (\rho_\mu) \sigma + |x|^{-1} * \sigma\big) =0,
\]
which is equivalent to \eqref{LEP2}.
The quadratic part
\begin{align*}
E_2
=&  \frac 12 \int_{S_\mu}  \rho_\mu |v|^2  +  \Phi'' (\rho_\mu) \sigma^2 dx -  \frac 1{8\pi}\int_{\BFR^3} |\nabla (|x|^{-1} * \sigma)|^2 dx = H_\mu(\sigma, v),
\end{align*}
of the the nonlinear energy $E$, which is equal to the Hamiltonian $H_\mu(\sigma, v)$ of the linearized Euler-Poisson system defined in \eqref{defn-H-linearized-EP}, is conserved by these linearized solutions.

Through the Legendre transformation $U =\rho_* \CX_t$, the Lagrangian structure with the action $\CA$ induces a natural Hamiltonian structure of the Euler-Poisson system with the Hamiltonian $\CH$ and the standard symplectic structure $J$:
\[
\CH(\CX, U) = \int_{S_\mu}  \frac 1{2 \rho_*}  |U|^2 dy + \int_{\CX(t, S_\mu)} \Phi (\rho) dx -  \frac 1{8\pi}\int_{\BFR^3} |\nabla V|^2 dx, \quad J = \begin{pmatrix} 0 & 1 \\ -1 & 0 \end{pmatrix}.
\]
It might be possible to apply the general results in Section \ref{section-abstract} to analyze the linearized Euler-Poisson system at $(\rho_\mu, 0)$ as a linear Hamiltonian system of the linearized Lagrangian map $\p_\ep \CX$ and momentum $\p_\ep U$. As in the nonlinear case, one could expect such system to be reduced to \eqref{LEP-1}-\eqref{LEP2} through a reduction due to the relabeling symmetry. We carried out the analysis directly on \eqref{LEP-1}-\eqref{LEP2} with the different symplectic structure $\CJ_\mu$, where the large symmetry group (corresponding to additional infinite kernel dimensions) has been reduced and stability/instability is directly on the linearized density and velocity.

\begin{center}
{\Large Acknowledgement}
\end{center}

This work is supported partly by the NSF grants DMS-1715201 and DMS-2007457
(Lin) and DMS-1900083 (Zeng).

\end{document}